\DeclareMathAlphabet{\mathpzc}{OT1}{pzc}{m}{it}
\newtheorem{theorem}{Theorem}[section]
\newtheorem{proposition}[theorem]{Proposition}
\newtheorem{corollary}[theorem]{Corollary}
\newtheorem{conjecture}[theorem]{Conjecture}
\newtheorem{lemma}[theorem]{Lemma}
\newtheorem*{MConj}{Conjecture}
\theoremstyle{definition}
\newtheorem{definition}[theorem]{Definition}
\theoremstyle{remark}
\newtheorem{remark}[theorem]{Remark}
\def\varle{\leqslant}
\newcommand{\CA}{{\mathcal A}}
\newcommand{\CC}{{\mathcal C}}
\newcommand{\CE}{{\mathcal E}}
\newcommand{\CG}{{\mathcal G}}
\newcommand{\CH}{{\mathcal H}}
\newcommand{\CI}{{\mathcal I}}
\newcommand{\CK}{{\mathcal K}}
\newcommand{\CM}{{\mathcal M}}
\newcommand{\CO}{{\mathcal O}}
\newcommand{\CR}{{\mathcal R}}
\newcommand{\CS}{{\mathcal S}}
\newcommand{\CT}{{\mathcal T}}
\newcommand{\CV}{{\mathcal V}}
\newcommand{\CW}{{\mathcal W}}
\newcommand{\CZ}{{\mathcal Z}}
\newcommand{\SA}{{\mathscr A}}
\newcommand{\SB}{{\mathscr B}}
\newcommand{\SF}{{\mathscr F}}
\newcommand{\SG}{{\mathscr G}}
\newcommand{\SO}{{\mathscr O}}
\newcommand{\ScW}{{\mathscr W}}
\newcommand{\fh}{{{\mathfrak h}}}
\newcommand{\fg}{{{\mathfrak g}}} 
\newcommand{\fb}{{{\mathfrak b}}} 
\newcommand{\fl}{{{\mathfrak l}}}
\newcommand{\hCW}{{\widehat\CW}}
\newcommand{\hCG}{{\widehat\CG}}
\newcommand{\hCS}{{\widehat\CS}}
\newcommand{\hCT}{{\widehat\CT}}
\newcommand{\hCZ}{{\widehat\CZ}}
\newcommand{\hS}{{\widehat S}}
\newcommand{\hT}{{\widehat T}}
\newcommand{\hV}{{\widehat V}}
\newcommand{\hR}{{\widehat R}}
\newcommand{\hX}{{\widehat X}}
\newcommand{\hH}{{\widehat H}}
\newcommand{\hw}{\widehat w}
\newcommand{\tP}{\widetilde{P}}
\newcommand{\tZ}{\widetilde{Z}}
\newcommand{\tCM}{\widetilde{\CM}}
\newcommand{\tCC}{\widetilde{\CC}}
\newcommand{\DC}{{\mathbb C}}
\newcommand{\DR}{{\mathbb R}}
\newcommand{\DZ}{{\mathbb Z}}
\newcommand{\DN}{{\mathbb N}}
\newcommand{\DQ}{{\mathbb Q}}
\newcommand{\DV}{{\mathbb V}}
\newcommand{\bM}{{\mathbf M}}
\newcommand{\bW}{{\mathbf W}}
\newcommand{\bH}{{\mathbf H}}
\newcommand{\bs}{{\mathbf s}}
\newcommand{\dercat}{{\textsf{D}}}
\newcommand{\height}{{\operatorname{ht}}}
\newcommand{\ch}{{\operatorname{char}\, }}
\newcommand{\im}{{\operatorname{im}}}
\newcommand{\Spec}{{\operatorname{Spec}}}
\newcommand{\Hom}{{\operatorname{Hom}}}
\newcommand{\Ind}{{\operatorname{Ind}}}
\newcommand{\Fl}{{\operatorname{{\mathcal{F}{l}}}}}
\newcommand{\Res}{{\operatorname{Res}}}
\newcommand{\supp}{{\operatorname{supp}}}
\newcommand{\catmod}{{\operatorname{-mod}}}
\newcommand{\rk}{{{\operatorname{rk}}}}
\newcommand{\grk}{{{\operatorname{\underline{rk}}}}}
\newcommand{\ol}{\overline}
\newcommand{\pt}{{pt}}
\newcommand{\llinie}{{\text{---\!\!\!---\!\!\!---}}}
\newcommand{\Hyp}{{\operatorname{\mathbb H}}}
\newcommand{\hFl}{{\widehat{\Fl}}}
\newcommand{\GL}{{\operatorname{GL}}}
\newcommand{\Aff}{{\operatorname{Aff}}}
\newcommand{\IC}{{\operatorname{IC}}}
\newcommand{\on}{\vartheta_{on}}
\newcommand{\son}{{\vartheta}^s_{on}} 
\newcommand{\sout}{{\vartheta}^s_{out}}
\newcommand{\tson}{\widetilde{\vartheta}^s_{on}} 
\newcommand{\tsout}{\widetilde{\vartheta}^s_{out}}
\newcommand{\CTon}{\CT_{on}}
\newcommand{\CTout}{\CT_{out}}
\newcommand{\CTson}{{\CT^s_{on}}}
\newcommand{\CTsout}{{\CT^s_{out}}}
\newcommand{\out}{\vartheta_{out}}
\newcommand{\ul}{\underline}
\newcommand{\ttheta}{\tilde\theta}
\begin{document}

\pagenumbering{arabic}
\title[]{Sheaves on affine Schubert varieties, modular  representations
  and Lusztig's conjecture} 
\author[]{Peter Fiebig}
\address{Department Mathematik, Universit{ä}t Erlangen-N\"urnberg, Germany}
\email{fiebig@mi.uni-erlangen.de}

\begin{abstract} We relate a certain category of sheaves of $k$-vector spaces on
  a complex affine Schubert variety to modules over the  $k$-Lie
  algebra  (for $\ch k>0$) or to modules over the small quantum group (for $\ch k=0$)  associated to the Langlands dual root datum. As an application we give a  new proof of Lusztig's
  conjecture on quantum  characters and on modular characters for almost all characteristics. Moreover, we relate the geometric and representation theoretic sides to sheaves on the underlying moment graph, which allows us to extend the known instances of Lusztig's modular conjecture in two directions: We give an upper bound on the exceptional characteristics and verify its multiplicity one case for all relevant primes. 
\end{abstract}

\maketitle

\section{Introduction}
One of the fundamental problems in representation theory is the
calculation of the simple characters of a given group. This problem often turns out to be difficult and there is  an
abundance of situations in which a solution is out of reach.   In the
case of  algebraic groups over 
fields of positive characteristic we have a partial, but not yet a full answer.  

In 1979 George Lusztig conjectured a formula for the simple characters
of a reductive algebraic group defined over a field of  characteristic bigger than the associated  Coxeter number, cf.~\cite{MR604598}. Lusztig outlined in 1990 a program that led, in a
combined effort of several authors, to a proof of the conjecture for
almost all characteristics. This means that for a given root system $R$  there exists a number $N=N(R)$ such that
the conjecture holds for all algebraic groups associated to the root system $R$ if the underlying field is of characteristic bigger than $N$. This number, however, is
unknown in all but low rank cases.

One of the essential steps in Lusztig's program was the construction
of a functor between the category of 
intersection cohomology sheaves with complex coefficients on an affine
flag manifold and the category of representations of a quantum group
(this combines results of  Kashiwara--Tanisaki, cf.~\cite{MR1317626}, and Kazhdan--Lusztig, cf.~\cite{KLequiv}). This  led to
a proof of the quantum (i.e.~ characteristic $0$) analog of the
conjecture. Andersen, Jantzen and Soergel then showed that the
characteristic zero case implies the characteristic $p$ case for
almost all $p$ (cf.~\cite{MR1272539}). 

One of the principal functors utilized in Lusztig's program was the affine version of
the Beilinson--Bernstein localization functor. It amounts to realizing
an affine Kac--Moody algebra inside the space of global differential
operators on an affine flag manifold. A characteristic $p$ version of
this functor is a fundamental ingredient in Bezrukavnikov's program
for modular representation theory (cf.~\cite{BMR08}), and recently Frenkel and Gaitsgory
used the Beilinson--Bernstein localization idea in order to study the
critical level representations of an affine Kac--Moody algebra (cf.~\cite{FG06}).

There is, however, an alternative approach that links  the geometry of an algebraic variety to representation
theory. It was originally developed in the case of finite dimensional
complex simple Lie algebras by Soergel (cf.~\cite{Soe90}). The
idea was to give a ``combinatorial description'' of both the
topological and the representation theoretic categories in terms of
the underlying root system using Jantzen's translation functors. This
approach gives a new proof of the Kazhdan--Lusztig
conjecture, but it is also important in its own right: when taken together with
the Beilinson--Bernstein localization it establishes the celebrated
Koszul duality for simple finite dimensional complex Lie algebras
(cf.~\cite{Soe90,BGS}).

In this paper we develop the combinatorial approach for quantum and modular
representations. We relate a certain category of sheaves of $k$-vector
spaces on an affine flag manifold to representations of the $k$-Lie
algebra or the quantum group associated to Langlands dual root datum (the occurence of
Langlands duality is typical for this type of approach).  As a
corollary we obtain Lusztig's conjecture for quantum groups and for modular representations for large enough
characteristics.

The main tool that we use is the theory of {\em sheaves on moment
  graphs}, which originally appeared in the work on the localization
theorem for equivariant sheaves on topological spaces by Goresky,
Kottwitz and MacPherson (cf.~\cite{MR1489894}) and Braden and MacPherson (cf.~\cite{MR1871967}). In
particular, we state a conjecture in terms of moment graphs that implies Lusztig's quantum and modular conjectures
 for all relevant characteristics. 

Although there is
no general proof of this moment graph conjecture yet, some important
instances are known: The {\em smooth locus} of a moment graph is
determined in \cite{fiebig:math0607501}, which yields the multiplicity one case of
Lusztig's conjecture in full generality. Moreover, by developing a
Lefschetz theory on a moment graph we obtain in \cite{fiebig:math0811.1674} an upper bound
on the exceptional primes, i.e.~an  upper
bound for the number $N$ referred to above. Although this bound is
huge (in particular, much bigger than the Coxeter number), it can
be calculated by an explicit formula in terms of the underlying root
system.

In the remainder of this introduction we explain our approach and the results in more detail.

\subsection{The basic data}

Let $G=G_\DC$ be a connected,  simply connected complex
algebraic group. We fix a Borel subgroup $B\subset G$ and a maximal
torus $T\subset B$. Let $X=\Hom(T,\DC^\times)$ be the character
lattice of $T$ and $R^+\subset R\subset X$ the sets of roots of $B$
and of $G$. We denote by $X^\vee=\Hom(\DC^\times,T)$ the cocharacter lattice,
and by $R^\vee\subset X^\vee$ the dual root system. We let $\CW\subset\GL(X^\vee)$ be the
Weyl group, $\hCW=\CW\ltimes \DZ R^\vee\subset\Aff(X^\vee)$ the affine Weyl group and $h$ 
the Coxeter number of our data, i.e.~the height of the highest root $+1$. By $\hCS\subset\hCW$ we denote the set of
simple affine reflections.

We fix an algebraically closed field $k$  of characteristic
$p>h$ and let $G^\vee=G^\vee_k$ be the connnected simple algebraic
group over $k$ that is the Langlands dual of $G$. We let  $T^\vee\subset
G^\vee$ be a maximal torus and identify
$\Hom(T^\vee,\DC^\times)$ with the cocharacter lattice $X^\vee$. We let $\fg^\vee$ and $\fh^\vee$ be the Lie algebras of $G^\vee$ and $T^\vee$. 

\subsection{Multiplicities of modular representations}

It is known how to calculate the characters of the simple rational
representations of the reductive algebraic groups with root system $R^\vee$  from  the characters of simple objects in
a certain category $\CC$ of restricted representations of $\fg^\vee$ that
carry an additional action of $T^\vee$ (i.e., an
$X^\vee$-grading), such that  $\fh^\vee\subset\fg^\vee$ acts via the differential of the $T^\vee$-action. 
The simple objects in $\CC$ are parametrized by their highest
weights, so for $\lambda\in X^\vee$ denote by $L(\lambda)$ the
corresponding simple object. It can be constructed as
the unique simple quotient of the {\em standard} (or {\em baby Verma})
module $Z(\lambda)$ with highest weight $\lambda$. 

The characters of
the standard modules are easy to compute, and in order to get the
characters of the simple modules it is enough to know the number $[Z(\lambda):L(\mu)]$ of occurrences of $L(\mu)$ as a
subquotient in a Jordan-H\"older filtration of $Z(\lambda)$ for all
$\lambda,\mu\in X^\vee$.
From the linkage and translation principles it follows that it is
sufficient to consider  the simple and standard modules in
the principal block of $\CC$, i.e.~those objects which correspond to
weights of the form  $\lambda=x\cdot_p 0$ and $\mu=y\cdot_p 0$ for $x,y\in\hCW$ (here
``$\cdot_p$'' denotes a shifted and elongated action of $\hCW$ on $X^\vee$).

We  identify the set $\hCW$ with the set $\SA$ of alcoves for the affine action of $\hCW$ on $X^\vee\otimes_\DZ \DR$ by fixing a base alcove $A_e$ (so that $w$ corresponds to the alcove $w.A_e$).
In \cite{MR1445511} a polynomial $p_{A,B}$ is associated to each pair of alcoves (following  \cite{MR591724}). For $x,y\in\hCW$ we write $p_{x,y}$ for $p_{x.A_e,y.A_e}$. 
Let us denote  by $w_0\in\CW$ the longest element in the finite
Weyl group. 

\begin{MConj} Suppose that $\ch k>h$. For $x,y\in\hCW$ we have 
$$
[Z(x\cdot_p 0):L(y\cdot_p 0)]=p_{w_0x,w_0y}(1).
$$
\end{MConj}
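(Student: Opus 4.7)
The plan is to pass to the category of sheaves on the affine Bruhat moment graph and reduce the conjecture to a statement about canonical Braden--MacPherson sheaves on that graph. Let $\CG=\CG(\hCW,\hCS)$ be the moment graph over $\fh^\vee_k$ whose vertices are the elements of $\hCW$, ordered by the Bruhat order, and whose edges, one for each pair $x<xs_\beta$, are labelled by the corresponding affine coroot $\beta^\vee\in\fh^\vee_k$. On $\CG$ live the indecomposable Braden--MacPherson sheaves $\CB(x)$, characterised by their support being the closed subset $\{y\mid y\leqslant x\}$ and by a local-to-global extension property for their sections.

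The first step is to build a Soergel-style combinatorial functor $\DV$ from the principal block of $\CC$ to the category of $\CZ$-modules, where $\CZ$ is the structure algebra of $\CG$, in the spirit of Soergel's ``Endomorphismen- und Struktursatz''. Concretely, one picks a minimal projective generator $P$ in the principal block, identifies its endomorphism ring with a subalgebra of $S=S(\fh^\vee_k)$ via Jantzen's translation functors through the walls of the fundamental $p$-alcove, and sets $\DV:=\Hom(P,-)$. The key assertion to be proved is that $\DV$ is fully faithful on projectives and sends the projective cover $P(x\cdot_p 0)$ to the space of global sections of $\CB(x)$. Granting this, standard BGG reciprocity inside $\CC$ combined with the fact that each stalk $\CB(x)^y$ is a graded free $S$-module of finite rank yields
\begin{equation*}
[Z(x\cdot_p 0):L(y\cdot_p 0)]\;=\;\grk\bigl(\CB(x)^y\bigr)\big|_{v=1}.
\end{equation*}

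It then remains to identify the graded rank $\grk \CB(x)^y$ with the polynomial $p_{w_0x,w_0y}(v)$. The Braden--MacPherson recursion expresses $\CB(x)^y$ in terms of lower-order stalks together with a wall-crossing contribution from each edge of $\CG$ at~$y$. Under the involution $x\mapsto w_0 x$, which reverses the Bruhat order on alcoves and interchanges the ``on'' and ``out'' roles of the walls of an alcove, this recursion turns into the Andersen recursion for the polynomials $p_{A,B}$ from \cite{MR591724,MR1445511}, yielding the required identity after specialising at $v=1$.

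The main obstacle is establishing the expected properties of $\CB(x)$ over a field of characteristic $p>h$. Over $\DC$ the graded ranks $\grk \CB(x)^y$ agree with the $p_{w_0x,w_0y}$ by the affine Kazhdan--Lusztig theorem of Kashiwara--Tanisaki, but in positive characteristic the Braden--MacPherson construction may be forced to add extra generators beyond the minimum dictated by the polynomial. Ruling this out amounts to a hard-Lefschetz-style property for $\CB(x)$ with $k$-coefficients, which is precisely the content of the moment-graph conjecture announced in the introduction, and which is at present known only for $p$ outside an explicitly bounded but still very large set of exceptional primes. A proof of the Main Conjecture in the stated generality is thus essentially equivalent to a proof of that moment-graph conjecture for all $p>h$.
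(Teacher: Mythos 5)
Your proposal correctly identifies the overall strategy the paper pursues — pass to moment-graph combinatorics and reduce Lusztig's conjecture to a statement about stalks of canonical Braden--MacPherson sheaves — and you are right to emphasize at the end that this reduction does not close the conjecture for all $p>h$: the paper only \emph{states} the Main Conjecture, proves its multiplicity-one case and its validity for $p>U(\hw_0)$, and leaves the general case open (Theorem~\ref{theorem-MainTh}). But two steps in your route do not work as stated.

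First, the polynomial identity $\grk\,\CB(x)^y = p_{w_0x,w_0y}(v)$ is false. The Braden--MacPherson sheaves live on the moment graph with the Bruhat order, and their graded ranks are governed by the Kazhdan--Lusztig polynomials $h_{y,x}(v)$ (this is exactly Conjecture~\ref{conj-MomGra}: $\rk\,\SB_w^x = h_{x,w}(1)$), \emph{not} by the periodic polynomials $p_{A,B}(v)$. One has $h_{x,y}\ne p_{w_0x,w_0y}$ in general, and only after specializing at $v=1$ and for $y\in\hCW^{res,-}$ does the equality $h_{x,y}(1)=p_{w_0x,w_0y}(1)$ hold. The reason the gradings do not match is precisely what the paper's functor $\Psi\colon\CH\to\CM$ encodes: $\CH$ carries a filtration by the ordinary Bruhat order (giving characters in $\bH$) while $\CM$ carries a filtration by the generic Bruhat order (giving characters in the periodic module $\bM$), and $\Psi$ intertwines them but does not preserve gradings. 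Matching the Braden--MacPherson recursion directly with Andersen's recursion for $p_{A,B}$, as you propose, would require exactly this bookkeeping and cannot produce a graded identity.

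Second, replacing the Andersen--Jantzen--Soergel equivalence by a Soergel-type Struktursatz functor $\DV=\Hom(P,-)$ sketches away the hardest input. The paper does not prove such a Struktursatz for $G_1^\vee T^\vee$-modules; it instead imports the AJS equivalence $\DV\colon\CR_k\stackrel{\sim}\to\tCM'_k$ (Theorem~\ref{AJSequiv}), which is the culmination of \cite{MR1272539}, and then constructs the genuinely new piece, namely the functor $\Psi\colon\CH_k\to\CM_k$ relating the structure algebra of the affine Bruhat graph to the AJS combinatorial category (Theorem~\ref{MTheo}). Without that intermediary comparison between two different alcove filtrations and without a Struktursatz-level theorem to substitute for AJS, the chain you describe does not go through. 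The honest core of your argument — reduction to a Lefschetz-type property of $\CB(x)$ over $k$, open for general $p>h$ — is right; the two intermediate claims are not.
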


The above conjecture is known as the {\em generic Lusztig
conjecture}. In \cite{Fie07} we prove the (well-known) fact that it is
equivalent to Lusztig's original conjecture on the characters of
simple rational representations (cf.~\cite{MR604598}). 

Using an inherent symmetry one can show that it is  enough to verify this conjecture in the case that  $y$
is an element in the ``anti-fundamental box'', i.e.~in the case that $y$ satisfies 
 $-p<\langle
  \alpha,y\cdot_p 0\rangle <0$ for each simple root $\alpha$. Denote
  by $\hCW^{res,-}\subset\hCW$ the set of such elements. 

\subsection{The quantum analog} Let us just quickly state, without
giving details, that there is a characteristic $0$ analog of the representation theory
described above (cf.~ the overview articles \cite{MR1403973} and \cite{MR1357204}). Lusztig associated to the root system $R$, its set
of positive roots $R^+\subset R$, an integer $l$ prime to all entries
of the Cartan matrix, and a primitive $l$-th root of unity $\zeta$ the {\em
  restricted quantum group $\bf u$}, which is a finite dimensional
$\DZ R$-graded algebra over $k=\DQ(\zeta)$. For each $\lambda\in X$
one constructs a standard object  $Z_q(\lambda)$ and a simple object
$L_q(\lambda)$ in the category of $\DZ R$-graded $\bf u$-modules and one can state a conjecture in complete analogy to the conjecture above. Let us refer to these two conjectures as {\em Lusztig's
 modular conjecture} and {\em Lusztig's  quantum conjecture}.

\subsection{Results}
The main application of the relations that we construct between sheaves on affine Schubert varieties, sheaves on the underlying moment graph and representation theory is a proof  of the following theorem. 

\begin{theorem}\label{theorem-MainTh}
\begin{enumerate} 
\item Lusztig's quantum conjecture holds.
\item The multiplicity one case of Lusztig's modular conjecture holds, i.e.~ for all fields $k$ with $\ch k=p>h$ we have $[Z(x\cdot_p 0):L(y\cdot_p 0)]=1$ if and only if $p_{w_0x,w_0y}(1)=1$. 
\item There is an explicit number $U(\hw_0)$, defined in
  terms of the root system, such that Lusztig's modular conjecture holds for
  all fields $k$ with  $\ch k>U(\hw_0)$. 
\end{enumerate}
\end{theorem}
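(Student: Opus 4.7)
The plan is to construct a three-way dictionary linking (a) a category of $k$-sheaves on the complex affine flag manifold, supported on Schubert varieties labeled by $\hCW^{res,-}$, (b) the category of Braden--MacPherson sheaves on the affine moment graph $\CG$ associated to $(\hCW,\hCS)$ over $k$, and (c) the principal block of the restricted modular category $\CC$ (respectively, the analogous block of graded $\bf u$-modules in the quantum case). Under this dictionary, the indecomposable projective cover $P(y)$ of $L(y\cdot_p 0)$ matches the Braden--MacPherson sheaf $\CB(y)$, and the BGG-reciprocity identity
\[
 [Z(x\cdot_p 0):L(y\cdot_p 0)] = [P(y):Z(x\cdot_p 0)]
\]
in $\CC$ translates the desired multiplicity into the graded rank, evaluated at $1$, of the stalk $\CB(y)_x$ for $y\in\hCW^{res,-}$.

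The first step is to build a Soergel-style combinatorial functor $\DV$ on the principal block of $\CC$ (and its quantum analog) with values in modules over the structure algebra of $\CG$, to show it is fully faithful on additive hulls of projective covers, and to identify $\DV(P(y))$ with the global sections of $\CB(y)$. The restricted translation functors on the representation side must be intertwined with edge-contraction/restriction functors on the moment graph side. In parallel, the Braden--MacPherson localization theorem identifies $\CB(y)$ over $\DC$ with the equivariant IC-stalks on the affine Schubert variety attached to $y$, which completes the geometry-to-moment-graph half of the bridge. The hardest part will be the construction of $\DV$ and the identification of $\DV(P(y))$ with $\CB(y)$: since $\CC$ is not a highest-weight category in the classical sense -- its linkage is affine and shifted by $p$ -- one must reconstruct projective covers combinatorially via restricted translation and an affine version of Soergel's endomorphism ring techniques, and track the interaction carefully with the moment graph structure.

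Granted the dictionary, each part of the theorem reduces to a concrete statement about $\CB(y)$. For part (1), in characteristic zero the stalks of equivariant IC-sheaves on the affine flag manifold compute the polynomials $p_{w_0 x, w_0 y}$, hence $\grk\CB(y)_x$ evaluated at $1$ equals $p_{w_0 x, w_0 y}(1)$, and the $\bf u$-module side of the dictionary, combined with the Kazhdan--Lusztig/Kashiwara--Tanisaki equivalence, yields Lusztig's quantum conjecture. For part (2), the smooth-locus theorem of \cite{fiebig:math0607501} characterises the vertices $x$ at which $\CB(y)_x$ has graded rank one by a purely combinatorial, characteristic-free condition on $\CG$; hence the characteristic-$p$ rank-one locus coincides with the characteristic-zero one, so $[Z(x\cdot_p 0):L(y\cdot_p 0)]=1$ if and only if $p_{w_0x,w_0y}(1)=1$ for every $p>h$. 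For part (3), the moment-graph Lefschetz theory of \cite{fiebig:math0811.1674} produces an explicit number $U(\hw_0)$, computed from the root system, that bounds the primes dividing the denominators appearing in an integral Lefschetz decomposition of the relevant characteristic-zero Braden--MacPherson sheaves; for every $p>U(\hw_0)$ the mod-$p$ reduction of $\CB(y)_{\DC}$ remains an indecomposable Braden--MacPherson sheaf with unchanged graded ranks, delivering Lusztig's full modular conjecture for $\ch k>U(\hw_0)$.
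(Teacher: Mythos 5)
Your overall strategy---a three-way dictionary between equivariant sheaves on the affine flag manifold, sheaves on the affine moment graph, and the deformed restricted (or quantum) representation category---is exactly the architecture the paper builds, and your identifications of the tools for parts (2) and (3) (the smooth-locus theorem of \cite{fiebig:math0607501} for the multiplicity-one case, and the moment-graph Lefschetz bound of \cite{fiebig:math0811.1674} for the explicit $U(\hw_0)$) are correct. You also correctly isolate the hardest step as a Soergel-style functor $\DV$ on the representation side; the paper leverages the Andersen--Jantzen--Soergel equivalence $\DV\colon\CR_k\to\tCM_k'$ for this rather than rebuilding it, and the genuinely new work is the functor $\Psi\colon\CH\to\CM$ linking moment-graph modules to the AJS category and its compatibility with all translation functors.

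There are two substantive issues. First, you assert that the equivariant IC-stalks compute the periodic polynomials $p_{w_0x,w_0y}$ and hence $\grk\,\CB(y)_x$ evaluated at $1$ equals $p_{w_0x,w_0y}(1)$. This is false as a polynomial statement: the stalks compute the Kazhdan--Lusztig polynomials $h_{x,y}$, which differ from $p_{w_0x,w_0y}$ in general. The whole argument survives only because of the nontrivial (and separately proved, due to Lusztig) numerical coincidence $h_{x,y}(1)=p_{w_0x,w_0y}(1)$ for $y\in\hCW^{res,-}$, and the paper flags precisely this point (``the functor $\Psi$ does not preserve gradings''). Your proof as stated skips this and so has a genuine gap; the periodic module/Bruhat-order dichotomy underlying the two character maps $h_{\varle,l}$ and $h_{\succeq,\delta}$ is not optional. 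Second, for part (1) you propose to finish by invoking the Kazhdan--Lusztig/Kashiwara--Tanisaki equivalence. That either makes the rest of your dictionary redundant or contradicts the paper's stated goal: the paper's proof of the quantum conjecture is explicitly independent of KL/KT and proceeds entirely through $\Phi=\DV^{-1}\circ(\cdot\otimes\tilde S)\circ\Psi\circ\Hyp^*_{\hT}$ together with the AJS multiplicity inequality $(\tP(ys\cdot_p 0):\tZ(x\cdot_p 0))\ge p_{w_0x,w_0ys}(1)$. You should either drop the KL/KT appeal or acknowledge that you are reproducing the older proof rather than the one given here.
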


Part (1) of the above theorem is already proven by combining results of Kashiwara--Tanisaki and Kazhdan--Lusztig which form major steps in Lusztig's program. Our proof is independent and uses only the combinatorial description of the quantum  category given by Andersen, Jantzen and Soergel. 
Part (2) is an application of the result on the ``smooth locus'' of a moment graph given in \cite{fiebig:math0607501}. Part (3) finally uses the main result in  \cite{fiebig:math0811.1674}, where we develop a Lefschetz theory on a moment graph in order to calculate  the number $U(\hw_0)$.

In the remainder of the introduction we want to describe the main
ideas in the proofs of the above results. 

\subsection{Deformed representation theory}
Each simple object $L(\mu)$ admits a projective cover
$P(\mu)$ in $\CC$. Moreover, each $P(\mu)$ has a finite
filtration with subquotients that are isomorphic to various
$Z(\lambda)$. The corresponding multiplicity, denoted by
$(P(\mu):Z(\lambda))$, is independent of the particular
filtration, and the following Brauer-type reciprocity formula was
shown by Humphreys (cf.~\cite{MR0453884}):
$$
[Z(\lambda):L(\mu)]=(P(\mu):Z(\lambda)).
$$

Let $S=S(\fh^\vee)$ be the symmetric algebra of $\fh^\vee$, and
denote by $\tilde S$ the completion of $S$ at the maximal ideal
generated by $\fh^\vee$. In
order to compute the above multiplicities  Andersen, Jantzen and
Soergel construct in \cite{MR1272539}  deformed
versions $\tP(\mu)$ and $\tZ(\lambda)$ of $P(\mu)$ and
$Z(\lambda)$, that appear in a deformed
 version $\tCC$ of
$\CC$. Again, each
$\tP(\mu)$ has a finite filtration with subquotients isomorphic to
various $\tZ(\lambda)$, and for the multiplicities we have 
$$
(\tP(\mu):\tZ(\lambda))=(P(\mu):Z(\lambda)).
$$

The objects $\tP(y\cdot_p 0)$ can be constructed using {\em translation
  functors}: to each simple affine reflection $s\in\hCS$ one associates a
translation functor $\theta^s$ on the principal block of
$\tCC$. We show that for each $y\in\hCW^{res,-}$ the object $\tP(y\cdot_p 0)$ appears as a direct
summand in $\theta^t\circ\dots\circ\theta^s \tZ(0)$ for a suitable sequence
$s,\cdots,t\in\hCS$. This serves as a  motivation to define $\CR$ as the smallest full subcategory of $\tCC$ that is stable under taking direct sums and direct summands, contains
$\tZ(0)$ and with each object $M$ and each $s\in\hCS$ 
 the object $\theta^s M$. 

Let us just mention that in the quantum case  we find analogous structures and results and, in particular, we have a category $\CR$ over the field $k=\DQ(\zeta)$ that encodes the structure of the quantum category. 

In the following we relate the category $\CR$ to equivariant sheaves of $k$-vector spaces on a complex affine flag variety.

\subsection{Equivariant sheaves}

Denote by $G((t))$ the loop group associated to $G$, and by $I\subset G((t))$
the Iwahori subgroup corresponding to $B$. The quotient $\hFl=G((t))/I$
carries a natural complex ind-variety structure and is called the
affine flag variety. Denote by $\hT=T\times\DC^\times$ the extended
torus and let it act on $G((t))$ such that the first factor acts by
left multiplication and the second by rotating the loops. Then $\hT$
also acts on $\hFl$. 

Now let $k$ be a field of characteristic $\ne 2$ and let
$\dercat_{\hT}(\hFl,k)$ be the bounded equivariant derived category of
sheaves of $k$-vector spaces on $\hFl$.  We define the category
$\CI$  of {\em special equivariant
sheaves} on $\hFl$ following \cite{MR1784005}. It is the smallest full
subcategory of $\dercat_{\hT}(\hFl,k)$ that  is stable under taking direct sums and direct summands and under shifting, contains the constant equivariant sheaf $F_{e}$ of rank
one on the base point of $\hFl$, and with each $F$ and each $s\in\hCS$
the sheaf $\pi_s^\ast\pi_{s\ast} F$ in $\dercat_{\hT}(\hFl,k)$ (here
$\pi_s\colon\hFl\to\hFl_s$ is the canonical map onto the partial affine
flag variety corresponding to $s$). In \cite{FW} we give another, more intrinsic definition of the category $\CI$ as the category of certain equivariant {\em parity sheaves} on $\hFl$.

The set of $\hT$-fixed points in $\hFl$ is discrete and can  canonically be identified
with the set $\hCW$. So for $x\in\hCW$ we denote by $i_x\colon
\{\pt\}\to\hFl$ the corresponding inclusion. The following result is
the most important step in the proof of Theorem \ref{theorem-MainTh}.
\begin{theorem}\label{Phi} Suppose $k=\DQ(\zeta)$ or  that $k$ is of characteristic  $p>h$.
\begin{enumerate}
\item
There exists an additive functor 
$$
\Phi\colon\CI\to\CR
$$
such that
$\Phi(F_e)\cong \tZ(0)$ and such that 
$\Phi(\pi_s^\ast\pi_{s\ast} F)\cong\theta^s\Phi(F)$ for all
$s\in\hCS$ and $F\in\CI$.
\item Each $\Phi(F)$ has a filtration by deformed standard modules, and the ranks of the local equivariant hypercohomologies of $F$ on  $\hT$-fixed points yield multiplicities for $\Phi(F)$, i.e.~we have 
$$
\rk\, \Hyp_{\hT}^\ast(i_x^\ast F)=\left(\Phi(F): \tZ(x\cdot_p 0)\right)
$$
for all $x\in\hCW$.
\end{enumerate}
\end{theorem}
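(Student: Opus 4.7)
The plan is to construct $\Phi$ by factoring through the category of sheaves on the Bruhat moment graph $\CG$ attached to $\hCW$, whose vertices are labelled by elements of $\hCW$, whose edges encode affine Bruhat covers, and whose edge labels are the corresponding affine real roots (viewed inside $S$). Write $\Sh(\CG,k)$ for the additive category of Braden--MacPherson sheaves on $\CG$; it carries, for each $s\in\hCS$, a wall-crossing endofunctor $\vartheta^s$, and a distinguished ``skyscraper'' BMP sheaf $\CB_e$ supported at the vertex $e$. I will produce two additive functors
$$
\Psi_{top}\colon\CI\to\Sh(\CG,k),\qquad \Psi_{alg}\colon\CR\to\Sh(\CG,k),
$$
each sending its distinguished generator ($F_e$, resp.\ $\tZ(0)$) to $\CB_e$ and intertwining its translation operation ($\pi_s^\ast\pi_{s\ast}$, resp.\ $\theta^s$) with $\vartheta^s$. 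Once $\Psi_{alg}$ is shown to be fully faithful with essential image containing the image of $\Psi_{top}$, the desired functor is $\Phi:=\Psi_{alg}^{-1}\circ\Psi_{top}$.

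The topological functor is $\hT$-equivariant hypercohomology, $\Psi_{top}(F):=\Hyp_{\hT}^\ast(F)$, regarded as an $S$-module via $S=\Hyp_{\hT}^\ast(\pt)$. The key input is the Braden--MacPherson localization theorem: because every object of $\CI$ is an equivariant parity sheaf in the sense of \cite{FW}, it is sufficiently equivariantly formal that the local cohomologies $\Hyp_{\hT}^\ast(i_x^\ast F)$ together with the GKM data on one-dimensional $\hT$-orbits assemble canonically into a BMP sheaf on $\CG$; that $\pi_s^\ast\pi_{s\ast}$ corresponds to $\vartheta^s$ reduces to a direct computation on the two-point fibers of $\pi_s$. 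The algebraic functor arises from a Soergel-type combinatorial description: to $M\in\tCC$ one assigns the sheaf on $\CG$ whose sections record the $S$-action on the weight-space decomposition of $M$. A modular/quantum extension of the Andersen--Jantzen--Soergel theorem (valid under the hypothesis $\ch k>h$, resp.\ $k=\DQ(\zeta)$) yields that $\Psi_{alg}$ sends $\tZ(0)$ to $\CB_e$, intertwines $\theta^s$ with $\vartheta^s$ by construction, and is fully faithful precisely on the subcategory $\CR\subset\tCC$ generated by $\tZ(0)$ under the $\theta^s$'s.

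For part (2), every BMP sheaf $\CB\in\Sh(\CG,k)$ admits a canonical filtration indexed by Bruhat order whose subquotient at $x$ is a free $S$-module of rank equal to the rank of the stalk $\CB_x$. Under $\Psi_{alg}^{-1}$ this transports to the deformed standard filtration of $\Phi(F)$, with $(\Phi(F):\tZ(x\cdot_p 0))$ equal to that same rank; on the other hand, by construction of $\Psi_{top}$ the stalk of $\Psi_{top}(F)$ at $x$ is $\Hyp_{\hT}^\ast(i_x^\ast F)$, and equating ranks gives the formula. The principal obstacle is the construction and full faithfulness of $\Psi_{alg}$ in the required generality: the quantum case follows AJS essentially verbatim, but the modular version requires showing that the combinatorial assignment $M\mapsto\Psi_{alg}(M)$ lands in BMP sheaves (i.e.\ that suitable deformed $\Hom$-spaces are free $S$-modules of the expected rank) and that translation functors genuinely correspond to wall-crossings. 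This compatibility rests crucially on $\ch k>h$, which is what allows Jantzen's sum formulas and the deformation theory of projectives in $\tCC$ to mimic the characteristic zero situation.
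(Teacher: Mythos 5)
Your proposal has the right general flavor (a Soergel-type ``two functors into a common combinatorial target'' scheme, anchored by the AJS equivalence), but it is not what the paper does, and as sketched it contains a genuine gap precisely at the point where the paper's real work happens.

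The paper constructs $\Phi$ as a one-way chain
$\CI \xrightarrow{\ \Hyp^\ast_{\hT}\ } \CH \xrightarrow{\ \Psi\ } \CM \xrightarrow{\ \otimes_S\tilde S\ } \tCM \xrightarrow{\ \DV^{-1}\ } \CR$.
The only thing that gets inverted is the AJS equivalence $\DV$, whose target is the AJS combinatorial category $\tCM$, not a category of Braden--MacPherson sheaves. The intermediate step $\Psi\colon\CH\to\CM$ (Theorem \ref{MTheo}) is a functor, not an equivalence and not even grading-preserving --- the paper explicitly remarks that $\Psi$ does not preserve gradings, which is why $h_{x,y}$ and $p_{w_0x,w_0y}$ agree only at $v=1$. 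Your plan requires $\Psi_{alg}\colon\CR\to\Sh(\CG,k)$ to be fully faithful with essential image containing the image of $\Psi_{top}$, which is a strictly stronger claim than the AJS theorem, and you cite no result that delivers it.

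The deeper structural problem is that the two sides do not naturally land in the same category of moment-graph sheaves. The topological side $\Hyp^\ast_{\hT}$ lives over $\hS = S(\hX\otimes k)$ with the affine moment graph $\hCG$; the AJS category $\CM$ (hence the target of $\DV$) is $S_k$-linear with $S_k = S(X\otimes k)$, carrying no $\delta$-direction, and is indexed by alcoves with separate $S^\beta_k$-data for each $\beta\in R^+$ rather than by a single edge labelling. Passing from $\hS$-modules to $S$-data requires the projection $\hS\to S$ that collapses all affine roots $\alpha-n\delta$ onto $\alpha$; if you tried to build a Bruhat moment graph over $X$ ``with affine roots viewed inside $S$,'' distinct edges at a vertex would share the same label $\alpha$ and the GKM property would fail. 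Bridging this mismatch --- including the switch from the Bruhat order to the generic Bruhat order and the delicate alcove/wall case analysis --- is exactly the content of the construction of $\Psi$ and the proof that it intertwines translation functors, which occupies most of Section 5 and on which your sketch is silent. Similarly, the multiplicity formula in part (2) is obtained in the paper by chaining Theorem \ref{speSheH}, Theorem \ref{MTheo}(2) and Theorem \ref{AJSequiv}, each matching a rank of a generic ($\emptyset$-)localization rather than a naive stalk rank; your one-line appeal to ``a canonical Bruhat filtration of BMP sheaves'' glosses over this.
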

(Here, ``$\rk$'' refers to the rank of a free 
$H_{\hT}^\ast(\pt,k)$-module.)
Now we explain how to derive part (1) of Theorem \ref{theorem-MainTh} from the statement  above.

\subsection{Intersection cohomology sheaves}

We identify the set of $I$-orbits in $\hFl$  with
the affine Weyl group, so we denote by $\CO_y\subset\hFl$ the orbit
corresponding to $y\in\hCW$ and let
$\IC_{\hT,y}\in\dercat_{\hT}(\hFl,k)$ be the $\hT$-equivariant intersection cohomology complex on the Schubert variety $\ol{\CO_y}$ with coefficients in $k$.

If $k$ is a field of characteristic zero, then the decomposition
theorem and some orbit combinatorics show that $\CI$ is the
category of direct sums of shifted $\hT$-equivariant intersection
cohomology sheaves on Schubert varieties in $\hFl$.  The
equivariant analogue of a
result of Kazhdan and
Lusztig in \cite{MR560412} is that we have  
$\rk\, \Hyp_{\hT}^\ast(i^\ast_x\IC_{\hT,y})=h_{x,y}(1)$
for all $x,y\in\hCW$, where $h_{x,y}$ is an
affine Kazhdan-Lusztig polynomial (cf.~Theorem \ref{self-dual elts}).

Now fix an arbitrary sequence  $s,\dots,t \in\hCS$. The object
$\pi_t^\ast\pi_{t\ast}\cdots\pi_s^\ast\pi_{s\ast} F_{e}$ can be defined over
$\DZ$ and  it decomposes in almost all characteristics as it
does in characteristic zero. We deduce that
$\pi_t^\ast\pi_{t\ast}\cdots\pi_s^\ast\pi_{s\ast} F_{e}$ is a direct
sum of shifted intersection cohomology sheaves on Schubert varieties
for all fields $k$ of  big enough characteristic (the notion of big
enough now depends on the chosen sequence). 

Denote by $\hw_0\in\hCW$ the largest element in $\hCW^{res,-}$ (with
respect to the Bruhat order), and
set $\hCW^\circ=\{w\in\hCW\mid w\le \hw_0\}$.  Define
$\CI^\circ\subset \CI$ as the full subcategory
that consists of direct sums of shifted direct summands of the objects
$\pi_t^\ast\pi_{t\ast}\cdots\pi_s^\ast\pi_{s\ast} F_{e}$, where
$s\cdots t$ is a reduced expression of an element  in $\hCW^\circ$. Since for
the construction of $\CI^\circ$ we have to consider only
finitely many sequences, we can deduce the following:
 
\begin{theorem}\label{ICpos} Suppose that $k$ is a field of characteristic $0$ or $p\gg 0$. 
\begin{enumerate}
\item  $\CI^\circ$ is the full subcategory of $\dercat_{\hT}(\hFl,k)$ that consists of objects isomorphic to a direct sum of shifted intersection cohomology sheaves $\IC_{\hT,y}$ with $y\in\hCW^\circ$. 
\item If $y\in\hCW^\circ$ and $y=s\cdots t$ is a reduced expression, then $\IC_{\hT,y}$ occurs as the unique indecomposable direct summand in $\pi_t^\ast\pi_{t\ast}\cdots\pi_s^\ast\pi_{s\ast} F_{e}$ that is supported on $\ol{\CO_{y}}$. 
\item For $y\in\hCW^\circ$ and $x\in\hCW$ we have $\rk\, \Hyp_{\hT}^\ast(i_x^\ast \IC_{\hT,y})=h_{x,y}(1)$. 
\end{enumerate}
\end{theorem}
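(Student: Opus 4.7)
The plan is first to establish the theorem in characteristic zero via the decomposition theorem applied to Bott--Samelson resolutions, and then to transport the conclusion to fields of sufficiently large positive characteristic by a spreading-out argument. The finiteness of $\hCW^\circ$ is what ensures that only finitely many primes need to be excluded.

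Assume first $\ch k=0$. Fix $y\in\hCW^\circ$ with a reduced expression $y=s_1\cdots s_n$, and write
$$
M=\pi_{s_n}^\ast\pi_{s_n\ast}\cdots\pi_{s_1}^\ast\pi_{s_1\ast} F_e.
$$
Up to an overall shift this is the $\hT$-equivariant pushforward of the constant sheaf from the Bott--Samelson variety associated to $(s_1,\ldots,s_n)$ along its canonical proper birational map onto $\ol{\CO_y}$. By the decomposition theorem, combined with the self-duality of $M$, the object $M$ is isomorphic to a direct sum of shifted $\hT$-equivariant intersection cohomology sheaves on $\hT$-invariant closed subvarieties of $\ol{\CO_y}$. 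Each summand is supported on some $\ol{\CO_{y'}}$ with $y'\le y$; because the resolution is birational, exactly one indecomposable summand has full support, and it must be $\IC_{\hT,y}$. Since $\hCW^\circ$ is a lower set in the Bruhat order (its maximum being $\hw_0$), every such $y'$ lies again in $\hCW^\circ$. Letting $y$ range over $\hCW^\circ$, this proves parts (1) and (2) over $\DC$, and part (3) is the equivariant Kazhdan--Lusztig formula recalled just before the statement.

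Now assume $\ch k=p\gg 0$. The Bott--Samelson varieties and their canonical maps to $\hFl$ are defined over $\DZ$, so each $M$ admits a natural $\DZ$-form $M_\DZ$ in the $\hT$-equivariant derived category with integral coefficients, from which $M_\DC$ and $M_k$ arise by extension of scalars. The indecomposable decomposition of $M_\DC$ is encoded by a finite collection of orthogonal idempotents in $\End(M_\DC)$; each such idempotent is defined already in $\End(M_\DZ)\otimes\DZ[1/N]$ for a suitable nonzero integer $N$ depending on the chosen expression. Hence for every prime $p\nmid N$ the decomposition specialises faithfully to $k$, reproducing the characteristic zero pattern over $k$ and in particular yielding $\IC_{\hT,y}$ as the unique full-support summand of $M_k$. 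Since $\hCW^\circ$ is finite, only finitely many expressions (one reduced expression per element of $\hCW^\circ$ suffices) are relevant for the construction of $\CI^\circ$, so only finitely many primes are bad; this is the precise meaning of ``$p\gg 0$'' here. Parts (1) and (2) follow, and part (3) is inherited from characteristic zero because the $\hT$-equivariant hypercohomology ranks of the base-changed IC-summands are preserved under reduction mod $p$, while the ranks of the left-hand sides are read off from the same free $H^\ast_{\hT}(\pt,\DZ)$-module data.

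The main obstacle is the last spreading-out step: one must certify that the integral denominators $N$ arising in the descent of the characteristic zero idempotents can be bounded uniformly in terms only of the finite set $\hCW^\circ$. The key input is that for a Bott--Samelson resolution over $\DZ$ the equivariant hypercohomology is a finitely generated free $H^\ast_{\hT}(\pt,\DZ)$-module, so the endomorphism algebra is a finitely generated torsion-free $\DZ$-algebra; the bad primes are exactly those appearing in the torsion part of certain IC-stalks, a finite set since $\hCW^\circ$ is finite. Once this bookkeeping is in place, the remainder of the argument is a direct transport of the characteristic zero picture, and the uniqueness in part (2) removes any ambiguity in the identification of summands across characteristics.
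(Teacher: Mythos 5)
Your proposal matches the paper's proof (given before Theorem \ref{ICpos2}) essentially step for step: characteristic zero is handled by the decomposition theorem applied to the Bott--Samelson (=\,iterated $\pi_s^\ast\pi_{s\ast}$) object and the equivariant Kazhdan--Lusztig formula, then the decomposition and character formulas are spread out from $\DC$ to $k$ via a $\DZ$-model, with finiteness of $\hCW^\circ$ giving a uniform bound on the excluded primes. You supply somewhat more detail on the idempotent/denominator bookkeeping than the paper does, but the underlying argument is the same.
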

Again one hopes that each prime above the Coxeter number is big
enough for the statements of the theorem to hold. 

In
Section \ref{sec-mod} we show that we have  $\Phi(\IC_{\hT,y})\cong
\tP(y\cdot_p 0)$ under the assumption of Theorem
\ref{ICpos}. Moreover, the following mysterious relation between the
Kazhdan--Lusztig polynomials $h$ and the periodic polynomial $p$
holds. Although $h_{x,y}\ne p_{w_0x,w_0y}$ in general, we have
$h_{x,y}(1)= p_{w_0x,w_0y}(1)$ for all $y\in\hCW^{res,-}$ and all $x\in\hCW$ (this phenomenon is explained by the fact that the functor $\Psi$ defined below does not preserve gradings). So part (1) of Theorem \ref{theorem-MainTh}  is a
consequence of Theorem \ref{Phi} and Theorem \ref{ICpos}. In addition, we can already deduce Lusztig's modular conjecture for almost all primes. Let us now
discuss  the main steps of the definition $\Phi$.

\subsection{Categorifications of combinatorial data}

 Let $\bH$ be the affine Hecke
algebra associated to $R$  and $\bM$ its
periodic module (cf.~\cite{MR1445511}).  Denote by $A_e$ the basis element in $\bM$
corresponding to the fundamental alcove, and consider the map 
$\psi\colon \bH\to\bM$ that is obtained by letting $\bH$ act on $A_e$. 

For each field $k$ with characteristic different from $2$ and different from $3$ if $R$ is of type $G_2$ we construct categories $\CH$ and $\CM$ and a functor
$\Psi\colon\CH\to\CM$ together with character maps
$h_\CH\colon\CH\dashrightarrow \bH$ and $h_\CM\colon\CM\dashrightarrow
\bM$ (i.e.~maps to the Grothendieck groups), such that the diagram 

\centerline{
\xymatrix{
\CH \ar@{-->}[d]_-{h_{\CH}}\ar[r]^-{\Psi} & \CM\ar@{-->}[d]^-{h_{\CM}}\\
 \bH \ar[r]^-{\psi} & \bM
}
} 
\noindent
commutes.

The category $\CH$ is a full subcategory of the category of modules
over the equivariant cohomology of $\hFl$ (with coefficients in $k$), and hypercohomology
$\Hyp^\ast_{\hT}$ yields a functor from $\CI$ to $\CH$. The
category $\CM$ appears in the work of Andersen, Jantzen and
Soergel. There an equivalence $\DV$ between $\CR$  and an $\tilde
S$-linear version $\tCM$ of $\CM$ is constructed in the case that $k$ is an algebraically closed field
of characteristic $p>h$. Our functor is then the composition
$$
\Phi\colon\CI\stackrel{\Hyp^\ast_{\hT}}\longrightarrow \CH
\stackrel{\Psi}\longrightarrow \CM\xrightarrow{\cdot\otimes_S\tilde S}\tCM
\stackrel{\DV^{-1}}\longrightarrow \CR.
$$ 
We now relate  $\CH$ to yet another category.

\subsection{Sheaves on moment graphs}
In the papers \cite{fiebig:math0607501} and
\cite{fiebig:math0811.1674} we study the theory of sheaves on moment
graphs, which gives the following alternative description of the indecomposable
objects in $\CH$ (cf.~\cite{Fie05b}).

To the action of $\hT$ on $\hFl$ one can associate a  {\em moment graph} 
over the lattice $\hX=\Hom(\hT,\DC^\times)$
that plays a prominent role in equivariant algebraic topology
(cf.~\cite{MR1489894,MR1871967}). Recall that an (ordered) moment graph over a lattice $Y$ is a
graph together with a partial order on its set of vertices and a map that associates to each edge a non-zero
element in $Y$. 

In our situation such a moment graph $\hCG$ is
obtained as follows. Its vertices are the 
$\hT$-fixed point in $\hFl$  and its edges are the one-dimensional
$\hT$-orbits (the closure of a one-dimensional orbit  contains exactly two fixed
points). Each edge is labelled by the positive affine root
associated to the rotation action of $\hT$ on the corresponding
orbit. The  partial order is induced by the closure
relation on the Iwahori-orbits in $\hFl$ (each such orbit
contains a unique $\hT$-fixed point). 
For the applications in this
article only the full subgraph $\hCG^\circ\subset\hCG$ that consists of
the vertices corresponding to elements in $\hCW^\circ$ plays a
role. It has the advantage of being finite.

Suppose that $k$ is a field of characteristic $\ne 2$ and set $\hV_k:=\hX\otimes_\DZ k$.
To $\hCG^\circ$
one can associate a category of  $k$-sheaves on
$\hCG^\circ$, and to each vertex $y$ 
one assigns the  {\em intersection} (or  {\em
  canonical}) sheaf $\SB_y$. Denote by
$\SB_y^x$ its stalk at the vertex $x$. It is a graded free module over the symmetric algebra $\hS:=S(\hV_k)$.  

Now the restriction on the characteristic of $k$ is as follows. A
large part of the theory of $k$-sheaves on a moment graph  $\CG$ behaves well 
only in case that the pair $(\CG, k)$ satisfies the  GKM-property, which means that the
labels on two different edges meeting at a common vertex
are linearly independent over $k$ (cf.~\cite{Fie05a}). We will show that for $\hCG^\circ$
this is the case if  $\ch k$ is at least the Coxeter
number. 

We define a certain subcategory $\CH^\circ$ of $\CH$ that
contains all relevant objects. From the results in   \cite{Fie05b} we deduce that for $\ch k\ge h$ the category $\CH^\circ$ can be
interpreted as the category of direct sums of the spaces of global sections of the intersection sheaves
$\SB_y$ on
$\hCG^\circ$. This, together with the functor $\Psi$ and the Andersen--Jantzen--Soergel equivalence, gives us a way to link the moment graph theory to modular representation theory, and in Section \ref{subsec-MomGraconj} we state a conjecture that implies Lusztig's conjecture. The main results in \cite{fiebig:math0607501} and
\cite{fiebig:math0811.1674} prove certain instances of this conjecture and yield part (2) and part (3) of  Theorem \ref{theorem-MainTh}. 

\subsection{Contents} In Section \ref{sec-cat} we discuss the
affinization of a root system and define an algebra $\hCZ$
over the subring $\DZ^\prime=\DZ[1/2]$ of $\DQ$. For each simple affine 
reflection we construct a translation functor
on the category of $\hCZ$-modules, and we define the category $\CH$ of {\em
  special $\hCZ$-modules} as the full subcategory generated from a unit
object by repeatedly applying translation functors. 

In Section \ref{sec-Loc} we discuss various localizations of special
modules. This is used in Section \ref{sec-Fil} to define a functorial
filtration on the category $\CH$ for each partial order on the set $\hCW$ that is compatible with the involutions given by the simple affine reflections. To such a filtration corresponds a character map from $\CH$ to the free $\DZ[v,v^{-1}]$-module with basis $\hCW$. It turns out that the Bruhat order naturally yields characters
in the affine Hecke algebra, whereas the generic Bruhat order yields
characters in its periodic module.

In Section \ref{sec-AJS} we recall the definition of the category $\CM$ introduced by
Andersen, Jantzen and Soergel. In analogy to the
definition of $\CH$, also $\CM$ is generated inside a certain category
$\CK$ by repeatedly applying translation functors to a unit
object. This allows us to define the functor $\Psi\colon
\CH\to\CM$. Although the functor itself is easy to define, it is quite
tedious to check that it intertwines the translation functors.

The definition of translation functors on $\CK$ that we use is
different from the definition of Andersen, Jantzen and Soergel (it is
a version ``without constants''). In Section \ref{sec-AJStrans} we
show that both definitions lead to equivalent categories.

In Section \ref{sec-She} we define the category $\CI$ of special
sheaves on the affine flag variety and show that the hypercohomology
functor can be considered as a functor from $\CI$ to $\CH$. We use the
decomposition theorem to deduce multiplicity formulas for the objects in
$\CH$ for almost all characteristics.

In Section \ref{sec-mod} we recall the main result of \cite{MR1272539},
which relates the category $\CM$ to the category $\CR$ of
representations
of a Lie algebra or a quantum group. The multiplicity formulas that we
gained in Section \ref{sec-She} for $\CH$, together with the functor
$\Psi\colon \CH\to \CM$, give multiplicity formulas for objects in
$\CR$.

We interpret our main category $\CH$ as a category of sheaves on
moment graphs in Section \ref{sec-momgra}, and state a conjecture on the multiplicities of stalks of the intersection sheaves on the graph. In  \cite{fiebig:math0607501} and
\cite{fiebig:math0811.1674} two instances of this conjecture are proven. In a last step we apply this to  Lusztig's conjecture.

\subsection{Acknowledgements} I thank Henning Haahr Andersen, Jens
Carsten Jantzen, Masaharu Kaneda, Olaf Schnürer, Wolfgang Soergel and Geordie Williamson for helpful remarks on various versions of this paper.

\section{A category associated to a root system} \label{sec-cat}

Let $V$ be a finite dimensional $\DQ$-vector space, let $V^\ast=\Hom_\DQ(V,\DQ)$ be its dual space and denote by $\langle\cdot,\cdot\rangle\colon V\times V^\ast\to\DQ$ the canonical pairing. Let $R\subset V$ be a reduced and irreducible root system. For a root $\alpha\in R$ denote by $\alpha^\vee\in V^\ast$ its coroot. Let $R^\vee=\{\alpha^\vee\mid \alpha\in R\}$ be the coroot system. 

Denote by $s_\alpha\in \GL(V^\ast)$ the reflection associated to $\alpha\in R$, i.e.~the linear map given by $s_\alpha(v)=v-\langle\alpha,v\rangle\alpha^\vee$. Let $\CW\subset\GL(V^\ast)$ be the Weyl group, i.e.~the subgroup generated by the $s_\alpha$ with $\alpha\in R$.

\subsection{The affine Weyl group}

 For $\alpha\in R$ and $n\in\DZ$ define the affine hyperplane
$$
H_{\alpha,n}:=\{v\in V^\ast\mid \langle\alpha,v\rangle=n\}.
$$
We denote by $s_{\alpha,n}$ the reflection at $H_{\alpha,n}$, i.e.~the affine transformation on $V^\ast$ that maps $v\in V^\ast$ to
$$
s_{\alpha,n}(v):=v-(\langle\alpha,v\rangle-n)\alpha^\vee.
$$
The {\em affine Weyl group} $\hCW\subset\Aff(V^\ast)$ is the group of affine transformations on $V^\ast$ that is generated by the set $\hCT$ of all reflections $s_{\alpha,n}$ with $\alpha\in R$ and  $n\in\DZ$.

Since $s_\alpha=s_{\alpha,0}$, the finite Weyl group $\CW$ appears as a subgroup of $\hCW$. Let $\DZ R^\vee\subset V^\ast$ be the coroot lattice. To $x\in \DZ R^\vee$ we associate the translation $t_x\colon V^\ast\to V^\ast$, $v\mapsto v+x$. For $\alpha^\vee\in R^\vee$ we have $t_{\alpha^\vee}=s_{\alpha,1}\circ s_{\alpha,0}$, hence $\hCW$ contains the abelian group $\DZ R^\vee$. We even have $\hCW=\CW\ltimes \DZ R^\vee$ and we denote by  $\ol{\cdot}\colon \hCW\to \CW$, $w\mapsto \ol w$,  the corresponding quotient map. Then $\ol{s_{\alpha,n}}=s_\alpha$.

\subsection{A linearization}

Set $\hV:=V\oplus\DQ$, so
$\hV^\ast=V^\ast\oplus\DQ$, and define for $\alpha\in R$ and
$n\in\DZ$ a {\em linear} action of $s_{\alpha,n}$ on  $\hV^\ast$ by 
$$
s_{\alpha,n}(v,\mu) := (v-(\langle\alpha,v\rangle-\mu  n)\alpha^\vee,\mu).
$$
This extends to a linear action of $\hCW$ on $\hV^\ast$ which leaves
the level spaces $V^\ast_\kappa=\{(v,\kappa)\mid v\in V^\ast\}\cong
V^\ast$ for $\kappa\in\DQ$ stable. On $V^\ast_1$ we recover the affine
action of $\hCW$ and on $V^\ast_0$ the affine Weyl group $\hCW$ acts via the ordinary action of its finite quotient $\CW$.

The hyperplane in $\hV^\ast$ fixed by $s_{\alpha,n}$ is  
$$
\hH_{\alpha,n}:=\{(v,\mu)\in\hV^\ast\mid \langle\alpha,v\rangle=\mu n\}.
$$ 
Set $\delta:=(0,1)\in \hV=V\oplus\DQ$. Then 
$$
\alpha_n:=(\alpha,0)-n\delta\in\hV
$$
is an equation of $\hH_{\alpha,n}$. We call $\alpha_n$ a {\em (real) affine root}, and we set 
$$
\hR:=\{\alpha_n\mid \alpha\in R,n\in\DZ\}=R\times \DZ\delta\subset \hV.
$$

Let us choose a system $R^+\subset R$ of positive (finite) roots. 
The corresponding set of positive affine roots is
$$
\hR^+:=\{\alpha+n\delta\mid \alpha\in R, n>0\}\cup
\{\alpha\mid \alpha\in R^+\}.
$$
Then $\hR=\hR^+\dot\cup -\hR^+$ and for any reflection $t\in\hCT$ there is a unique positive affine  root $\alpha_t=\alpha-n\delta$ such that $t=s_{\alpha,n}$.

Now consider the dual action of $\hCW$ on $\hV$ that is given by $w.\phi=\phi\circ w^{-1}$ for $w\in\hCW$ and $\phi\in \hV$. More explicitely, it is given by 
\begin{eqnarray*}
w(0,\nu) & = & (0,\nu), \\
s_{\alpha,n}(\lambda,0) & = & (s_\alpha(\lambda), n\langle\lambda,\alpha^\vee\rangle).
\end{eqnarray*}
Denote by $\ol\cdot\colon\hV\to V$ the map $(\lambda,\nu)\mapsto\lambda$. Then the following is immediate.
\begin{lemma}\label{Wquot}
For each $w\in\hCW$ and $x\in \hV$ we have $\ol{w(x)}=\ol w(\ol x)$.
\end{lemma}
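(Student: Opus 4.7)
The plan is to reduce the assertion to the case of generators of $\hCW$ and use the explicit formulas provided just above the lemma. First, since the dual action of $\hCW$ on $\hV$ is $\DQ$-linear and the map $\ol\cdot\colon\hV\to V$ is also $\DQ$-linear, both sides of the identity $\ol{w(x)}=\ol w(\ol x)$ are linear in $x$. Hence it suffices to check the claim on a spanning set of $\hV$.

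Decompose $x=(\lambda,0)+(0,\nu)$. For the purely ``central'' part $(0,\nu)$ we have $w(0,\nu)=(0,\nu)$ by the first displayed formula, so $\ol{w(0,\nu)}=0$, while $\ol{(0,\nu)}=0$ and therefore $\ol w(\ol{(0,\nu)})=0$. Thus both sides agree on $(0,\nu)$, and it remains to check the identity for vectors of the form $x=(\lambda,0)$, where $\ol x=\lambda$.

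Next I induct on the length of $w$ written as a product of simple affine reflections (equivalently, as a product of elements of $\hCT$). For a single reflection $w=s_{\alpha,n}$, the second displayed formula gives $s_{\alpha,n}(\lambda,0)=(s_\alpha(\lambda),n\langle\lambda,\alpha^\vee\rangle)$, so $\ol{s_{\alpha,n}(\lambda,0)}=s_\alpha(\lambda)$. On the other hand, from the construction of $\ol\cdot\colon\hCW\to\CW$ recalled right before the linearization we have $\ol{s_{\alpha,n}}=s_\alpha$, hence $\ol{s_{\alpha,n}}(\ol{(\lambda,0)})=s_\alpha(\lambda)$ as well. For the inductive step, write $w=w_1w_2$ with $w_1,w_2$ of smaller length. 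Using the inductive hypothesis applied first to $w_1$ on the vector $w_2(x)\in\hV$ and then to $w_2$ on $x$, together with the fact that $\ol{\cdot}\colon\hCW\to\CW$ is a group homomorphism, we compute
$$
\ol{w(x)}=\ol{w_1(w_2(x))}=\ol{w_1}\bigl(\ol{w_2(x)}\bigr)=\ol{w_1}\bigl(\ol{w_2}(\ol x)\bigr)=(\ol{w_1}\,\ol{w_2})(\ol x)=\ol w(\ol x),
$$
which closes the induction and establishes the lemma.

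I do not anticipate any real obstacle: the entire content is a bookkeeping check that the explicit linear action defined on $\hV^\ast$ (and dually on $\hV$) is compatible with the quotient $\hCW\twoheadrightarrow\CW$. The only mildly delicate point is to notice that the ``inhomogeneous'' shift $n\langle\lambda,\alpha^\vee\rangle$ in $s_{\alpha,n}(\lambda,0)$ lands entirely in the $\DQ$-summand of $\hV=V\oplus\DQ$, so it is killed by $\ol\cdot$; this is precisely what makes the identity work and is the reason the dual action was set up in this asymmetric way.
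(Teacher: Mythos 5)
Your proof is correct, and it matches the paper's (implicit) reasoning: the paper gives no written proof, merely noting that the lemma is immediate from the two displayed formulas for the dual action, which is exactly what your linearity-plus-generators argument spells out.
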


Recall that the set of positive roots $R^+$ determines a set of simple roots $\Delta\subset R^+$ and a set of simple reflections $\CS\subset\CW$. The corresponding set of simple affine reflections is 
$$
\hCS:=\CS\cup \{s_{\gamma,1}\}\subset\hCW,
$$
where $\gamma\in R^+$ is the highest root. Then $(\hCW,\hCS)$ is a Coxeter system and we denote by $l\colon \hCW\to \DN$ the associated length function.

\subsection{The associated moment graph}
 Let $Y\cong\DZ^r$ be a lattice. An (unordered) moment graph $\CG$ over $Y$ is given by a graph $(\CV,\CE)$ with vertices $\CV$ and edges $\CE$ and a map $\alpha\colon \CE\to Y\setminus\{0\}$ which is called the labeling. 

Let $X:=\{\lambda\in V\mid \langle \lambda,\alpha^\vee\rangle\in\DZ\quad\forall \alpha\in R\}$ be the weight lattice and 
$\hX:=X\oplus\DZ\delta\subset\hV$ 
the {\em affine weight lattice}. The latter  contains the affine root lattice $\DZ \hR$. The {\em affine Bruhat graph} $\hCG=\hCG_R$ associated to $R$ 
is given as follows. Its underlying lattice is $\hX$. The set of vertices is the affine Weyl group $\hCW$ and $x,y\in\hCW$ are connected by an edge if there is a reflection $t\in\hCT$ with $tx=y$. This edge is labelled by the positive affine root $\alpha_t\in \hX$ corresponding to $t$.

For a parabolic subgroup $\hCW_I\subset \hCW$ that is given by a subset $I$ of $\hCS$ we obtain another graph $\hCG^I$ as follows. Its set of vertices is $\hCW^I:=\hCW/ \hCW_I$ and $x,y\in\hCW^{I}$ are connected by an edge if there exists $t\in \hCT$ such that $tx=y$. This edge is then labelled by $\alpha_t$.

\subsection{The structure algebra}\label{subsec-Struc}  We denote by $\DZ^\prime$ the subring $\DZ[2^{-1}]$ of $\DQ$. Let
 $S=S_{\DZ^\prime}=S(X\otimes_\DZ\DZ^\prime)$ and $\hS=\hS_{\DZ^\prime}=S(\hX\otimes_\DZ\DZ^\prime)$ be the symmetric algebras over the free $\DZ^\prime$-modules associated to the lattices $X$ and $\hX$.  Both $S$ and $\hS$ are $\DZ$-graded algebras. The grading we
choose is determined by setting $X\otimes_\DZ \DZ^\prime$ and $\hX\otimes_\DZ\DZ^\prime$ in degree $2$ (this
will become important only when we  consider graded multiplicities).
In this article many objects are considered to be $\DZ$-graded. For $n\in\DZ$ we denote by $(\cdot)\langle n\rangle$ the functor that shifts gradings by $n$, i.e.~for a  $\DZ$-graded module $M=\bigoplus_{i\in\DZ} M_i$ we have $M\langle n\rangle_i=M_{i+n}$.

The {\em affine structure algebra} associated to the root system $R$ is 
$$
\hCZ=\hCZ_{\DZ^\prime}:=\left\{(z_w)\in\prod_{w\in\hCW} \hS\left|\begin{matrix} z_w\equiv z_{s_{\alpha,n}w}\mod\alpha_n \\ 
\text{ for all $\alpha\in R^+$, $n\in\DZ$, $w\in\hCW$}
\end{matrix}\right.\right\}.
$$ 
The infinite product $\prod_{w\in\hCW} \hS$ appearing in the definition above should be understood as the product in the category of graded $\hS$-modules, i.e.~the product is taken degree-wise. As the defining relations are homogeneous, $\hCZ$ is a $\DZ$-graded algebra. It is an $\hS$-algebra via the diagonal inclusion  $\hS\subset\hCZ$. 

Let $k$ be a ring in which $2$ is invertible. For any $\DZ^\prime$-module $M$ we set $M_k:=M\otimes_{\DZ^\prime}k$. In particular, we have  $S_k=S(X\otimes_{\DZ} k)$ and $\hS_k=S(\hX\otimes_{\DZ}k)$, and the structure algebra over $k$ is
$$
\hCZ_k=\hCZ\otimes_{\DZ^\prime} k.
$$
Note that this does not necessarily coincide with the subalgebra of $\prod_{w\in\hCW} \hS_k$ defined in analogy with $\hCZ$ by congruence relations. 

Let us construct some elements in $\hCZ$. For $\lambda\in \hX$ and $x\in\hCW$ define $c(\lambda)_x:=x(\lambda)$. As $s_{\alpha,n}(\mu)\equiv \mu\mod\alpha_n$ for all $\alpha,n,\mu$, we have  that $c(\lambda):=(c(\lambda)_x)_{x\in\hCW}$ is contained in $\hCZ$. This yields a homomorphism 
$$
c\colon \hX\to\hCZ
$$ 
of abelian groups. Note that the image of $c$ is contained in degree $2$. 


For $\beta\in R^+$  let $\hCW^\beta\subset\hCW$ be the subgroup generated by all reflections $s_{\beta,n}$ with $n\in\DZ$.  The next lemma will be used  in the proof of Theorem \ref{MTheo}.
\begin{lemma}\label{dinZ} For each $\beta\in R^+$ and $w\in\hCW$ there is an element $z=(z_x)_{x\in\hCW}\in\hCZ$ of degree $2$ with the following properties:
\begin{enumerate}
\item  $z_w=0$,
\item  for all $x\in\hCW^\beta$ we have
$z_{xw}\in \DZ^\prime\delta$, if $l(x)$ is even, and $z_{xw}\in\beta+\DZ^\prime\delta$, if $l(x)$ is odd.
\end{enumerate}
\end{lemma}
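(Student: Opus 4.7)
The plan is to construct $z$ directly as a simple combination of the two families of degree $2$ elements of $\hCZ$ that are already at hand: the constants from the diagonal inclusion $\hS\hookrightarrow\hCZ$, and the elements $c(\mu)\in\hCZ$ for $\mu\in\hX$ introduced just before the lemma. First I would choose $\omega\in X$ with $\langle\omega,\beta^\vee\rangle=1$. Such an $\omega$ exists by $\CW$-conjugacy of roots to simple roots together with $\CW$-stability of $X$: writing $\beta=u\alpha_i$ with $u\in\CW$ and $\alpha_i$ a simple root, the element $\omega:=u\omega_i$ (with $\omega_i$ the fundamental weight paired $1$ with $\alpha_i^\vee$) lies in $X$ and satisfies $\langle u\omega_i,\beta^\vee\rangle=\langle\omega_i,\alpha_i^\vee\rangle=1$. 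Then I set
$$
z:=\omega-c(w^{-1}\omega)\in\hCZ,
$$
where $\omega$ is identified with its diagonal image in $\hCZ$ and $w^{-1}\omega\in\hX$ since $\hCW$ preserves $\hX$. This is homogeneous of degree $2$ with $x$-component $z_x=\omega-xw^{-1}\omega$.

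Property (1) is immediate: $z_w=\omega-ww^{-1}\omega=0$. For (2), write each $x\in\hCW^\beta$ uniquely as either a translation $t_{k\beta^\vee}$ (with $\ol x=1$, so $l(x)$ even) or as $s_\beta t_{k\beta^\vee}$ (with $\ol x=s_\beta$, so $l(x)$ odd), for some $k\in\DZ$. Using the explicit action of $\hCW$ on $\hV$ given by $s_{\alpha,n}(\lambda,0)=(s_\alpha(\lambda),n\langle\lambda,\alpha^\vee\rangle)$ and the relation $t_{k\beta^\vee}=s_{\beta,k}s_\beta$, a short direct calculation with $\langle\omega,\beta^\vee\rangle=1$ yields
$$
t_{k\beta^\vee}\omega=\omega-k\delta,\qquad s_\beta t_{k\beta^\vee}\omega=\omega-\beta-k\delta.
$$
Since $z_{xw}=\omega-x\omega$, this gives $z_{t_{k\beta^\vee}w}=k\delta\in\DZ^\prime\delta$ and $z_{s_\beta t_{k\beta^\vee}w}=\beta+k\delta\in\beta+\DZ^\prime\delta$, exactly as required.

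There is no genuine obstacle in this argument; the key observation is that the desired values on the coset $\hCW^\beta\cdot w$ are precisely those of $\omega-c(w^{-1}\omega)$, and this element automatically extends the prescription to all of $\hCW$ in a congruence-compatible way because $c(\mu)\in\hCZ$ is already known. The only point that merits a moment's thought is the existence of a weight $\omega\in X$ pairing to $1$ with $\beta^\vee$ for non-simple $\beta$, which is handled by the Weyl conjugacy argument above.
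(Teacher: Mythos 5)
Your proof is correct. It is structurally the same construction the paper uses: both take $z$ of the form (diagonal constant) $\pm\, c(w^{-1}\mu)$ for a suitable $\mu\in\hX$, so that $z_w=0$ is automatic and membership in $\hCZ$ comes for free from the fact that $c(\hX)\subset\hCZ$. The difference is the choice of $\mu$. The paper takes $\mu=\beta$ and then observes via Lemma \ref{Wquot} that $\tilde z_{xw}=\beta-x(\beta)\equiv\beta-(-1)^{l(x)}\beta\bmod\delta$, which is $0$ or $2\beta$; the final element is therefore $2^{-1}\tilde z$, harmless because $2$ is invertible in $\DZ'$. You instead choose a weight $\omega\in X$ with $\langle\omega,\beta^\vee\rangle=1$ (via $\CW$-conjugacy of $\beta$ to a simple root), which eliminates the factor of $2$: your $z_{xw}$ lands in $\DZ\delta$ or $\beta+\DZ\delta$ on the nose. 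Your computation is also more explicit, evaluating $x\omega$ directly on the two coset types $t_{k\beta^\vee}$ and $s_\beta t_{k\beta^\vee}$ rather than working mod $\delta$. The trade-off: the paper's argument is shorter and needs no auxiliary weight, but requires $2^{-1}\in\DZ'$ at this spot; yours is slightly longer but produces an element already defined over $\DZ$, which is a mild strengthening. Both are valid, and the implicit step you both use (that $l(x)$ even $\iff \ol x=e$ for $x\in\hCW^\beta$, since the sign character of $\hCW$ factors through $\CW$) is correct.
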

\begin{proof} For all $x\in\hCW$ set $\tilde z_{x}:=\beta-xw^{-1}(\beta)\in \hX\subset \hS$. Then $\tilde z:=(\tilde z_x)_{x\in\hCW}=\beta\cdot 1_{\hCZ}+c(w^{-1}(\beta))$ is an element in $\hCZ$. Obviously, $\tilde z_w=0$. For all $x\in\hCW^\beta$ we calculate (using Lemma \ref{Wquot}):
\begin{eqnarray*}
\tilde z_{xw} & = & \beta-x(\beta) \\
& \equiv & \beta-\ol{x}(\beta)\mod\delta \\
& \equiv & \beta-(-1)^{l(x)}\beta \mod\delta
\end{eqnarray*}
and hence $2^{-1}\tilde z$ serves our purpose. 
\end{proof}

\subsection{Invariants} 
Choose  $t\in\hCT$ and consider the involution $w\mapsto wt$ on the set $\hCW$.  We denote by $\sigma_t$  the algebra involution on $\prod_{w\in\hCW} \hS$ that is given by switching coordinates: $\sigma_t(z_w)=(z^\prime_w)$, where $z^\prime_w=z_{wt}$. Then the subalgebra $\hCZ\subset\prod_{w\in\hCW}\hS$ is $\sigma_t$-stable. Denote by $\hCZ^t\subset\hCZ$ the subalgebra of $\sigma_t$-invariants. For the ring $k$ we set $\hCZ_k^t=(\hCZ^t)_k$. We denote  by $\hCZ^{-t}\subset\hCZ$ the $\hCZ^t$-module of $\sigma_t$-anti-invariants. As we can divide by $2$ in $\hCZ$ we have $\hCZ=\hCZ^t\oplus \hCZ^{-t}$. 
Consider the element $c(\alpha_t)\in\hCZ$. Then $\sigma_t(c(\alpha_t))=-c(\alpha_t)$ as $t(\alpha_t)=-\alpha_t$. So $c(\alpha_t)\in\hCZ^{-t}$.

\begin{lemma}\label{lemma-decZ} We have $\hCZ=\hCZ^t\oplus c(\alpha_t)\hCZ^t$ and $\hCZ_k=\hCZ_k^t\oplus c(\alpha_t)\hCZ_k^t$.
\end{lemma}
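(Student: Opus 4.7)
The plan is to establish $\hCZ = \hCZ^t \oplus c(\alpha_t)\hCZ^t$ over $\DZ^\prime$ and then tensor with $k$ to obtain the second statement. Since $2$ is invertible in $\DZ^\prime$, the involution $\sigma_t$ yields $\hCZ = \hCZ^t \oplus \hCZ^{-t}$, and noting that $c(\alpha_t) \in \hCZ^{-t}$, the whole claim reduces to showing that multiplication by $c(\alpha_t)$ gives an isomorphism $\hCZ^t \xrightarrow{\sim} \hCZ^{-t}$. Injectivity is immediate because each component $w(\alpha_t)$ is a non-zero element of the integral domain $\hS$.

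For surjectivity, given $z \in \hCZ^{-t}$ I would set $z'_w := z_w / w(\alpha_t)$ and show that this quotient lies in $\hS$ and that $z' = (z'_w)_{w \in \hCW}$ is a $\sigma_t$-invariant element of $\hCZ$ with $c(\alpha_t) z' = z$. The divisibility $w(\alpha_t) \mid z_w$ can be extracted from the structure-algebra congruence: the vertices $w$ and $wt$ are linked in the Bruhat graph by the reflection $wtw^{-1}$, whose positive affine root is $\pm w(\alpha_t)$, so $z_w \equiv z_{wt} \mod w(\alpha_t)$, and combining this with $z_{wt} = -z_w$ and the invertibility of $2$ forces $w(\alpha_t) \mid z_w$. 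The $\sigma_t$-invariance of $z'$ then follows from $(wt)(\alpha_t) = -w(\alpha_t)$ by a one-line calculation.

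The substantive step is to check that $z'$ itself satisfies the congruences $z'_w \equiv z'_{sw} \mod \alpha_s$ for every reflection $s = s_{\alpha,n}$. The case $s = wtw^{-1}$ collapses trivially, since then $sw = wt$ and anti-invariance of $z$ forces $z'_w = z'_{sw}$. For $s \neq wtw^{-1}$, using the linearization identity $s(\phi)-\phi \in \alpha_s \hS$ for $\phi \in \hV$, the congruence for $z$ rearranges into
\[
w(\alpha_t)\,(z'_w - z'_{sw}) \in \alpha_s \hS.
\]
The main obstacle is to cancel the factor $w(\alpha_t)$, which requires $\alpha_s$ to be a prime element of $\hS$ and coprime to $w(\alpha_t)$. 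Non-proportionality of $\alpha_s$ and $w(\alpha_t)$ is clear, since they are the positive affine roots of the distinct reflections $s$ and $wtw^{-1}$. For primality, I would use the elementary fact that the content of any root $\alpha \in R$ in the weight lattice $X$ must divide $\langle \alpha,\alpha^\vee\rangle = 2$, hence becomes a unit after passing to $\DZ^\prime = \DZ[1/2]$; consequently every affine root $\alpha_n = \alpha - n\delta$ is primitive in $\hX \otimes_\DZ \DZ^\prime$, and any primitive linear form in a polynomial ring over a PID generates a prime ideal (one completes it to a basis of linear forms and identifies the quotient with a smaller polynomial ring).

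Finally, applying $- \otimes_{\DZ^\prime} k$ to the decomposition $\hCZ = \hCZ^t \oplus c(\alpha_t)\hCZ^t$ and using that the $\DZ^\prime$-linear map $\hCZ^t \to c(\alpha_t)\hCZ^t$ given by multiplication by $c(\alpha_t)$ is already an isomorphism yields the statement over $k$.
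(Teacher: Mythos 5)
Your proposal is correct and follows essentially the same route as the paper: split $\hCZ$ into $\sigma_t$-eigenspaces, observe that every component of an anti-invariant $z$ is divisible by $w(\alpha_t)$, and verify that the quotient $z' = c(\alpha_t)^{-1}z$ again satisfies all the structure-algebra congruences, with the case $u = wtw^{-1}$ handled via $\sigma_t$-invariance and the case $u \ne wtw^{-1}$ handled by cancelling a factor coprime to $\alpha_u$. The only difference is cosmetic: the paper packages the cancellation by rearranging $c(\alpha_t)_{uw}c(\alpha_t)_w(z'_{uw}-z'_w)$ into a visible multiple of $\alpha_u$ and leaves the primality of $\alpha_u$ in $\hS$ implicit, whereas you spell out why affine roots become primitive (hence prime) linear forms after inverting $2$ — a worthwhile clarification, since that is exactly where the hypothesis $2\in (\DZ^\prime)^\times$ earns its keep in this lemma.
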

\begin{proof} The second claim is an immediate consequence of the first. We clearly have $c(\alpha_t)\hCZ^t\subset\hCZ^{-t}$. We now show that this inclusion is a bijection.
Let $z\in\hCZ^{-t}$. Then $z_w=-z_{wt}$ and $z_w\equiv z_{wt}\mod\alpha_{wtw^{-1}}$, hence $z_w$ is divisible by $\alpha_{wtw^{-1}}=\pm w(\alpha_t)=\pm c(\alpha_t)_w$ in $\hS$. Hence $z^\prime=c(\alpha_t)^{-1}z$ is a well-defined element in $\prod_{w\in\hCW}\hS$. Moreover, $z^\prime$ is $\sigma_t$-invariant as $c(\alpha_t)$ and $z$ are anti-invariant. We have to show that  $z^\prime\in\hCZ$.

Let $w\in\hCW$ and $u\in\hCT$. Then  
\begin{align*}
c(\alpha_t)_{uw}c(\alpha_t)_{w}(z^\prime_{uw}-z^\prime_w) & =  c(\alpha_t)_{w}z_{uw}-c(\alpha_t)_{uw}z_w \\
& =  (c(\alpha_t)_{w}-c(\alpha_t)_{uw})z_{uw} + c(\alpha_t)_{uw}(z_{uw}-z_w)
\end{align*}
is divisible by $\alpha_u$, since the expressions in brackets on the
right hand side are. If $u\ne wtw^{-1}$ then neither $c(\alpha_t)_{uw}$ nor
$c(\alpha_t)_{w}$ is divisible by $\alpha_u$, so we deduce
$z^\prime_{uw}\equiv z^\prime_w\mod\alpha_u$. If $u=wtw^{-1}$, then
$z^\prime_{uw}=z^\prime_w$ as $z^\prime$ is $\sigma_t$-invariant. So $z^\prime_{uw}\equiv
z^\prime_w\mod\alpha_u$ in any case, hence $z^\prime\in\hCZ$.
\end{proof}

\subsection{Translation functors and special modules}

Let $\hCZ_k\catmod^f$ be the category of $\DZ$-graded $\hCZ_k$-modules that
are torsion free over  $\hS_k$ and finitely generated over $\hS_k$. For a simple affine reflection  $s\in\hCS$ define $\hCZ_k^s\catmod^f$ analogously.
The restriction functor $\son:=\Res_{\hCZ_k}^{\hCZ_k^s}$ is a functor from $\hCZ_k\catmod^f$ to $\hCZ_k^s\catmod^f$. By the previous lemma, the induction functor $\sout:=\Ind_{\hCZ_k^s}^{\hCZ_k}=\hCZ_k\otimes_{\hCZ_k^s}\cdot$ is a functor from $\hCZ_k^s\catmod^f$ to $\hCZ_k\catmod^f$. We denote by  $\theta^s:=\sout\circ\son\colon\hCZ_k\catmod^f\to\hCZ_k\catmod^f$ the composition. Let $B_e\in\hCZ_k\catmod^f$ be the free $\hS_k$-module of rank one on which
$z=(z_w)$ acts by multiplication with $z_e$.  

\begin{definition}\begin{enumerate}
\item The category of {\em special $\hCZ_k$-modules} is the full
  subcategory $\CH_k$ of $\hCZ_k\catmod^f$ that consists of all  objects that
  are isomorphic to a direct summand of a direct sum of modules of the
  form $\theta^s\circ\cdots\circ\theta^t(B_e)\langle n\rangle$, where   $s,\dots,t$ is an
  arbitrary sequence in $\hCS$ and $n$ is in $\DZ$. 
\item Let $s\in\hCS$. The category of {\em special $\hCZ_k^s$-modules} is the full subcategory $\CH_k^{s}$ of $\hCZ_k^s\catmod^f$ that consists of all objects that are isomorphic to a direct summand of  $\son(M)$ for some $M\in\CH_k$.
\end{enumerate}
\end{definition}

By definition, we have induced functors
$\son\colon\CH_k\to\CH_k^{s}$, $\sout\colon\CH_k^{s}\to\CH_k$ and
$\theta^s\colon\CH_k\to\CH_k$ for all $s\in\hCS$.  For $M\in \CH_{\DZ^\prime}$ and $N\in \CH_{\DZ^\prime}^s$ we have $(\son M)_k=\son(M_k)$ and $(\sout M)_k=\sout (M_k)$, hence base change yields functors from $\CH_{\DZ^\prime}$ to $\CH_k$ and from $\CH_{\DZ^\prime}^s$ to $\CH^s_k$.

 For the applications we need the ring $k$ to be a field, preferably of positive characteristic. But for some of the arguments in the following we would need the GKM-restriction on $k$ (cf.~Section \ref{subsec-GKM}). As we prefer to consider, for the moment, the whole graph $\hCG$, this restricts ourselves to the characteristic zero case. A good compromise is to work over the ring $\DZ^\prime$. So in the following, when we leave out the index $k$, we mean objects over  $\DZ^\prime$. Later (cf.~Section \ref{subsec-basechange}) we use the base change functor  to establish the crucial results for fields $k$.

\subsection{Finiteness of special modules}
The next lemma shows that we can consider each special module as a module over a finite version of the structure algebra. This will be  important for the localization constructions in the next section. 

For $\Omega\subset\hCW$ define
$$
\hCZ(\Omega):= \left\{(z_w)\in\prod_{w\in\Omega} \hS\left|
\,
\begin{matrix} z_w\equiv z_{tw}\mod\alpha_t \\ 
\text{for all $t\in\hCT$, $w\in\Omega$ with $tw\in\Omega$}
\end{matrix}\right.\right\}.
$$
The projection $\prod_{w\in\hCW}\hS\to\prod_{w\in\Omega}\hS$ with
kernel $\prod_{w\in\hCW\setminus\Omega}\hS$ induces a map
$\hCZ\to\hCZ(\Omega)$. Note that this map need not be surjective. (If it
was, this article, as well as many others, would be rather pointless.)

 Choose $s\in\hCS$. In the following we call a subset $\Omega$ of $\hCW$ {\em $s$-invariant}, if $x\in\Omega$ implies $xs\in\Omega$. For an $s$-invariant $\Omega$  we can define, as before, an algebra involution $\sigma_s$ on $\hCZ(\Omega)$. Then we let $\hCZ(\Omega)^s\subset\hCZ(\Omega)$ be the subalgebra of invariants. There is a canonical map $\hCZ^s\to\hCZ(\Omega)^s$. The proof of Lemma \ref{lemma-decZ} carries over and yields a decomposition $\hCZ(\Omega)=\hCZ(\Omega)^s\oplus c(\alpha_s)\hCZ(\Omega)^s$.

\begin{lemma}\label{Zqufin} 
\begin{enumerate}
\item Let $M$ be an object in $\CH$. Then there exists  a finite subset $\Omega$ of $\hCW$ and an action of $\hCZ(\Omega)$ on $M$, such that $\hCZ$ acts on $M$ via the canonical map $\hCZ\to\hCZ(\Omega)$. 
\item Let $s\in\hCS$ and let $N$ be an object in $\CH^{s}$. Then  there exists  a finite, $s$-invariant subset  $\Omega$ of $\hCW$ and an action of $\hCZ(\Omega)^s$ on $N$, such that $\hCZ^s$ acts on $N$ via the canonical map $\hCZ^s\to\hCZ(\Omega)^s$. 
\end{enumerate}
\end{lemma}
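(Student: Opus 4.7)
The plan is to argue both parts by induction on the construction of the special modules, exploiting the rank-two decomposition from Lemma \ref{lemma-decZ}. Both properties are plainly closed under direct sums (take the union of the $\Omega$'s), grading shifts, and passage to direct summands, so it will suffice to verify them on modules obtained by iteratively applying translation functors $\theta^s$ to the unit $B_e$. The base case of part (1) is immediate: by definition $\hCZ$ acts on $B_e$ via the evaluation map $(z_w) \mapsto z_e$, which is exactly the canonical map $\hCZ \to \hCZ(\{e\}) = \hS$, so $\Omega = \{e\}$ works.

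For the inductive step of part (1), suppose $M$ carries its $\hCZ$-action through a finite $\hCZ(\Omega_0)$, and fix $s \in \hCS$. I would enlarge $\Omega_0$ to the $s$-closure $\Omega := \Omega_0 \cup \Omega_0 s$, which is still finite and now $s$-invariant. The coordinate projection $\hCZ(\Omega) \to \hCZ(\Omega_0)$ restricts to a map $\hCZ(\Omega)^s \to \hCZ(\Omega_0)$ compatible with the inclusions $\hCZ^s \hookrightarrow \hCZ$ and $\hCZ(\Omega)^s \hookrightarrow \hCZ(\Omega)$, so the $\hCZ^s$-action on $M$ already factors through $\hCZ(\Omega)^s$. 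The key step is then to establish the natural comparison isomorphism
$$
\hCZ \otimes_{\hCZ^s} M \xrightarrow{\sim} \hCZ(\Omega) \otimes_{\hCZ(\Omega)^s} M, \qquad z \otimes m \mapsto \bar z \otimes m,
$$
of $\hCZ$-modules, where $\bar z$ denotes the image of $z$ under the projection $\hCZ \to \hCZ(\Omega)$. Applying Lemma \ref{lemma-decZ} globally to $\hCZ$ and locally to $\hCZ(\Omega)$ (the latter requires the $s$-invariance of $\Omega$) gives decompositions $\hCZ = \hCZ^s \oplus c(\alpha_s)\hCZ^s$ and $\hCZ(\Omega) = \hCZ(\Omega)^s \oplus c(\alpha_s)\hCZ(\Omega)^s$ as right modules over the corresponding invariants, so both tensor products identify with $M \oplus c(\alpha_s) \otimes M$ and the map is the identity on this presentation. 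Hence $\theta^s M$ inherits an action of the finite algebra $\hCZ(\Omega)$, completing the induction.

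For part (2), any $N \in \CH^{s}$ is by definition a direct summand of $\son M$ for some $M \in \CH$. Apply part (1) to $M$ to obtain a finite $\Omega_0$, and take $\Omega := \Omega_0 \cup \Omega_0 s$; the factorization argument above then shows that the $\hCZ^s$-action on $\son M$, and hence on its summand $N$, factors through $\hCZ(\Omega)^s$, as required. The main obstacle I anticipate is verifying the comparison isomorphism carefully — everything else is formal bookkeeping — and the payoff is that the rank-two freeness of Lemma \ref{lemma-decZ} holds uniformly for $\hCZ$ and for every $\hCZ(\Omega)$ with $s$-invariant $\Omega$, so the infinite and finite induction functors agree on modules already supported on the finite part, which is precisely the fact that the subsequent localization constructions will need.
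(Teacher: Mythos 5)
Your proof is correct and follows essentially the same route as the paper's: same base case $B_e$ with $\Omega=\{e\}$, same inductive step via enlarging $\Omega$ to be $s$-invariant and the comparison $\hCZ\otimes_{\hCZ^s}M\cong\hCZ(\Omega)\otimes_{\hCZ(\Omega)^s}M$ (justified, as you do, by the rank-two decompositions of $\hCZ$ and $\hCZ(\Omega)$ over their $\sigma_s$-invariants), and same reduction of part (2) to modules of the form $\son M$. You are only slightly more explicit than the paper about closure under the formal operations and about writing out the comparison isomorphism.
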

\begin{proof} Suppose we have proven (1). Then (2) follows, since it is enough to prove (2) for modules of the form $\vartheta_{s,on}(M)$ with $M\in\CH$, and if $M$ is acted upon by $\hCZ(\Omega)$, then $\vartheta_{s,on}(M)$ is acted upon by $\hCZ(\Omega\cup\Omega s)^s$.   

Now (1) certainly holds for
  $M=B_e$ with $\Omega=\{e\}$. It is enough to show that if (1)
  holds for $M$, then it also holds for $\theta^s(M)$ for all
  $s\in\hCS$. So assume (1) holds for $M$ and choose $s\in\hCS$. Choose
$\Omega\subset \hCW$ with the desired property for $M$. After enlarging $\Omega$ if necessary, we can assume that $\Omega$ is $s$-invariant. Then the action of $\hCZ^s$ on $\son M$ factors over an action of $\hCZ(\Omega)^s$, and since $\theta^s(M)=\hCZ\otimes_{\hCZ^s}M = \hCZ(\Omega)\otimes_{\hCZ(\Omega)^s} M$
we get an action of $\hCZ(\Omega)$ on $\theta^s(M)$ with the claimed property.
\end{proof}

\subsection{Sheaves on moment graphs}\label{sec-momgrasheaves} We need another, more local, construction of the special $\hCZ$-modules as global sections of certain sheaves on the moment graph  associated to our data.  Let us recall the notion of a sheaf on such a graph. 
So let $\CG$ be a moment graph over the lattice $Y$.  Let $k$ be a ring and denote by $S_k(Y):=S(Y\otimes_\DZ k)$ the symmetric algebra over the free $k$-module associated to $Y$, again graded in such a way that $\deg Y\otimes_\DZ k=2$. A {\em $k$-sheaf} $\SF=(\{\SF^x\},\{\SF^E\},\{\rho_{x,E}\})$ on $\CG$ is given by the following data:
\begin{itemize}
\item an $S_k(Y)$-module $\SF^x$ for any vertex $x$,
\item an $S_k(Y)$-module $\SF^E$ for any edge $E$ with the property $\alpha(E)\cdot\SF^E=0$,
\item an $S_k(Y)$-module homomorphism $\rho_{x,E}\colon\SF^x\to\SF^E$ for any vertex $x$ adjacent to the edge $E$.
\end{itemize}
A homomorphism $f\colon\SF\to\SG$ between $k$-sheaves on $\CG$ is given by $S_k(Y)$-module homomorphisms $f^x\colon \SF^x\to\SG^x$ for any vertex $x$ and $f^E\colon\SF^E\to\SG^E$ for any edge $E$ that are compatible with all $\rho$-maps. We denote by $\CG\catmod_{k}$ the resulting category of $k$-sheaves on $\CG$. Recall that we assume that each $S_k(Y)$-module and each homomorphism is graded. This yields a graded structure on $\CG\catmod_{k}$ with the obvious shift functor.

Let $\SF$ be a $k$-sheaf on $\CG$ and let $\Omega$ be a subset of the set $\CV$ of vertices  of $\CG$. The {\em sections} of $\SF$ over $\Omega$ are defined as
$$
\Gamma(\Omega,\SF):=\left\{(m_x)\in\prod_{x\in\Omega}\SF^x\left|
\begin{matrix}
\rho_{x,E}(m_x)=\rho_{y,E}(m_y)\\
\text{ for any $x,y\in\Omega$ that are }\\
\text{ connected by the edge $E$ }
\end{matrix}
\right\}\right..
$$
We denote by $\Gamma(\SF):=\Gamma(\CV,\SF)$ the space of global sections of $\SF$. 

\subsection{Translation functors on sheaves}\label{sec-transonsheaves}

In the following we construct an analog of the functors $\son$ and $\sout$ for the categories of sheaves on the moment graphs $\hCG$ and $\hCG^s$ associated to the root system $R$ and a simple affine reflection $s$.  Again we restrict ourselves to the case $k=\DZ^\prime$ and leave out the index $k$.  The word sheaf means $\DZ^\prime$-sheaf.

So let $\SF$ be a sheaf on $\hCG$. We define the sheaf $\SG=\tson\SF$ on $\hCG^s$ as follows. For $\ol x=\{x,xs\}\in\hCW^s=\hCW/\{1,s\}$  set
$\SG^{\ol x}:=\Gamma(\{x,xs\},\SF)$. Let $\ol E$ be an edge connecting $\ol x=\{x,xs\}$ and $\ol y=\{y,ys\}$. Then there is $t\in\hCT$ with $t\ol x=\ol y$. We can assume that $y=tx$, so there is an edge $E$ connecting $y$ and $x$ and another edge $Es$ connecting $ys$ and $xs$, both labelled by $\alpha_t$. We set 
$\SG^{\ol E}:=\SF^E\oplus \SF^{Es}$.
For a vertex $\ol x$ adjacent to $\ol E$ we let
$\rho_{\ol x,\ol E}$ be the composition of the inclusion $\Gamma(\{x,xs\},\SF)\subset \SF^x\oplus\SF^{xs}$ with the direct sum $\rho_{x,E}\oplus\rho_{xs,Es}\colon \SF^{x}\oplus\SF^{xs}\to \SF^{E}\oplus\SF^{Es}$. These data define a sheaf $\SG$ on $\hCG^s$. One immediately checks that this yields a functor $\tson\colon\hCG\catmod\to\hCG^s\catmod$.

Now let $\SG$ be a sheaf on $\hCG^s$. We define a sheaf $\SF=\tsout\SG$ on $\hCG$. For a vertex $x$ on $\hCG$ let $\ol x=\{x,xs\}$ be the corresponding vertex on $\hCG^s$. Then we set 
$\SF^x:=\SG^{\ol x}$. 
Let $E$ be an edge of $\hCG$ connecting $x$ and $y$. In the case $y=xs$, so $\ol x=\ol y$, we set $
\SF^E:=\SG^{\ol x}/\alpha(E)\SG^{\ol x}$
and we let $\rho_{x,E}$ and $\rho_{y,E}$ be the canonical maps $\SG^{\ol x}\to \SG^{\ol x}/\alpha(E)\SG^{\ol x}$. If $y\ne xs$, then $\ol x$ and $\ol y$ are connected by an edge $\ol E$ and we set
$\SF^E:=\SG^{\ol E}$
and $\rho_{x,E}:=\rho_{\ol x,\ol E}$, $\rho_{y,E}:=\rho_{\ol y,\ol E}$. This defines a sheaf $\SF$ on $\hCG$ and it is immediate that we obtain a functor $\tsout\colon\hCG^s\catmod\to\hCG\catmod$. We  denote by $\ttheta^s=\tsout\circ\tson\colon \hCG\catmod\to\hCG\catmod$ the composition.

Let $\Omega$ be a subset of $\hCW$. Note that coordinatewise multiplication yields a $\hCZ(\Omega)$-module structure on $\Gamma(\Omega,\SF)$ for any sheaf $\SF$ on $\hCG$. Likewise, if $\Omega$ is $s$-invariant and if $\ol\Omega\in\hCW^s$ is its image, then $\Gamma(\ol \Omega,\SG)$ is a $\hCZ(\Omega)^s$-module for each sheaf $\SG$ on $\hCG^s$. 

\begin{lemma} \label{lemma-inttwintrans} Suppose that $\Omega\subset\hCW$ is $s$-invariant and denote by $\ol \Omega\in\hCW^s$ its canonical image. For any sheaf $\SF$ on $\hCG$ we have $\Gamma(\ol\Omega,\tson \SF)=\son\Gamma(\Omega,\SF)$. For any sheaf $\SG$ on $\hCG^s$ such that for any edge $\ol E$ the stalk $\SG^{\ol E}$ is a free $\hS/\alpha(E)\hS$-module, we have $\Gamma(\Omega,\tsout\SG)=\sout\Gamma(\ol\Omega,\SG)$.
\end{lemma}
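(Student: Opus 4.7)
The overall strategy for both parts is to match section data edge by edge. The key preliminary observation is a classification of the edges of $\hCG$ with both endpoints in the $s$-invariant set $\Omega$: they come in two types. Within each coset $\ol x = \{x, xs\}$ there is a single edge joining $x$ and $xs$, labelled $\pm x(\alpha_s)$, namely the edge corresponding to the reflection $xsx^{-1}$. For distinct cosets $\ol x, \ol y$ joined by an edge $\ol E$ of $\hCG^s$ labelled $\alpha_t$ (with $y = tx$), exactly two edges of $\hCG$ lie above $\ol E$, namely $E\colon x \to y$ and $Es\colon xs \to ys$, both labelled $\alpha_t$. The ``crossed'' candidates $x \to ys$ and $xs \to y$ never occur, because they would correspond to the element $t \cdot xsx^{-1}$, a product of two distinct reflections in $\hCW$ (distinct because $y \ne xs$), and such a product is never itself a reflection.

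For part (1), with this classification the section condition on $\Omega$ for $\SF$ splits cleanly. The within-coset edges are precisely the conditions that each pair $(m_x, m_{xs})$ lies in $\Gamma(\{x, xs\}, \SF) = (\tson \SF)^{\ol x}$, and the between-coset edges unpack, via the definition of $\rho_{\ol x, \ol E}$ on $\tson\SF$, to the section conditions on $\ol\Omega$ for $\tson\SF$. Hence the two sets of sections coincide, and the actions of $\hCZ(\Omega)^s$ agree by construction, since $\sigma_s$-invariant $z$ satisfy $z_x = z_{xs}$ and act coordinatewise on both sides.

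For part (2), I would use the decomposition $\hCZ(\Omega) = \hCZ(\Omega)^s \oplus c(\alpha_s)\hCZ(\Omega)^s$ from Lemma \ref{lemma-decZ} to identify
$$
\sout\Gamma(\ol\Omega, \SG) \;=\; \Gamma(\ol\Omega, \SG) \,\oplus\, c(\alpha_s)\cdot\Gamma(\ol\Omega, \SG),
$$
and define the natural map to $\Gamma(\Omega, \tsout\SG)$ by $((a_{\ol x}), c(\alpha_s)(b_{\ol x})) \mapsto (a_{\ol x} + x(\alpha_s)\,b_{\ol x})_{x \in \Omega}$. That this lands in $\Gamma(\Omega, \tsout\SG)$ is routine: within-coset edges are handled by the congruence $a_{\ol x} \pm x(\alpha_s) b_{\ol x} \equiv a_{\ol x} \mod x(\alpha_s)$, and between-coset edges by the compatibility of $(a_{\ol x})$ and $(b_{\ol x})$ over $\hCG^s$ together with $y(\alpha_s) - x(\alpha_s) \in \alpha_t\hS$ (since $\SG^{\ol E}$ is annihilated by $\alpha_t$). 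For the inverse, given $(n_x) \in \Gamma(\Omega, \tsout\SG)$, set $a_{\ol x} := (n_x + n_{xs})/2$ (using $2 \in \DZ^\prime$) and solve $x(\alpha_s)\,b_{\ol x} = (n_x - n_{xs})/2$ in $\SG^{\ol x}$, which is possible because the within-coset edge condition on $(n_x)$ says $n_x - n_{xs} \in x(\alpha_s)\SG^{\ol x}$.

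The main obstacle, and the one place where the freeness hypothesis on $\SG^{\ol E}$ enters, is verifying that the resulting family $(b_{\ol x})$ assembles into a section in $\Gamma(\ol\Omega, \SG)$. The edge conditions on $(n_x)$ yield $x(\alpha_s)\rho_{\ol x, \ol E}(b_{\ol x}) = y(\alpha_s)\rho_{\ol y, \ol E}(b_{\ol y})$ in $\SG^{\ol E}$, and since $x(\alpha_s) \equiv y(\alpha_s) \mod \alpha_t$ this reduces to $x(\alpha_s)\bigl(\rho_{\ol x, \ol E}(b_{\ol x}) - \rho_{\ol y, \ol E}(b_{\ol y})\bigr) = 0$. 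Because $y \ne xs$, the affine roots $x(\alpha_s) = \pm\alpha_{xsx^{-1}}$ and $\alpha_t$ are not proportional in $\hS$, so $x(\alpha_s)$ is a non-zero-divisor in the polynomial ring $\hS/\alpha_t\hS$ and, by the freeness hypothesis, on $\SG^{\ol E}$. This forces $\rho_{\ol x, \ol E}(b_{\ol x}) = \rho_{\ol y, \ol E}(b_{\ol y})$, completing the verification, and the compatibility with the $\hCZ(\Omega)$-module structures is immediate from the construction.
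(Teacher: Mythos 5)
Your argument is correct and is essentially the paper's: your map $(a,c(\alpha_s)b)\mapsto(a_{\ol x}+x(\alpha_s)b_{\ol x})_x$ and its inverse reproduce, in explicit coordinates, the paper's decomposition of $\Gamma(\Omega,\tsout\SG)$ into $\sigma_s$-invariants (your $a$) and anti-invariants (your $c(\alpha_s)b$), with the identical use of freeness of the edge stalks $\SG^{\ol E}$ to cancel $x(\alpha_s)$ along between-coset edges. Your opening classification of edges, including the observation that no ``crossed'' edges occur because a product of two distinct reflections is never a reflection, usefully spells out what the paper dismisses as immediate from the definition of $\tson$ in part (1).
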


\begin{proof} Note that from the definition of $\tson$ it immediately follows that we have an identification $\Gamma(\Omega,\SF)=\Gamma(\ol\Omega,\tson\SF)$  as $\hCZ(\Omega)^s$-modules, which yields the first claim.

In order to prove the second claim, let us set $\SF=\tsout\SG$. We then have $\SF^x=\SF^{xs}$ for any $x\in\Omega$. Hence we can define an involution $\sigma_s$ on $\prod_{x\in\Omega}\SF^x$ by $\sigma_s(m_x)=(m^\prime_x)$, where $m^\prime_x=m_{xs}$ for any $x$. It follows from the definitions  that $\Gamma(\Omega,\SF)\subset\prod_{x\in\Omega}\SF^x$ is $\sigma_s$-stable. Hence, we have a decomposition
$\Gamma(\Omega,\SF)=\Gamma(\Omega,\SF)^s\oplus\Gamma(\Omega,\SF)^{-s}$
into $\sigma_s$-invariants and $\sigma_s$-anti-invariants. 

We now prove that $\Gamma(\Omega,\SF)^{-s}=c(\alpha_s)\Gamma(\Omega,\SF)^{s}$. The arguments are similar to the ones used for Lemma \ref{lemma-decZ}. Let $x\in\Omega$ and let $E$ be the edge connecting $x$ and $xs$.
We have $\SF^x=\SF^{xs}=\SG^{\ol x}$ and $\SF^E=\SG^{\ol x}/\alpha(E)\SG^{\ol x}$. If $m=(m_x)\in \Gamma(\Omega,\SF)^{-s}$, then $m_{x}=-m_{xs}$ and $m_x\equiv m_{xs}\mod \alpha(E)$. Hence $m_x$ and $m_{xs}$ are divisible by $\alpha(E)$ in $\SG^{\ol x}$. Now $\alpha(E)$ is the positive root corresponding to the reflection $xsx^{-1}$, i.e.~$\alpha(E)=\pm x(\alpha_s)=\pm c(\alpha_s)_x$. So $m^\prime=c(\alpha_s)^{-1} m$ is a well-defined element in $\prod_{x\in\Omega}\SF^x$, which is, moreover, $\sigma_s$-invariant. Hence it remains to show that $m^\prime$ is a section in $\Gamma(\Omega,\SF)$.

So let $E$ now be an edge connecting $x$ and $y$. We want to show that $\rho_{x,E}(m^\prime_x)=\rho_{y,E}(m^\prime_y)$. In the case $y=xs$ this is clear. So suppose that $y\ne xs$. As $m$ is a section, we have $\rho_{x,E}(m_x)=\rho_{y,E}(m_y)$. By our assumption, $\SF^E=\SG^{\ol E}$ is a free $\hS/\alpha(E)\hS$-module. Now  $c(\alpha_s)_x$ and $c(\alpha_s)_y$ are equivalent modulo $\alpha(E)$, hence they yield the same element in $\hS/\alpha(E)\hS$, which is non-zero (here the GKM-restriction is needed in case we worked over a field of positive characteristic). We deduce that $\rho_{x,E}(m^\prime_x)=\rho_{y,E}(m^\prime_y)$. Hence $m^\prime$ is a section of $\SF$. So we showed that $\Gamma(\Omega,\SF)^{-s}=c(\alpha_s)\Gamma(\Omega,\SF)^{s}$, hence we have $\Gamma(\Omega,\SF)=\Gamma(\Omega,\SF)^{s}\oplus c(\alpha_s)\Gamma(\Omega,\SF)^{s}$.

From the definition of $\tsout$ it is immediate that $\Gamma(\Omega,\SF)^s$ can be identified with $\Gamma(\ol\Omega,\SG)$ as a $\hCZ^s$-module. Using Lemma  \ref{lemma-decZ} we get an identification $\Gamma(\Omega,\SF)=\Gamma(\ol\Omega,\SG)\oplus c(\alpha_s)\Gamma(\ol\Omega,\SG)=\sout\Gamma(\ol\Omega,\SG)$ of $\hCZ$-modules.
\end{proof}

\subsection{Special sheaves on moment graphs}

We let $\SB_e$ be the following sheaf on $\hCG$. We set $\SB_e^e=\hS$ and $\SB_e^x=0$ for $x\ne e$. For any edge $E$ we set $\SB^E_e=0$ and we let all $\rho_{x,E}$'s be the zero homomorphism.

\begin{definition}\begin{enumerate}
\item The category of {\em special sheaves on $\hCG$} is the full
  subcategory $\CA$ of $\hCG\catmod_{\DZ^\prime}$ that consists of all  objects that
  are isomorphic to a direct summand of a direct sum of sheaves of the
  form $\ttheta^s\circ\cdots\circ\ttheta^t(\SB_e)\langle n\rangle$, where   $s,\dots,t$ is an
  arbitrary sequence in $\hCS$ and $n$ is in $\DZ$. 
\item Let $s\in\hCS$. The category of {\em special sheaves on $\hCG^s$} is the full subcategory $\CA^{s}$ of $\hCG^s\catmod_{\DZ^\prime}$ that consists of all objects that are isomorphic to a direct summand of  $\tson(\SF)$ for some $\SF\in\CA$.
\end{enumerate}
\end{definition}

By definition, we have induced functors
$\tson\colon\CA\to\CA^{s}$, $\tsout\colon\CA^{s}\to\CA$ and
$\ttheta^s\colon\CA\to\CA$ for all $s\in\hCS$. From the definitions of the translation functors it follows that for each $\SF\in\CA$ and each $\SG\in\CA^s$ the stalks $\SF^x$ and $\SG^{\ol x}$ are free $\hS$-modules of finite rank,  and $\SF^E$ and $\SG^{\ol E}$ are free $\hS/\alpha(E)\hS$-modules of finite rank.

\subsection{The global sections of special sheaves}

Note that we use a simplified definition of the translation functors.  In general, the special sheaves that we obtain are not generated by global sections in the sense of \cite{Fie05a}, so are not Braden--MacPherson sheaves. However, this definition nevertheless serves our purpose which is to interpret the special $\hCZ$-modules as global sections of moment graph sheaves. This is used in the proof of Lemma \ref{lemma-subquotfree} in an essential way. 

\begin{proposition}\label{prop-inttwintrans}   \begin{enumerate} 
\item For $\SF\in\CA$ we have $\Gamma(\SF)\in\CH$. For any $s\in\hCS$ and $\SG\in\CA^s$ we have $\Gamma(\SG)\in\CH^s$. 
\item Let $s\in\hCS$. Then the following diagrams of functors commute:

\centerline{
\xymatrix{
\CA \ar[d]_-{\tson} \ar[r]^-{\Gamma} & \CH \ar[d]^-{\son}\\
 \CA^s \ar[r]^-{\Gamma} & \CH^s,
}\quad  
\xymatrix{
\CA^s \ar[d]_-{\tsout} \ar[r]^-{\Gamma} & \CH^s \ar[d]^-{\sout}\\
 \CA \ar[r]^-{\Gamma} & \CH.
}
}
\end{enumerate}
\end{proposition}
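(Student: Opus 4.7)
The plan is to induct on the complexity of objects in $\CA$ and $\CA^s$ (the length of the sequence of simple affine reflections used to build them), and to reduce everything to Lemma~\ref{lemma-inttwintrans}. Both halves of (1) and (2) are proved simultaneously by the induction.

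First I would observe that every $\SF \in \CA$ has support contained in a finite subset of $\hCW$; this is immediate by induction on the length of the defining sequence, using that $\tson$ and $\tsout$ enlarge the support at most by multiplication by $s$. After enlarging if necessary, we may choose an $s$-invariant finite $\Omega \subset \hCW$ containing the support, so $\Gamma(\SF) = \Gamma(\Omega, \SF)$, and similarly for objects of $\CA^s$. This puts us in the regime where Lemma~\ref{lemma-inttwintrans} applies, and it also matches the setting of Lemma~\ref{Zqufin}, so the resulting sections carry an $\hCZ(\Omega)$-action (resp.\ $\hCZ(\Omega)^s$-action) compatible with the ambient $\hCZ$-module structure.

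The base case $\SF = \SB_e$ is immediate: only the vertex $e$ supports a nonzero stalk, so $\Gamma(\SB_e) = \hS$ with $(z_w)\in\hCZ$ acting as multiplication by $z_e$, which is $B_e \in \CH$. For the inductive step, suppose $\SF \in \CA$ with $\Gamma(\SF) \in \CH$. The first identity of Lemma~\ref{lemma-inttwintrans} gives $\Gamma(\tson \SF) = \son \Gamma(\SF)$, which therefore lies in $\CH^s$; this establishes the first commutative diagram and simultaneously covers the $\CA^s$-part of~(1), since $\CA^s$ is generated by $\tson(\CA)$ under direct sums, direct summands, and shifts. Given $\SG \in \CA^s$ with $\Gamma(\SG) \in \CH^s$, the freeness of edge stalks $\SG^{\ol E}$ over $\hS/\alpha(E)\hS$ (recorded in the paragraph preceding the proposition) lets me apply the second identity of Lemma~\ref{lemma-inttwintrans}, yielding $\Gamma(\tsout \SG) = \sout \Gamma(\SG) \in \CH$, which is the second diagram. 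Iterating gives $\Gamma(\ttheta^s \SF) = \theta^s \Gamma(\SF) \in \CH$; additivity of $\Gamma$ together with the closure of $\CH$ and $\CH^s$ under direct sums and summands then handles the remaining objects of $\CA$ and $\CA^s$.

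\textbf{Main obstacle.} The substantive technical point is the freeness of the edge stalks of sheaves in $\CA^s$ as modules over $\hS/\alpha(E)\hS$, without which the second identity of Lemma~\ref{lemma-inttwintrans} cannot be invoked. Under $\tsout$ the ``vertical'' edges connecting $x$ and $xs$ acquire stalks of the form $\SG^{\ol x}/\alpha(E)\SG^{\ol x}$, so freeness there rests on freeness of the vertex stalks $\SG^{\ol x}$ as $\hS$-modules; the remaining edge stalks are inherited directly from $\SG^{\ol E}$. Tracking freeness at vertices, at edges, and of the section modules themselves simultaneously through each application of $\tson$ and $\tsout$ is the main bookkeeping chore, but once it is set up the proposition follows cleanly from the inductive scheme above.
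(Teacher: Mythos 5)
Your proof is correct and follows essentially the same route as the paper: a base case $\Gamma(\SB_e)\cong B_e$, then induction through the translation functors using both identities of Lemma~\ref{lemma-inttwintrans}, with the freeness of edge stalks of special sheaves (established in the paragraph before the proposition) supplying the hypothesis needed for $\tsout$. You spell out the finiteness/$s$-invariance of $\Omega$ and the freeness bookkeeping in more detail than the paper's two-sentence proof, but the underlying argument is the same.
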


\begin{proof} It follows immediately from the definitions that $\Gamma(\SB_e)\cong B_e$ as $\hCZ$-modules. In order to prove our claims it is enough to show that for $\SF\in\CA$ with $\Gamma(\SF)\in\CH$ we have  $\Gamma(\tson\SF)\cong\son\Gamma(\SF)$ and $\Gamma(\tsout\tson \SF)\cong\sout\Gamma(\tson\SF)$. For the first claim we can directly apply Lemma \ref{lemma-inttwintrans}. As each stalk of a special sheaf  is free (by construction), we can apply the same lemma to prove the second claim. 
\end{proof}

\section{Localization of special modules}\label{sec-Loc}

Each special $\hCZ$-module is naturally a module over the symmetric algebra $\hS$. In this section we collect some results on what happens if we invert some of the affine roots. In previous papers we used to work with certain localizations at prime ideals of the underlying symmetric algebra. But since the main aim of this paper is to relate the category of special modules to the category defined by Andersen, Jantzen and Soergel, it is convenient to use the following subalgebras of the quotient field of $\hS$. Let us fix $\beta\in R^+$ and let us set
$$
\hS^{\emptyset}:=\hS[\alpha_n^{-1}\mid\alpha\in R^+, n\in\DZ]\text{ and } \hS^{\beta}:=\hS[\alpha_n^{-1}\mid \alpha\in R^+, \alpha\ne\beta, n\in\DZ].
$$
  For a subset $\Omega$ of $\hCW$ set
$\hCZ^\emptyset(\Omega):=\hCZ(\Omega)\otimes_{\hS}\hS^\emptyset$ and $\hCZ^\beta(\Omega):=\hCZ(\Omega)\otimes_{\hS}\hS^\beta$.

Recall that we defined the subgroup $\hCW^\beta$ of $\hCW$ that is generated by all $s_{\beta,n}$, $n\in\DZ$.
We denote by $\hCW^\beta\backslash\hCW$ the set of orbits in $\hCW$ of the left $\hCW^\beta$-action. 

\begin{lemma}\label{lemma-Zloc} For a finite subset $\Omega$ we have canonical isomorphisms
\begin{align*}
\hCZ^\emptyset(\Omega)&=\bigoplus_{w\in\Omega}\hS^\emptyset,\\
\hCZ^\beta(\Omega)&=\left\{(z_w)\in\bigoplus_{w\in\Omega} \hS^\beta\left|\,\begin{matrix} z_w\equiv z_{s_{\beta,n}w}\mod\beta_n \\ 
\text{ for all $w\in\Omega$,  $n\in\DZ$ with $s_{\beta,n}w\in\Omega$}
\end{matrix}\right.\right\}\\
&=\bigoplus_{\Theta\in\hCW^\beta\backslash\hCW}\hCZ^\beta(\Omega\cap\Theta).
\end{align*}
\end{lemma}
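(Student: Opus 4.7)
My plan is to realize $\hCZ(\Omega)$ as the kernel of an explicit ``congruence-defect'' $\hS$-linear map, and then use flatness of the two localizations to read off which relations survive. Writing $T_\Omega$ for the set of pairs $(t,w)$ with $t\in\hCT$, $w\in\Omega$ and $tw\in\Omega$, I would introduce
$$
\phi\colon \bigoplus_{w\in\Omega}\hS \longrightarrow \bigoplus_{(t,w)\in T_\Omega}\hS/\alpha_t\hS,\qquad (z_w)_{w\in\Omega}\mapsto (z_w-z_{tw}\bmod\alpha_t)_{(t,w)}.
$$
Since $\Omega$ is finite, the product in the definition of $\hCZ(\Omega)$ agrees with the direct sum, and by construction $\hCZ(\Omega)=\ker\phi$. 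Because $\hS^\emptyset$ and $\hS^\beta$ are localizations of $\hS$, they are flat over $\hS$, so tensoring commutes with the kernel, giving $\hCZ^\emptyset(\Omega)=\ker(\phi\otimes\hS^\emptyset)$ and $\hCZ^\beta(\Omega)=\ker(\phi\otimes\hS^\beta)$.

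For part (1), every label $\alpha_t$ becomes a unit in $\hS^\emptyset$, so $\hS/\alpha_t\hS\otimes_\hS\hS^\emptyset=0$ for each $(t,w)\in T_\Omega$. Hence $\phi\otimes\hS^\emptyset$ has trivial target and $\hCZ^\emptyset(\Omega)=\bigoplus_{w\in\Omega}\hS^\emptyset$.

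For part (2), the only affine root $\alpha_n$ not inverted in $\hS^\beta$ is the one with $\alpha=\beta$; for any other $\alpha$ the corresponding $\hS/\alpha_n\hS\otimes_\hS\hS^\beta$ vanishes. Thus the only surviving components of $\phi\otimes\hS^\beta$ are indexed by pairs $(t,w)$ with $t=s_{\beta,n}$, and the kernel is exactly the set of tuples satisfying the congruences $z_w\equiv z_{s_{\beta,n}w}\bmod \beta_n$ whenever both $w$ and $s_{\beta,n}w$ lie in $\Omega$. This is the stated description.

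For part (3), the surviving congruences only relate entries at indices that can be connected by a sequence of reflections $s_{\beta,n}$, and these generate exactly $\hCW^\beta$. Consequently, for each orbit $\Theta\in\hCW^\beta\backslash\hCW$ the natural projection $\bigoplus_{w\in\Omega}\hS^\beta\to\bigoplus_{w\in\Omega\cap\Theta}\hS^\beta$ restricts to a surjection $\hCZ^\beta(\Omega)\to\hCZ^\beta(\Omega\cap\Theta)$, and the direct sum of these projections is the desired isomorphism. I do not foresee a serious obstacle here beyond keeping the bookkeeping tidy; in particular one must remember that $T_\Omega$ only includes pairs where both $w$ and $tw$ lie in $\Omega$, so that the defining relations of $\hCZ(\Omega)$ are precisely those used to construct $\phi$.
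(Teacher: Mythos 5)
Your proof is correct. Both your argument and the paper's rest on the observation that passing to $\hS^\emptyset$ (resp.\ $\hS^\beta$) makes every label $\alpha_t$ (resp.\ every label except the $\beta_n$) invertible, so the corresponding congruences become vacuous. You package this by presenting $\hCZ(\Omega)$ as $\ker\phi$ for an explicit defect map $\phi$ into $\bigoplus_{(t,w)\in T_\Omega}\hS/\alpha_t\hS$, and then invoking flatness of localization to commute the kernel past $\otimes_\hS\hS^\emptyset$ and $\otimes_\hS\hS^\beta$, after which the vanishing of the relevant cyclic quotients $\hS/\alpha_t\hS$ reads off the answer. The paper instead constructs, using finiteness of $\Omega$, a single element $d\in\hS$ equal to the product of the labels to be inverted, verifies directly that $\bigoplus_{w\in\Omega}d\hS\subset\hCZ(\Omega)$, and sandwiches: after localizing, $d$ becomes a unit, so one gets $\bigoplus_{w\in\Omega}\hS^\emptyset\subset\hCZ^\emptyset(\Omega)\subset\bigoplus_{w\in\Omega}\hS^\emptyset$ and hence equality (and analogously for the $\beta$-case, with $d$ built from the non-$\beta$ labels). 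Your kernel-and-flatness formulation is more systematic and uniform across parts (1) and (2), and avoids constructing $d$; the paper's version is more elementary and keeps the homological input implicit. Your reduction of part (3) to the second equality, using that the surviving congruences link only indices within a single left $\hCW^\beta$-orbit, is in substance identical to the paper's ``direct consequence''.
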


\begin{proof}  The inclusion $\hCZ(\Omega)\subset\bigoplus_{w\in\Omega}\hS$ yields an inclusion $\hCZ^\emptyset(\Omega)\subset\bigoplus_{w\in\Omega}\hS^\emptyset$. We want to show that this is a bijection.  Let $d\in \hS$ be the product of all $\alpha_t$, where $t\in\hCT$ is a reflection connecting two elements in $\Omega$. From the definition of $\hCZ(\Omega)$ it is immediate that   $\bigoplus_{w\in\hCW} d\hS\subset \hCZ(\Omega)$, and since $d$ is invertible in $\hS^\emptyset$ we can deduce our first claim.

For the second claim we argue analogously. Let us denote by $L$ the set on the right hand side of the second claimed equation. It is clear that $\hCZ^\beta(\Omega)$ is contained in $L$. Hence it is enough to show that $L\cap \bigoplus_{w\in\Omega}\hS$ is contained in $\hCZ^\beta(\Omega)$. Now let $d\in\hS$ be the product of all $\alpha_t$ for reflections $t$ that are not of type $\beta$ (i.e.~$t\ne s_{\beta,n}$ for all $n$), but connect two elements in $\Omega$. Then we have $L\cap\bigoplus_{w\in\Omega}d\hS\subset\hCZ(\Omega)$. As $d$ is invertible in $\hS^\beta$, we can deduce the second equation. The third is a direct consequence.
\end{proof}

Let $M\in\CH$ and define $M^{\emptyset}:=M\otimes_{\hS_k} \hS^{\emptyset}$ and $M^{\beta}:=M\otimes_{\hS} \hS^{\beta}$. 
Recall that $M$ is a module for
$\hCZ(\Omega)$ for some finite subset $\Omega$ of $\hCW$ by Lemma
\ref{Zqufin}. So $M^\emptyset$ and $M^\beta$ are modules over
$\hCZ^\emptyset(\Omega)$ and $\hCZ^\beta(\Omega)$, resp.

\begin{lemma}\label{lemma-decompreg} The decompositions in Lemma \ref{lemma-Zloc} induce decompositions
$$
M^\emptyset  =  \bigoplus_{w\in\hCW} M^{\emptyset, w}, \quad 
M^\beta   =   \bigoplus_{\Theta\in\hCW^\beta\backslash\hCW} M^{\beta,\Theta}.
$$
\end{lemma}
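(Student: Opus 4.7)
The plan is to deduce both decompositions directly from the corresponding idempotent decompositions of the localized structure algebras. By Lemma \ref{Zqufin}, choose a finite subset $\Omega \subset \hCW$ such that the $\hCZ$-action on $M$ factors through $\hCZ(\Omega)$. After base change, the $\hCZ^\emptyset$-action on $M^\emptyset$ factors through $\hCZ^\emptyset(\Omega)$, and similarly the $\hCZ^\beta$-action on $M^\beta$ factors through $\hCZ^\beta(\Omega)$. So the decompositions of Lemma \ref{lemma-Zloc} transfer to $M^\emptyset$ and $M^\beta$ by extracting the corresponding orthogonal idempotents.

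Concretely, the isomorphism $\hCZ^\emptyset(\Omega) = \bigoplus_{w \in \Omega} \hS^\emptyset$ gives a complete system of orthogonal idempotents $\{e_w\}_{w \in \Omega}$ with $\sum_{w \in \Omega} e_w = 1$. I would set $M^{\emptyset, w} := e_w \cdot M^\emptyset$ for $w \in \Omega$ and $M^{\emptyset, w} := 0$ for $w \in \hCW \setminus \Omega$; the direct sum decomposition is then immediate. Similarly, the orbit decomposition $\hCZ^\beta(\Omega) = \bigoplus_\Theta \hCZ^\beta(\Omega \cap \Theta)$ supplies an orthogonal idempotent $e_\Theta$ for each orbit $\Theta \in \hCW^\beta \backslash \hCW$ meeting $\Omega$, and I would define $M^{\beta, \Theta} := e_\Theta \cdot M^\beta$ (with $M^{\beta, \Theta} = 0$ for those $\Theta$ disjoint from $\Omega$).

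The only point that requires a separate argument is that the pieces do not depend on the choice of $\Omega$, so that the indexing over all of $\hCW$ (resp.\ all orbits $\Theta$) in the statement is intrinsic. If $\Omega \subset \Omega'$ both admit the required action, the canonical projection $\hCZ^\emptyset(\Omega') \to \hCZ^\emptyset(\Omega)$ sends $e_w$ to $e_w$ for $w \in \Omega$ and to $0$ for $w \in \Omega' \setminus \Omega$; since the $\hCZ^\emptyset(\Omega')$-action on $M^\emptyset$ factors through this projection, the extra idempotents act trivially, and the decomposition obtained from $\Omega'$ agrees with the one from $\Omega$. The analogous projection argument handles the $\beta$-case. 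The main obstacle, if one can call it that, is purely bookkeeping — the substantive algebraic input is already contained in Lemma \ref{lemma-Zloc}, and once $\Omega$ is fixed the decomposition is forced by the ring-theoretic splitting.
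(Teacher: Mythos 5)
Your proof is correct and matches the paper's intended argument: the paper leaves this lemma without a written proof, deriving it immediately from the observation (made just before the statement) that $M^\emptyset$ and $M^\beta$ are modules over $\hCZ^\emptyset(\Omega)$ and $\hCZ^\beta(\Omega)$ for a finite $\Omega$, together with the ring decompositions of Lemma \ref{lemma-Zloc}. Your idempotent bookkeeping and the compatibility check for nested $\Omega \subset \Omega'$ simply make explicit what the paper treats as self-evident.
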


Now fix $s\in\hCS$.
We want to prove a similar statement for objects $N\in\CH^{s}$. Suppose that $\Omega\subset\hCW$ is $s$-invariant. Then the involution $\sigma_s$ induces an involution on $\hCZ^\emptyset(\Omega)$ and on $\hCZ^\beta(\Omega)$ for any $\beta\in R^+$ that we denote, for simplicity, by $\sigma_s$ as well. We let $\hCZ^{\emptyset}(\Omega)^s$ and $\hCZ^{\beta}(\Omega)^s$ be the corresponding sets of invariants.  

Recall that $\hCW_s=\{1,s\}$ is the parabolic subgroup corresponding to $s$ and that  $\hCW^s=\hCW/\hCW_s$.  For $w\in\hCW$ we denote by $\ol w\in\hCW^s$
its
canonical image. 
The group $\hCW^\beta$ still acts on $\hCW^s$ from the left and the quotient map $\hCW\to \hCW^s$ is $\hCW^\beta$-equivariant. For  $\Theta\in\hCW^\beta\backslash\hCW$ we denote by
$\ol\Theta\in\hCW^\beta\backslash\hCW^s$ its canonical image.
From Lemma \ref{lemma-Zloc} we get decompositions
\begin{align*}
\hCZ^\emptyset(\Omega)^s=\bigoplus_{\ol w\in\ol\Omega}\hS^\emptyset, \quad\hCZ^\beta(\Omega)^s=\bigoplus_{\ol\Theta\in\hCW^\beta\backslash\hCW^s}\hCZ^\beta(\Omega\cap\ol\Theta)^s.
\end{align*}
Hence we obtain the following analog of Lemma \ref{lemma-decompreg}:

\begin{lemma}\label{decompsing} Let $N\in\CH^{s}$. Then there are canonical decompositions
$$
N^\emptyset  =  \bigoplus_{\ol w\in\hCW^s} N^{\emptyset, \ol w}, \quad 
N^\beta   =   \bigoplus_{\ol \Theta\in\hCW^\beta\backslash\hCW^s} N^{\beta,\ol \Theta}.
$$
\end{lemma}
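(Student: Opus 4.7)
The proof will follow the same pattern as Lemma \ref{lemma-decompreg}, but now using the $s$-invariant versions of the decompositions of the structure algebra which were just displayed immediately before the lemma. The point is that the ring decompositions
\begin{align*}
\hCZ^\emptyset(\Omega)^s &= \bigoplus_{\ol w\in\ol\Omega}\hS^\emptyset, \\
\hCZ^\beta(\Omega)^s &= \bigoplus_{\ol\Theta\in\hCW^\beta\backslash\hCW^s}\hCZ^\beta(\Omega\cap\ol\Theta)^s
\end{align*}
are decompositions as algebras, hence given by a system of orthogonal central idempotents, and any module over such an algebra decomposes accordingly.

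First, I would apply Lemma \ref{Zqufin}(2) to find a finite, $s$-invariant subset $\Omega\subset\hCW$ together with an action of $\hCZ(\Omega)^s$ on $N$ such that the $\hCZ^s$-action factors through the canonical map $\hCZ^s\to\hCZ(\Omega)^s$. Tensoring with $\hS^\emptyset$ (resp.\ $\hS^\beta$) over $\hS$ then turns $N^\emptyset$ into a $\hCZ^\emptyset(\Omega)^s$-module and $N^\beta$ into a $\hCZ^\beta(\Omega)^s$-module. The displayed decompositions above yield central orthogonal idempotents $e_{\ol w}\in\hCZ^\emptyset(\Omega)^s$ (for $\ol w\in\ol\Omega$) and $e_{\ol\Theta}\in\hCZ^\beta(\Omega)^s$ (for $\ol\Theta\in\hCW^\beta\backslash\hCW^s$ meeting $\Omega$), and setting
$$
N^{\emptyset,\ol w}:=e_{\ol w}N^\emptyset,\qquad N^{\beta,\ol\Theta}:=e_{\ol\Theta}N^\beta
$$
(with the convention that $N^{\emptyset,\ol w}=0$ for $\ol w\notin\ol\Omega$, and $N^{\beta,\ol\Theta}=0$ when $\Omega\cap\ol\Theta=\emptyset$) gives the two desired direct sum decompositions.

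The only thing that really requires justification is that the decompositions are canonical, i.e.~do not depend on the choice of $\Omega$. For this I would show that if $\Omega\subset\Omega^\prime$ are both finite, $s$-invariant and both admit an action factoring the $\hCZ^s$-action on $N$, then the induced decomposition is the same. This is straightforward: the map $\hCZ(\Omega^\prime)^s\to\hCZ(\Omega)^s$ is a surjection of algebras (inherited from the coordinate projection) and sends the idempotent $e_{\ol w}\in\hCZ^\emptyset(\Omega^\prime)^s$ for $\ol w\in\ol\Omega$ to the corresponding idempotent $e_{\ol w}\in\hCZ^\emptyset(\Omega)^s$, while the remaining idempotents for $\ol w\in\ol{\Omega^\prime}\setminus\ol\Omega$ are sent to $0$; hence they act as zero on $N$. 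An identical argument works in the $\beta$-localized case using the $\hCW^\beta\backslash\hCW^s$-indexing.

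I do not expect a serious obstacle here; the content of the lemma is essentially a bookkeeping consequence of the two bullet-pointed algebra decompositions and of Lemma \ref{Zqufin}(2). The mild subtlety is making sure the $s$-invariance of $\Omega$ is respected throughout, so that the idempotents live in the $\sigma_s$-invariant subalgebra and therefore really do give a decomposition as $\hCZ^s$-modules (and not merely as $\hCZ$-modules after induction), but this is immediate from the fact that the $\sigma_s$-action permutes coordinates indexed by $\hCW$ and thus acts trivially on the components indexed by $\hCW^s$ and $\hCW^\beta\backslash\hCW^s$.
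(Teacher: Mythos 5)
Your proof is correct and matches the paper's intended approach; the paper does not actually spell out a proof of this lemma (nor of Lemma \ref{lemma-decompreg}), but simply asserts that the algebra decompositions displayed just before the statement induce the module decompositions, which is exactly the idempotent argument you give. Your only minor imprecision is the claim that $\hCZ(\Omega^\prime)^s\to\hCZ(\Omega)^s$ is surjective (the paper even warns that the unlocalized restriction maps need not be); but this is harmless since the canonicity argument only needs the \emph{localized} maps $\hCZ^\emptyset(\Omega^\prime)^s\to\hCZ^\emptyset(\Omega)^s$ and $\hCZ^\beta(\Omega^\prime)^s\to\hCZ^\beta(\Omega)^s$, which by Lemma \ref{lemma-Zloc} are visibly the coordinate projections and do carry each idempotent to the expected idempotent or to zero.
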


\subsection{Localization of translation functors}

Now we study the behaviour of the translation functors under base change. 
Let us fix $s\in\hCS$. 

\begin{lemma}\label{lemma-gendecomptrans} For $M\in\CH$, $N\in\CH^{s}$ and $w\in\hCW$ we have
$$
(\son M)^{\emptyset, \ol w}=M^{\emptyset,w}\oplus M^{\emptyset,ws}, \quad (\sout N)^{\emptyset,w}=N^{\emptyset,\ol w}.
$$
\end{lemma}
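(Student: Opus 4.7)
My plan is to reduce everything to the explicit direct-sum descriptions of the localized structure algebras provided by Lemma \ref{lemma-Zloc} and then track how the central idempotents interact. By Lemma \ref{Zqufin} I may choose a finite $s$-invariant subset $\Omega\subset\hCW$ such that $M$ is an $\hCZ(\Omega)$-module, $N$ is an $\hCZ(\Omega)^s$-module, and $\sout N$ is an $\hCZ(\Omega)$-module. After inverting every affine root Lemma \ref{lemma-Zloc} gives $\hCZ^\emptyset(\Omega)=\bigoplus_{w\in\Omega}\hS^\emptyset\,e_w$ with its orthogonal idempotents $e_w$, and analogously $\hCZ^\emptyset(\Omega)^s=\bigoplus_{\ol w\in\ol\Omega}\hS^\emptyset\,e_{\ol w}$. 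By construction the decompositions in Lemma \ref{lemma-decompreg} and Lemma \ref{decompsing} are cut out by these idempotents: $M^{\emptyset,w}=e_wM^\emptyset$ and $N^{\emptyset,\ol w}=e_{\ol w}N^\emptyset$.

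The key point is that under the inclusion $\hCZ^\emptyset(\Omega)^s\subset\hCZ^\emptyset(\Omega)$ one has
$$
e_{\ol w}=e_w+e_{ws},
$$
since $e_{\ol w}$ is the indicator function of the $\{1,s\}$-orbit $\{w,ws\}$. Granting this, the first identity falls out immediately from the definitions: $(\son M)^{\emptyset,\ol w}=e_{\ol w}M^\emptyset=(e_w+e_{ws})M^\emptyset=M^{\emptyset,w}\oplus M^{\emptyset,ws}$.

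For the second identity I will first observe that, because $\hS$ is central and base change to $\hS^\emptyset$ is flat, one has a natural identification $(\sout N)^\emptyset=\hCZ^\emptyset(\Omega)\otimes_{\hCZ^\emptyset(\Omega)^s}N^\emptyset$. Inserting the two idempotent decompositions rewrites this tensor product as
$$
\bigoplus_{\ol w\in\ol\Omega}\hCZ^\emptyset(\Omega)e_{\ol w}\otimes_{\hS^\emptyset e_{\ol w}}N^{\emptyset,\ol w}.
$$
Using $\hCZ^\emptyset(\Omega)e_{\ol w}=\hS^\emptyset e_w\oplus\hS^\emptyset e_{ws}$, which is a free module of rank two over $\hS^\emptyset e_{\ol w}\cong\hS^\emptyset$, each summand becomes $e_wN^{\emptyset,\ol w}\oplus e_{ws}N^{\emptyset,\ol w}$ with the first piece sitting in the $w$-component and the second in the $ws$-component. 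Projecting by $e_w$ then gives $(\sout N)^{\emptyset,w}=N^{\emptyset,\ol w}$.

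I do not expect any serious obstacle: both identities become bookkeeping once the localization is fully split. The only piece of substantive content is the idempotent identity $e_{\ol w}=e_w+e_{ws}$, which is immediate from the description of $\hCZ^\emptyset(\Omega)^s$ as the diagonal $\sigma_s$-invariants inside $\hCZ^\emptyset(\Omega)$; verifying carefully that the base change and idempotent manipulations commute with the tensor product defining $\sout$ is the mildly delicate step in the second identity.
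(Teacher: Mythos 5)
Your proposal is correct and follows essentially the same route as the paper: the paper's proof is the one-line observation that for a finite $s$-invariant $\Omega$ the inclusion $\hCZ^\emptyset(\Omega)^s\subset\hCZ^\emptyset(\Omega)$ is just the $\sigma_s$-invariants inside $\bigoplus_{w\in\Omega}\hS^\emptyset$, after which ``the claims follow immediately.'' You have simply unpacked that ``immediately'' via the orthogonal idempotents $e_w$ and $e_{\ol w}=e_w+e_{ws}$, and you verified carefully the commutation of localization with the induction $\hCZ\otimes_{\hCZ^s}-$, which the paper leaves implicit.
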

\begin{proof} 
Note that for a finite $s$-invariant subset $\Omega$ of $\hCW$ the inclusion $\hCZ^\emptyset(\Omega)^s\subset\hCZ^\emptyset(\Omega)$ equals $\left(\bigoplus_{w\in\Omega}\hS^\emptyset\right)^s\subset\bigoplus_{ w\in\Omega}\hS^\emptyset$ and the claims follow immediately.
\end{proof}

Now fix $\beta\in R^+$. 
For $\Theta\in\hCW^\beta\backslash\hCW$ we
have two cases to distinguish. Either we have $\Theta=\Theta s$, or $\Theta\ne
\Theta s$. In the latter case the map that sends $w$ to $\ol w$ yields a
bijection  $\Theta\stackrel{\sim}\to \ol \Theta$.

\begin{lemma}\label{loctransone} Choose $\Theta\in\hCW^\beta\backslash\hCW$
  and let $\ol \Theta\in\hCW^\beta\backslash\hCW^s$ be its image.  Then 
\begin{enumerate}
\item  $(\son M)^{\beta,\ol\Theta}=M^{\beta,\Theta}\oplus M^{\beta,\Theta s}$, if $\Theta\ne \Theta s$, and $(\son M)^{\beta,\ol\Theta}=M^{\beta,\Theta}$ if $\Theta=\Theta s$. 
\item  $(\sout N)^{\beta,\Theta}= N^{\beta,\ol\Theta}$, if $\Theta\ne \Theta s$, and $(\sout N)^{\beta,\Theta}= \hCZ^\beta(\Theta)\otimes_{\hCZ^\beta(\Theta)^s} N^{\beta,\ol\Theta}$ if $\Theta=\Theta s$. 
\end{enumerate}
\end{lemma}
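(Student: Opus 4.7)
The plan is to reduce everything to a finite-level calculation and then exploit the algebra decompositions of Lemma \ref{lemma-Zloc}. First I would use Lemma \ref{Zqufin} to pick a finite $s$-invariant subset $\Omega\subset\hCW$ such that $M$ carries an $\hCZ(\Omega)$-action and $N$ carries an $\hCZ(\Omega)^s$-action. The localizations then become modules over $\hCZ^\beta(\Omega)=\bigoplus_{\Theta'}\hCZ^\beta(\Omega\cap\Theta')$ and over $\hCZ^\beta(\Omega)^s=\bigoplus_{\ol{\Theta'}}\hCZ^\beta(\Omega\cap\ol{\Theta'})^s$ respectively, and the summands $M^{\beta,\Theta'}$ and $N^{\beta,\ol{\Theta'}}$ are cut out by the corresponding orbit idempotents $e_{\Theta'}\in\hCZ^\beta(\Omega)$ and $e_{\ol{\Theta'}}\in\hCZ^\beta(\Omega)^s$.

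The central point to pin down next is the interplay between the two systems of idempotents under $\sigma_s$. If $\Theta\ne\Theta s$, then $\sigma_s$ interchanges the summands $\hCZ^\beta(\Omega\cap\Theta)$ and $\hCZ^\beta(\Omega\cap\Theta s)$, so $e_{\ol\Theta}$ corresponds to $e_\Theta+e_{\Theta s}$ inside $\hCZ^\beta(\Omega)$ and $\hCZ^\beta(\Omega\cap\ol\Theta)^s$ is identified diagonally with $\hCZ^\beta(\Omega\cap\Theta)$. If $\Theta=\Theta s$, then $\sigma_s$ stabilizes $\hCZ^\beta(\Omega\cap\Theta)$ and acts on it internally, so $e_{\ol\Theta}=e_\Theta$, and Lemma \ref{lemma-decZ} supplies the rank-two decomposition $\hCZ^\beta(\Omega\cap\Theta)=\hCZ^\beta(\Omega\cap\Theta)^s\oplus c(\alpha_s)\hCZ^\beta(\Omega\cap\Theta)^s$ of $\hCZ^\beta(\Omega\cap\Theta)$ over its $\sigma_s$-invariants. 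From this, part (1) is immediate, since $(\son M)^{\beta,\ol\Theta}=e_{\ol\Theta}M^\beta$ picks out $M^{\beta,\Theta}\oplus M^{\beta,\Theta s}$ in the first case and $M^{\beta,\Theta}$ in the second.

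For part (2) I would use the formula $\sout N=\hCZ(\Omega)\otimes_{\hCZ(\Omega)^s}N$ (as in the proof of Lemma \ref{Zqufin}), localize to get $(\sout N)^\beta=\hCZ^\beta(\Omega)\otimes_{\hCZ^\beta(\Omega)^s}N^\beta$, and then multiply by $e_\Theta$. Because $e_\Theta$ annihilates $e_{\ol{\Theta'}}$ for $\ol{\Theta'}\ne\ol\Theta$, only $N^{\beta,\ol\Theta}$ survives on the right, and the action factors through $\hCZ^\beta(\Omega\cap\ol\Theta)^s$, so $(\sout N)^{\beta,\Theta}=\hCZ^\beta(\Omega\cap\Theta)\otimes_{\hCZ^\beta(\Omega\cap\ol\Theta)^s}N^{\beta,\ol\Theta}$. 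If $\Theta\ne\Theta s$, the diagonal identification above makes this inclusion an isomorphism and the tensor product collapses to $N^{\beta,\ol\Theta}$; if $\Theta=\Theta s$, no such collapse occurs and one obtains the stated formula. The main obstacle I expect is purely notational: keeping the $\Theta$-versus-$\Theta s$ bookkeeping consistent and verifying that the auxiliary finite $\Omega$ drops out of the final expressions, which it does because all the formulas involve only orbit idempotents and invariant subalgebras.
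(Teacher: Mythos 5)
Your proof is correct and follows essentially the same approach as the paper's, which likewise reduces to the orbit decomposition of $\hCZ^\beta(\Omega)$ and $\hCZ^\beta(\Omega)^s$ and tracks how $\sigma_s$ acts on the summands indexed by $\Theta$ versus $\Theta s$. You merely make the paper's terse argument explicit by working with the orbit idempotents $e_\Theta$, $e_{\ol\Theta}$; the substance (diagonal identification when $\Theta\ne\Theta s$, the Lemma~\ref{lemma-decZ}-style decomposition when $\Theta=\Theta s$) is the same.
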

\begin{proof} In the case $\Theta=\Theta s$ the inclusion $\hCZ^\beta(\Omega)^s\subset\hCZ^\beta(\Omega)$ contains $\hCZ^\beta(\Theta)^s\subset\hCZ^\beta(\Theta)$ as a direct summand. In the case $\Theta\ne\Theta s$ we have an analogous direct summand $\hCZ^\beta(\Theta\cup\Theta s)^s\subset\hCZ^\beta(\Theta)\oplus\hCZ^\beta(\Theta s)$, and this inclusion is an isomorphism on each direct summand. From these facts the claims follow easily.
\end{proof}

Let $\Omega\subset\hCW$ be a subset and $\Lambda:=\hCW\setminus\Omega$ its complement. Let $M$ be an object in $\CH$. We consider the canonical inclusion $M\subset M^\emptyset=\bigoplus_{w\in\hCW}M^{\emptyset,w}$ and define
\begin{align*}
M_\Lambda&:=M\cap \bigoplus_{w\in\Lambda} M^{\emptyset,w},\\
M^\Omega&:=M/M_\Lambda=\im\left(M\to M^{\emptyset}\to \bigoplus_{w\in\Omega}M^{\emptyset,w}\right).
\end{align*}
If $\Omega^\prime\subset\Omega$ are subsets in  $\hCW$ with complements $\Lambda\subset\Lambda^\prime$, then we have an inclusion $M_{\Lambda}\subset M_{\Lambda^\prime}$ and a corresponding surjection $M^{\Omega}\to M^{\Omega^\prime}$. 

Let $N\in\CH^s$. For any subset $\ol\Omega$ of $\hCW^s$ with complement $\ol\Lambda$ we can analogously define a submodule $N_{\ol \Lambda}$ and a quotient $N^{\ol \Omega}$ of $N$. The following is an immediate consequence of Lemma \ref{lemma-gendecomptrans}.

\begin{lemma}\label{lemma-transsubquot} Let $s\in\CS$ and let $\Omega$ be an $s$-invariant subset of $\hCW$ with complement $\Lambda$. Denote by $\ol\Omega,\ol\Lambda\in\hCW^s$ the images. For $M\in\CH$ and $N\in\CH^s$ we have
\begin{align*}
(\son M)_{\ol\Lambda}=\son (M_\Lambda),\quad (\sout N)_\Lambda=\sout( N_{\ol\Lambda}),\\
(\son M)^{\ol\Omega}=\son (M^\Omega),\quad (\sout N)^\Omega=\sout (N^{\ol\Omega}).
\end{align*}
\end{lemma}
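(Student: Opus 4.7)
Proof plan for Lemma \ref{lemma-transsubquot}.

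My plan is to verify each of the four equations by tracking the defining intersections and quotients through the decompositions of the localizations supplied by Lemma \ref{lemma-gendecomptrans}. The key observation is that $s$-invariance of $\Lambda$ (equivalently of $\Omega$) converts sums indexed by $\hCW^s$ to sums indexed by $\hCW$ in a clean way: for any $\hS^\emptyset$-submodule $L^{\ol w}\subset(\son M)^{\emptyset,\ol w}=M^{\emptyset,w}\oplus M^{\emptyset,ws}$ the identity
\[
\bigoplus_{\ol w\in\ol\Lambda}(\son M)^{\emptyset,\ol w}=\bigoplus_{w\in\Lambda} M^{\emptyset,w}
\]
holds inside $M^\emptyset=(\son M)^\emptyset$, and analogously for $\sout$.

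First I would dispose of the $\son$-half. Since $\son$ is just restriction of scalars along $\hCZ^s\hookrightarrow\hCZ$, the underlying $\hS$-module of $\son M$ is $M$ and $(\son M)^\emptyset=M^\emptyset$ as $\hS^\emptyset$-modules. The displayed identity above, combined with the definition $(\son M)_{\ol\Lambda}=\son M\cap\bigoplus_{\ol w\in\ol\Lambda}(\son M)^{\emptyset,\ol w}$, gives $(\son M)_{\ol\Lambda}=M\cap\bigoplus_{w\in\Lambda}M^{\emptyset,w}=M_\Lambda$, which as a $\hCZ^s$-module is $\son(M_\Lambda)$. Taking quotients by $(\son M)_{\ol\Lambda}$ vs.~$M_\Lambda$ then yields $(\son M)^{\ol\Omega}=\son(M^\Omega)$.

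For the $\sout$-half, I would write $\sout N=N\oplus c(\alpha_s)\otimes N$ using the decomposition $\hCZ=\hCZ^s\oplus c(\alpha_s)\hCZ^s$ of Lemma \ref{lemma-decZ}. Under the isomorphism $(\sout N)^{\emptyset,w}=N^{\emptyset,\ol w}$ of Lemma \ref{lemma-gendecomptrans}, a general element $1\otimes m+c(\alpha_s)\otimes m'$ of $\sout N$ has $w$-component $m_{\ol w}+w(\alpha_s)m'_{\ol w}$ in $N^{\emptyset,\ol w}$. Such an element lies in $(\sout N)_\Lambda$ precisely when $m_{\ol w}+w(\alpha_s)m'_{\ol w}=0$ for every $w\in\Omega$. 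Since $\Omega$ is $s$-invariant, $ws$ belongs to $\Omega$ as well, which together with $\ol{ws}=\ol w$ and $ws(\alpha_s)=-w(\alpha_s)$ gives the two equations
\[
m_{\ol w}+w(\alpha_s)m'_{\ol w}=0,\qquad m_{\ol w}-w(\alpha_s)m'_{\ol w}=0.
\]
Because $2$ is invertible in $\DZ^\prime$ and $w(\alpha_s)$ is invertible in $\hS^\emptyset$, this forces $m_{\ol w}=m'_{\ol w}=0$ for every $\ol w\in\ol\Omega$, hence $m,m'\in N_{\ol\Lambda}$. Therefore the element lies in $\sout(N_{\ol\Lambda})$; the reverse inclusion is immediate. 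Passing to quotients yields the corresponding identity $(\sout N)^\Omega=\sout(N^{\ol\Omega})$.

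The only step that requires any real computation is the $\sout$-submodule identity; there the main (mild) obstacle is to unwind the tensor product $\hCZ\otimes_{\hCZ^s}N$ on the level of localizations and exploit $2$-invertibility together with invertibility of $w(\alpha_s)$ in $\hS^\emptyset$. Once that is in hand, everything else is a formal consequence of Lemma \ref{lemma-gendecomptrans} and the $s$-invariance hypothesis.
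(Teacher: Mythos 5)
Your proof is correct, and it takes the approach the paper itself indicates: the paper disposes of this lemma with the single sentence ``The following is an immediate consequence of Lemma \ref{lemma-gendecomptrans},'' and your argument is precisely a careful unwinding of that consequence. The one place where the ``immediate'' deserves the spelling-out you give is the $\sout$-submodule identity, where you correctly exploit the decomposition $\hCZ=\hCZ^s\oplus c(\alpha_s)\hCZ^s$ together with $2$-invertibility and the invertibility of $w(\alpha_s)$ in $\hS^{\emptyset}$ to force both coordinates $m_{\ol w}$, $m'_{\ol w}$ to vanish.
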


\section{Filtrations and characters} \label{sec-Fil}

In this section we describe how certain partial orders on the set
$\hCW$  induce functorial filtrations on the objects of  $\CH$ indexed by $\hCW$. The
subquotients of these filtrations turn out to be graded free
$\hS$-modules of finite rank and hence give rise to the definition of
a character map from the Grothendieck group of $\CH$ to the free
$\DZ[v,v^{-1}]$-module $\bW$ with basis $\hCW$.  We are particularly interested in the characters associated to the
ordinary Bruhat order ``$\varle$''
and the ``generic'' Bruhat order
``$\succeq$'' on $\hCW$ (cf.~Sections \ref{Bruord} and \ref{Brinf}). 

We identify $\bW$ with
the $\DZ[v,v^{-1}]$-module underlying both the affine Hecke algebra $\bH$
and its periodic module $\bM$, and we show that the translation
combinatorics on $\CH$ forms a categorification of $\bH$ via the
character associated to ``${\varle}$'' (Lemma \ref{hvarle}), and of $\bM$ via the character
associated to ``${\succeq}$'' (Lemma \ref{hsucceq}).

\subsection{Partial orders}\label{partord}
Let ``$\unlhd$'' be a partial order on the set $\hCW$. For $x,y\in\hCW$
denote by $[x,y]=\{z\in\hCW\mid x\unlhd z\unlhd y\}$ the interval between $x$ and $y$. Suppose that
``$\unlhd$'' has the
following properties:  
\begin{enumerate} 
\item The elements $w$ and $tw$ are comparable for all $w\in\hCW$ and
  $t\in\hCT$. The relations between all such pairs $w,tw$ generate the partial order.
\item We have $[w,ws]=\{w,ws\}$ for all $w\in\hCW$ and $s\in\hCS$ such
  that  $w \unlhd ws$.  
\item For $x,y\in\hCW$ such that $x\unlhd xs$ and $y\unlhd xs$ we have $ys\unlhd xs$.  For $x,y\in\hCW$ such that $xs\unlhd x$ and $xs\unlhd y$ we have $xs\unlhd ys$.
\end{enumerate}

We call a  subset $\Omega$  of $\hCW$ {\em open}, if $x\in\Omega$, $y\in\hCW$ and $x\unlhd y$ imply $y\in\Omega$. Note that assumption (3) implies that if $\Omega$ is open and if $s\in\CS$, then $\Omega\cup\Omega s$ is open as well and that for each $x\in\hCW$ with $xs\unlhd x$ the sets $\{y\in\hCW\mid y\unlhd x\}$ and $\{y\in\hCW\mid xs\unlhd y\}$ are $s$-invariant. 
For $s\in\hCS$  each coset in $\hCW^s=\hCW/\{1,s\}$ has a smallest
representative by assumption. Hence comparing these representatives gives us an
induced partial order on $\hCW^s$ that we denote by 
``$\unlhd$'' as well.

\subsection{Local sections of moment graph sheaves}

Let $\SF$ be an object in $\CA$. By Proposition \ref{prop-inttwintrans} we have $\Gamma(\SF)\in\CH$. For any subset $\Omega$ of $\hCW$ the restriction map $\Gamma(\SF)\to\Gamma(\Omega,\SF)$ factors over a map $\Gamma(\SF)^\Omega\to\Gamma(\Omega,\SF)$. We now prove that these maps are bijections for open sets $\Omega$. 

\begin{lemma}\label{lemma-specmodglobsec} \begin{enumerate}
\item For a sheaf $\SF$ in $\CA$ and an  open subset $\Omega$ of $\hCW$ we have $\Gamma(\SF)^{\Omega}=\Gamma(\Omega,\SF)$. 
\item For a simple affine reflection $s$, a sheaf $\SG$ in $\CA^s$ and an open subset $\ol\Omega$ of $\hCW^s$ we have $\Gamma(\SG)^{\ol\Omega}=\Gamma(\ol\Omega,\SG)$.
\end{enumerate}
\end{lemma}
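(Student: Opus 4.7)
The proof goes by simultaneous induction on the construction of $\SF \in \CA$ (respectively $\SG \in \CA^s$) from $\SB_e$ via iterated translation functors, direct summands and shifts being trivially preserved. I first reduce to a flabbiness statement: for any $\SF \in \CA$ and any subset $\Omega \subset \hCW$, the restriction map $\Gamma(\SF) \to \Gamma(\Omega,\SF)$ factors through an injection $\Gamma(\SF)^\Omega \hookrightarrow \Gamma(\Omega,\SF)$, because its kernel $\{m \in \Gamma(\SF) : m_w = 0 \text{ for all } w \in \Omega\}$ equals $\Gamma(\SF)_\Lambda$ (with $\Lambda := \hCW \setminus \Omega$) by torsion-freeness of the stalks $\SF^w$. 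So the claim reduces to showing that every section over $\Omega$ extends to a global section. The base case $\SF = \SB_e$ is immediate.

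For the $\tson$-step (part~(2)), given $\SF \in \CA$ satisfying the property and an open $\ol\Omega \subset \hCW^s$, let $\Omega \subset \hCW$ be the preimage of $\ol\Omega$ under the quotient $\hCW \to \hCW^s$. Then $\Omega$ is tautologically $s$-invariant, and a short case analysis using condition~(3) of the partial order (applied to the minimal representative of each orbit) shows that $\Omega$ is open. Proposition~\ref{prop-inttwintrans} together with Lemmas~\ref{lemma-inttwintrans} and~\ref{lemma-transsubquot} then identify $\Gamma(\tson\SF)^{\ol\Omega}$ with $\son(\Gamma(\SF)^\Omega)$ and $\Gamma(\ol\Omega,\tson\SF)$ with $\son\Gamma(\Omega,\SF)$; the inductive hypothesis for $\SF$ at $\Omega$ gives the conclusion, since $\son$ is merely restriction of scalars.

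For the $\tsout$-step (part~(1)), given $\SG \in \CA^s$ and an open $\Omega \subset \hCW$, set $\Omega^+ := \Omega \cup \Omega s$; by condition~(3) this is open, and it is $s$-invariant by construction, while its image $\ol\Omega^+ \subset \hCW^s$ is open. The $s$-invariant case $\Gamma(\tsout\SG)^{\Omega^+} = \Gamma(\Omega^+, \tsout\SG)$ is handled exactly as in the $\tson$-step, noting that the freeness hypothesis in Lemma~\ref{lemma-inttwintrans} holds since the edge stalks of special sheaves are free. It remains to show that the restriction $\Gamma(\Omega^+, \tsout\SG) \to \Gamma(\Omega, \tsout\SG)$ is surjective. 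Using the decomposition $\hCZ = \hCZ^s \oplus c(\alpha_s)\hCZ^s$ of Lemma~\ref{lemma-decZ}, every section on $\Omega^+$ takes the unique form $\tilde m_w = a_{\ol w} + w(\alpha_s)\, b_{\ol w}$ for some pair $(a,b) \in \Gamma(\ol\Omega^+,\SG)^{\oplus 2}$. Given $(m_w)_{w\in\Omega}$, on each orbit $\{w,ws\} \subset \Omega$ the sheaf condition $m_w \equiv m_{ws} \pmod{w(\alpha_s)}$ together with freeness of $\SG^{\ol w}$ (and invertibility of $2$ in $\DZ^\prime$) forces $a_{\ol w} = (m_w+m_{ws})/2$ and $b_{\ol w} = (m_w-m_{ws})/(2 w(\alpha_s))$; on a half-orbit (where by openness the element $y \in \Omega$ is the larger representative) the single equation $a_{\ol y} + y(\alpha_s) b_{\ol y} = m_y$ leaves $b_{\ol y}$ as a free parameter.

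The sheaf conditions on $(m_w)_{w\in\Omega}$ at edges of $\hCG$ within $\Omega$ translate into the corresponding sheaf conditions for $(a,b)$ at edges of $\hCG^s$ within $\ol\Omega^+$; the key input is that for $y = tw$ one has $y(\alpha_s) = t(w(\alpha_s)) \equiv w(\alpha_s) \pmod{\alpha_t}$ (since $t$ fixes the hyperplane $\alpha_t = 0$), so both roots act identically on $\SG^{\ol E}$, which is annihilated by $\alpha_t$. The main obstacle is the case analysis at edges connecting orbits of different types (both fully in $\Omega$, mixed, or both half), and the verification that the constraints on the free parameters $b_{\ol y}$ are consistent and identify $(a,b)$ with a valid section of $\SG \oplus \SG$ on $\ol\Omega^+$; this uses the non-zero-divisor property of $w(\alpha_s)$ in $\hS/\alpha_t \hS$ when $t \neq wsw^{-1}$, which is the GKM condition. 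The induction hypothesis for $\SG$ at $\ol\Omega^+$ then extends $(a,b)$ to a pair of global sections of $\SG$, producing the required global section of $\tsout\SG$ restricting to $(m_w)_{w \in \Omega}$.
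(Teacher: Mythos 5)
Your $\tson$-step is correct and essentially identical to the paper's: you pull back $\ol\Omega$ to an $s$-invariant open $\Omega$, and chain Lemmas~\ref{lemma-inttwintrans} and~\ref{lemma-transsubquot} with the inductive hypothesis. The $s$-invariant case of the $\tsout$-step is also handled the same way in both arguments.

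The genuine gap is in the final part of the $\tsout$-step, which is precisely the crux of the lemma. You correctly reduce to showing that $\Gamma(\Omega^+,\tsout\SG)\to\Gamma(\Omega,\tsout\SG)$ is surjective (with $\Omega^+=\Omega\cup\Omega s$), and you reformulate a putative extension in terms of the decomposition $\tilde m_w = a_{\ol w} + w(\alpha_s)\,b_{\ol w}$: on full orbits $(a_{\ol w}, b_{\ol w})$ is determined, while on half-orbits $b_{\ol y}$ is a free parameter. But a pair $(a,b)$ obtained this way is \emph{exactly the data of} a section of $\tsout\SG$ over $\Omega^+$; asking for a consistent choice of the free parameters $b_{\ol y}$ so that $(a,b)\in\Gamma(\ol\Omega^+,\SG)^{\oplus 2}$ is not a simplification of the problem, it \emph{is} the problem. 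You name this explicitly as ``the main obstacle'' and then assert, without argument, that ``the constraints on the free parameters $b_{\ol y}$ are consistent.'' That assertion is not justified: the edge conditions of $\hCG^s$ at a half-orbit $\ol y$ may involve data at $ys\notin\Omega$ that is not controlled by the sheaf conditions on $m$ within $\Omega$, and reconciling several such constraints on a single $b_{\ol y}$, or between two adjacent half-orbits, requires an actual argument. So as written the proof is incomplete at the one step that carries the real content.

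The paper avoids the global consistency issue entirely by extending one vertex at a time: it suffices to show $\Gamma(\Omega\cup\{x\},\SF)\to\Gamma(\Omega,\SF)$ is surjective for a single $x\in\Omega s\setminus\Omega$ with $\Omega\cup\{x\}$ open. Since the compatibility condition for the new stalk $m_x$ only involves $m_y$ for $y\rhd x$, and since $\{y\mid x\lhd y\}\setminus\{xs\}$ is an $s$-invariant open set, the already-established $s$-invariant case lets one subtract off a global section and thus assume $m_y=0$ for all $y\neq xs$ above $x$; then $m_x:=m_{xs}$ manifestly works by the definition of $\tsout$. If you want to salvage your approach, you would need to carry out exactly this kind of reduction (or an equivalent argument) in order to make the choice of free parameters $b_{\ol y}$ and verify its coherence; without that, the proof does not close.

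Two smaller remarks: you should note that the identification of $b_{\ol w}$ as $(m_w-m_{ws})/(2w(\alpha_s))$ lands in $\SG^{\ol w}$ (not merely a localization) because $\SG^{\ol w}$ is a free, hence torsion-free, $\hS$-module; and the claim that the full preimage $\Omega$ of an open $\ol\Omega$ is open is the observation already made in Section~\ref{partord} (namely that $\{y\mid xs\unlhd y\}$ is $s$-invariant when $xs\unlhd x$), so the ``short case analysis'' is fine but could be sourced there.
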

\begin{proof} 
Statement (1) is obvious in the case $\SF=\SB_e$. So using induction it is enough to consider the following situation. Suppose that  (1) holds for  $\SF^\prime\in\CA$. We then have to show that (2) holds for $\SG:=\tson\SF^\prime$ and (1) holds for $\SF=\tsout\SG$. We repeatedly use the Lemmas \ref{lemma-inttwintrans} and \ref{lemma-transsubquot} in the following.

So suppose that $\ol\Omega$ is open in $\hCW^s$. Then its preimage $\Omega$ is $s$-invariant and open in $\hCW$. We have $\Gamma(\tson\SF^\prime)^{\ol\Omega}=(\son\Gamma(\SF^\prime))^{\ol\Omega}=\son(\Gamma(\SF^\prime)^\Omega)=\son(\Gamma(\Omega,\SF^\prime))=\Gamma(\ol\Omega,\tson\SF^\prime)$, and we have shown property (2) for $\SG=\tson\SF^\prime$.

Now  consider the sheaf $\SF=\tsout\SG$.  Let $\Omega$ be open in $\hCW$ and suppose first
 that $\Omega$ is $s$-invariant. Then we have $\Gamma(\tsout\SG)^\Omega=(\sout\Gamma(\SG))^\Omega=\sout(\Gamma(\SG)^{\ol\Omega})=\sout\Gamma(\ol\Omega,\SG)=\Gamma(\Omega,\tsout\SG)$, hence we proved (1) for $s$-invariant $\Omega$.  
 
 Note that the claim in (1) is equivalent to the surjectivity of the restriction $\Gamma(\SF)\to \Gamma(\Omega,\SF)$. So we have proven this under the $s$-invariance assumption. For an arbitrary open subset $\Omega$, the set $\Omega\cup \Omega s$ is open as well. Using induction it is hence enough to show that $\Gamma(\Omega\cup\{x\},\SF)\to\Gamma( \Omega,\SF)$ is surjective for any $x\in\Omega s$, $x\not\in\Omega$ such that $\Omega\cup \{x\}$ is open as well. So let us fix such an $x$ and let us choose a section $m=(m_y)_{y\in \Omega}$  in $\Gamma(\Omega,\SF)$. We want to find some $m_x\in\SF^x$ such that $((m_y)_{y\in\Omega},m_x)$ is a section of $\SF$ over $\Omega\cup\{x\}$.
  
  Now any two connected vertices in the graph are comparable, hence in order to find $m_x$ it suffices to consider only those $m_y$ with $x\lhd y$.  We have $x\lhd xs$ and hence the set $\{y\mid x\lhd y\}\setminus\{xs\}$ is $s$-invariant. As we have proven the surjectivity statement already for $s$-invariant open sets,  we can assume that $m_y=0$ for all $y\ne xs$ with  $x\lhd y$. But in this case it immediately follows from the definition of $\tsout$  that setting $m_x:=m_{xs}$ actually yields  a section $((0)_{x\lhd y, y\ne xs}, m_x,m_{xs})$  of $\SF$ over $\{y\mid x\unlhd y\}$, hence an extension of $m$ to the vertex $x$. Hence $\Gamma(\Omega\cup\{x\},\SF)\to \Gamma(\Omega,\SF)$ is a surjective map.
  \end{proof}

\subsection{Subquotients}
\label{subsec-subquot}

Let $\SF\in\CA$ and let $M=\Gamma(\SF)\in\CH$ .  Let $\Omega\subset\hCW$ be open and suppose that $x$ is a minimal element in $\Omega$, so $\Omega^\prime:=\Omega\setminus\{x\}$ is open as well. We let $K=K_{\Omega,x}$ be the kernel of the surjection $M^\Omega\to M^{\Omega^\prime}$. From Lemma \ref{lemma-specmodglobsec} we deduce that $K$ can be identified with the kernel of the restriction map $\Gamma(\Omega,\SF)\to \Gamma(\Omega^\prime,\SF)$, hence with the elements in $\Gamma(\Omega,\SF)$ supported on $\{x\}$. Hence this is the set of elements $m_x$ in $\SF^x$ with the property that $\rho_{x,E}(m_x)=0$ for any edge $E$ connecting $x$ to some $y$ with $x\lhd y$. Now the latter description is independent of the choice of $\Omega$ (but certainly depends on ``$\lhd$''). Hence we obtain that $M_{[x]}=M_{[x,\unlhd]}:=K_{\Omega,x}$ is a well-defined subquotient of  $M$.

Suppose that $\Lambda\subset \hCW$ is the complement of $\Omega$. Then $\Lambda^\prime=\Lambda\cup\{x\}$ is the complement of $\Omega^\prime$. Clearly, the cokernel of $M_{\Lambda}\to M_{\Lambda^\prime}$ identifies with the kernel of $M^\Omega\to M^{\Omega^\prime}$, hence with $M_{[x]}$. In particular, we can identify $M_{[x]}$ with the image of the map $M_{\{y\mid y\unlhd x\}}\subset \bigoplus_{y} M^{\emptyset, y}\to M^{\emptyset, x}$, where the last map is the projection along the decomposition.

Let $s$ be a simple affine reflection and let $x\in\hCW$. Then $\{x,xs\}$ is an interval in the partial order ``$\unlhd$''. Suppose that $x\lhd xs$ and set $\Omega:=\{y\mid x\unlhd y\}$, $\Omega^\prime:= \Omega\setminus\{x\}$ and $\Omega^{\prime\prime}=\Omega^\prime\setminus\{xs\}$. Then $\Omega$, $\Omega^\prime$ and $\Omega^{\prime\prime}$  are open. We let $M_{[x,xs]}$ be the kernel of the surjection $M^\Omega\to M^{\Omega^{\prime\prime}}$. As $M_{[x]}$ and $M_{[xs]}$ can be identified with the kernels of $M^\Omega\to M^{\Omega^\prime}$ and $M^{\Omega^\prime}\to M^{\Omega^{\prime\prime}}$ we obtain a short exact sequence 
$$
0\to M_{[x]}\to M_{[x,xs]}\to M_{[xs]}\to 0.
$$

\begin{lemma}\label{lemma-subquotfree} Let $x\in\hCW$, $s\in\hCS$ and
  suppose that $x\unlhd xs$. Then we have isomorphisms
\begin{eqnarray*}
(\theta^s M)_{[x]} &\cong & M_{[x]}\langle -2\rangle \oplus
M_{[xs]}\langle -2\rangle, \\
(\theta^s M)_{[xs]}& \cong &  M_{[x]}\oplus M_{[xs]} 
\end{eqnarray*}
of graded $\hS$-modules. 
In particular, for each $M\in\CH$ and all $y\in\hCW$ the space $M_{[y]}$ is a graded free $\hS$-module of finite rank.
\end{lemma}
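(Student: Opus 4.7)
The strategy is to realize $M$ as the global sections of a moment-graph sheaf and to compute the subquotients locally. By Proposition~\ref{prop-inttwintrans} we may write $M=\Gamma(\SF)$ for some $\SF\in\CA$, and then $\theta^s M=\Gamma(\SG)$ where $\SG:=\ttheta^s\SF$. Combining Lemma~\ref{lemma-specmodglobsec} with the definition of the subquotient, $M_{[y]}$ is identified with the $\hS$-submodule of $\SF^y$ consisting of $m$ with $\rho^{\SF}_{y,E}(m)=0$ for every edge $E$ of $\hCG$ joining $y$ to some $z$ with $y\lhd z$.

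I would then unpack the relevant stalks of $\SG=\tsout\tson\SF$: we have $\SG^x=\SG^{xs}=\Gamma(\{x,xs\},\SF)\subset\SF^x\oplus\SF^{xs}$; the edge $E_0$ of $\hCG$ joining $x$ and $xs$, with label $\alpha_0:=\alpha(E_0)$, yields $\SG^{E_0}=\Gamma(\{x,xs\},\SF)/\alpha_0\Gamma(\{x,xs\},\SF)$ with both $\rho^{\SG}_{x,E_0}$ and $\rho^{\SG}_{xs,E_0}$ equal to the quotient map; and for any other edge $E$ from $x$ to $y=tx$ (necessarily $t\neq xsx^{-1}$, so $y\neq xs$), the companion edge $Es$ from $xs$ to $t(xs)$ shares the label of $E$, and $\rho^{\SG}_{x,E}(a,b)=(\rho^{\SF}_{x,E}(a),\rho^{\SF}_{xs,Es}(b))$. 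Since $x\unlhd xs$ implies that $\{z:x\unlhd z\}$ is $s$-invariant (by the remark following property~(3) in Section~\ref{partord}), the sheaf description of the subquotients applied to $\SG$ yields: $(\theta^s M)_{[xs]}$ consists of pairs $(a,b)\in\Gamma(\{x,xs\},\SF)$ with $\rho^{\SF}_{x,E}(a)=0$ and $\rho^{\SF}_{xs,Es}(b)=0$ for every edge $E$ from $x$ to $y\neq xs$ with $x\lhd y$, while $(\theta^s M)_{[x]}$ consists of those pairs additionally lying in $\alpha_0\Gamma(\{x,xs\},\SF)$.

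Writing $(a,b)=\alpha_0(a',b')$ with $(a',b')\in\Gamma(\{x,xs\},\SF)$ (well defined by torsion-freeness of the stalks of $\SF$), and noting that $\alpha_0$ is a non-zero-divisor modulo each $\alpha(E)$ for the edges $E$ above (the positive roots of the distinct reflections $xsx^{-1}$ and $t$ are not proportional in $\hS$), the conditions on $(a,b)$ become the same conditions on $(a',b')$. Multiplication by $\alpha_0$ is thus an isomorphism $(\theta^s M)_{[xs]}\langle-2\rangle\xrightarrow{\sim}(\theta^s M)_{[x]}$, giving the first claim. For the second, I would use the combinatorial equivalence $x\lhd tx\Leftrightarrow xs\lhd t(xs)$ for every $t\neq xsx^{-1}$, which for the Bruhat and generic Bruhat orders follows from the root-positivity criterion ($x^{-1}\alpha_t>0$ iff $sx^{-1}\alpha_t>0$ whenever $x^{-1}\alpha_t\neq\pm\alpha_s$). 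This identifies the conditions on $b$ above with those defining $b\in M_{[xs]}$, and the conditions on $a$ with those for $M_{[x]}$ except for the $E_0$-condition $\rho^{\SF}_{x,E_0}(a)=0$. The projection $(a,b)\mapsto b$ thus yields a short exact sequence
\[
0\longrightarrow M_{[x]}\longrightarrow(\theta^s M)_{[xs]}\longrightarrow M_{[xs]}\longrightarrow 0,
\]
whose kernel is $\{(a,0):a\in M_{[x]}\}$ thanks to the section condition at $E_0$.

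The principal obstacle is to produce a splitting of this sequence, which amounts to the inclusion $\rho^{\SF}_{xs,E_0}(M_{[xs]})\subset\rho^{\SF}_{x,E_0}(M^{\star}_{[x]})$ in $\SF^{E_0}$, where $M^{\star}_{[x]}$ denotes the $\hS$-submodule of $\SF^x$ cut out by the defining conditions of $M_{[x]}$ other than the $E_0$-condition. I would handle this by joint induction with the freeness statement on the presentation of $M$ in $\CH$: the base case $M=B_e$ is trivial, and for $M=\theta^s M'$ the required inclusion can be verified directly from the explicit description of $\theta^s M'$ in terms of $M'_{[x]}$ and $M'_{[xs]}$ furnished by the induction hypothesis, using their freeness. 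Finally, since every $y\in\hCW$ belongs to a unique pair $\{x,xs\}$ with $x\unlhd xs$, the established isomorphisms express $(\theta^s M')_{[y]}$ as a graded direct sum of shifted copies of free $\hS$-modules, and graded summands of graded free $\hS$-modules are graded free, so freeness propagates.
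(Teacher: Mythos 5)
Your route is genuinely different from the paper's. The paper works entirely at the level of $\hCZ$-modules: it first observes that $\Omega=\{z\mid x\unlhd z\}$ and $\Omega''=\Omega\setminus\{x,xs\}$ are $s$-invariant, so Lemma \ref{lemma-transsubquot} together with the exactness of $\theta^s=\hCZ\otimes_{\hCZ^s}(-)$ gives $(\theta^s M)_{[x,xs]}=\theta^s(M_{[x,xs]})=\hCZ(\{x,xs\})\otimes_{\hS}M_{[x,xs]}$. Since $\hCZ(\{x,xs\})_{[x]}\cong\hS\langle-2\rangle$ and $\hCZ(\{x,xs\})_{[xs]}\cong\hS$, one reads off $(\theta^s M)_{[x]}\cong M_{[x,xs]}\langle-2\rangle$ and $(\theta^s M)_{[xs]}\cong M_{[x,xs]}$ with no further work, and then $M_{[x,xs]}\cong M_{[x]}\oplus M_{[xs]}$ because the short exact sequence splits once $M_{[xs]}$ is known (inductively) to be free. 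You instead compute the stalks of $\SG=\ttheta^s\SF$ explicitly; your descriptions of $\SG^x$, $\SG^{E_0}$, $\rho^{\SG}$, and your identification of $(\theta^s M)_{[x]}$, $(\theta^s M)_{[xs]}$ as subsets of $\Gamma(\{x,xs\},\SF)$ are all correct, as is the direct multiplication-by-$\alpha_0$ argument showing $(\theta^s M)_{[x]}\cong(\theta^s M)_{[xs]}\langle-2\rangle$. (The biconditional $x\lhd tx\Leftrightarrow xs\lhd t(xs)$ for $t\ne xsx^{-1}$ that you derive from root positivity also follows for an arbitrary order satisfying (1)--(3), using the $s$-invariance of $\{z\mid z\unlhd xs\}$ and property (1).)

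The genuine gap is in the passage you yourself flag. Your sequence $0\to M_{[x]}\to(\theta^s M)_{[xs]}\to M_{[xs]}\to 0$ is only established to be left-exact: the inclusion $\rho^{\SF}_{xs,E_0}(M_{[xs]})\subset\rho^{\SF}_{x,E_0}(M^{\star}_{[x]})$ is precisely \emph{surjectivity} of the projection $(a,b)\mapsto b$, not a splitting; the two are distinct problems, and your sketch conflates them. Moreover, your proposed inductive verification ``for $M=\theta^s M'$'' only addresses the case in which the reflection of the lemma coincides with the last one used to build $M$, while the lemma must hold for all $s\in\hCS$ simultaneously; as written the induction does not close. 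This is exactly the step that the paper's module-theoretic argument removes: exactness of $\theta^s$ forces $(\theta^s M)_{[x,xs]}=\theta^s(M_{[x,xs]})$, so the surjectivity of $(\theta^s M)_{[x,xs]}\to(\theta^s M)_{[xs]}$ is automatic and only the splitting of the \emph{single} short exact sequence $0\to M_{[x]}\to M_{[x,xs]}\to M_{[xs]}\to 0$ has to be supplied, which it is by the freeness of $M_{[xs]}$ that one may assume inductively. If you want to keep your sheaf-theoretic presentation, the cleanest repair is to import the paper's identification $(\theta^s M)_{[x,xs]}\cong\hCZ(\{x,xs\})\otimes_{\hS}M_{[x,xs]}$ (or its translation into the section language $\Gamma(\{x,xs\},\SF)$) as the source of surjectivity, rather than attempting to establish the image inclusion directly by induction.
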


\begin{proof}
Since $\Omega$ and $\Omega^{\prime\prime}$ are $s$-invariant we have, by Lemma \ref{lemma-transsubquot}, 
$\theta^s M^\Omega=(\theta^s M)^\Omega$ and $\theta^s M^{\Omega^{\prime\prime}}=(\theta^s M)^{\Omega^{\prime\prime}}$. As $\theta^s$ is an exact functor on $\hCZ\catmod^f$, we can deduce $\theta^s M_{[x,xs]}=(\theta^s M)_{[x,xs]}$. Now
$
\theta^s M_{[x,xs]} = \hCZ(\{x,xs\})\otimes_{\hCZ(\{x,xs\})^s} M_{[x,xs]}=\hCZ(\{x,xs\})\otimes_{\hS} M_{[x,xs]}$.
Moreover,  $\hCZ(\{x,xs\})_{[x]}\cong \hS\langle -2\rangle$ and $\hCZ(\{x,xs\})_{[xs]}\cong \hS$. From the short exact sequence above we deduce that the $\hS$-modules $M_{[x,xs]}$ and $M_{[x]}\oplus M_{[xs]}$ are isomorphic, hence we get the isomorphisms that are stated in the lemma. 
\end{proof}

\subsection{The $k$-linear version}\label{subsec-basechange}

Let $k$ be a ring in which $2$ is invertible. Recall that we defined  $\hCZ_k=\hCZ\otimes_{\DZ^\prime}k$ and $\hCZ^s_k=\hCZ^s\otimes_{\DZ^\prime}k$, hence any module in $\CH_k$ occurs as a direct summand of $M_k=M\otimes_{\DZ^\prime} k$ for some object $M$ of $\CH$. We now translate the previous results on the category $\CH$ to $\CH_k$ using base change. 

For a finite subset $\Omega$ of $\hCW$ we set $\hCZ_k(\Omega)=\hCZ(\Omega)_k$. If $\Omega$ is $s$-invariant, we set $\hCZ_k(\Omega)^s=(\hCZ(\Omega)^s)_k$. We get canonical maps $\hCZ_k\to \hCZ_k(\Omega)$, $\hCZ^s_k\to \hCZ_k(\Omega)^s$ and the statements of Lemma \ref{Zqufin} carry over to the $k$-linear versions. Now the statement of  Lemma \ref{lemma-inttwintrans} does not necessarily carry over, as we used an analog of the GKM-property in its proof (but compare Proposition \ref{prop-globsecBM}), which is why we defined the category $\CA$ only over $\DZ^\prime$. 

We set $\hCZ^\emptyset_k(\Omega)=\hCZ^\emptyset(\Omega)_k$ and $\hCZ^\beta_k(\Omega)=\hCZ^\beta(\Omega)_k$ for $\beta\in R^+$. From Lemma \ref{lemma-Zloc} we deduce  $\hCZ^\emptyset_k(\Omega)=\bigoplus_{w\in\Omega}\hS_k^\emptyset$ and $\hCZ_k^\beta(\Omega)=\bigoplus_{\Theta\in\hCW^\beta\backslash\hCW}\hCZ_k^\beta(\Omega\cap\Theta)$. So we get a decomposition as in Lemma \ref{lemma-decompreg} also for objects in $\CH_k$.  
  For a simple affine reflection $s$ and an $s$-invariant $\Omega$ we set
$\hCZ_k^{\emptyset}(\Omega)^s=(\hCZ^{\emptyset}(\Omega)^s)_k$, etc. Then the statements of Lemma \ref{decompsing} carry over. For $M\in \CH_k$ or $N\in \CH_k^s$ we define $M^\Omega$, $M_\Lambda$, $N^{\ol\Omega}$ and $N_{\ol\Lambda}$ as before using the generic decompositions. These decompositions are  compatible with base change, i.e.~for $M\in \CH$ and $w\in \hCW$ we have $M_k^{\emptyset,w}=(M^{\emptyset,w})_k$, etc., so Lemmas \ref{lemma-gendecomptrans}, \ref{loctransone} and \ref{lemma-transsubquot} carry over. 

Now we define, for $M\in\CH_k$ and $x\in\hCW$ the subquotients $M_{[x]}$ and $M_{[x,xs]}$ as the kernels of the maps $M^{\unrhd x}\to M^{\rhd x}$ and $M^{\unrhd x}\to M^{\rhd x,\ne xs}$ if $xs\rhd x$.  Then the freeness result of Lemma \ref{lemma-subquotfree} implies that for $M\in \CH$ we have $(M_k)_{[x]}=(M_{[x]})_k$, etc. Hence also Lemma \ref{lemma-subquotfree} carries over.

From now on we consider for a given $M\in\CH_k$ the quotients $M^\Omega$ only for open subsets $\Omega$, and the submodules $M_\Lambda$ only for their closed complements $\Lambda$.

\subsection{Character maps associated to partial orders}
Suppose that $L$ is a $\DZ$-graded, graded free $\hS_k$-module of finite
rank, i.e.~$L\cong \bigoplus_{i=1,\dots,n} \hS_k\langle k_i\rangle$ for some
$k_i\in\DZ$. The multiset of numbers $\{k_i\}$ is well-defined, so we
set 
$$
\grk\, L:=\sum_{i=1,\dots,n} v^{-k_i}\in\DN[v,v^{-1}].
$$ 

Choose a length function $l\colon\hCW\to\DZ$ with respect to
``$\unlhd$'', i.e.~a function with the property that $l(xs)=l(x)+1$ for
all $x\in\hCW$ and $s\in\hCS$ with $x\unlhd xs$. Assume that $l(e)=0$. 
Let $\bW$ be the free
$\DZ[v,v^{-1}]$-module with basis $\{ W_x\}_{x\in\hCW}$. 
By Lemma \ref{lemma-subquotfree}  we can define for each $M\in\CH_k$ the element 
$$
h_{\unlhd,l}(M)  :=  \sum_{x\in\hCW} v^{l(x)}\grk\, M_{[x,\unlhd]}W_x\in\bW.
$$
This yields a map from the Grothendieck group of $\CH_k$ to $\bW$ that
we notate as $h_{\unlhd,l}\colon \CH_k \dashrightarrow \bW$.
For  $s\in\hCS$  we define the $\DZ[v,v^{-1}]$-linear map $\rho_{s,\unlhd}\colon\bW\to\bW$ by 
$$
\rho_{s,\unlhd}(W_x) := 
\begin{cases} 
W_{xs}+ v^{-1}W_x,  &\text{if $xs\unlhd x$}, \\
W_{xs}+ v W_x, & \text{if $x\unlhd xs$}.
\end{cases}
$$

\begin{proposition}\label{chartrans} For each $M\in\CH_k$ and $s\in\hCS$ we have
$$
h_{\unlhd,l}(\theta^s M\langle1\rangle) =  \rho_{s,\unlhd}(h_{\unlhd,l}(M)).
$$
\end{proposition}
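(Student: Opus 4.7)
The plan is to reduce the identity to a coefficient-by-coefficient comparison by grouping $\hCW$ into $s$-orbits $\{x, xs\}$, always arranged so that $x \unlhd xs$ (hence $l(xs)=l(x)+1$). Since both sides of the claimed identity only mix $W_x$ and $W_{xs}$ within a single such pair, it suffices to match the coefficients of these two basis elements for each orbit separately.

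The only serious ingredient is Lemma \ref{lemma-subquotfree}. Before applying it I would record two preparatory facts. First, the elementary formula $\grk N\langle n\rangle = v^{-n}\grk N$ follows directly from the definition of $\grk$. Second, the subquotient functor $N\mapsto N_{[y,\unlhd]}$ commutes with the grading shift $\langle 1\rangle$, because it is built from base-change localizations, projections onto direct summands, and kernels of graded $\hS_k$-linear maps, each of which is compatible with shifts. Together these give $\grk(\theta^s M\langle 1\rangle)_{[y]} = v^{-1}\grk(\theta^s M)_{[y]}$ for every $y\in\hCW$.

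Now fix a pair $\{x,xs\}$ with $x\unlhd xs$. Lemma \ref{lemma-subquotfree} gives
$$
\grk(\theta^s M)_{[x]} = v^2\bigl(\grk M_{[x]}+\grk M_{[xs]}\bigr), \qquad \grk(\theta^s M)_{[xs]} = \grk M_{[x]}+\grk M_{[xs]}.
$$
Plugging these into the definition of $h_{\unlhd,l}$, and using $l(xs)=l(x)+1$ together with the shift identity above, the contribution of this pair to the left-hand side simplifies to
$$
v^{l(x)+1}\bigl(\grk M_{[x]}+\grk M_{[xs]}\bigr)W_x + v^{l(x)}\bigl(\grk M_{[x]}+\grk M_{[xs]}\bigr)W_{xs}.
$$
On the right-hand side, applying $\rho_{s,\unlhd}$ to the two terms from the same pair in $h_{\unlhd,l}(M)$ yields
$$
v^{l(x)}\grk M_{[x]}\bigl(W_{xs}+vW_x\bigr) + v^{l(xs)}\grk M_{[xs]}\bigl(W_x+v^{-1}W_{xs}\bigr),
$$
using the $x\unlhd xs$ case of $\rho_{s,\unlhd}$ for the first term and the opposite case for the second. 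Expanding these out gives the very same expression, so the pair contributions agree. Summing over all pairs proves the identity (the sum is finite, since $M\in\CH_k$ is finitely generated over $\hS_k$ and only finitely many $M_{[y]}$ are nonzero).

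The entire argument is really a bookkeeping exercise once Lemma \ref{lemma-subquotfree} is in hand. The main (very mild) obstacle is keeping the grading-shift conventions aligned with the formal variable $v$, and observing that the length-function factor $v^{l(x)}$ in $h_{\unlhd,l}$ and the two cases of $\rho_{s,\unlhd}$ (with their $v$ versus $v^{-1}$ asymmetry) were designed precisely to cancel the shifts $\langle -2\rangle$ and $\langle 1\rangle$ coming from $\theta^s$ and the auxiliary $\langle 1\rangle$ on the left-hand side. No further input is needed beyond Lemma \ref{lemma-subquotfree} and the definitions.
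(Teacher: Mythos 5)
Your proof is correct and follows essentially the same computation as the paper: both reduce the claim to the $\grk$ formulas supplied by Lemma \ref{lemma-subquotfree} together with the defining formula $\grk(N\langle n\rangle)=v^{-n}\grk N$, and then verify the identity by matching coefficients of $W_x$ and $W_{xs}$. The only cosmetic difference is that you organize the sum by $s$-orbits $\{x,xs\}$ while the paper splits the single sum over $\hCW$ into the two cases $x\unlhd xs$ and $xs\unlhd x$ — these are the same bookkeeping.
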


\begin{proof} 
For  $x\in\hCW$ we have, by Lemma \ref{lemma-subquotfree},
$$
\grk(\theta^s M)_{[x]}  
= 
\begin{cases} 
\grk\, M_{[x]}+\grk\, M_{[xs]}, & \text{if $ xs\unlhd x$}, \\
v^2\cdot \grk\, M_{[x]}+v^2\cdot \grk\, M_{[xs]}, & \text{if $x\unlhd xs$.}
\end{cases}
$$
Hence
\begin{eqnarray*}
h_{\unlhd,l}(\theta^s M\langle1\rangle) & = & \sum_{x\in\hCW}v^{l(x)-1}\grk (\theta^s M)_{[x]}  W_x \\
& = & \sum_{x\in\hCW, xs\unlhd x}  v^{l(x)-1}  \left(\grk\, M_{[x]}+\grk\, M_{[xs]}\right)W_x \\
& & + \sum_{x\in\hCW, x\unlhd xs}  v^{l(x)+1} \left(\grk\, M_{[xs]}+\grk\, M_{[x]}\right) W_x.
\end{eqnarray*}
Moreover,
\begin{eqnarray*}
\rho_{s,\unlhd}(h_{\unlhd,l}(M)) & = & \sum_{x\in\hCW}(v^{l(x)} \grk\, M_{[x]})\rho_{s,\unlhd}(W_x)\\ 
& = & \sum_{x\in\hCW, xs\unlhd x}  ( v^{l(x)}\grk\, M_{[x]} )(W_{xs}+v^{-1} W_x) \\
&& + \sum_{x\in\hCW, x\unlhd xs}  (v^{l(x)}\grk\, M_{[x]})(W_{xs}+vW_{x}) \\
& = & \sum_{x\in\hCW,xs\unlhd x}  (v^{l(x)-1}\grk\, M_{[x]} +v^{l(x)-1}\grk\, M_{[xs]}) W_x \\
&& + \sum_{x\in\hCW, x\unlhd xs}  (v^{l(x)+1}\grk\, M_{[x]}+v^{l(x)+1}\grk\, M_{[xs]})W_x\\ 
& = & h_{\unlhd,l}(\theta^s M\langle 1 \rangle).
\end{eqnarray*}
Hence the proposition is proven.
\end{proof}
In the following we consider more closely the character maps
associated to the Bruhat order and the generic Bruhat order.

\subsection{The Bruhat order and the affine Hecke algebra} \label{Bruord}
In the following we will denote by ``$\varle$'' the Bruhat order on $\hCW$ with respect to $\hCS$. It has all the properties listed in Section \ref{partord} (for property $(3)$, see \cite[Proposition 5.9]{Hum}). Denote by $l\colon\hCW\to\DN$ the associated length function. 

Let $\bH=\bigoplus_{x\in\hCW} \DZ[v,v^{-1}]\cdot T_x$ be the affine Hecke algebra. Its multiplication is given by the formulas
\begin{eqnarray*}
{ T}_x\cdot { T}_{y} & = & { T}_{xy}\quad\text{if $l(xy)=l(x)+l(y)$}, \\
{ T}_s^2 & = & v^{-2}T_e+(v^{-2}-1)T_s \quad\text{for $s\in\hCS$}.
\end{eqnarray*}
Then ${ T}_e$ is a unit in $\bH$ and for any $x\in\hCW$ there exists an inverse of ${ T}_x$ in $\bH$. For $s\in\hCS$ we have ${ T}_s^{-1}=v^2T_s+(v^2-1)$. There is a duality (i.e.~a $\DZ$-linear involution) $d\colon\bH\to\bH$, given by $d(v)=v^{-1}$ and $d(T_x)  =  T_{x^{-1}}^{-1}$ for $x\in\hCW$. 

Set $\tilde T_x:=v^{l(x)}T_x$. Recall the following result:
\begin{theorem}[\cite{MR560412,MR1445511}]\label{self-dual elts} 
For any $x\in\hCW$ there exists a unique element $\ul{H}_x=\sum_{y\in\hCW} h_{y,x}(v)\cdot { \tilde T}_y\in\bH$ with the following properties:
\begin{enumerate}
\item \label{prop of H: duality} $\ul{H}_x$ is self-dual, i.e.~$d(\ul{H}_x)=\ul{H}_x$. 
\item \label{prop of H: support} $h_{y,x}(v)=0$ if $y\not\leq x$, and $h_{x,x}(v)=1$,
\item \label{prop of H: norm} $h_{y,x}(v)\in v\DZ[v]$ for $y<x$.
\end{enumerate}
\end{theorem}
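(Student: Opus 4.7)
This is the classical construction of the Kazhdan--Lusztig basis of the affine Hecke algebra, and the plan is to prove uniqueness first, then construct the elements $\ul{H}_x$ by induction on $l(x)$ via the standard ``bar-involution plus correction'' argument of \cite{MR560412}.

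\emph{Uniqueness.} Suppose $H, H' \in \bH$ both satisfy (1)--(3). The preliminary step is to observe that for every $y \in \hCW$ one has $d(\tilde{T}_y) \in \tilde{T}_y + \bigoplus_{z < y} \DZ[v,v^{-1}]\,\tilde{T}_z$; this follows from $d(T_y) = T_{y^{-1}}^{-1}$ and $T_s^{-1} = v^2 T_s + (v^2 - 1)$ by a short induction on $l(y)$. Writing $D := H - H' = \sum_y a_y \tilde{T}_y$, the element $D$ is self-dual, supported on $\{y : y < x\}$, and all $a_y$ lie in $v\DZ[v]$. If $D \ne 0$, choose $y$ maximal with $a_y \ne 0$; the coefficient of $\tilde{T}_y$ in $d(D) = D$ is then $\bar{a}_y$, forcing $a_y = \bar{a}_y$. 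Since $a_y \in v\DZ[v]$ while $\bar{a}_y \in v^{-1}\DZ[v^{-1}]$, both vanish, a contradiction.

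\emph{Existence.} Induct on $l(x)$. For $x = e$, set $\ul{H}_e := T_e$; since $d(T_e) = T_e$, self-duality is immediate. For $l(x) \ge 1$, fix a simple affine reflection $s \in \hCS$ with $xs < x$. I would first verify directly that $\ul{H}_s := \tilde{T}_s + v\tilde{T}_e$ is self-dual by computing $d(\tilde{T}_s) = \tilde{T}_s + v - v^{-1}$ from the quadratic relation. Since $d$ is an algebra homomorphism (checkable on generators using the defining relations), the product $C := \ul{H}_{xs}\cdot \ul{H}_s$ is self-dual. Using $\tilde{T}_y \tilde{T}_s = \tilde{T}_{ys}$ if $ys > y$ and $\tilde{T}_y \tilde{T}_s = \tilde{T}_{ys} + (v^{-1}-v)\tilde{T}_y$ if $ys < y$, one checks that $C$ has coefficient $1$ at $\tilde{T}_x$ and is supported on $\{y \le x\}$. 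Then run a downward induction on the Bruhat order: for each $y < x$ taken in decreasing order, the coefficient $c_y$ of $\tilde{T}_y$ in the currently corrected $C$ satisfies $c_y = \bar{c}_y$ by self-duality combined with the fact that all coefficients at higher $z$ already lie in $v\DZ[v]$; hence the ``non-$v\DZ[v]$'' part of $c_y$ is a symmetric Laurent polynomial $\mu_y \in \DZ[v,v^{-1}]$. Subtracting $\mu_y \ul{H}_y$ (which exists by the inductive hypothesis, is self-dual by a symmetric scalar, and is supported on $\{z \le y\}$ so it leaves higher coefficients undisturbed) yields a new self-dual element whose coefficient at $\tilde{T}_y$ now lies in $v\DZ[v]$. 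After finitely many such corrections, the resulting element satisfies all three properties and is therefore the desired $\ul{H}_x$.

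The main obstacle I anticipate is justifying the symmetry claim at each stage of the downward induction, namely that the ``bad part'' of $c_y$ is invariant under $v \leftrightarrow v^{-1}$, for this is what allows the correction by a symmetric multiple of $\ul{H}_y$. This requires a careful comparison of coefficients of $d(C) = C$ and the observation that previously applied corrections $\mu_z \ul{H}_z$ with $z > y$ contribute only to $\tilde{T}_{z'}$-coefficients with $z' \le z$, so the self-duality relation at $\tilde{T}_y$ is preserved through the induction. Once this symmetry is established, the construction proceeds mechanically, and uniqueness then identifies the result with $\ul{H}_x$.
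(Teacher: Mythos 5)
The paper does not actually prove this theorem; it cites it from Kazhdan--Lusztig \cite{MR560412} and Soergel \cite{MR1445511} and treats it as a black box. Your proof is the standard argument from those sources: unitriangularity of the bar involution (with self-duality pushing symmetric Laurent polynomials in $v\DZ[v]$ to zero) for uniqueness, and the ``self-dual product plus downward correction'' construction for existence. I checked the computations: $d(\tilde T_e)=\tilde T_e$, $d(\tilde T_s)=\tilde T_s+v-v^{-1}$ hence $d(\ul H_s)=\ul H_s$, the multiplication formulas for $\tilde T_y\tilde T_s$, and the unitriangularity of $d$ on the $\tilde T_y$-basis are all correct, and the observation that the correction by $\mu_y\ul H_y$ leaves coefficients at $\tilde T_z$ for $z\not\le y$ untouched is exactly the point that makes the downward induction close. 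One small point of care, which you do flag, is that $d$ is a ring homomorphism; this is a computation on the generators and relations and is needed to conclude $C=\ul H_{xs}\cdot\ul H_s$ is self-dual. With that verified, your argument is complete and correct, and it is essentially the only known proof of this statement.
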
 
For example, we have $\ul{H}_e=\tilde T_e$ and $\ul{H}_s=\tilde T_s+v\tilde T_e$ for each $s\in\hCS$. 

Now we identify the free $\DZ[v,v^{-1}]$-modules $\bH$ and $\bW$ by means of the map $\tilde T_x\mapsto W_x$ for all $x\in\hCW$. In the following we will consider $h_{\varle,l}$ as a character map from $\CH_k$ to $\bH$. A short and simple calculation yields the following result.
\begin{lemma}\label{hvarle} Choose $s\in\hCS$. Under the above identification the right multiplication with $\ul{H}_s$ on $\bH$ corresponds to the map $\rho_{s,\varle}\colon \bH\to\bH$. Hence 
$$
h_{\varle,l}(\theta^s M\langle 1\rangle)=h_{\varle,l}( M)\cdot \ul{H}_s
$$
for all $M\in\CH_k$.
\end{lemma}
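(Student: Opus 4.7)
The lemma has two assertions. The second (the compatibility $h_{\varle,l}(\theta^s M\langle 1\rangle) = h_{\varle,l}(M)\cdot \underline{H}_s$) follows at once from the first via Proposition \ref{chartrans}, since that proposition identifies $h_{\varle,l}(\theta^s M\langle 1\rangle)$ with $\rho_{s,\varle}(h_{\varle,l}(M))$, and the first assertion says that $\rho_{s,\varle}$ is right multiplication by $\underline{H}_s$ under the identification $\tilde T_x\leftrightarrow W_x$. So the plan is to reduce everything to verifying this one formal identity in $\bH$.

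To do so, I would compute $\tilde T_x\cdot \underline{H}_s = \tilde T_x\tilde T_s + v\tilde T_x$ in the two cases distinguished by $\rho_{s,\varle}$. If $x\varle xs$ then $l(xs)=l(x)+1$, so $T_xT_s=T_{xs}$ and hence
\[
\tilde T_x\tilde T_s = v^{l(x)}T_x\cdot vT_s = v^{l(x)+1}T_{xs}=\tilde T_{xs},
\]
giving $\tilde T_x\cdot \underline{H}_s=\tilde T_{xs}+v\tilde T_x$, which matches $\rho_{s,\varle}(W_x)=W_{xs}+vW_x$.

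If $xs\varle x$ then $l(x)=l(xs)+1$, so $T_x=T_{xs}T_s$ and the quadratic relation $T_s^2=v^{-2}T_e+(v^{-2}-1)T_s$ gives
\[
T_xT_s = T_{xs}T_s^2 = v^{-2}T_{xs}+(v^{-2}-1)T_x.
\]
Multiplying by $v^{l(x)+1}$ yields $\tilde T_x\tilde T_s=\tilde T_{xs}+(v^{-1}-v)\tilde T_x$, and adding $v\tilde T_x$ gives $\tilde T_x\cdot \underline{H}_s=\tilde T_{xs}+v^{-1}\tilde T_x$, which matches $\rho_{s,\varle}(W_x)=W_{xs}+v^{-1}W_x$. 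Since $\{\tilde T_x\}_{x\in\hCW}$ is a $\DZ[v,v^{-1}]$-basis of $\bH$, both $\DZ[v,v^{-1}]$-linear maps agree on all of $\bH$.

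There is no real obstacle here: the argument is a short direct calculation in the affine Hecke algebra, using only the quadratic relation and the length-additive multiplication rule. The only thing worth flagging is that one must verify the Bruhat order on $\hCW$ satisfies the three axioms of Section \ref{partord} in order to invoke Proposition \ref{chartrans} (property (3) is the nontrivial one, but is standard, e.g.\ \cite[Proposition 5.9]{Hum} as cited in Section \ref{Bruord}).
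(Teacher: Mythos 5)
Your proof is correct and is precisely the "short and simple calculation" the paper alludes to without writing out; the two-case computation of $\tilde T_x\cdot\ul{H}_s$ using length-additivity and the quadratic relation is the intended argument, and the reduction of the second assertion to Proposition \ref{chartrans} matches the paper's structure.
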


\subsection{The generic Bruhat order and the periodic Hecke module}\label{Brinf}
Denote by ``$\succeq$'' the {\em generic} Bruhat order on $\hCW$. It can be
defined as follows. For $x,y\in\hCW$ set $x\succeq y$ if
$t_{\lambda}y\varle t_{\lambda}x$ for sufficiently large 
$\lambda\in \DN R^{\vee,+}$.  This order has all the properties we listed at the beginning of this section (for property (3), see \cite[Proposition 3.2]{MR591724}).  Let $\delta\colon\hCW\to\DZ$ be a length function, normalized such that
 $\delta(e)=0$. 
 
 Let $\bM$ be the free $\DZ[v,v^{-1}]$-module with basis $\{A_x\}_{x\in\hCW}$, and identify $\bM$ with $\bW$ by means of the map $A_x\mapsto W_x$.

\begin{theorem} \cite{MR1445511}\label{Mstruc}
 There is a unique right $\bH$-module structure on $\bM$, denoted by
 $\ast\colon \bM\times\bH\to\bM$, such that for all $s\in\hCS$ the right multiplication with
 $\ul{H_s}$ is given by the map $\rho_{s,\succeq}\colon
 \bM\to\bM$, i.e.~
$$
X\ast \ul{H_s} = \rho_{s,\succeq}(X)
$$
for all $X\in \bM$ and $s\in\hCS$. 
\end{theorem}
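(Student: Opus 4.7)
I would split the argument into uniqueness and existence.

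\emph{Uniqueness.} The identity $\ul{H}_s = v\tilde T_s + v\tilde T_e = vT_s + vT_e$ yields $T_s = v^{-1}\ul{H}_s - T_e$, so together with $T_e$ the Kazhdan--Lusztig generators $\{\ul{H}_s\}_{s\in\hCS}$ generate $\bH$ as a $\DZ[v,v^{-1}]$-algebra. Hence any right $\bH$-module structure on $\bM$ is determined by the operators of right multiplication with the $\ul{H}_s$, proving uniqueness.

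\emph{Existence.} I would set $R_s := \rho_{s,\succeq}$ and show that these operators satisfy the defining relations of $\bH$ relative to the generating set $\{\ul{H}_s\}$. A short direct manipulation, using $\ul{H}_s = vT_s + vT_e$ together with the quadratic relation on $T_s$, shows that $\bH$ is presented by the $\ul{H}_s$ subject to the quadratic relations $\ul{H}_s^2 = (v+v^{-1})\ul{H}_s$ and, for each pair $s\ne t$ with $m_{st}<\infty$, the braid relation $\underbrace{\ul{H}_s\ul{H}_t\ul{H}_s\cdots}_{m_{st}\text{ factors}} = \underbrace{\ul{H}_t\ul{H}_s\ul{H}_t\cdots}_{m_{st}\text{ factors}}$. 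The quadratic identity $R_s^2 = (v+v^{-1})R_s$ is an immediate $2\times 2$ matrix computation: by property (2) of Section~\ref{partord} the pair $\{x,xs\}$ is an interval in~$\succeq$, so the restriction of $R_s$ to $\DZ[v,v^{-1}]A_x\oplus\DZ[v,v^{-1}]A_{xs}$ has the same shape as in the ordinary Bruhat setting, and $R_s^2 = (v+v^{-1})R_s$ follows by direct calculation.

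\emph{Main obstacle.} The hard part is the braid relations. Applied to a basis element $A_x$, both sides of $\underbrace{R_sR_tR_s\cdots}_{m_{st}}(A_x) = \underbrace{R_tR_sR_t\cdots}_{m_{st}}(A_x)$ produce elements supported on the right coset $x\langle s,t\rangle$, a finite set of cardinality $2m_{st}$. To conclude I would need an explicit description of the restriction of the generic Bruhat order $\succeq$ to this dihedral coset; property (3) of Section~\ref{partord} guarantees enough compatibility to reduce the verification to a finite case analysis, but unlike the ordinary Bruhat order, $\succeq$ is defined via translations $t_\lambda$ with $\lambda$ sufficiently large in $\DN R^{\vee,+}$, so the combinatorics is less transparent. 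The required description of $\succeq$ on a dihedral coset is supplied by \cite[Proposition~3.2]{MR591724}, and the verification of the braid identities for the operators $\rho_{s,\succeq}$ in each of the finitely many possible local configurations is precisely the content of Lusztig's construction in \cite{MR1445511}, to which I would refer for the detailed case-by-case check.
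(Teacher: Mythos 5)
The paper does not prove this theorem; it is stated with a citation to Soergel \cite{MR1445511} and used as a black box. Your uniqueness argument is correct: since $T_s = v^{-1}\ul{H}_s - T_e$, the elements $\ul{H}_s$ together with $T_e$ generate $\bH$, so any right module structure is determined by right multiplication with the $\ul{H}_s$.

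The existence argument, however, contains a genuine error. You assert that $\bH$ is presented by the generators $\ul{H}_s$ subject to the quadratic relations $\ul{H}_s^2 = (v+v^{-1})\ul{H}_s$ together with the braid relations $\ul{H}_s\ul{H}_t\ul{H}_s\cdots = \ul{H}_t\ul{H}_s\ul{H}_t\cdots$. But the Kazhdan--Lusztig generators $\ul{H}_s$ do \emph{not} satisfy the braid relations in $\bH$. Writing $H_s := vT_s$, so that $\ul{H}_s = H_s + v$ and $H_s^2 = 1 + (v^{-1}-v)H_s$, a direct expansion in the case $m_{st}=3$ gives
$$
\ul{H}_s\ul{H}_t\ul{H}_s - \ul{H}_t\ul{H}_s\ul{H}_t = H_s - H_t \ne 0,
$$
so no such presentation of $\bH$ exists, and the operators $\rho_{s,\succeq}$ cannot satisfy those braid relations (they act by $\ul{H}_s$ in the module one is trying to construct). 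What must actually be checked is that the operators $T_s := v^{-1}\rho_{s,\succeq} - \id$ satisfy the defining relations of $\bH$: the quadratic relation $T_s^2 = v^{-2}\id + (v^{-2}-1)T_s$ (equivalent to your computation $\rho_{s,\succeq}^2 = (v+v^{-1})\rho_{s,\succeq}$) \emph{and} the braid relations $T_sT_tT_s\cdots = T_tT_sT_t\cdots$. This is what Soergel's verification in \cite{MR1445511} amounts to. Your instinct to reduce to right cosets $x\langle s,t\rangle$ and to invoke Lusztig's description of the generic order on them \cite[Prop.~3.2]{MR591724} is the right one, but the case-by-case check must be carried out for the shifted operators $\rho_{s,\succeq}-v\cdot\id$, not for the $\rho_{s,\succeq}$ themselves.
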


In the following we will consider $h_{\succeq,\delta}$ as a character map from $\CH_k$ to $\bM$. From Proposition \ref{chartrans} and Theorem \ref{Mstruc} we deduce the following:
\begin{lemma}\label{hsucceq} For all $M\in\CH_k$  we have
$$
h_{\succeq,\delta}(\theta^s M\langle 1\rangle)=h_{\succeq,\delta}(M)\ast \ul{H}_s.
$$
\end{lemma}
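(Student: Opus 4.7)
The plan is to observe that this lemma is essentially a direct combination of two results already established: the general character-theoretic statement in Proposition \ref{chartrans} and the explicit description of the $\bH$-module structure on $\bM$ in Theorem \ref{Mstruc}.

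First, I would verify that the generic Bruhat order $\succeq$ together with the length function $\delta$ satisfies the axioms (1)--(3) from Section \ref{partord}; properties (1) and (2) are immediate from the definition of $\succeq$ via translation of the ordinary Bruhat order by sufficiently dominant cocharacters (and the fact that $l$ satisfies these), while property (3) is the content of \cite[Proposition 3.2]{MR591724}, already cited in Section \ref{Brinf}. This legitimizes applying Proposition \ref{chartrans} with $\unlhd = \succeq$ and $l = \delta$, yielding
$$
h_{\succeq,\delta}(\theta^s M\langle 1\rangle) = \rho_{s,\succeq}(h_{\succeq,\delta}(M)).
$$

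Second, I would invoke Theorem \ref{Mstruc}, which identifies $\rho_{s,\succeq}$ with right multiplication by $\ul{H}_s$ under the isomorphism $\bM \cong \bW$ given by $A_x\mapsto W_x$. That is, for any $X\in\bM$ one has $\rho_{s,\succeq}(X) = X \ast \ul{H}_s$. Substituting $X = h_{\succeq,\delta}(M)$ gives
$$
h_{\succeq,\delta}(\theta^s M\langle 1\rangle) = h_{\succeq,\delta}(M)\ast \ul{H}_s,
$$
which is the desired identity. The only non-formal step is the verification that $\succeq$ fits into the framework of Section \ref{partord}, and this is essentially a citation; no obstacle is anticipated. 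The heavy lifting has already been done in Proposition \ref{chartrans} (which reduces the statement to a purely combinatorial computation on the $\bW$-level) and in the algebraic identification furnished by Theorem \ref{Mstruc}.
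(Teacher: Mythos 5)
Your proof is correct and follows exactly the route the paper itself indicates: the lemma is deduced by combining Proposition \ref{chartrans} (applied with $\unlhd=\succeq$, $l=\delta$, after noting that $\succeq$ satisfies the axioms of Section \ref{partord}) with Theorem \ref{Mstruc}, which identifies $\rho_{s,\succeq}$ with right multiplication by $\ul{H}_s$ on $\bM$. Nothing more is needed.
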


The following theorem is part of our main result.
\begin{theorem}\label{charmaps} The following diagram commutes:

\centerline{
\xymatrix{
& \CH_k \ar@{-->}[ld]_-{h_{\varle,l}}\ar@{-->}[dr]^-{h_{\succeq,\delta}}\\
 \bH \ar[rr]^-{A_e\ast\cdot}& & \bM
}
} 
\end{theorem}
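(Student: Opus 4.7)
The plan is to reduce the identity $A_e \ast h_{\varle, l}(M) = h_{\succeq,\delta}(M)$ to the two compatibility lemmas already proven, using that both character maps are additive and that $\CH_k$ is generated, under direct sums, direct summands, and grading shifts, by objects of the form $\theta^s\circ\cdots\circ\theta^t(B_e)$.

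First I would check the base case $M=B_e$. From the construction of $B_e$ as the rank-one free $\hS_k$-module on which $\hCZ_k$ acts via evaluation at $e$, and from the description of the subquotients $M_{[x]}$ in Section \ref{subsec-subquot} in terms of sheaf sections supported at a single vertex, one reads off that $(B_e)_{[e]} = \hS_k$ (in degree $0$) and $(B_e)_{[x]} = 0$ for $x\ne e$, for both partial orders. Consequently $h_{\varle,l}(B_e) = W_e = \tilde T_e$ (the unit of $\bH$) and $h_{\succeq,\delta}(B_e) = W_e = A_e$ in $\bM$, so the two sides agree.

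Second I would do the inductive step. Since both $\grk(\cdot)$ is multiplicative in grading shift and the subquotient operation $(\cdot)_{[x]}$ commutes with $\langle n\rangle$, each character map sends $\langle 1\rangle$ to multiplication by $v^{-1}$; so it is enough to treat the step $M\leadsto \theta^s M\langle 1\rangle$. Assume inductively that $A_e\ast h_{\varle,l}(M) = h_{\succeq,\delta}(M)$. Applying Lemma \ref{hvarle}, then associativity of the right $\bH$-action on $\bM$ (Theorem \ref{Mstruc}), then the inductive hypothesis, and finally Lemma \ref{hsucceq}, gives the chain
\begin{align*}
A_e \ast h_{\varle,l}(\theta^s M\langle 1\rangle)
 &= A_e \ast \bigl(h_{\varle,l}(M)\cdot \ul{H}_s\bigr) \\
 &= \bigl(A_e \ast h_{\varle,l}(M)\bigr)\ast \ul{H}_s \\
 &= h_{\succeq,\delta}(M)\ast \ul{H}_s \\
 &= h_{\succeq,\delta}(\theta^s M\langle 1\rangle),
\end{align*}
closing the induction.

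Finally I would handle direct sums and direct summands: both character maps descend to the split Grothendieck group of $\CH_k$ since $h_{\unlhd,l}(M_1\oplus M_2) = h_{\unlhd,l}(M_1) + h_{\unlhd,l}(M_2)$ is immediate from the definition, and $A_e\ast(-)$ is $\DZ[v,v^{-1}]$-linear on $\bH$. Since every object of $\CH_k$ lies, up to shift, in the additive closure of iterated applications of $\theta^s$ to $B_e$, the identity extends from the generators to all of $\CH_k$. I do not foresee a genuine obstacle here; the work was done in establishing the two compatibility lemmas and in defining $\bM$ so that the action of $\ul H_s$ is precisely $\rho_{s,\succeq}$, and this proof simply assembles them via associativity of the module action.
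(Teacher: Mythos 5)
Your proposal is correct and follows essentially the same inductive argument as the paper: both verify the base case $M = B_e$ and then push the identity through $\theta^s(\cdot)\langle 1\rangle$ via Lemmas \ref{hvarle} and \ref{hsucceq} together with associativity of the $\bH$-action on $\bM$. The only difference is that you spell out the reduction to generators (additivity under direct sums/summands and compatibility with shifts), which the paper leaves implicit in the phrase ``the theorem follows by induction.''
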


\begin{proof} We have $h_{\succeq,\delta}(B_e)=A_e=A_e\ast
  \ul{H}_e=A_e\ast h_{\varle,l}(B_e)$. Suppose that $h_{\succeq,\delta}(M)=A_e\ast
  h_{\varle,l}(M)$ for $M\in\CH_k$ and choose $s\in\hCS$. Then Lemma \ref{hvarle} and
  Lemma \ref{hsucceq} yield
\begin{eqnarray*}
h_{\succeq,\delta}(\theta^s M\langle 1\rangle) & = &
h_{\succeq,\delta}(M) \ast \ul{H}_s \\
& = & (A_e\ast h_{\varle,l}(M))\ast  \ul{H}_s  \\
& = & A_e\ast(h_{\varle,l}(M))\cdot  \ul{H}_s )\\
& = & A_e\ast h_{\varle,l}(\theta^s M\langle 1\rangle).
\end{eqnarray*}
The theorem follows by induction.
\end{proof}

\section{The category of Andersen, Jantzen and Soergel}\label{sec-AJS}

In this section we define the combinatorial category $\CM$ that
appears in the work of Andersen, Jantzen and Soergel on  Lusztig's conjecture (cf.~\cite{MR1272539,MR1357204}). Its
definition is very similar to the definition of $\CH$: It is
constructed by applying translation functors to a unit object $P_0$ inside a certain category $\CK$. 
We construct a functor $\Psi\colon \CH\to\CK$ with
$\Psi(B_e)\cong P_0$ and show that $\Psi$ intertwines the translation
functors on both sides. From this we deduce that the image of $\Psi$
is contained in $\CM\subset\CK$. 

The translation functors that we define on $\CK$ differ from  those
used in \cite{MR1272539}. In Section \ref{sec-AJStrans} we construct an autoequivalence on $\CK$ that
intertwines both sets of translation functors (Theorem \ref{twtrans}), hence both sets lead to equivalent categories of special objects.

\subsection{Alcoves and walls}

Note that in the constructions so far the affine Weyl group $\hCW$ appears in many respects merely as an index set (more precisely, as an index set acted upon by a set of involutions $\hCS$). For the following it is convenient to replace $\hCW$ by the set $\SA$ of  alcoves for the affine action of $\hCW$ on $V^\ast$. So before we state the definition of $\CK$, we collect some basic facts and notions from alcove geometry.

For  $\alpha\in R^+$ and $n\in \DZ$ define
$$
H_{\alpha,n}^+:=\{v\in V^\ast\mid \langle \alpha,v\rangle>n\},\quad 
H_{\alpha,n}^-:=\{v\in V^\ast\mid \langle \alpha, v\rangle<n\}.
$$
Then $V^\ast=H_{\alpha,n}^-\dot\cup H_{\alpha,n}\dot\cup H_{\alpha,n}^+$. 
 Consider the coarsest partition of $V^\ast$ that refines the set of all partitions thus obtained. The components of this partition are called {\em facets}. Let $\SF$ be the set of all facets. Facets that are open in $V^\ast$ are called {\em alcoves}.  Let $\SA\subset \SF$ be the set of alcoves. A facet $B$ that is a subset of exactly one hyperplane  $H_{\alpha,n}$ is called a {\em wall}. We set $\alpha_B:=\alpha$ in this case and call $\alpha_B$ the {\em type} of $B$. Let $\ScW\subset\SF$ be the set of walls. A wall that is a subset of the closure of an alcove  $A$ is called a
wall of $A$.

Let $C=\{v\in V^\ast\mid \langle\alpha,v \rangle > 0\text{ for all
  $\alpha\in R^+$}\}$ be the dominant chamber, and denote by
$A_e\subset C$ the fundamental dominant alcove, i.e.~the unique
alcove in $C$ whose closure contains the origin $0$ of  $V^\ast$.

The affine Weyl group acts on $\SF$ and leaves the subsets $\SA$ and $\ScW$ stable.  This induces a bijection
\begin{eqnarray*}
\hCW & \xrightarrow{\sim} & \SA \\
w & \mapsto & A_w:=w(A_e)
\end{eqnarray*}
whose inverse we denote by $A\mapsto w_A$. By transport of structure we get a right action of $\hCW$
on $\SA$. It is given explicitly by $(A_w).x=A_{wx}$.

The walls of $A_e$ form a fundamental domain for the $\hCW$-orbits in $\ScW$.  Moreover, the walls of $A_e$ are naturally in bijection with
the simple affine reflections $\hCS$: associate to $s\in\hCS$ the
unique wall $A_{e,s}$ of $A_e$ that lies in the closure of $s.A_e$ (this is the
wall stabilized by $s$). Denote by $\SA^s\subset\ScW$ its $\hCW$-orbit. We get another bijection
\begin{eqnarray*}
\hCW^s=\hCW/\{1,s\} & \xrightarrow{\sim} & \SA^s, \\
\ol w & \mapsto & \ol w(A_{e,s}).
\end{eqnarray*}

We denote by 
\begin{eqnarray*}
\SA&\to&\SA^s,\\
A & \mapsto & \ol A
\end{eqnarray*} 
the unique $\hCW$-equivariant map that sends the alcove $A_e$ to its wall $A_{e,s}$, and hence each alcove $A$ to its unique wall $\ol A$ in $\SA^s$. So the diagram

\centerline{
\xymatrix{
\hCW \ar[r]^{\sim} \ar[d] & \SA  \ar[d]\\
\hCW^s\ar[r]^{\sim}  & \SA^s.
}
} 
\noindent
commutes.
We define two maps that are splittings of $\SA\to \SA^s$. For $B\in\SA^s$ we denote by $B_-$ and $B_+$ the unique alcoves with wall $B$ such that $B_-\subset H_{\alpha,n}^-$ and
$B_+\subset H_{\alpha,n}^+$, where  $H_{\alpha,n}$ is the hyperplane containing $B$. 

Now choose $\beta\in R^+$.
For a facet ${F}\in \SF$ define $\beta\uparrow
{F}:=s_{\beta,n}.{F}$, where $n\in\DZ$ is the smallest integer such
that ${F}$ lies on or below $H_{\beta,n}$ (i.e.~in $H_{\beta,n}^-\cup
H_{\beta,n}$). Then $\beta\uparrow\cdot\colon \SF\to \SF$ is a
bijection whose inverse we denote by $\beta\downarrow\cdot$. 
Under our identification $\hCW\cong\SA$, an equivalence
class of alcoves under the relation generated by
$\beta\uparrow\cdot$ corresponds to a coset under the
left action of the subgroup $\hCW^\beta$ that is generated by all $s_{\beta,n}$ with $n\in\DZ$.  
 
In the following we list some statements which are easy to prove. \begin{lemma}\label{wallcomb} Let $s\in\hCS$ and $\beta\in R^+$. Choose $A\in\SA$ and $B\in \SA^s$. 
\begin{enumerate}
\item If $\beta\uparrow B=B$, then $\beta\uparrow B_-=B_+$. 
\item If $\beta\uparrow B\ne B$, then $\{\beta\uparrow B_-, \beta\uparrow B_+\}=\{(\beta\uparrow B)_-, (\beta\uparrow B)_+\}$.
\item If $\beta\uparrow \ol A=\ol A$ and $A=\ol A_-$, then $\ol{\beta\uparrow A}=\ol A$ and $\beta\uparrow A=\ol A_+$.
\item If $\beta\uparrow \ol A\ne \ol A$, then $\ol{\beta\uparrow A}=\beta\uparrow \ol A$.
\end{enumerate}
\end{lemma}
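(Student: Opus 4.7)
All four parts are routine alcove geometry exercises; the essential point is to analyse, for a wall or alcove $F$, the integer $m=m(F,\beta)$ defined as the smallest integer with $F\subseteq H_{\beta,m}^-\cup H_{\beta,m}$, so that $\beta\uparrow F=s_{\beta,m}F$. The key auxiliary fact I will use repeatedly is: if $\bar A\in\SA^s$ is a wall of an alcove $C$, then $\bar C=\bar A$, because each alcove has precisely one wall in each $\hCW$-orbit of $\ScW$, and $\SA^s$ is by definition one such orbit.

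For (1), the hypothesis $\beta\uparrow B=B$ forces $B\subseteq H_{\beta,m}$, so the type of $B$ is $\beta$ and $B_{-},B_{+}$ are the two alcoves on opposite sides of $H_{\beta,m}$. Then $B_-\subseteq H_{\beta,m}^-$ gives $\beta\uparrow B_-=s_{\beta,m}(B_-)$, and this reflection sends $B_-$ to the alcove on the other side of $H_{\beta,m}$ having $B$ as a wall, namely $B_+$. For (2), the hypothesis $\beta\uparrow B\ne B$ means $B\subseteq H_{\beta,m}^-$ strictly. Any alcove having $B$ in its closure must lie in the same open half-space (a short closure/connectedness argument: an alcove is open and connected, and any neighbourhood of a point of $B$ meets $H_{\beta,m}^-$). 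Hence $B_{\pm}\subseteq H_{\beta,m}^-$, so $\beta\uparrow B_{\pm}=s_{\beta,m}B_{\pm}$; these are the two alcoves adjacent to $s_{\beta,m}B=\beta\uparrow B$, which is exactly $\{(\beta\uparrow B)_-,(\beta\uparrow B)_+\}$.

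Part (3) is a direct application of (1) to $B=\bar A$: it yields $\beta\uparrow\bar A_-=\bar A_+$, so $\beta\uparrow A=\beta\uparrow\bar A_-=\bar A_+$. The wall $\bar A$ lies in $\SA^s$ and is by construction a wall of $\bar A_+$, so by the uniqueness principle above $\overline{\bar A_+}=\bar A$, that is $\overline{\beta\uparrow A}=\bar A$. For (4), the hypothesis $\beta\uparrow\bar A\ne\bar A$ gives $\bar A\subseteq H_{\beta,m'}^-$ strictly, where $m'=m(\bar A,\beta)$. By the same closure argument as in (2), the alcove $A$ (which has $\bar A$ in its closure) also lies in $H_{\beta,m'}^-$, and in fact $m(A,\beta)=m'$. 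Consequently $\beta\uparrow A=s_{\beta,m'}A$ and $\beta\uparrow\bar A=s_{\beta,m'}\bar A$. The latter still lies in the $\hCW$-orbit $\SA^s$ and is a wall of the former, so uniqueness again gives $\overline{\beta\uparrow A}=s_{\beta,m'}\bar A=\beta\uparrow\bar A$.

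There is no real obstacle here; the only subtle moment is the half-space argument used in (2) and (4), which needs the minimality of $m$ together with the fact that an alcove and any facet in its closure must sit in the same open half-space of a hyperplane not crossing either. Everything else is bookkeeping with the uniqueness of the type-$s$ wall of an alcove.
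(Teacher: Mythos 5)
The paper gives no proof of Lemma~\ref{wallcomb} (it is introduced with the remark that the statements ``are easy to prove''), so there is no argument of the paper to compare against; your proof supplies a correct justification. The mechanism is exactly right: determine the relevant reflection hyperplane $H_{\beta,m}$ for the wall, observe that $\beta\uparrow$ acts by $s_{\beta,m}$ on the wall and on the adjacent alcoves simultaneously, and then use that each alcove has a unique wall in the $\hCW$-orbit $\SA^s$ to identify $\ol{\beta\uparrow A}$. The one spot you pass over a little quickly is that $B_\pm\subseteq H_{\beta,m}^-$ only gives $m(B_\pm,\beta)\le m$ a priori; equality is needed to conclude $\beta\uparrow B_\pm=s_{\beta,m}B_\pm$, and follows because if $m(B_\pm,\beta)<m$ then $\ol{B_\pm}\subset H_{\beta,m-1}^-\cup H_{\beta,m-1}$ would force $B\subset\ol{B_\pm}$ into that closed half-space, contradicting the minimality of $m$ for $B$ (and similarly for $A$ versus $\ol A$ in part (4)). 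With that small step made explicit, the argument is complete.
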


\subsection{The category of Andersen, Jantzen and Soergel} Let $k$ be a field of characteristic $\ne 2$, and suppose that its characteristic also differs from $3$ if $R$ is of type $G_2$. Recall that we defined $S_k$ as the symmetric algebra $S(X\otimes_\DZ k)$ over the $k$-vector space associated to $X$.  
We define the following sub-$S_k$-algebras of the quotient field of $S_k$: Set
$$
S_k^{\emptyset}:=S_k[\alpha^{-1}\mid\alpha\in R^+]\text{ and }S_k^{\beta}:=S_k[\alpha^{-1}\mid \alpha\in R^+, \alpha\ne\beta]
$$
for $\beta\in R^+$. 
These algebras naturally inherit a $\DZ$-grading. For an $S_k$-module $M$ set $M^{\beta}:=M\otimes_{S_k} S_k^{\beta}$ and $M^{\emptyset}:=M\otimes_{S_k} S_k^{\emptyset}$.

\begin{definition}[\cite{MR1272539,MR1357204}]\label{DefK} For a $\hCW$-orbit ${\SO}\subset\SF$  of facets let $\CK_k({\SO})$ be the category that consists of objects $
M=\left(\{M(F)\}_{F\in{\SO}}, \{M(F,\beta)\}_{F \in {\SO},\beta\in R^+}\right)$, where
\begin{enumerate}
\item  $M(F)$ is an $S_k^{\emptyset}$-module  for each $F \in {\SO}$ and
\item for $F \in{\SO}$ and $\beta\in R^+$, $M(F ,\beta)$ is an $S_k^\beta$-submodule of $M(F )\oplus M(\beta\uparrow F )$, if $\beta\uparrow F \neq F $, and of $M(F)$, if $\beta\uparrow F =F $.
\end{enumerate}
A morphism 
$f\colon M\to N$ in $\CK_k(\SO)$
is given by a collection $(f_F)_{F\in{\SO}}$ of homomorphisms $f_F\colon M(F)\to N(F)$ of  $S_k^\emptyset$-modules, such that for all $F\in{\SO}$ and $\beta\in R^+$, $f_F\oplus f_{\beta\uparrow F}$ ($f_F$, resp.) maps $M(F,\beta)$ into $N(F,\beta)$.
\end{definition}
 We define a degree shift functor $\langle n\rangle\colon\CK_k(\SO)\to\CK_k(\SO)$  for each $n\in \DZ$ by applying the degree shift to all constituents $M(F)$ and $M(F,\beta)$ for $M\in\CK_k(\SO)$.

\subsection{Translation functors on $\CK$}\label{sec-transonK}

We fix a simple reflection $s\in\hCS$ and write  $\CK_k$ for $\CK_k(\SA)$ and $\CK_k^s$ for $\CK_k(\SA^s)$. In this section we define translation functors  $\CTon=\CTson\colon\CK_k\to\CK_k^s$ and
$\CTout=\CTsout\colon\CK_k^s\to\CK_k$. Recall that for
an alcove $A$ we denote by $\ol A$ its unique wall that lies in the
$\hCW$-orbit $\SA^s$, and to each wall $B\in\SA^s$ we associate the
neighbouring alcoves $B_-$ and $B_+$. 

For $M\in\CK_k$, $B\in\SA^s$ and
$\beta\in R^+$ set
\begin{eqnarray*}
\CTon M({B}) &:=  &  M({B}_-)\oplus M({B}_+), 
\\
\CTon M({B},\beta) &:=  &
\begin{cases}
M({B}_-,\beta), &  \text{if $\beta\uparrow B=B$}, \\
M({B}_-,\beta)\oplus M({B}_+,\beta), & \text{if $\beta\uparrow B\ne B$}.
\end{cases} 
\end{eqnarray*}
If $\beta\uparrow B=B$ we have $\beta\uparrow B_-=B_+$, and if $\beta\uparrow B\ne B$ we have $\{(\beta\uparrow B)_-,(\beta\uparrow B)_+\}=\{\beta\uparrow B_-,\beta\uparrow B_+\}$. Hence we can naturally view $\CTon M({B}, \beta)$ as a subspace in $\CTon M(B)$ or $\CTon M(B)\oplus \CTon M(\beta\uparrow B)$, resp. 
 
For $N\in\CK_k^s$, $A\in\SA$ and $\beta\in R^+$ set
\begin{eqnarray*}
\CTout N({A}) &:=  &  N(\ol{A}), 
\\
\CTout N({A} ,\beta) &:=  &
\begin{cases}  
\left\{(\beta x+y,y)\mid x,y\in N(\ol{A},\beta)\right\}, & \text{if $\beta\uparrow \ol{A}=\ol{A}$}\\
&\quad\text{ and $A=\ol{A}_-$},  \\
\beta\cdot N(\ol{A},\beta)\oplus N(\ol{\beta\uparrow {A}},\beta), & \text{if $\beta\uparrow \ol{A}=\ol{A}$}\\
&\quad\text{and $A=\ol{A}_+$}, \\
N(\ol{A},\beta), & \text{if $\beta\uparrow \ol{A}\ne \ol{A}$}.
\end{cases}
\end{eqnarray*}
If $\beta\uparrow \ol A=\ol A$ and $A=\ol A_-$, then $\ol{\beta\uparrow A}=\ol A$. If $\beta\uparrow \ol A\ne \ol A$, then $\ol{\beta\uparrow A}= \beta\uparrow \ol A$. Hence $\CTout N(A,\beta)$ is naturally a subspace in $\CTout N(A)\oplus \CTout N(\beta\uparrow A)$ in each case.
We denote by  $\CT^s:=\CTsout\circ\CTson\colon\CK_k\to\CK_k$ the composition.

\subsection{Special objects in $\CK_k$}

Let $P_0\in\CK_k$ be the object defined by
\begin{eqnarray*}
P_0(A) & := & 
\begin{cases} S_k^\emptyset, & \text{if $A=A_e$}, \\
0, & \text{else},
\end{cases}
 \\
P_0(A,\beta) & := & 
\begin{cases} S_k^\beta, & \text{if $A=A_e$ or $A=\beta\downarrow A_e$},\\ 
0, & \text{else},
\end{cases}
\end{eqnarray*}
with the obvious inclusions $P_0(A,\beta)\subset P_0(A)\oplus P_0(\beta\uparrow A)$ for all $A\in\SA$ and $\beta\in R^+$.

\begin{definition}\label{defM}
\begin{enumerate}
\item
The category of {\em special objects in $\CK_k$} is the full subcategory
$\CM_k$ of $\CK_k$ that consists of all objects that are isomorphic to a
direct summand of a direct sum of objects of the form
$\CT^s\circ\dots\circ \CT^t(P_0)\langle n\rangle$ for arbitrary
sequences $s,\dots, t\in\hCS$ and $n\in\DZ$.
\item Let $s\in\hCS$. The category of {\em special objects in $\CK_k^s$} is the full subcategory $\CM_k^s$ of $\CK_k^s$ that consists of all objects that are isomorphic to a direct summand of $\CTson(M)$ for some $M\in\CM_k$.
\end{enumerate}
\end{definition}
By definition, the translation functors induce functors $\CTson\colon
\CM_k\to\CM_k^s$, $\CTsout\colon\CM_k^s\to\CM_k$ and $\CT^s\colon\CM_k\to\CM_k$.

\subsection{The functor $\Psi$}\label{sec-funcpsi}

Recall that $\CH_k$ is an $\hS_k$-linear category, while $\CM_k$ is an $S_k$-linear category. In order to construct a functor $\Psi\colon\CH_k\to\CM_k$, we need the homomorphism $\hS_k\to S_k$ of algebras that is induced by the projection $\ol{\cdot}\colon\hX=X\oplus\DZ\delta \to X$ onto the first direct summand. 

In Section \ref{sec-Loc} we defined the extension $\hS_k^\emptyset$ of $\hS_k$ by inverting all affine roots $\alpha_n$. Since $\ol\alpha_n=\alpha$ we get an induced map $\hS_k^\emptyset\to S_k^\emptyset$. Analogously, we get a map  $\hS_k^\beta\to S_k^\beta$ for each $\beta\in R^+$. For an $\hS_k$-module $M$ set $\ol M:=M\otimes_{\hS_k} S_k$. If $M$ was an $\hS_k^{\beta}$-module (an $\hS_k^\emptyset$-module), then $\ol M$ is an $S_k^\beta$-module (an $S_k^\emptyset$-module, resp.).

In Section \ref{sec-Fil} we constructed for each $M\in\CH_k$ a generic decomposition, i.e.~a canonical
decomposition $M^\emptyset=\bigoplus_{w\in\hCW} M^{\emptyset,w}$. Now
we replace $\hCW$, as an index set, by the set $\SA$ using the
identification $\hCW\xrightarrow{\sim}\SA$, $w\mapsto A_w=w(A_e)$.  Recall that by ``$\succeq$'' we denote the generic Bruhat order on $\hCW$ that we now consider as a partial order on $\SA$. The constructions
in Section \ref{sec-Fil} hence give us a functorial filtration on
$\CH_k$ by the partially ordered set $(\SA,\succeq)$, i.e.~for each
$M\in\CH_k$ and $A\in\SA$ we have the  submodule $M_{\succeq A}=M_{\{D\in\SA\mid D\succeq A\}}\subset
M$. Likewise,
we replace $\hCW^s$ by $\SA^s$ for each $s\in\hCS$, so we get a
functorial filtration on $\CH_k^s$ indexed by the partially ordered set
$(\SA^s,\succeq)$.

Now we define the functor $\Psi\colon\CH_k\to\CK_k$. 
For $M\in\CH_k$, $A\in\SA$ and $\beta\in R^+$ define 
\begin{eqnarray*}
\Psi M({A}) & = & \ol{M^{\emptyset,A}}, \\
\Psi M({A},\beta) & = & \im\left(\ol{M^\beta_{\succeq{A}}}\to \ol{M^{\emptyset,A}}\oplus \ol{M^{\emptyset,\beta\uparrow{A}}}\right).
\end{eqnarray*}
Here the map $\ol{M^\beta_{\succeq{A}}}\to \ol{M^{\emptyset,A}}\oplus \ol{M^{\emptyset,\beta\uparrow{A}}}$ is induced by the map 
$$
M^\beta_{\succeq A}\subset \bigoplus_{B\succeq A} M^{\emptyset, B}\to M^{\emptyset, A}\oplus M^{\emptyset, \beta\uparrow A},
$$
where the last map is the projection along the decomposition.

For $s\in\hCS$ we define  $\Psi^s\colon\CH_k^s\to\CK_k^s$ as follows.  For $N\in\CH_k^s$, $B\in\SA^s$ and $\beta\in R^+$ set 
\begin{eqnarray*}
\Psi^s N({B}) & = & \ol{N^{\emptyset,B}}, \\
\Psi^s N({B},\beta) & = & 
\begin{cases} 
\im\left(\ol{N^{\beta}_{\succeq{B}}}\to \ol{N^{\emptyset,B}}\oplus\ol{N^{\emptyset,\beta\uparrow{B}}}\right), & \text{ if $\beta\uparrow{B}\ne{B}$}, \\
\im\left(\ol{N^\beta_{\succeq{B}}}\to \ol{N^{\emptyset,B}}\right), & \text{ if $\beta\uparrow{B}={B}$}.
\end{cases}
\end{eqnarray*}
Again the maps that occur in the definition above are to be understood as the projections along the canonical decomposition.
The following theorem provides the most important step in the proof of Theorem \ref{theorem-MainTh}. 
\begin{theorem}\label{MTheo} 
\begin{enumerate}
\item For $M\in\CH_k$ we have $\Psi M\in \CM_k$.
\item For  $M\in\CH_k$ we have $\rk\, \Psi M (A)=\rk \,M^{\emptyset, A}$ for each alcove $A$.
\item The character map $h_{\succeq,\delta}\colon\CH_k   \dashrightarrow \bM$ factors over the functor $\Psi$ and we get a commutative diagram

\centerline{
\xymatrix{
\CH_k \ar@{-->}[d]_-{h_{\varle,l}} \ar[r]^-{\Psi} & \Psi(\CH_k) \ar@{-->}[d]^-{h_{\succeq,\delta}}\\
 \bH \ar[r]^-{A_e\ast\cdot} & \bM.
}
} 
\item Let $s\in\hCS$. For $N\in\CH_k^s$ we have $\Psi^s N\in\CM_k^s$ and the following diagrams commute:

\centerline{
\xymatrix{
\CH_k \ar[d]_-{\son} \ar[r]^-{\Psi} & \CM_k \ar[d]^-{\CTson}\\
 \CH_k^s \ar[r]^-{\Psi^s} & \CM_k^s,
}\quad  
\xymatrix{
\CH_k^s \ar[d]_-{\sout} \ar[r]^-{\Psi^s} & \CM_k^s \ar[d]^-{\CTsout}\\
 \CH_k \ar[r]^-{\Psi} & \CM_k.
}
}
\noindent
\end{enumerate}
\end{theorem}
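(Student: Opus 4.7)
The plan is to reduce all four parts of the theorem to two key inputs: first, the computation $\Psi(B_e)\cong P_0$; second, natural intertwinings of translation functors, $\Psi^s\circ\son\cong\CTson\circ\Psi$ and $\Psi\circ\sout\cong\CTsout\circ\Psi^s$. Given these, part (1) and the membership part of (4) follow by induction, since $\CH_k$ and $\CM_k$ are defined as the smallest subcategories closed under direct sums, direct summands and translation functors starting from $B_e$ and $P_0$ respectively. Parts (2) and (3) will then fall out by short calculations. For the first input I will argue directly from the definitions: $B_e^{\emptyset,w}=\hS_k^\emptyset$ if $w=e$ and $0$ otherwise, so $\Psi B_e(A_e)=S_k^\emptyset$ and $\Psi B_e(A)=0$ else, matching the stalks of $P_0$; for the $\beta$-submodules I will use that the only relevant $\hCW^\beta$-orbit is $\{A_e,\beta\downarrow A_e\}$, and a direct computation shows that $B_e^\beta_{\succeq A}$ matches $P_0(A,\beta)=S_k^\beta$ on those two alcoves and is zero elsewhere.

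\textbf{Key step: intertwining with translation functors.} The stalk part of the intertwining is immediate from Lemma \ref{lemma-gendecomptrans}, translated via $\hCW\cong\SA$: both $\Psi^s(\son M)(B)$ and $\CTson(\Psi M)(B)$ equal $\ol{M^{\emptyset,B_-}}\oplus\ol{M^{\emptyset,B_+}}$, and both $\Psi(\sout N)(A)$ and $\CTsout(\Psi^s N)(A)$ equal $\ol{N^{\emptyset,\ol A}}$. The real content is the $\beta$-submodule comparison. I will carry out a case analysis driven by Lemma \ref{wallcomb}, splitting according to whether $\beta\uparrow B=B$, equivalently whether the $\hCW^\beta$-orbit $\Theta\subset\hCW$ containing a chosen lift of $B$ satisfies $\Theta=\Theta s$. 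In each case, Lemma \ref{loctransone} provides an explicit description of $(\son M)^{\beta,\ol\Theta}$ and $(\sout N)^{\beta,\Theta}$, which I will match against the piecewise definitions in Section \ref{sec-transonK}. The delicate subcase is $\CTsout$ when $\Theta=\Theta s$ and $A=\ol A_+$, where the definition produces an explicit factor of $\beta$; on the $\CH_k$-side this factor must arise from the action of a degree-two element of the structure algebra whose value at $A$ is $\beta$ modulo $\delta$, and such an element is exactly provided by Lemma \ref{dinZ}.

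\textbf{Deducing the remaining parts and main obstacle.} Once Steps 1 and 2 are in place, parts (1) and (4) follow by induction on the number of translation functors applied, using additivity of $\Psi$ and $\Psi^s$ to handle direct sums and direct summands. Part (2) is immediate from $\Psi M(A)=\ol{M^{\emptyset,A}}$, since base change along the surjection $\hS_k^\emptyset\to S_k^\emptyset$ induced by $\ol{\cdot}\colon\hX\to X$ preserves the rank of a free module. For part (3), I will define a character map $\tilde h\colon\Psi(\CH_k)\dashrightarrow\bM$ by $\tilde h(X):=\sum_{A\in\SA}v^{\delta(A)}\grk X(A)\,W_A$; the identification $M_{[A,\succeq]}\otimes_{\hS}\hS^\emptyset\cong M^{\emptyset,A}$ from Section \ref{subsec-subquot}, combined with the graded version of (2), yields $\tilde h(\Psi M)=h_{\succeq,\delta}(M)$, and commutativity of the outer triangle is then exactly Theorem \ref{charmaps}. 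The main obstacle throughout is the $\sout$-intertwining in the subcase $\Theta=\Theta s$: producing the extra scalar $\beta$ on the representation-theoretic side is not formal, and depends crucially on the combinatorial input of Lemma \ref{dinZ}; the rest of the argument is a bookkeeping exercise matching the piecewise formulas of Sections \ref{sec-Loc} and \ref{sec-transonK}.
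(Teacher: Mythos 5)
Your strategy for parts (1), (2), and (4) and for the key intertwining isomorphisms $\Psi^s\circ\son\cong\CTson\circ\Psi$ and $\Psi\circ\sout\cong\CTsout\circ\Psi^s$ matches the paper's proof, including the case analysis via Lemma \ref{wallcomb}/\ref{loctransone} and the correct identification of the subcase $\beta\uparrow\ol A=\ol A$, $A=\ol A_+$ as the one where Lemma \ref{dinZ} is needed to produce the explicit factor $\beta$.

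Part (3), however, has a genuine gap. You propose to factor $h_{\succeq,\delta}$ by defining $\tilde h(X):=\sum_A v^{\delta(A)}\grk X(A)\,W_A$ on $\Psi(\CH_k)$, using the identification $M_{[A,\succeq]}\otimes_{\hS}\hS^\emptyset\cong M^{\emptyset,A}$ and ``the graded version of (2)''. The obstruction is that $X(A)=\Psi M(A)=\ol{M^{\emptyset,A}}$ is a graded free $S_k^\emptyset$-module, and the graded rank of such a module is not well-defined: the roots $\alpha$ have degree $2$ and are inverted, so $S_k^\emptyset\langle 2\rangle\cong S_k^\emptyset$, meaning all even shifts of the free module of rank one are isomorphic. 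Consequently $\grk X(A)$ carries only the ungraded rank information of part (2), which is a specialization at $v=1$ rather than a polynomial. Any attempt to endow $X(A)$ with the graded rank of $M_{[A,\succeq]}$ would reintroduce $M$, so $\tilde h$ would not be intrinsic to $\Psi(\CH_k)$, which is precisely what ``factors over $\Psi$'' requires. The paper avoids this by observing that the additional data $\Psi M(A,\beta)$ lets one recover $\ol{M^\beta_{[A,\succeq]}}$ as $\im(\Psi M(A,\beta)\to\Psi M(A))$, and then the honest $S_k$-module $\ol{M_{[A,\succeq]}}$ as the intersection $\bigcap_{\beta\in R^+}\ol{M^\beta_{[A,\succeq]}}$ inside $\Psi M(A)$ (using $S_k=\bigcap_\beta S_k^\beta$); this $S_k$-module is graded free and its graded rank equals $\grk M_{[A,\succeq]}$, which is the genuinely well-defined quantity entering $h_{\succeq,\delta}(M)$. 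You should replace the stalk-based definition of $\tilde h$ with this recovery via the $\beta$-pieces; the final commutativity then follows from Theorem \ref{charmaps} as you say.
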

\begin{remark}
In (2),  $\rk$ denotes the rank of an $S_k^\emptyset$- or $\hS_k^\emptyset$-module, resp.
\end{remark}
\begin{proof}
One immediately checks that $\Psi(B_e)\cong P_0$. So in order to prove claim (1) and claim (4) we only have to prove that $\Psi$ and $\Psi^s$ intertwine the translation functors. This we postpone for a moment. 

The claim (2) follows immediately from the definition. Assume that (1) and (4) are proven. Recall that in Section \ref{subsec-subquot} we defined  for $M\in\CH_k$ the subquotient $M_{[A,\succeq]}=\im(M_{\succeq
  A}\to M^{\emptyset,A})$. Now
$$
\ol{M^\beta_{[A,\succeq]}}=\im( \Psi(M)(A,\beta)\to \Psi(M)(A)).
$$
Hence we can recover $\ol{M^\beta_{[A,\succeq]}}$ from
the object $\Psi(M)$. Now recall that $M_{[A,\succeq]}$ is a graded free
$\hS_k$-module (by Lemma \ref{lemma-subquotfree}), so $\ol{M_{[A,\succeq]}}$ is a graded free $S_k$-module of
the same rank. But we can recover the latter module from the
collection  $\{\ol{M^\beta_{[A,\succeq]}}\}_{\beta\in R^+}$ (as $S_k=\bigcap_{\beta\in R^+} S_k^\beta$, cf.~\cite[Lemma 9.1]{MR1272539}), hence
from $\Psi(M)$, and we can deduce the first part of (3). The second part is then a consequence of Theorem \ref{charmaps}. 

Now we prove that $\Psi$ and $\Psi^s$ intertwine the  translation functors. Fix $M\in\CH_k$, $N\in\CH_k^s$, $A\in\SA$ and $B\in \SA^s$. By Lemma \ref{lemma-gendecomptrans} we have 
\begin{eqnarray*}
(\on M)^{\emptyset,B} & = & M^{\emptyset, B_-}\oplus M^{\emptyset, B_+}, \\
(\out N)^{\emptyset, A} & =& N^{\emptyset, \ol A}.
\end{eqnarray*}
Hence
\begin{eqnarray*} 
\Psi^s(\on M)(B) & = & \CTon(\Psi M)(B), \\
\Psi(\out N)(A) & = & \CTout(\Psi^s N)(A),
\end{eqnarray*}
which proves already the generic part of the  equations $\Psi^s\circ\on=\CTon\circ\Psi$ and $\Psi\circ\out=\CTout\circ\Psi^s$. 
In the following we fix $\beta\in R^+$  and determine  $\Psi\out N(A,\beta)$ and $\Psi^s\on M(B,\beta)$. We start with $\Psi^s\on M(B,\beta)$.

{\em The case $\beta\uparrow B\ne B$}:  We want to determine 
$$
\im\left((\on M)^\beta_{\succeq B}\to (\on M)^{\emptyset, B}\oplus (\on M)^{\emptyset, \beta\uparrow B} \right).
$$
Since the preimage of the set $\{C\in \SA^s\mid C\succeq B\}$ under the map $\SA\to \SA^s$ is $\{D\in\SA\mid D\succeq B_-\}$ we have 
$$
(\on M)^\beta_{\succeq B}=M^\beta_{\succeq B_-}.
$$
We have already shown that  
$$
(\on M)^{\emptyset, B}=M^{\emptyset, B_-}\oplus M^{\emptyset, B_+}
$$ 
and 
$$(\on M)^{\emptyset, \beta\uparrow B}=M^{\emptyset,(\beta\uparrow B)_-}\oplus M^{\emptyset,(\beta\uparrow B)_+}=M^{\emptyset,\beta\uparrow B_-}\oplus M^{\emptyset,\beta\uparrow B_+}
$$ 
(for the last identity we used Lemma \ref{wallcomb}). Hence we want to determine 
$$
\im\left(M^\beta_{\succeq B_-}\to M^{\emptyset, B_-}\oplus M^{\emptyset, \beta\uparrow B_-}\oplus M^{\emptyset, B_+}\oplus M^{\emptyset,\beta\uparrow B_+} \right).
$$

By Lemma \ref{lemma-decompreg} we have a canonical decomposition
$$
M^\beta  = \bigoplus_{\Theta\in \hCW^\beta\backslash\SA}M^{\beta,\Theta},
$$
where $M^{\beta,\Theta}\subset M^{\beta}$ is the direct summand supported on $\Theta$. The alcoves $B_-$ and $B_+$ lie in different $\hCW^\beta$-orbits, and hence the image above splits into the direct sum of 
$$
\im\left(M^\beta_{\succeq B_-}\to M^{\emptyset, B_-}\oplus M^{\emptyset, \beta\uparrow B_-} \right)
$$ 
and 
$$
\im\left(M^\beta_{\succeq B_+}\to M^{\emptyset, B_+}\oplus M^{\emptyset, \beta\uparrow B_+} \right).
$$
Here we used the fact that $B_+$ is the lowest alcove in the intersection of $\{D\in\SA\mid D\succeq B_-\}$ with the $\hCW^\beta$-orbit of $B_+$.

After inducing from $\hS_k$ to $S_k$, the first image is $\Psi M(B_-,\beta)$ and the second is $\Psi M(B_+,\beta)$, and we deduce
\begin{eqnarray*}
\Psi^s\on M(B,\beta) & = & \Psi M(B_-,\beta)\oplus \Psi M(B_+,\beta) \\
& = & \CTon \Psi M(B,\beta).
\end{eqnarray*}

{\em The case $\beta\uparrow B=B$}: This means that $\beta\uparrow B_-=B_+$. We want to determine
$$
\im\left((\on M)^\beta_{\succeq B}\to (\on M)^{\emptyset, B}\right).
$$
As before we have that $(\on M)^\beta_{\succeq B}=M^\beta_{\succeq B_-}$ and $ (\on M)^{\emptyset, B}=M^{\emptyset, B_-}\oplus M^{\emptyset, B_+}$. So we have
$$
\im\left((\on M)^\beta_{\succeq B}\to (\on M)^{\emptyset, B}\right)= \im\left( M^{\beta}_{\succeq B_-}\to M^{\emptyset, B_-}\oplus M^{\emptyset, B_+}\right).
$$
Hence $\Psi^s\on M(B,\beta) =  \Psi M(B_-,\beta)  =  \CTon \Psi M(B,\beta)$.

This finishes part of the proof: By now we showed that the functors $\Psi^s\circ \on$ and $\CTon \circ \Psi$ are isomorphic. Next we have to determine $\Psi\out N(A,\beta)$.

{\em The case $\beta\uparrow \ol A\ne \ol A$}: In this case the alcoves $A$ and $As$ lie in different $\hCW^\beta$-orbits. Let us denote by $\Theta\in\hCW^\beta\backslash \SA$ the orbit of $A$.
We want to determine
\begin{align*}
\im\left((\out N)^\beta_{\succeq A} \to (\out N)^{\emptyset, A}\oplus (\out N)^{\emptyset, \beta\uparrow A}\right) \\
=\im\left((\out N)^{\beta,\Theta}_{\succeq A} \to (\out N)^{\emptyset, A}\oplus (\out N)^{\emptyset, \beta\uparrow A}\right).
\end{align*}
By Lemma \ref{loctransone} we have $(\out N)^{\beta,\Theta}_{\succeq A}=N^{\beta,\ol\Theta}_{\succeq \ol A}$, where $\ol\Theta$ is the image of $\Theta$ in $\hCW^\beta\backslash \SA^s$. We already showed $(\out N)^{\emptyset, A}=N^{\emptyset,\ol A}$ and $(\out N)^{\emptyset, \beta\uparrow A}=N^{\emptyset,\ol{\beta\uparrow A}}$. We have $\beta\uparrow\ol{A}=\ol{\beta\uparrow A}$ by Lemma \ref{wallcomb}, hence we can deduce
\begin{align*}
\im\left((\out N)^{\beta,\Theta}_{\succeq A}\to (\out N)^{\emptyset, A}\oplus (\out N)^{\emptyset, \beta\uparrow A}\right) & =  \im\left(N^{\beta,\ol\Theta}_{\succeq \ol A}\to N^{\emptyset, \ol A}\oplus N^{\emptyset,\beta\uparrow\ol A}\right)\\
& = \im\left(N^{\beta}_{\succeq \ol A}\to N^{\emptyset, \ol A}\oplus N^{\emptyset,\beta\uparrow\ol A}\right),
\end{align*}
so $\Psi\out N(A,\beta)=\Psi^s N(\ol A,\beta)=\CTout\Psi^s N(A,\beta)$. 

{\em The case $\beta\uparrow \ol A=\ol A$ and $A=\ol A_-$}: In this case, $\ol A=\ol{\beta\uparrow A}=\beta\uparrow\ol A$. We want to determine
$$
 \im\left((\out N)^\beta_{\succeq A}\to (\out N)^{\emptyset, A}\oplus (\out N)^{\emptyset, \beta\uparrow A}\right).
$$
The set $\{D\in\SA\mid D\succeq A\}$ is  $s$-invariant in the present case, so we can apply Lemma \ref{loctransone} and we get $(\out N)^\beta_{\succeq A}=\out(N^\beta_{\succeq \ol A})$. By Lemma \ref{lemma-decompreg} we have $\out(N^{\emptyset,\ol A})=(\out N)^{\emptyset, A}\oplus (\out N)^{\emptyset, \beta\uparrow A}$, hence
\begin{align*}
\im\left((\out N)^\beta_{\succeq A}  \to  (\out N)^{\emptyset, A}\oplus (\out N)^{\emptyset, \beta\uparrow A}\right)  
 &=  \im\left(\out(N_{\succeq\ol A}^\beta)\to \out(N^{\emptyset, \ol A})\right)\\
& =  \out\left(\im\left(N_{\succeq\ol A}^\beta\to N^{\emptyset, \ol A}\right)\right)
\end{align*}
since $\out$ is exact. Now
$$
 \out\left(\im\left(N_{\succeq\ol A}^\beta\to N^{\emptyset, \ol A}\right)\right)
 =  \left\{(\beta_n x+y,y)\mid x,y\in \im\left(N_{\succeq\ol A}^\beta\to N^{\emptyset, \ol A}\right)\right\},
$$
where $n$ is such that $s_{\beta,n}(A)=\beta\uparrow A$.
We deduce that
\begin{eqnarray*}
\Psi\out N(A,\beta) & = & \left\{(\beta x+y,y)\mid x,y\in \Psi^s N(\ol A,\beta)\right\} \\
& = & \CTout \Psi^s N( A,\beta).
\end{eqnarray*}

{\em The case $\beta\uparrow \ol A=\ol A$ and $A=\ol A_+$}: We want to determine
$$
 \im\left(\ol{(\out N)^\beta_{\succeq A}}\to \ol{(\out N)^{\emptyset,A}}\oplus\ol{(\out N)^{\emptyset,\beta\uparrow A}}\right).
$$
By Lemma \ref{lemma-decompreg} we have a decomposition
$$
(\out N)^\beta=\bigoplus_{\Theta\in\hCW^{\beta}\backslash \SA} (\out N)^{\beta,\Theta}.
$$
Denote by $\Theta$ the $\hCW^\beta$-orbit of $A$. It also contains $\beta\uparrow A$, so 
\begin{align*}
\im\left((\out N)^\beta_{\succeq A}\to (\out N)^{\emptyset, A}\oplus (\out N)^{\emptyset, \beta\uparrow A}\right)\quad\quad \\
 =\im\left((\out N)^{\beta,\Theta}_{\succeq A}\to (\out N)^{\emptyset, A}\oplus (\out N)^{\emptyset, \beta\uparrow A}\right).
\end{align*}

For notational convenience we set  $N^\prime:=N^{\beta,\ol\Theta}$, where $\ol \Theta$ is the image of $\Theta$ inside $\hCW^\beta\backslash\SA^s$. Then 
$$
(\out N)^{\beta,\Theta}=\out(N^{\beta,\ol\Theta})=\out N^\prime.
$$
Hence we want to determine 
$$
\im\left((\out N^{\prime})_{\succeq A}\to (\out {N^{\prime}})^{\emptyset, A}\oplus (\out {N^{\prime}})^{\emptyset, \beta\uparrow A}\right).
$$
Let us consider the inclusion
$$
(\out N^{\prime})_{\succeq  A}/(\out N^{\prime})_{\succeq \beta\uparrow A}\subset (\out N^{\prime})_{\succeq \beta\downarrow  A}/(\out N^{\prime})_{\succeq \beta\uparrow A}.\eqno{(\ast)}
$$
The right hand side is supported on $\{\beta\downarrow A, A\}$, and the left hand side is its subspace of elements supported on $\{A\}$.

Since $A=\ol A_+$, the sets $\{\succeq \beta\uparrow A\}$ and $\{\succeq \beta\downarrow A\}$ are $s$-invariant. Hence, by Lemma \ref{loctransone},
\begin{eqnarray*}
(\out N^\prime)_{\succeq \beta\downarrow A}  & = &  \out(N^\prime_{\succeq\ol{\beta\downarrow A}}),\\
(\out N^\prime)_{\succeq \beta\uparrow A}  & =  & \out(N^\prime_{\succeq\ol{\beta\uparrow A}}). 
\end{eqnarray*}
Using the exactness of the functor $\out$ we identify the inclusion $(\ast)$  with
\begin{eqnarray*}
(\out N^{\prime})_{\succeq  A}/(\out N^{\prime})_{\succeq \beta\uparrow A} & \subset &  \out\left(N^{\prime}_{\succeq\ol{\beta\downarrow A}}/N^\prime_{\succeq\ol{\beta\uparrow A}}\right) \\
& = & \hCZ\otimes_{\hCZ^{s}} N^\prime_{\succeq\ol{\beta\downarrow A}}/ N^\prime_{\succeq\ol{\beta\uparrow A}}.
\end{eqnarray*}

Now $N^\prime_{\succeq\ol{\beta\downarrow A}}/ N^\prime_{\succeq\ol{\beta\uparrow A}}$ is supported on $\{\ol A\}$, hence
\begin{eqnarray*}
\hCZ\otimes_{\hCZ^{s}} N^\prime_{\succeq\ol{\beta\downarrow A}}/ N^\prime_{\succeq\ol{\beta\uparrow A}} & = &  \hCZ(\{A,\beta\downarrow A\})\otimes_{\hS_k}N^\prime_{\succeq\ol{\beta\downarrow A}}/ N^\prime_{\succeq\ol{\beta\uparrow A}} \\
& = & \left\{(\beta_n x+y,y)\mid x,y\in N^\prime_{\succeq\ol{\beta\downarrow A}}/ N^\prime_{\succeq\ol{\beta\uparrow A}}\right\}.
\end{eqnarray*}
Here $n$ is such that $s_{\beta,n}(A)=\beta\downarrow A$. Recall that the inclusion $(\ast)$ is the inclusion of the subspace of elements supported on $\{A\}$. We conclude
$$
(\out N^{\prime})_{\succeq  A}/(\out N^{\prime})_{\succeq \beta\uparrow A}  =  (\beta_n,0)\cdot (\out N^{\prime})_{\succeq \beta\downarrow  A}/(\out N^{\prime})_{\succeq \beta\uparrow A}.
$$

 Let $z=(z_C)_{C\in\SA}\in\hCZ$ be an element with $z_{{\beta\downarrow A}}=0$, $z_{A}\equiv \beta\mod \delta$ and $z_{{\beta\uparrow A}}\equiv 0\mod \delta$. Such an element exists by Lemma \ref{dinZ}. From the last identity we deduce
$$
\ol{(\out N^\prime)_{\succeq A}}=\ol{(\out N^\prime)_{\succeq \beta\uparrow A}}+\ol{z\cdot (\out N^\prime)_{\succeq \beta\downarrow A}}.$$

We want to determine
\begin{eqnarray*}
 \im\left(\ol{(\out N^\prime)_{\succeq A}}\to \ol{(\out N^\prime)^{\emptyset,A}}\oplus\ol{(\out N^\prime)^{\emptyset,\beta\uparrow A}}\right).
\end{eqnarray*}
Now the image of $\ol{(\out N^\prime)_{\succeq \beta\uparrow A}}$ inside $\ol{(\out N^\prime)^{\emptyset,A}}\oplus\ol{(\out N^\prime)^{\emptyset,\beta\uparrow A}}$ is contained in the second summand, while the image of $\ol{z\cdot (\out N^\prime)_{\succeq \beta\downarrow A}}$ is contained in the first, since $z_{\beta\uparrow A}\equiv 0\mod\delta$. 

We have 
\begin{eqnarray*}
\im\left(\ol{(\out N^\prime)_{\succeq \beta\uparrow A}}\to \ol{(\out N^\prime)^{\emptyset,\beta\uparrow A}}\right) 
& =&  \im\left(\ol{N^\prime_{\succeq \ol{\beta\uparrow A}}}\to \ol{{N^\prime}^{\emptyset,\ol {\beta\uparrow A}}}\right) \\
& = & \im\left(\ol{N^\beta_{\succeq \ol{\beta\uparrow A}}}\to \ol{{N}^{\emptyset,\ol {\beta\uparrow A}}}\right) \\
& = & \Psi^s N(\ol{\beta\uparrow A},\beta)
\end{eqnarray*}
and
\begin{eqnarray*}
\im\left(\ol{z\cdot (\out N^\prime)_{\succeq \beta\downarrow A}}\to \ol{(\out N^\prime)^{\emptyset,A}} \right) &= & \beta\cdot\im\left( \ol{N^\prime_{\succeq \ol{A}}}\to \ol{{N^\prime}^{\emptyset,\ol{A}}}\right) \\
& = & \beta\cdot\im\left( \ol{N^\beta_{\succeq \ol{A}}}\to \ol{{N}^{\emptyset,\ol{A}}}\right) \\
& = & \beta\cdot\Psi^s N(\ol A,\beta).
\end{eqnarray*}
Hence we showed that 
\begin{eqnarray*}
\Psi\out N(A,\beta) & = &  \beta\cdot\Psi^s N(\ol A,\beta)\oplus \Psi^sN(\ol{\beta\uparrow A},\beta) \\
& = & \CTout \Psi^s N(A,\beta).
\end{eqnarray*}

We have now considered all possible cases and hence have constructed an isomorphism $\Psi\circ\out\cong \CTout\circ\Psi^s$ of functors. This also finishes the proof of our theorem.
\end{proof}

\subsection{The objects $M(A,\beta)\subset M(A)\oplus M(\beta\uparrow A)$}

In this section we provide a result which is needed in the proof of
Theorem \ref{twtrans}. 

Fix $\beta\in R^+$. 
Choose $A\in\SF$ and suppose that $\beta\uparrow A\ne A$. Let $\CK_k^\beta(A)$ be the category whose objects are given by the data $(M, M(A), M(\beta\uparrow A))$, where $M(A)$ and $M(\beta\uparrow A)$ are $S_k^\emptyset$-modules and $M$ is an $S_k^\beta$-submodule of $M(A)\oplus M(\beta\uparrow A)$. Let the morphisms in $\CK_k^\beta(A)$ be the obvious ones. 

Let $V^\beta_A\in\CK_k^\beta(A)$ be given by the inclusion $S_k^\beta\subset S_k^\emptyset\oplus 0$, let $V^\beta_{\beta\uparrow A}$ be given by the inclusion $S_k^\beta\subset 0\oplus S_k^\emptyset$, and let $P^\beta_A$ be given by the inclusion $\{(\beta x+y, y)\mid  x,y\in S_k^\beta\}\subset S_k^\emptyset\oplus S_k^\emptyset$.

If $\beta\uparrow A=A$ we define the category $\CK_k^{\beta}(A)$
likewise, with objects given by the data $(M,M(A))$. We have a
standard object $V^\beta_A\in\CK_k^{\beta}(A)$ given by the inclusion $S_k^\beta\subset S_k^\emptyset$.

For $M\in\CM_k$, $A\in\SA$ and $\beta\in R^+$ we can consider $M(A,\beta)\subset M(A)\oplus M(\beta\uparrow A)$ as an object in $\CK_k^\beta(A)$. If $s\in \hCS$, the same holds for $M\in\CM_k^s$ and $A\in\SA^s$. 

\begin{lemma}\label{L4}  Suppose $M\in\CM_k$ and $A\in\SA$. Then
  $M(A,\beta)$ is isomorphic to a direct sum of copies of $V^\beta_A$, $V^\beta_{\beta\uparrow A}$ and $P^\beta_A$. A similar statement holds for objects in $\CM_k^s$. 
\end{lemma}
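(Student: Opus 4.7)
The plan is to proceed by induction on the construction of $\CM_k$ and $\CM_k^s$ as the closures of $P_0$ under grading shifts, direct sums, direct summands, and the translation functors $\CTson$ and $\CTsout$; the statements for $\CM_k$ and $\CM_k^s$ are proved simultaneously in a single induction.

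For the base case $M = P_0$, one reads off from the definition that $P_0(A, \beta)$ equals $V^\beta_{A_e}$ when $A = A_e$ (since $P_0(\beta\uparrow A_e) = 0$, so the ambient reduces to $S_k^\emptyset \oplus 0$), equals $V^\beta_{\beta \uparrow A}$ when $A = \beta \downarrow A_e$ (where $P_0(A) = 0$, so the ambient reduces to $0 \oplus S_k^\emptyset$), and is zero otherwise. Stability under direct sums is immediate, and stability under direct summands follows from a Krull--Schmidt argument once one checks that $V^\beta_A$, $V^\beta_{\beta \uparrow A}$, and $P^\beta_A$ each have degree-zero endomorphism ring equal to $k$. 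A direct computation using $S_k^\emptyset$-linearity yields this: for $P^\beta_A$, endomorphisms correspond to pairs $(a, b) \in S_k^\beta \oplus S_k^\beta$ with $a \equiv b \pmod{\beta S_k^\beta}$, and since $\beta$ has positive degree this forces $a = b \in k$ in degree zero. An entirely analogous (simpler) check handles the case $\beta\uparrow A = A$, where only the standard $V^\beta_A$ appears.

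The substantive step is preservation under $\CTson$ and $\CTsout$. Assuming inductively that $M \in \CM_k$ and $N \in \CM_k^s$ satisfy the claim, I would analyze $\CTson M(B, \beta)$ by splitting on whether $\beta\uparrow B = B$, using Lemma~\ref{wallcomb} to match $\beta\uparrow B_\pm$ with either $B_\mp$ or $(\beta\uparrow B)_\pm$, and then decomposing each $M(B_\pm, \beta)$ via the inductive hypothesis; a direct tally shows that every input standard contributes as a standard in $\CK_k^\beta(B)$. For $\CTsout N(A, \beta)$ I would split into the three sub-cases of the definition. The sub-case $\beta\uparrow\ol A = \ol A$, $A = \ol A_-$, is exactly the one where the formula $\{(\beta x + y, y) : x, y \in N(\ol A, \beta)\}$, applied summand-wise to $N(\ol A, \beta) = \bigoplus_i V^\beta_{\ol A}\langle m_i\rangle$, produces $\bigoplus_i P^\beta_A\langle m_i\rangle$ and thereby accounts for the $P^\beta_A$ contributions.

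The main obstacle I expect is the graded bookkeeping in the sub-case $\beta\uparrow\ol A = \ol A$, $A = \ol A_+$, where an input summand $V^\beta_{\ol A}\langle m\rangle$ of $N(\ol A, \beta)$ contributes the submodule $\beta S_k^\beta\langle m\rangle \subset S_k^\emptyset\langle m\rangle$ inside the first ambient factor. One must identify this, via the degree-zero iso $\cdot\beta^{-1} \colon S_k^\emptyset\langle m\rangle \to S_k^\emptyset\langle m-2\rangle$ (legitimate since $\beta$ has degree $2$), with $V^\beta_A\langle m-2\rangle$ in $\CK_k^\beta(A)$; failing to account for the grading shift $\langle -2\rangle$ would corrupt both the claim and any downstream character calculation. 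The analogous shift appears when recognizing $P^\beta_{B_-}$ inside $\CK_k^\beta(B)$ with $\beta\uparrow B = B$ as a sum $V^\beta_B \oplus V^\beta_B\langle -2\rangle$. Once these grading shifts induced by multiplication by $\beta$ are tracked correctly, every remaining case reduces to a routine verification.
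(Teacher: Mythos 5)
Your overall strategy --- induction along the generation of $\CM_k$ from $P_0$ under shifts, direct sums, summands and the translation functors, followed by a case analysis of $\CTson$ and $\CTsout$ --- is exactly the paper's; the paper simply declares the base case, the summand step and the translation step to be ``readily verified'' or ``easily verified''. Your treatment of the base case and of the translation functors, including the grading-shift bookkeeping (the identification of $\beta\cdot V^\beta_{\ol A}\langle m\rangle$ with $V^\beta_A\langle m-2\rangle$, and of $P^\beta_{B_-}$ with $V^\beta_B\oplus V^\beta_B\langle -2\rangle$ inside $\CK_k^\beta(B)$ when $\beta\uparrow B=B$), is correct.

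The verification you offer for closure under direct summands, however, is wrong. You assert that $V^\beta_A$, $V^\beta_{\beta\uparrow A}$ and $P^\beta_A$ have degree-zero endomorphism ring equal to $k$. In fact $\End_0(V^\beta_A)=(S_k^\beta)_0$, and as soon as $R$ has rank at least $2$ this ring properly contains $k$: it contains $\alpha\gamma^{-1}$ for any two distinct positive roots $\alpha,\gamma\ne\beta$, and one can check it is not even local. Likewise for $P^\beta_A$: the congruence $a\equiv b\pmod{\beta S_k^\beta}$ does not force $a=b$ in degree zero, because $\beta S_k^\beta$ already contains nonzero degree-zero elements such as $\beta\gamma^{-1}$. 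So the Krull--Schmidt reduction you invoke, as stated, does not apply. Admittedly the paper's own proof disposes of this step with a single unjustified sentence, so the lacuna is shared; but your proposed justification is a concrete claim that is false (for every root system of rank at least $2$) and would have to be replaced by a different argument for why the property of being a direct sum of standard objects of $\CK_k^\beta(A)$ is inherited by direct summands of an object of $\CM_k$.
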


\begin{proof} The statement is readily verified for $M=P_0$. If the lemma is true for $M$, then it is also true for each direct summand of $M$. Hence it is enough to show that if the lemma holds for $M$ and if $s\in\hCS$, then it also holds for $\CTson(M)$ ($\CTsout(M)$, resp.). Now this is easily verified after a quick look at the definition of translation functors. 
\end{proof}

\begin{lemma}\label{L3} Fix $M\in\CM_k$ and  $A\in\SA$. Choose invertible elements
  $a,b\in S_k^{\beta}$ such that $a\equiv b\mod\beta$. Then
  $(a,1)M({A},\beta)=(b,1)M({A},\beta)$. A similar statement holds for objects in $\CM_k^s$.
\end{lemma}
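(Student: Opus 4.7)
The plan is to invoke Lemma \ref{L4} to reduce the claim to the three building blocks $V^\beta_A$, $V^\beta_{\beta\uparrow A}$ and $P^\beta_A$ (respectively the single building block $V^\beta_A$ in the case $\beta\uparrow A=A$ relevant for $\CM_k^s$), and then verify the identity on each separately. Since a morphism in $\CK_k^\beta(A)$ is, by definition, a pair of $S_k^\emptyset$-linear maps on $M(A)$ and $M(\beta\uparrow A)$, it automatically commutes with the diagonal scalars $(a,1)$ and $(b,1)$; hence the property $(a,1)N=(b,1)N$ is preserved under isomorphism and direct sums in $\CK_k^\beta(A)$, so it suffices to check it on each standard summand.

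For the ``one-sided'' summands $V^\beta_A$, $V^\beta_{\beta\uparrow A}$ the verification is immediate: $(a,1)$ acts either as multiplication by $a$ on the nonzero factor (and since $a$ is a unit in $S_k^\beta$ we have $a\cdot S_k^\beta=S_k^\beta$) or as the identity, and the same holds for $b$; either way the summand is preserved.

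The only genuine computation is for $P^\beta_A=\{(\beta x+y,y)\mid x,y\in S_k^\beta\}$. Here I will use the congruence $a\equiv b\mod\beta$ to write $a-b=\beta c$ with $c\in S_k^\beta$, and then rewrite a general element $(a,1)(\beta x+y,y)=(a\beta x+ay,y)$ as $(b\beta x'+by',y')$ by taking $y':=y$ and $x':=b^{-1}(ax+cy)\in S_k^\beta$, where invertibility of $b$ in $S_k^\beta$ is exactly what makes $x'$ well-defined. This yields the inclusion $(a,1)P^\beta_A\subset(b,1)P^\beta_A$, and the reverse inclusion follows by interchanging the roles of $a$ and $b$. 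The $\CM_k^s$ case is handled identically via the wall-orbit analog of Lemma \ref{L4}; no substantive obstacle arises, since once the classification in Lemma \ref{L4} is in hand, the statement reduces to the elementary one-line manipulation above.
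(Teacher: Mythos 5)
Your proof is correct and follows essentially the same route as the paper: both arguments reduce via Lemma \ref{L4} to the three standard summands $V^\beta_A$, $V^\beta_{\beta\uparrow A}$, $P^\beta_A$, dispose of the one-sided ones immediately, and handle $P^\beta_A$ using $a\equiv b\pmod\beta$ together with invertibility. The paper phrases the $P^\beta_A$ case a bit more abstractly, noting that each of $(a,1)P^\beta_A$ and $(b,1)P^\beta_A$ is the $S_k^\beta$-submodule generated by $(\beta,0)$ together with $(a,1)$, resp.\ $(b,1)$, whereas you exhibit the change of coordinates $x'=b^{-1}(ax+cy)$ explicitly — but these are the same computation.
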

\begin{proof} By Lemma \ref{L4}, $M(A,\beta)$ is isomorphic to a direct sum of copies of $V^\beta_A$, $V^\beta_{\beta\uparrow A}$ and $P^\beta_A$. For $V^\beta_A$ and $V^\beta_{\beta\uparrow A}$ the claim is clear. For the case of $P_A^\beta$ we  observe that the inclusions
\begin{align*}
(a,1)\cdot \{(\beta x+y, y)\mid  x,y\in S_k^\beta\} &\subset S_k^\emptyset\oplus S_k^\emptyset \\ 
(b,1)\cdot \{(\beta x+y, y)\mid  x,y\in S_k^\beta\} &\subset S_k^\emptyset\oplus S_k^\emptyset
\end{align*}
coincide, since the left hand spaces are   generated by $(a,1)$ and $(\beta,0)$, and  by $(b,1)$ and $(\beta,0)$, resp., and since $a\equiv b\mod \beta$.
\end{proof}

\section{The translation functors of Andersen, Jantzen and Soergel}\label{sec-AJStrans}

The definition of the translation functors on $\CK$ that is given by Andersen, Jantzen and Soergel in \cite{MR1272539} is different from ours. In this section  we will show that both versions lead to equivalent categories of special objects. For this we construct  auto-equivalences ${\gamma}\colon \CK\to\CK$ and ${\gamma}^s\colon \CK^s\to\CK^s$ for each $s\in\hCS$ that intertwine the  sets of translation  functors.

First we recall the definition of Andersen, Jantzen and Soergel. Let $R^-=-R^+$ be the system of negative roots and fix $s\in\hCS$. For $A\in\SA$ and $\beta\in R^+$ we define an element $a_A^\beta\in S_k^{\emptyset}$ by 
$$
a_A^\beta:=
\begin{cases} -\alpha_{\ol{A}}, & \text{if $s_\beta(\alpha_{\ol{A}})\in R^-$, $A=\ol{A}_+$}, \\
 \alpha_{\ol{A}}^{-1}, & \text{if $s_\beta(\alpha_{\ol{A}})\in R^-$, $A=\ol{A}_-$},\\
1, & \text{if $s_\beta(\alpha_{\ol{A}})\in R^+$}.
\end{cases}
$$
(Recall that $\alpha_{\ol A}$ is the positive root associated to the
hyperplane that contains $\ol A$.)

In \cite{MR1272539} the  translation functors
on the wall $\CTon^\prime=\CTon^{\prime s}\colon\CK_k\to\CK_k^s$ and
out of the wall $\CTout^\prime=\CTout^{\prime s}\colon\CK_k^s\to\CK_k$  were defined as follows. For $M\in\CK_k$, $B\in\SA^s$ and $\beta\in R^+$ set
\begin{eqnarray*}
\CTon^\prime M({B}) & :=  & M({B}_-)\oplus M({B}_+),
\\
\CTon^\prime M({B},\beta) &:=  &
\begin{cases}
((a_{B_-}^\beta)^{-1},1) M({B}_-,\beta), & \text{if $\beta\uparrow B=B$}, \\
((a_{B_-}^\beta)^{-1},1) M({B}_-,\beta) \\
\quad \oplus ((a_{B_+}^\beta)^{-1},1) M({B}_+,\beta),  & \text{if $\beta\uparrow B\ne B$}.
\end{cases} 
\end{eqnarray*}
For  $N\in\CK_k^s$, $A\in\SA$ and $\beta\in R^+$ set
\begin{eqnarray*}
\CTout^\prime N({A}) & :=  &  N(\ol{A}),
\\
\CTout^\prime N({A} ,\beta) &:=  &
\begin{cases} 
\left\{\left. (x+a^\beta_{A}y, y)\right|  x,y\in N(\ol{A},\beta) \right\}, & \text{if $\beta\uparrow \ol{A}=\ol{A}$}\\
&\quad\text{and $A=\ol{A}_-$},  \\
 N(\ol{A},\beta)\oplus   N(\ol{\beta\uparrow {A}},\beta),
& \text{if $\beta\uparrow \ol{A}=\ol{A}$}\\
&\quad \text{ and $A=\ol{A}_+$},   \\
\left(a_{A}^\beta,1\right)N(\ol{A},\beta),
 & \text{if $\beta\uparrow \ol{A}\ne \ol{A}$}.
\end{cases}
\end{eqnarray*}
Set $\CT^{\prime s}:=\CTsout\circ\CTson$ and define $\CM_k^\prime\subset \CK_k$ in
analogy to $\CM_k$ (see Definition \ref{defM}), i.e.~by replacing the
translation functors $\CT^s$ by ${\CT}^{\prime s}$. The main result of this section is the following.
\begin{theorem}\label{AJScat}
There is an equivalence $\gamma\colon\CM_k\xrightarrow{\sim}\CM_k^\prime$ that
intertwines the translation functors on both sides, i.e.~for all $s\in\hCS$ the
following diagram commutes naturally:

\centerline{
\xymatrix{
\CM_k \ar[d]_-{\CT^s}\ar[r]^-{\gamma} & \CM_k^\prime  \ar[d]^-{{\CT}^{\prime s}}\\
\CM_k \ar[r]^-{\gamma} & \CM_k^\prime.
}
} 
\end{theorem}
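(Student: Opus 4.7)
The approach is to construct $\gamma$ and $\gamma^s$ as \emph{rescaling autoequivalences} of $\CK_k$ and $\CK_k^s$: both will leave the $S_k^\emptyset$-modules $M(A)$ unchanged and only modify the $S_k^\beta$-submodules $M(A,\beta)\subset M(A)\oplus M(\beta\uparrow A)$ by a scalar action on the first coordinate. Concretely, for each $A\in\SA$ and $\beta\in R^+$ I will fix an invertible element $c_A^\beta\in S_k^\emptyset$ and set $\gamma(M)(A,\beta):=(c_A^\beta,1)\cdot M(A,\beta)$; analogously, for $B\in\SA^s$ I will fix $d_B^\beta\in S_k^\emptyset$ and define $\gamma^s(N)(B,\beta):=(d_B^\beta,1)\cdot N(B,\beta)$. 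Since multiplication by an invertible element is reversible, both functors are then automatically autoequivalences of the ambient categories $\CK_k$ and $\CK_k^s$.

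The constants will be chosen so that $\gamma^s\circ\CTon\cong\CTon'\circ\gamma$ and $\gamma\circ\CTout\cong\CTout'\circ\gamma^s$ for every $s\in\hCS$. Unwinding the unprimed and primed definitions and comparing the resulting subspaces inside the relevant direct sums turns these intertwining identities into a list of equations of the form $(u,1)\cdot L=(v,1)\cdot L$, where $L$ is (a summand of) one of the subspaces $M(A,\beta)$ and $u,v$ are products of $c$'s, $d$'s and the AJS factors $a_A^\beta$. By Lemma~\ref{L3}, each such equation holds as soon as $u\equiv v\mod\beta$, so the full system collapses to a list of congruences modulo $\beta$. I will solve these by defining $c_A^\beta$ and $d_B^\beta$ inductively, starting from $c_{A_e}^\beta=1$ and propagating the correction across adjacent walls using the explicit piecewise formula for $a_A^\beta$.

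With the constants in place, the intertwining becomes a case-by-case verification that uses Lemma~\ref{wallcomb} to dispatch the alternatives ($\beta\uparrow B=B$ versus $\beta\uparrow B\ne B$, $A=\ol A_+$ versus $A=\ol A_-$, sign of $s_\beta(\alpha_{\ol A})$). The trickiest case, $\beta\uparrow\ol A=\ol A$, in which $\CTout'$ involves the nontrivial twist $\{(x+a_A^\beta y,y)\mid x,y\in N(\ol A,\beta)\}$ instead of a direct sum, is handled by Lemma~\ref{L4}: it reduces to checking separately on each of the standard building blocks $V^\beta_{A}$, $V^\beta_{\beta\uparrow A}$ and $P^\beta_A$ that appear as summands of $N(\ol A,\beta)$. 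One also checks directly from the definitions that $\gamma(P_0)\cong P_0$. Induction on the length of a sequence of translation functors then gives $\gamma(\CM_k)\subseteq\CM_k^\prime$, and the symmetric argument using the inverse constants $(c_A^\beta)^{-1}$ shows that $\gamma$ restricts to an equivalence $\CM_k\xrightarrow{\sim}\CM_k^\prime$.

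The main obstacle is the combinatorial bookkeeping: one must exhibit constants $c_A^\beta$ and $d_B^\beta$ that simultaneously satisfy the congruences coming from both $\CTon$-vs-$\CTon'$ and $\CTout$-vs-$\CTout'$, across all positional configurations. In other words, one must verify that the family $(a_A^\beta)$ is a coboundary modulo $\beta$ with respect to the combinatorics of adjacent alcoves, which is what permits the local rescalings to be patched into a globally defined $\gamma$. Once this coboundary condition is established the rest is a finite, if somewhat intricate, case-check.
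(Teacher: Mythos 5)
Your overall strategy coincides with the paper's: both define a rescaling autoequivalence of $\CK_k$ by acting on each $M(A,\beta)\subset M(A)\oplus M(\beta\uparrow A)$ with an invertible pair of scalars, check $\gamma(P_0)\cong P_0$, and then reduce the intertwining $\gamma^s\circ\CTon\cong\CTon'\circ\gamma$, $\gamma\circ\CTout\cong\CTout'\circ\gamma^s$ to a case analysis that ultimately invokes Lemma~\ref{L3} (congruence mod $\beta$ suffices) and Lemma~\ref{L4} (reduction to the building blocks $V^\beta_A$, $V^\beta_{\beta\uparrow A}$, $P^\beta_A$). Your choice of normalization $(c_A^\beta,1)$ is exactly the paper's choice $\gamma_A^{\beta,+}=1$, $\gamma_A^{\beta,-}=d_A^\beta$, so the framework is the same.

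The genuine gap is that you never actually produce the constants or prove that they exist. You phrase their existence as a ``coboundary condition'' to be verified and propose to build $c_A^\beta$ by propagation from $c_{A_e}^\beta=1$ across adjacent walls, but a propagation definition is only well-posed once you know it is independent of the chosen gallery between $A_e$ and $A$; establishing this path-independence is precisely where the content of the statement lies, and your sketch does not address it. The paper sidesteps the issue entirely by giving a closed formula: it defines
$$
d_A^\beta \;=\; \prod_{\substack{\alpha_i\;\in\; H(A,\,\beta\uparrow A) \\ s_\beta(\alpha_i)\in R^-}} \alpha_i^{-1},
$$
a product over (the types of) the hyperplanes separating $A$ and $\beta\uparrow A$, together with a compatible formula for walls; Lemma~\ref{Lemcon} then records the local transformation rules ($d_{B_+}^\beta$ versus $d_{B_-}^\beta$, invertibility of $\beta\cdot d_A^\beta$ in $S_k^\beta$, etc.) that are needed in the case analysis. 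Once such a globally defined family is in hand, the case-checks really are the finite verification you describe. Without exhibiting the $c_A^\beta$ (and the wall constants $d_B^\beta$) or proving your cocycle is a coboundary, the argument is an outline rather than a proof; I would expect you to either write down the paper's closed formula, or else prove a path-independence lemma justifying the inductive construction.
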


The proof of the theorem will occupy the remainder of this section.

\subsection{Adding constants}

Let ${\SO}\subset\SF$ be an orbit of facets, and choose for each ${A}\in {\SO}$ and each $\beta\in R^+$ invertible scalars ${\gamma}_{{A}}^{\beta,-}, {\gamma}_{{A}}^{\beta,+}\in S_k^{\emptyset}$ if $\beta\uparrow{A}\ne{A}$, and an invertible scalar ${\gamma}_{{A}}^\beta\in S_k^{\emptyset}$ if $\beta\uparrow {A}={A}$. Denote by ${\gamma_{\SO}}\colon\CK_k({\SO})\to\CK_k({\SO})$ the following functor. For ${A}\in{\SO}$, $\beta\in R^+$  and $M\in \CK_k({\SO})$ define
\begin{eqnarray*}
{\gamma_\SO}M({A}) &:=& M({A}), \\
{\gamma_\SO}M({A},\beta) &:=& 
\begin{cases} ({\gamma}_{{A}}^{\beta,-}, {\gamma}_{{A}}^{\beta,+})\cdot M({A},\beta),  &  \text{if $\beta\uparrow {A}\neq {A}$}, \\
{\gamma}_{{A}}^\beta\cdot M({A},\beta), & \text{if $\beta\uparrow {A}={A}$}.
\end{cases}
\end{eqnarray*}
If $f=(f_{A})_{{A}\in{\SO}}\colon M\to N$ is a morphism in $\CK_k({\SO})$, then $f_{A}\oplus f_{\beta\uparrow A}$ ($f_A$, resp.) maps $M({A},\beta)$ into $N({A},\beta)$ if $\beta\uparrow A\ne A$ (if $\beta\uparrow A=A$, resp.) for each ${A}\in{\SO}$, $\beta\in R^+$. Hence  $f_{A}\oplus f_{\beta\uparrow A}$  ($f_A$, resp.) also maps $({\gamma}_{{A}}^{\beta,-}, {\gamma}_{{A}}^{\beta,+})\cdot M({A},\beta)$ (${\gamma}_{A}^\beta\cdot M(A,\beta)$, resp.) into $({\gamma}_{{A}}^{\beta,-}, {\gamma}_{{A}}^{\beta,+})\cdot N({A},\beta)$ (${\gamma}_{A}^\beta\cdot M(A,\beta)$, resp.), hence the collection $(f_{A})_{{A}\in{\SO}}$ also yields a morphism ${\gamma_\SO}f\colon {\gamma_\SO}M\to {\gamma_\SO}N$. So our choice of scalars indeed gives a functor ${\gamma_\SO}\colon\CK_k({\SO})\to\CK_k({\SO})$.

\begin{proposition} The functor ${\gamma_\SO}\colon\CK_k({\SO})\to\CK_k({\SO})$ is an equivalence of categories.
\end{proposition}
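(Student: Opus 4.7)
The plan is to exhibit an explicit, strict two-sided inverse for $\gamma_\SO$. Define a companion functor $\gamma_\SO^\prime\colon\CK_k(\SO)\to\CK_k(\SO)$ by the same recipe used for $\gamma_\SO$, but with each scalar $\gamma_A^{\beta,\pm}$ (respectively $\gamma_A^\beta$) replaced by its inverse $(\gamma_A^{\beta,\pm})^{-1}$ (respectively $(\gamma_A^\beta)^{-1}$) in $S_k^\emptyset$. The fact that $\gamma_\SO^\prime$ is actually a functor follows by the very same observation that made $\gamma_\SO$ into a functor: because the component maps $f_A$ are $S_k^\emptyset$-linear and the scalars lie in $S_k^\emptyset$, if $(f_A)_{A\in\SO}$ sends each $M(A,\beta)$ into $N(A,\beta)$, then it sends any $S_k^\emptyset$-rescaling of $M(A,\beta)$ into the correspondingly rescaled $N(A,\beta)$.

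Next I would check that the two compositions $\gamma_\SO\circ\gamma_\SO^\prime$ and $\gamma_\SO^\prime\circ\gamma_\SO$ are equal to the identity functor on the nose. On objects this is immediate: neither functor alters the ambient $S_k^\emptyset$-modules $M(A)$, and on the distinguished submodules the scalars multiply to $\gamma_A^{\beta,\pm}(\gamma_A^{\beta,\pm})^{-1}=1$ in the case $\beta\uparrow A\ne A$, respectively $\gamma_A^\beta(\gamma_A^\beta)^{-1}=1$ in the case $\beta\uparrow A=A$. On morphisms, both $\gamma_\SO$ and $\gamma_\SO^\prime$ act as the identity on the underlying collection $(f_A)_{A\in\SO}$, so the composite is again the identity assignment, and we need only verify that the morphism condition with respect to the twice-rescaled submodules agrees with the original one---which is again automatic since the ambient modules coincide.

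The main (very modest) obstacle is simply the case analysis: for each $A\in\SO$ and $\beta\in R^+$ one must separately treat $\beta\uparrow A\neq A$ and $\beta\uparrow A=A$, and in the former case one must remember that the rescaling uses a pair of scalars $(\gamma_A^{\beta,-},\gamma_A^{\beta,+})$. However, because the scalars are invertible in $S_k^\emptyset$ and the morphisms $f_A$ are $S_k^\emptyset$-linear, each case reduces to the trivial observation that multiplication by an invertible scalar on an $S_k^\emptyset$-module is an equivalence. Consequently $\gamma_\SO$ and $\gamma_\SO^\prime$ are mutually inverse, and $\gamma_\SO$ is not merely an equivalence but in fact an isomorphism of categories.
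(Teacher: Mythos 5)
Your proposal is correct and matches the paper's proof, which is the one-line observation that the inverse functor is given by the inverse scalars; you have simply spelled out the routine verifications explicitly.
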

\begin{proof} The inverse functor ${\gamma_\SO}^{-1}$ is given by the inverse scalars.
\end{proof}

Now suppose we have chosen scalars as above for each alcove and each wall. This gives us functors $\gamma=\gamma_\SA\colon\CK_k\to\CK_k$ and $\gamma^s=\gamma_{\SA^s}\colon\CK_k^s\to\CK_k^s$ for each $s\in\hCS$. 
For each choice of scalars we obviously have $\gamma P_0\cong P_0$.
Define 
\begin{eqnarray*}
\CTon^{\gamma}  =\CTon^{\gamma s}  :=   {\gamma}^{s,-1}\circ \CTon^{\prime s}\circ {\gamma}& \colon & \CK_k\to\CK_k^s, \\
\CTout^{\gamma} =\CTout^{\gamma s} :=    {\gamma}^{-1}\circ \CTout^{\prime s}\circ {\gamma}^s & \colon & \CK_k^s\to\CK_k. 
\end{eqnarray*}
Theorem \ref{AJScat} follows from the next statement.

\begin{proposition}\label{twtrans} One can choose scalars as above for each alcove and each wall such that for all $s\in\hCS$ we have $\CTout^{\gamma s}\cong \CTsout$ and $\CTon^{\gamma s}\cong \CTson$ when restricted to $\CM_k$ and $\CM_k^s$, resp.
\end{proposition}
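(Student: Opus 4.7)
The plan is to verify the two required natural isomorphisms $\CTon^\prime\circ\gamma\cong\gamma^s\circ\CTon$ and $\CTout^\prime\circ\gamma^s\cong\gamma\circ\CTout$ directly on standard building blocks, after making a judicious choice of scalars. By Lemma \ref{L4}, for every $M\in\CM_k$, every alcove $A$, and every $\beta\in R^+$, the subobject $M(A,\beta)$ (regarded together with $M(A)$ and $M(\beta\uparrow A)$ as an object of $\CK_k^\beta(A)$) is a direct sum of copies of $V^\beta_A$, $V^\beta_{\beta\uparrow A}$, and $P^\beta_A$; the analogous statement holds on $\SA^s$. Both auto-equivalences $\gamma,\gamma^s$ and all four translation functors act only by coordinate-wise rescaling of these subobjects, so it is enough to verify the natural isomorphisms on these finitely many standard test objects.

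A direct calculation shows that for any such standard object $V$ and any two pairs of invertible scalars $(c_1,c_2)$ and $(d_1,d_2)$ in $S_k^\emptyset$, one has $(c_1,c_2)V = (d_1,d_2)V$ as subobjects of $V(A)\oplus V(\beta\uparrow A)$ provided that the ratios $c_1/d_1$ and $c_2/d_2$ lie in $S_k^\beta$, are units there, and are congruent modulo $\beta$. This is exactly Lemma \ref{L3} for $V^\beta_A$ and $V^\beta_{\beta\uparrow A}$, and can be checked directly for $P^\beta_A$ by writing $P^\beta_A = S_k^\beta\cdot(\beta,0) + S_k^\beta\cdot(1,1)$ and comparing generators. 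The scalars $\gamma_A^{\beta,\pm}$ for alcoves and $\gamma_B^{\beta,\pm}$ (or $\gamma_B^\beta$ when $\beta\uparrow B=B$) for walls are then chosen so as to encode the accumulated AJS correction factors $a_A^\beta$. A convenient ansatz is to take $\gamma_\cdot^{\beta,+}=1$ throughout, and to let $\gamma_\cdot^{\beta,-}$ equal the product of the $a_{A'}^\beta$'s over a minimal chain connecting each alcove or wall to a fixed base; each $a_{A'}^\beta$ is an invertible element of $S_k^\emptyset$, so these products are invertible as well.

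With these choices one verifies the $\CTon$-isomorphism by splitting according to whether $\beta\uparrow B = B$ (equivalently, $\alpha_B = \beta$) or not, and the $\CTout$-isomorphism by splitting into the three cases $\beta\uparrow\ol A\ne\ol A$, $\beta\uparrow\ol A=\ol A$ with $A=\ol A_-$, and $\beta\uparrow\ol A=\ol A$ with $A=\ol A_+$. In each case the required equality of rescaled subobjects becomes, after simplification, a pair of congruences of scalar ratios modulo $\beta$ which hold by construction. The subtle case is $\beta\uparrow\ol A=\ol A$, $A=\ol A_-$, where $a_A^\beta=\alpha_{\ol A}^{-1}=\beta^{-1}$ forces the AJS output to equal $(\beta^{-1},1)\cdot\CTout N(A,\beta)$; here $\CTout N(A,\beta)$ is of type $P^\beta_A$, which carries exactly the $\beta$-factor needed for Lemma \ref{L3} to absorb the $\beta^{-1}$ into the $\gamma$-scalings.

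The hardest part will be the detailed bookkeeping: the piecewise definition of $a_A^\beta$ (three sub-cases according to the sign of $s_\beta(\alpha_{\ol A})$ and the side of $\ol A$ on which $A$ lies) combined with the piecewise definitions of $\CTon^\prime$ and $\CTout^\prime$ yields a long list of sub-cases, all of which must be reconciled with a single consistent family of $\gamma$-scalars. The organizing principle is that the nontrivial factors $a_A^\beta=\pm\alpha_{\ol A}^{\pm 1}$ occur precisely when $\alpha_{\ol A}=\beta$, which is exactly the situation in which honest $\beta$-factors appear on the $(\CTon,\CTout)$ side of the identities as well; this coincidence is what permits Lemma \ref{L3} to bridge the two sides in every case.
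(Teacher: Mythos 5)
Your high-level plan (rescale by a consistent family of units and invoke Lemma \ref{L3} to absorb the discrepancies, checked on the standard test objects from Lemma \ref{L4}) is the right skeleton, but there are two substantial problems with the way you flesh it out.

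First, the proposed scalars are not actually well-defined, and they do not match what is needed. You want $\gamma^{\beta,-}_{A}$ to be ``the product of the $a_{A'}^\beta$'s over a minimal chain to a fixed base,'' but the AJS constants $a_{A'}^\beta$ themselves depend on a \emph{choice} of $s\in\hCS$ (through $\alpha_{\ol{A'}}$), and the equivalence $\gamma$ must be a single functor, defined once and for all, that works \emph{simultaneously} for every $s$. You never resolve which $s$ is used at each step, nor why the product is independent of the chain. The paper instead takes $\gamma_A^{\beta,-}=d_A^\beta=\prod_i\alpha_i^{-1}$, where the $\alpha_i$ run over the types of the (finitely many) hyperplanes in $H(A,\beta\uparrow A)$ with $s_\beta(\alpha_i)\in R^-$; this is manifestly well-defined, depends only on $\beta$, and — importantly — has \emph{no sign} in it, unlike the $a$'s which can be $-\alpha_{\ol A}$. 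The relations one actually needs (Lemma \ref{Lemcon}: $d_{B_+}^\beta=\alpha_B(-s_\beta(\alpha_B))d_{B_-}^\beta$ when $s_\beta(\alpha_B)\in R^-$, etc.) are proved by a hyperplane-counting argument, not by chaining $a$'s.

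Second, and more seriously, your ``organizing principle'' — that nontrivial $a_A^\beta$ occur precisely when $\alpha_{\ol A}=\beta$ — is false. The definition makes $a_A^\beta$ nontrivial whenever $s_\beta(\alpha_{\ol A})\in R^-$, which happens for many $\alpha_{\ol A}\ne\beta$ (e.g.\ $\alpha_{\ol A}=\alpha_1$, $\beta=\alpha_1+\alpha_2$ in type $A_2$). This misreading lets you skip the genuinely delicate case $\beta\uparrow B\ne B$, $s_\beta(\alpha_B)\in R^-$: there the rescaled subobject comes out to $\left(\frac{s_\beta(\alpha_B)}{\alpha_B},1\right)M(B_+,\beta)$, and one must observe $s_\beta(\alpha_B)\equiv\alpha_B\bmod\beta$ and \emph{then} invoke Lemma \ref{L3} to see this equals $M(B_+,\beta)$. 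That is exactly where Lemma \ref{L3} does essential work with $\alpha_{\ol A}\ne\beta$, contradicting your principle. The analogous step appears for $\CTout$ in the case $\beta\uparrow\ol A\ne\ol A$, $s_\beta(\alpha_{\ol A})\in R^-$, $A=\ol A_+$. Until you produce a concrete, $s$-independent family $\{\gamma_A^{\beta,\pm}\}$ with explicit transition relations like Lemma \ref{Lemcon}, and run the case analysis including the cases where the correction is nontrivial but $\alpha_{\ol A}\ne\beta$, the proof is not complete.
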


\subsection{A choice of constants}

For alcoves $A$ and $A^\prime$ with $A\preceq A^\prime$ define $H(A,A^\prime)$ as the set of all
reflection hyperplanes that separate $A$ and $A^\prime$, i.e.~as the
set of all $H_{\alpha,n}$ with $A\subset H_{\alpha,n}^-$ and
$A^\prime\subset H_{\alpha,n}^+$. 
Let $\beta\in R^+$. We will now define a non-zero constant
$d_{F}^\beta\in S_k^{\emptyset}$ for each facet $F$ that is an alcove or
a wall. Suppose first that $F$ is an alcove $A\in\SA$. Let
$\alpha_1,\dots,\alpha_n\in R^+$ be the positive roots associated to
the hyperplanes in $H(A,\beta\uparrow A)$, and set
$$
d_{A}^\beta:=\prod_{i=1,\dots,n\atop s_{\beta}(\alpha_i)\in R^-} \alpha_i^{-1}.
$$
For a wall ${B}\in\ScW$ set
$$
d_{B}^\beta:=
\begin{cases}
\alpha_B\cdot d_{B_-}^\beta, & \text{if $s_\beta(\alpha_B)\in R^-$,} \\
d_{B_-}^\beta, & \text{if $s_\beta(\alpha_B)\in R^+$}.
\end{cases}
$$
\begin{lemma}\label{Lemcon} Let $A\in\SA$ and $B\in\ScW$.
\begin{enumerate}
\item $d_{A}^\beta\in\beta^{-1} S_k^\beta$, and $\beta\cdot d_A^{\beta}$ is invertible in $S_k^\beta$.
\item If $B$ is of type $\beta$ (i.e.~$\beta\uparrow B=B$), then $d_{B_-}^\beta=\beta^{-1}$ and $d_B^{\beta}=1$.
\item If $B$ is not of type $\beta$ (i.e.~$\beta\uparrow B\ne B$), then 
$$
d_{B_+}^\beta=
\begin{cases} 
\alpha_B\cdot (-s_\beta(\alpha_B))\cdot d_{B_-}^\beta, & \text{if $s_\beta(\alpha_B)\in R^-$}, \\
d_{B_-}^\beta, & \text{if $s_\beta(\alpha_B)\in R^+$.}
\end{cases}
$$
\end{enumerate}
\end{lemma}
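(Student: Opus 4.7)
The proof strategy is to identify precisely which hyperplanes contribute to the product defining $d_F^\beta$.

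For part (1), note that $\beta\uparrow A=s_{\beta,n}(A)$, where $H_{\beta,n}$ is the lowest type-$\beta$ hyperplane strictly above $A$; its reflection $s_{\beta,n}(A)$ sits between $H_{\beta,n}$ and $H_{\beta,n+1}$, so $H_{\beta,n}$ is the unique type-$\beta$ hyperplane in $H(A,\beta\uparrow A)$. Since $s_\beta(\beta)=-\beta\in R^-$, it contributes the factor $\beta^{-1}$ to $d_A^\beta$; the remaining factors $\alpha^{-1}$ have $\alpha\in R^+\setminus\{\beta\}$, hence are units in $S_k^\beta$.

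Part (2) follows directly from Lemma \ref{wallcomb}(1): when $B$ is of type $\beta$ we have $\beta\uparrow B_- = B_+$, so $H(B_-,\beta\uparrow B_-)=\{H_B\}$ with $H_B$ of type $\beta$, giving $d_{B_-}^\beta=\beta^{-1}$; the first case of the recursive definition of $d_B^\beta$ then yields $d_B^\beta = \beta\cdot \beta^{-1}=1$.

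For part (3), the main tool is the cocycle identity for hyperplane-crossings: walking around the four-cycle $B_-\to B_+\to \beta\uparrow B_+\to \beta\uparrow B_-\to B_-$ and taking symmetric differences gives
\[
H(B_-,\beta\uparrow B_-)\,\triangle\, H(B_+,\beta\uparrow B_+) = H(B_-,B_+)\,\triangle\, H(\beta\uparrow B_+,\beta\uparrow B_-) = \{H_B,\,H_{\beta\uparrow B}\},
\]
where the second equality uses Lemma \ref{wallcomb}(2) together with the hypothesis $\beta\uparrow B\neq B$. Thus each of $H_B$ and $H_{\beta\uparrow B}$ lies in exactly one of the two separating sets. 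In Case A ($s_\beta(\alpha_B)\in R^+$), neither hyperplane contributes to the product defining $d$: $H_B$ has type $\alpha_B$ with $s_\beta(\alpha_B)\in R^+$, and $H_{\beta\uparrow B}$ has type $s_\beta(\alpha_B)$ with $s_\beta(s_\beta(\alpha_B))=\alpha_B\in R^+$. Hence $d_{B_+}^\beta=d_{B_-}^\beta$. In Case B ($s_\beta(\alpha_B)\in R^-$), the types $\alpha_B$ and $-s_\beta(\alpha_B)$ both satisfy the defining condition; the desired formula $d_{B_+}^\beta=\alpha_B\cdot(-s_\beta(\alpha_B))\cdot d_{B_-}^\beta$ will follow once both $H_B$ and $H_{\beta\uparrow B}$ are shown to lie in $H(B_-,\beta\uparrow B_-)$.

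The main obstacle is this last Case B verification. I would set $t=s_{\beta,n_-}$ (so $t(B_-)=\beta\uparrow B_-$) and exploit that $H(B_-,tB_-)$ is $t$-stable, to conclude $H_B$ and $t(H_B)$ are simultaneously in or out. A direct reflection computation yields $t(H_B)=H_{\beta\uparrow B}$ in Case B, where the crucial point is that $c=\langle\alpha_B,\beta^\vee\rangle>0$ forces $B_+$ to lie strictly higher than $B_-$ in the $\beta$-direction, so $B$ is an upper $\beta$-wall of $B_-$ and the indices $n$ for $\beta\uparrow B$ and $n_-$ for $\beta\uparrow B_-$ coincide. To rule out the alternative scenario (both hyperplanes lying in $H(B_+,\beta\uparrow B_+)$), one evaluates $\langle\alpha_B, s_{\beta,n_-}(v)\rangle = \langle\alpha_B,v\rangle + c(n_- - \langle\beta,v\rangle)$ on a suitable interior point $v\in B_-$ and checks that $\beta\uparrow B_-$ lies on the opposite side of $H_B$ from $B_-$.
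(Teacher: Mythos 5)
Parts (1) and (2) are correct and coincide with the paper's argument. For part (3) your cocycle organization of the bookkeeping is a genuine alternative to the paper's: rather than asserting a set-containment between $H(B_-,\beta\uparrow B_-)$ and $H(B_+,\beta\uparrow B_+)$ as the paper does, you compute the symmetric difference and only afterwards decide which way the two hyperplanes split. This is in fact the cleaner framing, since the paper's stated inclusion $H(B_-,\beta\uparrow B_-)=H(B_+,\beta\uparrow B_+)\cup\{H_1,H_2\}$ goes the wrong way when $\langle\alpha_B,\beta^\vee\rangle<0$ (a subcase of Case A, harmless for the conclusion because then neither hyperplane contributes to $d$), whereas your Case A argument does not need to resolve the split at all.

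The Case B verification, however, is exactly the point where the argument must produce real content, and there your reasoning is confused. You attribute the identity $t(H_B)=H_{\beta\uparrow B}$, with $t=s_{\beta,n_-}$, to $c=\langle\alpha_B,\beta^\vee\rangle>0$, and gloss this with the claim that ``$B$ is an upper $\beta$-wall of $B_-$.'' Neither point is right. The wall $B$ has type $\alpha_B\ne\beta$, so it is not a $\beta$-wall of anything; and the coincidence of the upward-reflection indices for $B$ and $B_-$ has nothing to do with the sign of $c$. It holds whenever $\beta\uparrow B\ne B$: since $B\subset\ol{B_-}$ is a facet not contained in any $H_{\beta,k}$, it lies strictly between the same pair of consecutive $\beta$-hyperplanes bounding $B_-$ (and $B_+$), so $\beta\uparrow B=t(B)$ and hence $t(H_B)=H_{\beta\uparrow B}$, independently of $c$. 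The only place where $c>0$ actually enters is the final sign check, and your prescription to evaluate $\langle\alpha_B,t(v)\rangle$ at ``a suitable interior point $v\in B_-$'' is too vague to be a proof: for a general interior point of $B_-$ the two contributions $\langle\alpha_B,v\rangle-m<0$ and $c(n_--\langle\beta,v\rangle)>0$ compete and the sign is undetermined. The clean fix is to evaluate on the wall itself: for $v\in B$ one has $\langle\alpha_B,v\rangle=m$ and $n_--\langle\beta,v\rangle\in(0,1)$, so $\langle\alpha_B,t(v)\rangle=m+c\,(n_--\langle\beta,v\rangle)>m$. This places $\beta\uparrow B=t(B)$ strictly in $H_{\alpha_B,m}^+$, hence so are both alcoves adjacent to the wall $\beta\uparrow B$ (their closures contain $\beta\uparrow B$, and $H_{\alpha_B,m}$ is not the hyperplane containing $\beta\uparrow B$); in particular $\beta\uparrow B_-\subset H_{\alpha_B,m}^+$, so $H_B\in H(B_-,\beta\uparrow B_-)$, and by $t$-stability of $H(B_-,tB_-)$ so is $H_{\beta\uparrow B}=t(H_B)$. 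With that repair your argument is complete.
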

\begin{proof}
The alcoves $A$ and $\beta\uparrow A$ are separated by exactly one hyperplane of type $\beta$ and it follows that $d_{A}^\beta\in \beta^{-1} S_k^{\beta}$ and that $\beta\cdot d_A^{\beta}$ is invertible in $S_k^\beta$. Hence (1).

Only the hyperplane containing $B$ separates $B_-$ and $B_+$, and if it is of type
$\beta$, then $\beta\uparrow B_-=B_+$ and we deduce
$d_{B_-}^\beta=\beta^{-1}$ and $d_B^\beta=1$ directly from the definitions. Hence we proved claim (2).

Suppose  $B$ is not of type $\beta$. Then $\{\beta\uparrow B_-,\beta\uparrow B_+\}=\{(\beta\uparrow B)_-,(\beta\uparrow B)_+\}$. Denote by $H_1$ the reflection
hyperplane separating $B_-$ and $B_+$ and by $H_2$ the hyperplane separating $(\beta\uparrow B)_-$ and $(\beta\uparrow B)_+$. 
For each reflection hyperplane $H\ne H_1$ we
have $B_-\subset H^-$ if and only if $B_+\subset H^-$, and the analogous result holds  for $(\beta\uparrow B)_-$ and $(\beta\uparrow B)_+$ .

If $H_1=H_2$, then $B_-$ and $\beta\uparrow B_-$ lie on the same side of $H_1$ and we deduce $H(B_-,\beta\uparrow B_-)=
H(B_+,\beta\uparrow B_+)$, so $d_{B_-}^\beta=d_{B_+}^\beta$. Moreover, $s_\beta$ stabilizes $H_1$, so
$s_\beta(\alpha_B)=\alpha_B$ (since $\alpha_B\ne \beta$), so $s_\beta(\alpha_B)\in R^+$, hence $d_{B_+}^\beta=d_{B_-}^\beta$ is
what we claimed in (3).

If $H_1\ne H_2$, then $H(B_-,\beta\uparrow B_-)=H(B_+,\beta\uparrow B_+)\cup\{H_1,H_2\}$. By definition $H_1$ is of type $\alpha_B$. Then the type of $H_2$ is the positive root in $\{\pm s_\beta(\alpha_B)\}$. Suppose that  $s_\beta(\alpha_B)\in R^+$. Then for the calculation of $d_{B_-}^\beta$ we do not have to consider $\alpha_B$, nor $s_\beta(\alpha_B)$ (as $s_\beta(s_\beta(\alpha_B))=\alpha_B\in R^+$). Hence $d_{B_+}^\beta=d_{B_-}^\beta$ in this case. But if $s_\beta(\alpha_B)\in R^-$, then, as $s_\beta(-s_\beta(\alpha_B))=-\alpha_B\in R^-$, we have to consider both $\alpha_B$ and $-s_\beta(\alpha_B)$ for the calculation of $d_{B_-}^\beta$, hence $d_{B_-}^\beta=\alpha_B^{-1}\cdot(-s_\beta(\alpha_B))^{-1}\cdot d_{B_+}^\beta$.
\end{proof}

\begin{proof}[Proof of Proposition \ref{twtrans}] Let ${\gamma}\colon \CK_k\to\CK_k$ and ${\gamma}^s\colon\CK_k^s\to\CK_k^s$ for $s\in\hCS$ be the functors associated to the following constants. For $A\in\SA$, $B\in\ScW$ and $\beta\in R^+$ define 

\begin{align*}
\gamma_A^{\beta,+} & := 1, & {\gamma}_{{A}}^{\beta,-} & :=d_A^\beta,   &\\
{\gamma}_{{B}}^{\beta,+}  &:=  1, &{\gamma}_{{B}}^{\beta,-} &:=d_{{B}}^\beta, & \text{ if $\beta\uparrow{B}\ne  {B}$}, \\
{\gamma}_{{B}}^\beta & :=1, &&&\text{ if $\beta\uparrow{B}=  {B}$}.
\end{align*} 

Now fix $s\in\hCS$ and choose $M\in\CM_k$,  $N\in\CM_k^s$,  $A\in\SA$ and $B\in\SA^s$. Then 
\begin{center}
\begin{tabular}{rcccl}
$\CTon^\gamma M(B)$&$ =$&$ M(B_-)\oplus M(B_+)$&$= $&$\CTon M(B)$, \\
$\CTout^{\gamma} N(A) $&$=$&$ N(\ol{A})$&$= $&$\CTout N(A) $.
\end{tabular}
\end{center}
In order to prove the proposition it is enough to show that the subspaces associated to $\beta\in R^+$ coincide.

{\em The case $\beta\uparrow B=B$}. Then $\alpha_B=\beta$, $a_{B_-}^\beta=\beta^{-1}$, ${\gamma}_{B_-}^{\beta,-}=\beta^{-1}$, ${\gamma}^{\beta,+}_{B_-}={\gamma}_B^{\beta}=1$. Hence
\begin{eqnarray*}
\CTon^{\gamma} M(B,\beta) & = & \left(\frac{ {\gamma}^{\beta,-}_{B_-}}{a_{B_-}^\beta\cdot{\gamma}^\beta_B},\frac{{\gamma}^{\beta,+}_{B_-}}{{\gamma}^\beta_B}\right) M(B_-,\beta)\\
& = &  M(B_-,\beta) =  \CTon M(B,\beta).
\end{eqnarray*}

{\em The case $\beta\uparrow B\ne B$ and $s_{\beta}(\alpha_B)\in R^-$}. Then $a_{B_-}^\beta=\alpha_B^{-1}$,  $a_{B_+}^\beta=-\alpha_B$, ${\gamma}_{B_-}^{\beta,-}=d_{B_-}^\beta$, ${\gamma}_{B_+}^{\beta,-}=d_{B_+}^\beta$, ${\gamma}_B^{\beta,-}=d_B^\beta=\alpha_B\cdot d_{B_-}^{\beta}$ and ${\gamma}_{B_-}^{\beta,+}={\gamma}_{B_+}^{\beta,+}={\gamma}_B^{\beta,+}=1$. Moreover, $d_{B_+}^\beta=\alpha_B\cdot(-s_\beta(\alpha_B))\cdot d_{B_-}^\beta$ by Lemma \ref{Lemcon}. Hence
\begin{eqnarray*}
\CTon^{\gamma} M(B,\beta) & = & \left(\frac{ {\gamma}^{\beta,-}_{B_-}}{a_{B_-}^\beta{\gamma}_B^{\beta,-}},\frac{{\gamma}^{\beta,+}_{B_-}}{{\gamma}^{\beta,+}_B}\right) M(B_-,\beta)\oplus \left(\frac{ {\gamma}^{\beta,-}_{B_+}}{a^\beta_{B_+} {\gamma}_B^{\beta,-}},\frac{{\gamma}^{\beta,+}_{B_+}}{{\gamma}^{\beta,+}_B}\right) M(B_+,\beta) \\
& = &  \left(\frac{d_{B_-}^\beta }{\alpha_B^{-1}\alpha_B d_{B_-}^\beta},1\right) M(B_-,\beta)\oplus \left(\frac{ d_{B_+}^\beta}{-\alpha_B \alpha_B d_{B_-}^\beta},1\right) M(B_+,\beta) \\
& = & M(B_-,\beta)\oplus \left(\frac{s_\beta(\alpha_B)}{\alpha_B},1\right)  M(B_+,\beta).
\end{eqnarray*}
Now $s_{\beta}(\alpha_B)\equiv \alpha_B\mod\beta$, hence $s_{\beta}(\alpha_B)\cdot \alpha_B^{-1}\equiv 1\mod\beta$. By Lemma \ref{L3}, $\left(\frac{s_\beta(\alpha_B)}{\alpha_B},1\right)  M(B_+,\beta)=M(B_+,\beta)$, hence $\CTon^{\gamma} M(B,\beta)=\CTon M(B,\beta)$.

{\em The case $\beta\uparrow B\ne B$ and $s_{\beta}(\alpha_B)\in R^+$}. Then $a_{B_-}^\beta=a_{B_+}^\beta=1$, ${\gamma}_{B_-}^{\beta,-}=d_{B_-}^\beta$, ${\gamma}_{B_+}^{\beta,-}=d_{B_+}^\beta$, ${\gamma}_B^{\beta,-}=d_B^\beta= d_{B_-}^{\beta}$ and ${\gamma}_{B_-}^{\beta,+}={\gamma}_{B_+}^{\beta,+}={\gamma}_B^{\beta,+}=1$. Moreover, $d_{B_+}^\beta=d_{B_-}^\beta$ by Lemma \ref{Lemcon}. Hence
\begin{eqnarray*}
\CTon^{\gamma} M(B,\beta) & = & \left(\frac{ {\gamma}^{\beta,-}_{B_-}}{a_{B_-}^\beta{\gamma}_B^{\beta,-}},\frac{{\gamma}^{\beta,+}_{B_-}}{{\gamma}^{\beta,+}_B}\right) M(B_-,\beta)\oplus \left(\frac{ {\gamma}^{\beta,-}_{B_+}}{a^\beta_{B_+} {\gamma}_B^{\beta,-}},\frac{{\gamma}^{\beta,+}_{B_+}}{{\gamma}^{\beta,+}_B}\right) M(B_+,\beta) \\
& = &  \left(\frac{d_{B_-}^\beta }{d_{B_-}^\beta},1\right) M(B_-,\beta)\oplus \left(\frac{ d_{B_+}^\beta}{d_{B_-}^\beta},1\right) M(B_+,\beta) \\
& = & M(B_-,\beta)\oplus M(B_+,\beta)=\CTon M(B,\beta).
\end{eqnarray*}
Hence we proved the proposition for $\CTon$.

{\em The case $\beta\uparrow \ol{A}=\ol{A}$ and $A=\ol{A}_-$}. Then $\alpha_{\ol{A}}=\beta$, $a_{A}^\beta=\beta^{-1}$, ${\gamma}_{A}^{\beta,-}=\beta^{-1}$, ${\gamma}_{A}^{\beta,+}=1$ and ${\gamma}_{\ol{A}}^\beta=1$ by Lemma \ref{Lemcon}. Hence
\begin{eqnarray*}
\CTout^{\gamma} N(A,\beta) & = &  \left.\left\{\left(\frac{x+a_{A}^\beta y}{\gamma_A^{\beta,-}},\frac{ y}{\gamma_A^{\beta,+}}\right)\right| \, x,y\in \gamma_{\ol{A}}^\beta\cdot N(\ol{A},\beta)\right\} \\
& = & \left\{(\beta x+y,y)\mid x,y\in N(\ol{A},\beta) \right\}\\
& = & \CTout N(A,\beta).
\end{eqnarray*}

{\em The case $\beta\uparrow \ol{A}=\ol{A}$ and $A=\ol{A}_+$}. Then ${\gamma}_{A}^{\beta,-}=d_A^\beta$, ${\gamma}_A^{\beta,+}=1$ and ${\gamma}_{\ol{A}}^\beta={\gamma}_{\ol{\beta\uparrow A}}^\beta=1$ by Lemma \ref{Lemcon}. Hence
\begin{eqnarray*}
\CTout^{\gamma} N(A,\beta) & = &  \frac{{\gamma}_{\ol{A}}^\beta}{{\gamma}_A^{\beta,-}} N(\ol{A},\beta) \oplus \frac{{\gamma}_{\ol{\beta\uparrow A}}^\beta}{{\gamma}_A^{\beta,+}} N(\ol{\beta\uparrow A},\beta) \\
& = & (d_A^\beta)^{-1}\cdot N(\ol{A},\beta) \oplus N(\ol{\beta\uparrow A},\beta).
\end{eqnarray*}
Now $\beta\cdot d_A^\beta$ is invertible in $S_k^\beta$ by Lemma \ref{Lemcon}, hence $(d_A^\beta)^{-1}\cdot N(\ol{A},\beta)=\beta\cdot N(\ol{A},\beta)$, so $\CTout^{\gamma} N(A,\beta)=\CTout N(A,\beta)$.

{\em The case $\beta\uparrow \ol{A}\ne \ol{A}$ and $s_\beta(\alpha_{\ol{A}})\in R^+$}. Then $a_{A}^\beta=1$, ${\gamma}_A^{\beta,-}=d_A^\beta$, ${\gamma}_{\ol{A}}^{\beta,-}=d_{\ol{A}}^\beta=d_{\ol{A}_-}^\beta$ and ${\gamma}_A^{\beta,+}={\gamma}_{\ol{A}}^{\beta,+}=1$. Moreover,  $d_{\ol{A}_+}^\beta=d_{\ol{A}_-}^\beta$, hence in either case ($A=\ol A_+$ or $A=\ol A_-$) we have $d_A^\beta=d_{\ol A_-}^\beta$. Then

\begin{eqnarray*}
\CTout^{\gamma} N(A,\beta) & = &  \left(\frac{a_A^\beta\cdot{\gamma}_{\ol{A}}^{\beta,-}}{{\gamma}_A^{\beta,-}},\frac{{\gamma}_{\ol{A}}^{\beta,+}}{{\gamma}_A^{\beta,+}}\right)  N(\ol{A},\beta) \\
& = &  \left(\frac{d_{\ol{A}_-}^{\beta}}{d_A^{\beta}},1\right)  N(\ol{A},\beta) \\
& = &  \CTout N(A,\beta).
\end{eqnarray*}

{\em The case $\beta\uparrow \ol{A}\ne \ol{A}$ and $s_\beta(\alpha_{\ol{A}})\in R^-$}. Then  $a_{\ol{A}_-}^\beta=\alpha_{\ol{A}}^{-1}$,  $a_{\ol{A}_+}^\beta=-\alpha_{\ol{A}}$, ${\gamma}_A^{\beta,-}=d_A^\beta$, ${\gamma}_{\ol{A}}^{\beta,-}=d_{\ol{A}}^\beta=\alpha_{\ol{A}}\cdot d_{\ol{A}_-}^\beta$ and ${\gamma}_A^{\beta,+}={\gamma}_{\ol{A}}^{\beta,+}=1$. Moreover, $d_{\ol{A}_+}^\beta=\alpha_{\ol{A}}\cdot(-s_\beta(\alpha_{\ol{A}}))\cdot d_{\ol{A}_-}^\beta$ by Lemma \ref{Lemcon}. If $A=\ol{A}_-$, then 

\begin{eqnarray*}
\CTout^{\gamma} N(A,\beta) & = &  \left(\frac{a_A^\beta\cdot{\gamma}_{\ol{A}}^{\beta,-}}{{\gamma}_A^{\beta,-}},\frac{{\gamma}_{\ol{A}}^{\beta,+}}{{\gamma}_A^{\beta,+}}\right)  N(\ol{A},\beta) \\
& = &  \left(\frac{\alpha_{\ol{A}}^{-1}\cdot\alpha_{\ol{A}}\cdot d_{\ol{A}_-}^\beta}{d_A^{\beta}},1\right) N(\ol{A},\beta) \\
& = & N(\ol{A},\beta) = \CTout N(A,\beta).
\end{eqnarray*}
If $A=\ol{A}_+$, then 

\begin{eqnarray*}
\CTout^{\gamma} N(A,\beta) & = &  \left(\frac{a_A^\beta\cdot{\gamma}_{\ol{A}}^{\beta,-}}{{\gamma}_A^{\beta,-}},\frac{{\gamma}_{\ol{A}}^{\beta,+}}{{\gamma}_A^{\beta,+}}\right)  N(\ol{A},\beta) \\
 & = &  \left(\frac{-\alpha_{\ol{A}}\cdot\alpha_{\ol{A}}\cdot d_{\ol{A}_-}^{\beta}}{d_{\ol{A}_+}^\beta},1\right)  N(\ol{A},\beta) \\
& = &  \left(\frac{\alpha_{\ol{A}}}{s_\beta(\alpha_{\ol{A}})},1\right)  N(\ol{A},\beta).
\end{eqnarray*}
Now $s_{\beta}(\alpha_{\ol{A}})\equiv \alpha_{\ol{A}}\mod\beta$, hence $s_{\beta}(\alpha_{\ol{A}})^{-1}\cdot \alpha_{\ol{A}}\equiv 1\mod\beta$. By Lemma \ref{L3}, $(s_\beta(\alpha_{\ol{A}})^{-1}\cdot{\alpha_{\ol{A}}},1)  N(\ol{A},\beta)=N(\ol{A},\beta)$,  hence $\CTout^{\gamma} N(A,\beta)=\CTout N(A,\beta)$.
\end{proof}

\section{Sheaves on the affine flag variety}\label{sec-She}

In this section we introduce the affine flag variety $\hFl$ associated
to the  connected, simply connected complex algebraic group with root datum $R$. It carries an action of an extended torus $\hT$. We define for a field $k$ the category $\CI_k$ of ``special $\hT$-
equivariant sheaves'' of $k$-vector spaces on $\hFl$. (In the following we call a ``sheaf'' any element in the derived category of sheaves on a topological space.) To each $w\in\hCW$ one associates a Schubert cell $\CO_w\subset\hFl$ and its closure $\ol{\CO_w}\subset\hFl$, the corresponding Schubert variety. Note that each Schubert variety is a finite dimensional complex projective variety. We 
show that each special sheaf is supported inside  a Schubert variety. 

For the applications to modular representation theory, only a subcategory $\CI_k^{\circ}$ of $\CI_k$ with finitely many indecomposable isomorphism classes (up to shifts) plays a role. We use the decomposition theorem to show that,
if $k$ is a field with zero or big enough characteristic, the
indecomposable objects in $\CI_k^\circ$ are, up to a shift, the equivariant intersection
cohomology sheaves $\IC_{\hT,y}$ on the Schubert varieties
$\ol{\CO_y}$, where  $y$ comes from a finite subset $\hCW^\circ$ of $\hCW$. By a classical theorem of Kazhdan and
Lusztig, the ``characters'' of these sheaves are known.

Then we consider the equivariant hypercohomology functor $\Hyp^\ast_{\hT}$ and show
that it can be considered as a functor from $\CI_k$ to $\CH_k$ and
that the characters of sheaves yield characters of objects in $\CH_k$.

In the following section we state some theorems about the structure of
affine flag varieties. For details we refer to Kumar's book 
\cite{MR1923198}.

\subsection{The affine flag variety}

Let $G$ be a connected simply connected complex algebraic
group. Let $T\subset B\subset G$ be a maximal torus and a Borel
subgroup and suppose that $R$ is the root system of $G$ and that
$R^+\subset R$ is the set of roots of $B$. 

Denote by $G((t)):=\{f\colon \Spec\, \DC((t))\to G\}$ the associated
loop group, and by $G[[t]]:=\{f\colon \Spec\, \DC[[t]]\to G\}\subset G((t))$ the subgroup of loops that extend to zero. Let $I\subset
G[[t]]$ be the Iwahori subgroup, i.e.~the preimage of $B$ under the
evaluation map $G[[t]]\to G$, $t\mapsto 0$. The set-theoretic quotient
$\hFl=G((t))/I$ carries a natural structure of an ind-variety and is
called  the
affine flag variety. 

To a simple affine reflection $s\in\hCS$ corresponds a minimal
parabolic subgroup $P_s\subset G((t))$ that contains $I$. The
set theoretic quotient $\hFl_s:=G((t))/P_s$ carries again an ind-variety
structure and is called a partial affine flag variety. The natural map
$\pi_s\colon \hFl\to\hFl_s$  is an ind-proper map of ind-varieties.

The set of $I$-orbits in $\hFl$ can  canonically be identified with the affine Weyl
group $\hCW$. Denote the orbit corresponding to $w\in\hCW$
by $\CO_w\subset\hFl$. Its Zariski closure $\ol{\CO_w}$ is a finite
dimensional, projective variety and is called a Schubert variety. It
is the union of the orbits $\CO_y$ for all $y\le w$. The
set of $I$-orbits in $\hFl_s$ can analogously be identified with
$\hCW^s$, and for each $\ol w\in\hCW^s$ we denote the corresponding
orbit by 
$\CO_{\ol w}$. Its closure $\ol{\CO_{\ol w}}$ is called a partial
Schubert variety.  We have $\pi_s(\CO_w)={\CO_{\ol w}}$ and $\pi_s^{-1}({\CO_{\ol w}})=\CO_w\cup\CO_{ws}$ for all $w\in\hCW$.

Let $\hT=T\times \DC^\times$ be the extended torus, and define a
$\hT$-action on $G((t))$ by letting the first factor act by left
multiplication and the second by rotating the loops. Then $\hT$ also
acts on $\hFl$ and on $\hFl_s$ for all $s\in\hCS$. Each $\CO_w$ contains a unique $\hT$-fixed point $\pt_w\in\CO_w$. Denote by $i_w\colon \{\pt_w\}\to\hFl$ its inclusion.

\subsection{The equivariant cohomology of Schubert varieties}

Let $w\in\hCW$, and denote by $i\colon \ol{\CO_w}^{\hT}\to\ol{\CO_w}$
the inclusion of the set of $\hT$-fixed points. Then
$\ol{\CO_w}^{\hT}$ can  be identified  with the
set $\{\le w\}:=\{x\in\hCW\mid x\le w\}$.  In particular, it is finite. Now identify the character lattice $\Hom(\hT,\DC^\times)$ of the extended torus with the extended character lattice $\hX=X\oplus \DZ$. Then  $H^\ast_{\hT}(\pt,\DZ)=S(\hX)=\hS_\DZ$. Consider the ``localization map'' on equivariant topology,
$$
i^\ast\colon H^\ast_{\hT}(\ol{\CO_w},\DZ)\to
H^\ast_{\hT}(\ol{\CO_w}^{\hT},\DZ)=\bigoplus_{x\le w}\hS_\DZ.
$$
The following statement can
be deduced from the results in Chapter XI of \cite{MR1923198}.

\begin{theorem} Let $w\in\hCW$. 
\begin{enumerate}
\item The localization map is injective and induces an isomorphism 
$$
H_{\hT}^\ast(\ol{\CO_w},\DZ)=\left\{(z_x)\in\bigoplus_{x\le w} \hS_\DZ\left|
\,
\begin{matrix} z_x\equiv z_{tx}\mod\alpha_t \\ 
\text{for all $t\in\hCT$, $x\le w$  with $tx\le w$}
\end{matrix}\right.\right\}.
$$
\item Let $s\in\hCS$ and suppose $ws<w$. The pull-back map $\pi_s^\ast$ on
  equivariant cohomology induces an identification
$$
H_{\hT}^\ast(\ol{\CO_{\ol w}},\DZ)=H_{\hT}^\ast(\ol{\CO_w},\DZ)^s.
$$
\end{enumerate}
\end{theorem}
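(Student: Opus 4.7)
The overall strategy is a standard application of GKM theory to Schubert varieties, using the Bruhat/Iwahori cell decomposition as the main geometric input. The key facts one needs to assemble are: (a) $\ol{\CO_w}$ admits an $I$-stable paving by affine cells $\CO_x \cong \DA^{l(x)}$ indexed by $\{x \le w\}$; (b) the set of $\hT$-fixed points in $\ol{\CO_w}$ is precisely $\{\pt_x\}_{x \le w}$; (c) the one-dimensional $\hT$-orbits in $\ol{\CO_w}$ have the form $\DP^1$-curves joining $\pt_x$ and $\pt_{tx}$ for $t=s_{\alpha,n}\in\hCT$ with both $x, tx \le w$, and the $\hT$-weight on such an orbit is precisely $\alpha_t = \alpha_n$.

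For part (1), I would first use the affine paving to show that $H^*_{\hT}(\ol{\CO_w},\DZ)$ is a graded free $\hS_\DZ$-module with basis indexed by $\{x\le w\}$: the long exact sequence of the pair $(\ol{\CO_w}, \ol{\CO_w}\setminus \CO_y)$ for $y$ maximal among the elements below $w$ degenerates, because $\CO_y$ is an equivariant affine bundle over $\pt_y$ and hence has free equivariant cohomology $\hS_\DZ\langle -2l(y)\rangle$; induction on $\{x\le w\}$ gives freeness. The localization map $i^\ast$ is then injective, since the same cell argument shows that $\bigoplus_{x\le w}\hS_\DZ$ is also free of the right rank and the map is graded injective after inverting the product of all affine root labels (this is Atiyah--Bott localization, or equivalently that $\ol{\CO_w}\setminus\ol{\CO_w}^{\hT}$ has free equivariant cohomology). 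The GKM congruences are then automatic: if $C\cong\DP^1$ is the closure of a one-dimensional $\hT$-orbit connecting $\pt_x$ to $\pt_{tx}$ inside $\ol{\CO_w}$, then restriction through $H^*_{\hT}(\ol{\CO_w},\DZ)\to H^*_{\hT}(C,\DZ)\to \hS_\DZ\oplus\hS_\DZ$ factors through the subalgebra $\{(a,b)\mid a\equiv b\bmod \alpha_t\}$. Finally, to upgrade the resulting inclusion of the image into the GKM subalgebra to an equality, one compares ranks: both are graded free $\hS_\DZ$-modules, and the graded rank of the GKM side can be computed cell-by-cell by writing the defining congruences as a system indexed by Bruhat covers, which matches $\sum_{x\le w} v^{2l(x)}$ — the same as for $H^*_{\hT}(\ol{\CO_w},\DZ)$ by the cell calculation.

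For part (2), the hypothesis $ws<w$ implies that $\pi_s\colon \ol{\CO_w}\to \ol{\CO_{\ol w}}$ is a Zariski-locally trivial $\DP^1$-bundle (this is exactly the case when the minimal parabolic $P_s$ acts nontrivially on the cells of $\ol{\CO_w}$), so $\pi_s^*$ is a split injection onto the $\hT$-equivariant cohomology classes that are constant along fibers. Under the GKM description of part (1), the natural involution $\sigma_s$ (coordinate-swap by $x\leftrightarrow xs$) precisely expresses this fiberwise-constancy: a section $(z_x)_{x\le w}$ lies in $\pi_s^\ast H^*_{\hT}(\ol{\CO_{\ol w}},\DZ)$ iff $z_x = z_{xs}$ for all $x\le w$, because $\pt_x$ and $\pt_{xs}$ lie in the same $\pi_s$-fiber. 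Hence the image is $H^*_{\hT}(\ol{\CO_w},\DZ)^s$, and to check surjectivity onto the $s$-invariants it suffices to observe that the GKM description of $H^*_{\hT}(\ol{\CO_{\ol w}},\DZ)$ (established by the same cellular argument applied to $\ol{\CO_{\ol w}}$) identifies with exactly this invariant subalgebra under $\pi_s^*$.

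The main obstacle I expect is the integral rank comparison at the end of part (1): the GKM subalgebra is a priori only known to be a submodule of $\bigoplus_{x\le w}\hS_\DZ$, and showing that it is free of the correct rank requires either producing explicit equivariant Schubert classes $\xi_x\in H^*_{\hT}(\ol{\CO_w},\DZ)$ satisfying $\xi_x|_{\pt_y}=0$ for $y\not\ge x$ and $\xi_x|_{\pt_x}=\prod_{\alpha>0,\,s_\alpha x<x}\alpha$, or invoking the degeneration of the spectral sequence computing the GKM subalgebra via the Bruhat stratification. Both are treated in Chapter XI of \cite{MR1923198}, and that is the step where I would simply cite Kumar.
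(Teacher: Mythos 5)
Your proposal is correct and follows the same standard GKM route as the paper, which for this theorem simply cites Chapter~XI of Kumar's book \cite{MR1923198}. The pieces you assemble --- the affine paving by Iwahori orbits to get freeness, integral localization via this paving, the $\DP^1$-orbit congruences, the equivariant Schubert-class rank comparison you flag as the crux, and the $\DP^1$-bundle argument for $\pi_s$ when $ws<w$ --- are exactly what that chapter supplies.
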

(In part (2) we denote by $H_{\hT}^\ast(\ol{\CO_w},\DZ)^s\subset H_{\hT}^\ast(\ol{\CO_w},\DZ)$ the subspace of $\sigma_s$-invariant elements, i.e.~the set of $(z_x)$ with $z_x=z_{xs}$ for all $x\le w$.)

Now fix a field $k$ and let $\hCZ_k$ be the structure algebra over the field $k$ introduced in Section  \ref{subsec-Struc}. Let $\hCZ_k(\{\le w\})$ be its finite version corresponding to  the set of vertices $\{x\in \hCW\mid x\le w\}$.
A quick look at the definition of the structure algebra  gives the following result:
\begin{corollary} \label{Zco} For each field $k$  we have natural identifications
$H_{\hT}^\ast(\ol{\CO_w},k)=\hCZ_k(\{\le w\})$ for all $w\in\hCW$ and $H_{\hT}^\ast(\ol{\CO_{\ol w}},k)= \hCZ_k(\{\le w\})^s$ for all $w\in\hCW$ with $ws<w$. 
\end{corollary}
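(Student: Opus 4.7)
The plan is to deduce both identifications directly from the theorem by a comparison of definitions, with a base change from $\DZ$ to $k$ as essentially the only substantive content.

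For the first identification, I would observe that part (1) of the theorem describes $H_{\hT}^\ast(\ol{\CO_w},\DZ)$ as the subring of $\bigoplus_{x\le w}\hS_\DZ$ cut out by precisely the congruences $z_x\equiv z_{tx}\mod\alpha_t$ (for $t\in\hCT$ with $x,tx\le w$) that appear in the definition of $\hCZ(\{\le w\})$. So over $\DZ$ the statement is a verbatim match of definitions; tensoring with $\DZ^\prime=\DZ[1/2]$ yields the analogous identification for $\hCZ(\{\le w\})$. To descend to an arbitrary field $k$ of characteristic $\ne 2$, I would invoke the Bruhat cell decomposition of $\ol{\CO_w}$, which has only even-dimensional cells. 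This makes $H_{\hT}^\ast(\ol{\CO_w},\DZ)$ a graded free $\hS_\DZ$-module, so that $H_{\hT}^\ast(\ol{\CO_w},k)=H_{\hT}^\ast(\ol{\CO_w},\DZ)\otimes_\DZ k$. Transporting this across the integral identification just established, $\hCZ(\{\le w\})$ is free over $\hS$ and the congruence description base-changes correctly to produce $\hCZ_k(\{\le w\})$.

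For the second identification I would apply the same template to part (2) of the theorem, which identifies $H_{\hT}^\ast(\ol{\CO_{\ol w}},\DZ)$ with the ring $H_{\hT}^\ast(\ol{\CO_w},\DZ)^s$ of $\sigma_s$-invariants, i.e., with the $\DZ$-version of $\hCZ(\{\le w\})^s$. The partial Schubert variety $\ol{\CO_{\ol w}}$ also carries a Bruhat-type decomposition with only even-dimensional cells, so its equivariant cohomology is likewise free over $\hS_\DZ$ and the analogous base-change argument gives $H_{\hT}^\ast(\ol{\CO_{\ol w}},k)=H_{\hT}^\ast(\ol{\CO_{\ol w}},\DZ)\otimes_\DZ k$. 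Since $2$ is invertible in $k$, the $\sigma_s$-invariants split off as a direct summand inside the free $\hS$-module $\hCZ(\{\le w\})$, so taking invariants commutes with $\otimes_{\DZ^\prime}k$ and yields $\hCZ_k(\{\le w\})^s$.

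The main (and essentially only) obstacle is the base-change step: verifying that the congruence description of the structure algebra and the formation of $\sigma_s$-invariants are both preserved under $\otimes_{\DZ^\prime}k$. Both are controlled by the freeness of $H_{\hT}^\ast(\ol{\CO_w},\DZ)$ and $H_{\hT}^\ast(\ol{\CO_{\ol w}},\DZ)$ over $\hS_\DZ$, which is a standard consequence of the even Bruhat cell decompositions of these varieties. With this freeness in hand, the corollary is a direct translation of the preceding theorem into the structure-algebra language employed throughout this paper.
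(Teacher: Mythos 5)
Your proposal is correct and essentially fills in the base-change step that the paper itself leaves to the reader (the text before the corollary just says ``a quick look at the definition of the structure algebra gives the following result''). The key point you supply, and which genuinely needs to be said, is that the even Bruhat cell decomposition of $\ol{\CO_w}$ (resp.\ $\ol{\CO_{\ol w}}$) makes $H_{\hT}^\ast(\ol{\CO_w},\DZ)$ a graded free $\hS_\DZ$-module, whence $H_{\hT}^\ast(\ol{\CO_w},k)=H_{\hT}^\ast(\ol{\CO_w},\DZ)\otimes_\DZ k$; combining this with the integral identification from the theorem and the paper's \emph{definition} $\hCZ_k(\{\le w\}) := \hCZ(\{\le w\})\otimes_{\DZ'}k$ gives the first claim, and your observation that $2$ being invertible makes the $\sigma_s$-invariants a direct summand gives the second.

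One small caution in your phrasing: you write that ``the congruence description base-changes correctly to produce $\hCZ_k(\{\le w\})$.'' Since the paper explicitly warns, right after defining $\hCZ_k$, that $\hCZ_k$ need \emph{not} coincide with the subalgebra of $\prod_w\hS_k$ cut out by the congruence relations over $k$, this formulation is slightly misleading. The cleaner statement of the argument avoids the issue entirely: $\hCZ_k(\{\le w\})$ is \emph{by definition} the base change $\hCZ(\{\le w\})\otimes_{\DZ'}k$, so once you have $H_{\hT}^\ast(\ol{\CO_w},\DZ')\cong\hCZ(\{\le w\})$ and the freeness-driven base change of cohomology, the identification over $k$ is immediate without any claim about congruences surviving $\otimes_{\DZ'}k$. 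The content of your argument is right; just be careful not to suggest more than is true about the description of $\hCZ_k(\Omega)$ by congruence relations over $k$.
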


\subsection{Special sheaves on $\hFl$}

Let again $k$ be a field  of characteristic $\ne 2$. For a topological space $Y$ which is acted upon continuously by $\hT$ we denote by $\dercat_{\hT}(Y,k)$ the $\hT$-equivariant derived category of
sheaves of $k$-modules with cohomology bounded from below. If $f\colon Y\to Y^\prime$ is a continuous map between such spaces that commutes with the $\hT$-actions, we denote by $f^\ast\colon\dercat_{\hT}(Y^\prime,k)\to\dercat_{\hT}(Y,k)$ and by $f_\ast\colon\dercat_{\hT}(Y,k)\to\dercat_{\hT}(Y^\prime,k)$ the equivariant pull-back and push-forward functors.

Denote by
$[n]\colon\dercat_{\hT}(\hFl,k)\to\dercat_{\hT}(\hFl,k)$ the shift
functor for $n\in\DZ$. Note that the support of a sheaf $F\in\dercat_{\hT}(\hFl,k)$, denoted by $\supp\, F$, is defined as the support of the underlying ordinary sheaf in $\dercat(\hFl,k)$, i.e.~the union of the supports of the cohomology sheaves of the latter.

Let $k_{e}\in  \dercat_{\hT}(\CO_e,k)$ be the constant equivariant sheaf of rank one on the point orbit $\CO_e$, and set $F_{e}:=i_{e\ast} k_{e}\in \dercat_{\hT}(\hFl,k)$.

\begin{definition} 
 The category of {\em special (equivariant) sheaves}  is the full
 subcategory $\CI_k$ of $\dercat_{\hT}(\hFl,k)$ that consists of all objects that are isomorphic to a direct summand of a direct sum of sheaves of the form $\pi_s^\ast\pi_{s\ast}\cdots \pi_t^\ast\pi_{t\ast}F_{e}[n]$  for arbitrary sequences $s,\dots,t \in\hCS$ and $n\in\DZ$.
 \end{definition}

In \cite{FW} we give a more intrinsic definition of this category as the category of {\em parity sheaves} on the affine flag manifold.
\begin{lemma} The support of each object of $\CI_k$ is contained in a
  Schubert variety, i.e.~for each $F\in\CI_k$ there exists $w\in\hCW$
  with $\supp\, F\subset\ol{\CO_w}$.
\end{lemma}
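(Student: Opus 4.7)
The plan is to induct on the number of pull--push operations used to build the generating object. Cohomological shifts $[n]$ preserve support, finite direct sums have support equal to the union of the constituents' supports, and direct summands can only shrink support, so it suffices to prove that an iterated pull--push $\pi_s^\ast\pi_{s\ast}\cdots\pi_t^\ast\pi_{t\ast}F_{e}$ is supported on a single Schubert variety $\ol{\CO_w}$.

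The base case is immediate: $F_{e}=i_{e\ast}k_{e}$ is supported on $\{\pt_e\}\subset\ol{\CO_e}$. For the inductive step, I will assume $\supp G\subseteq\ol{\CO_w}$ for some $w\in\hCW$ and analyse $\pi_s^\ast\pi_{s\ast}G$ for an arbitrary $s\in\hCS$. Since $\pi_s\colon\hFl\to\hFl_s$ is ind-proper, the push-forward satisfies $\supp(\pi_{s\ast}G)\subseteq\pi_s(\supp G)\subseteq\pi_s(\ol{\CO_w})=\ol{\CO_{\ol w}}$, and then $\supp(\pi_s^\ast\pi_{s\ast}G)\subseteq\pi_s^{-1}(\ol{\CO_{\ol w}})$. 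The key geometric identity I will invoke is $\pi_s^{-1}(\ol{\CO_{\ol w}})=\ol{\CO_w}\cup\ol{\CO_{ws}}$, obtained by taking closures in the already-noted stratum equality $\pi_s^{-1}(\CO_{\ol w})=\CO_w\cup\CO_{ws}$ (using that $\pi_s$ is closed on each Schubert variety). Letting $v\in\{w,ws\}$ be whichever is longer in the Bruhat order, this union equals the single Schubert variety $\ol{\CO_v}$, closing the induction.

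For an arbitrary object of $\CI_k$, written as a direct summand of a finite direct sum $\bigoplus_i G_i$ of generators with $\supp G_i\subseteq\ol{\CO_{w_i}}$, I will simply pick any Bruhat upper bound $w\in\hCW$ of the finite set $\{w_i\}$ (such an element exists in the affine Weyl group, e.g.\ by concatenating reduced expressions). Then the total support lies in $\ol{\CO_w}$, and the same bound survives the passage to any direct summand.

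The argument is essentially formal once one has the support properties of $\pi_{s\ast}$ and $\pi_s^\ast$ together with the inductive setup; the only point warranting a little care is the closure identity $\pi_s^{-1}(\ol{\CO_{\ol w}})=\ol{\CO_w}\cup\ol{\CO_{ws}}$, which is standard but needs the fact that $\pi_s$ is proper on each Schubert variety, so that preimages of closed subsets are closed.
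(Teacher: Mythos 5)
Your proof is correct and follows essentially the same route as the paper's: both argue by induction over the operations (shift, direct sum/summand, pull--push) used to generate $\CI_k$ from $F_e$. The paper states the inductive closure properties without spelling out the support calculation, whereas you usefully make explicit the geometric identity $\pi_s^{-1}(\ol{\CO_{\ol w}})=\ol{\CO_w}\cup\ol{\CO_{ws}}=\ol{\CO_{\max(w,ws)}}$ and the Bruhat-directedness needed to bound a finite direct sum by a single Schubert variety.
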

\begin{proof} This is certainly true for $F_{e}$. If it is true for $F$,
  then also for each isomorphic object, for $\pi_s^\ast\pi_{s\ast} F$, for $F[n]$ and for each  direct summand of $F$. If it is true for $F$ and $G$, then also for
  $F\oplus G$. The lemma follows. 
\end{proof}

\subsection{The hypercohomology of special sheaves}

 Let $F\in\CI_k$ and suppose that the support of $F$ is contained in $\ol{\CO_w}$. Then
the hypercohomology $\Hyp^\ast_{\hT}(F)$ is naturally a module over the
equivariant cohomology $H_{\hT}^\ast(\ol{\CO_w},k)$. The natural map
$\hCZ_k\to \hCZ_k(\{\le w\})\cong H_{\hT}^\ast(\ol{\CO_w},k)$ hence induces a $\hCZ_k$-module
structure on $\Hyp^\ast_{\hT}(F)$, so we can consider $\Hyp_{\hT}^\ast$
as a functor from $\CI_k$ to $\hCZ_k\catmod$.

\begin{proposition} Choose $F\in\CI_k$ and $s\in\hCS$. Then  there is a natural isomorphism 
$$
\hCZ_k\otimes_{\hCZ_k^s}\Hyp^\ast_\hT(F)\cong\Hyp^\ast_\hT(\pi_s^\ast\pi_{s\ast} F)
$$
of $\hCZ_k$-modules.
\end{proposition}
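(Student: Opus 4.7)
The plan is to reduce the claim to two intermediate identifications and then combine them. Namely, I will show separately that (a) $\Hyp^\ast_{\hT}(\pi_{s\ast}F) \cong \son \Hyp^\ast_{\hT}(F)$ as $\hCZ_k^s$-modules, and (b) for any $G$ in the essential image of $\pi_{s\ast}$ on $\CI_k$, we have $\Hyp^\ast_{\hT}(\pi_s^\ast G) \cong \hCZ_k \otimes_{\hCZ_k^s} \Hyp^\ast_{\hT}(G)$ as $\hCZ_k$-modules. Putting (a) into (b) with $G = \pi_{s\ast}F$ then gives the result.

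For (a), the morphism $\pi_s$ is proper and $\hT$-equivariant, hence hypercohomology commutes with $\pi_{s\ast}$ and yields an equality $\Hyp^\ast_{\hT}(\hFl_s, \pi_{s\ast}F) = \Hyp^\ast_{\hT}(\hFl, F)$ of underlying $k$-modules. Since $\supp F \subset \ol{\CO_w}$ for some $w$, the resulting module structure factors through $H^\ast_{\hT}(\ol{\CO_{\ol w}},k)$ on the one side and through $H^\ast_{\hT}(\ol{\CO_w},k)$ on the other, and the two are compatible via the pullback map $\pi_s^\ast$ on equivariant cohomology. Corollary \ref{Zco} identifies these two cohomology rings with $\hCZ_k(\{\leq w\})^s \subset \hCZ_k(\{\leq w\})$, so that the $\hCZ_k^s$-module structure on $\Hyp^\ast_{\hT}(\pi_{s\ast}F)$ is precisely the one obtained by restriction of scalars from the $\hCZ_k$-structure on $\Hyp^\ast_{\hT}(F)$, which is the definition of $\son$.

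For (b), one uses that $\pi_s$ is a Zariski-locally trivial $\mathbb{P}^1$-bundle, hence smooth and proper. The projection formula then gives $\pi_{s\ast}\pi_s^\ast G \cong G \otimes^L \pi_{s\ast} k_{\hFl}$, and the relative cohomology $\pi_{s\ast}k_{\hFl}$ is free of rank two over $k_{\hFl_s}$ equivariantly, reflecting the decomposition of Lemma \ref{lemma-decZ} in which $\hCZ_k(\{\leq w\})$ is a free $\hCZ_k(\{\leq w\})^s$-module of rank two with basis $\{1, c(\alpha_s)\}$. Applying $\Hyp^\ast_{\hT}(\hFl_s, -)$ to this identification and using Corollary \ref{Zco} to rewrite the cohomology rings yields the desired isomorphism after the base change $\hCZ_k \otimes_{\hCZ_k^s} -$.

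The main obstacle I expect is the bookkeeping in (b): one must check that the projection formula not only gives an abstract isomorphism of $k$-modules but respects the module structure over the structure algebra, and that the tensor product over the full (infinite) rings $\hCZ_k^s \hookrightarrow \hCZ_k$ agrees with the tensor product over their finite quotients $\hCZ_k(\{\leq w\})^s \hookrightarrow \hCZ_k(\{\leq w\})$ through which the actions factor. Both reductions are enabled by the freeness of $\hCZ_k$ over $\hCZ_k^s$ guaranteed by Lemma \ref{lemma-decZ}, so the argument is available once one takes care with naturality.
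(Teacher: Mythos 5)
Your proposal is correct and follows essentially the same route as the paper, which (after reducing to $ws<w$) simply cites Soergel's argument in \cite{MR1784005} for the key isomorphism $H^\ast_{\hT}(\ol{\CO_w})\otimes_{H^\ast_{\hT}(\ol{\CO_{\ol w}})}\Hyp_{\hT}^\ast(F)\cong\Hyp^\ast_{\hT}(\pi_s^\ast\pi_{s\ast} F)$ and then invokes Corollary \ref{Zco}; your steps (a) and (b) are precisely an unpacking of that cited argument (adjunction/compatibility of module structures for the proper pushforward, and the $\mathbb{P}^1$-bundle projection formula with freeness making the Künneth-type spectral sequence degenerate). Two small points you should make explicit: you need the normalization $ws<w$ (as the paper states) so that $\pi_s^{-1}(\ol{\CO_{\ol w}})=\ol{\CO_w}$ and $\pi_s^\ast\pi_{s\ast}F$ remains supported on $\ol{\CO_w}$, and the freeness of $\pi_{s\ast}k_{\ol{\CO_w}}$ over $k_{\ol{\CO_{\ol w}}}$ should be established directly from the $\mathbb{P}^1$-bundle structure (its two cohomology generators in degrees $0$ and $2$) rather than ``reflecting'' Lemma \ref{lemma-decZ}, since that lemma is what you want to match the sheaf-theoretic picture against via Corollary \ref{Zco}, not the source of the freeness.
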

\begin{proof} Suppose that the support
  of $F$ is contained in $\ol{\CO_w}$ for $w\in\hCW$. Without loss of
  generality we can assume
  that $ws<w$.
 Then $\pi_s^\ast\pi_{s\ast} F$ is supported on $\ol{\CO_w}$ as well. As in \cite{MR1784005} one shows that we have an isomorphism
$$
H^\ast_{\hT}(\ol{\CO_w})\otimes_{H^\ast_{\hT}(\ol{\CO_{\ol w}})}\Hyp_{\hT}^\ast(F)=\Hyp^\ast_{\hT}(\pi_s^\ast\pi_{s\ast} F)
$$
of $H^\ast_{\hT}(\ol{\CO_w})$-modules. Now the statement follows from Corollary \ref{Zco}. 
\end{proof}

\begin{theorem} \label{speSheH} Equivariant hypercohomology gives a functor
$\Hyp^\ast_{\hT}\colon\CI_k\to\CH_k$ such that the following diagram commutes naturally for all $s\in\hCS$:

\centerline{
\xymatrix{
\CI_k \ar[d]_-{\pi_s^\ast\pi_{s\ast}}\ar[rr]^-{\Hyp_{\hT}^\ast} && \CH_k  \ar[d]^-{\theta^s}\\
\CI_k \ar[rr]^-{\Hyp_{\hT}^\ast} && \CH_k.
}
} 
\noindent
Moreover, we have 
$\rk\,\Hyp_{\hT}^\ast(i^\ast_x F)=\rk\, \Hyp^\ast_{\hT}( F)^{\emptyset, x}$
for each $F\in\CI_k$ and $x\in\hCW$. (By $\rk$ we denote the  rank of a free $\hS_k$-module or a free
$\hS^\emptyset_k$-module.)
\end{theorem}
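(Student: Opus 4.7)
The plan is to prove all three claims by induction on the word length used to build objects of $\CI_k$, using the previous proposition as the inductive step, and then to handle the rank statement via the $\hT$-equivariant localization theorem.

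First I would check the base case $F = F_e$. Since $F_e = i_{e\ast}k_e$ is supported on the single fixed point $\pt_e$, we have $\Hyp^\ast_{\hT}(F_e) = H^\ast_{\hT}(\pt,k) = \hS_k$, and the $\hCZ_k$-action factors through evaluation at $e$, so $\Hyp^\ast_{\hT}(F_e) \cong B_e$ as $\hCZ_k$-modules; in particular this lies in $\CH_k$. For the inductive step, assume $\Hyp^\ast_{\hT}(F) \in \CH_k$ for some $F\in\CI_k$. The previous proposition gives a natural isomorphism
$$\Hyp^\ast_{\hT}(\pi_s^\ast\pi_{s\ast} F) \cong \hCZ_k \otimes_{\hCZ_k^s} \Hyp^\ast_{\hT}(F) = \theta^s \Hyp^\ast_{\hT}(F),$$
which is precisely the claimed commutativity and which places the hypercohomology of the new sheaf inside $\CH_k$, since $\CH_k$ is stable under $\theta^s$. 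Because hypercohomology commutes with shifts and with finite direct sums and sends direct summands to direct summands, and because $\CH_k$ is closed under these operations by definition, the functor $\Hyp^\ast_{\hT}$ extends naturally to all of $\CI_k$ with image in $\CH_k$.

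For the rank statement I would fix $F \in \CI_k$ with $\supp\, F \subset \ol{\CO_w}$ for some $w \in \hCW$. The $\hT$-fixed-point set of $\ol{\CO_w}$ is the finite discrete set $\{\pt_x\}_{x\varle w}$, and the weights of $\hT$ acting in the normal directions at each $\pt_x$ are (up to sign) of the form $x(\alpha_n)$, all of which become units in $\hS_k^\emptyset$. Since $\Hyp^\ast_{\hT}(F)$ is $\hS_k$-torsion-free (as an object of $\CH_k$), the equivariant localization theorem produces an isomorphism
$$\Hyp^\ast_{\hT}(F) \otimes_{\hS_k} \hS_k^\emptyset \xrightarrow{\;\sim\;} \bigoplus_{x\varle w} \Hyp^\ast_{\hT}(i_x^\ast F)\otimes_{\hS_k}\hS_k^\emptyset$$
of modules over $\hCZ_k^\emptyset(\{\varle w\})$, where on the right the algebra acts through the decomposition $\hCZ_k^\emptyset(\{\varle w\}) = \bigoplus_{x\varle w}\hS_k^\emptyset$ given by Lemma \ref{lemma-Zloc}. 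On the other hand, the same lemma produces the canonical decomposition $\Hyp^\ast_{\hT}(F)^\emptyset = \bigoplus_{x\varle w}\Hyp^\ast_{\hT}(F)^{\emptyset,x}$ along the same direct-sum decomposition of the acting algebra. Comparing the two decompositions factor by factor identifies $\Hyp^\ast_{\hT}(i_x^\ast F)\otimes_{\hS_k}\hS_k^\emptyset$ with $\Hyp^\ast_{\hT}(F)^{\emptyset,x}$, and taking $\hS_k^\emptyset$-ranks yields the claim.

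The principal obstacle is justifying the equivariant localization isomorphism in this setting, since Schubert varieties are singular and the sheaves in question are not locally constant. What rescues the argument is that objects of $\CI_k$ are parity-type sheaves whose stalks and costalks at $\hT$-fixed points are $\hS_k$-free (a fact one verifies along the very same induction that proves the functorial statement, using that $\pi_s^\ast\pi_{s\ast}$ preserves this property and that $F_e$ satisfies it trivially). Consequently the spectral sequences or stratification long exact sequences associated to the filtration of $\ol{\CO_w}$ by Schubert cells degenerate after inverting the affine roots, and the Euler classes that need to be inverted for localization are precisely products of affine roots, which are the generators of $\hS_k^\emptyset$ over $\hS_k$.
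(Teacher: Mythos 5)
Your proof is correct and follows essentially the same path as the paper's: show $\Hyp^\ast_{\hT}(F_e)\cong B_e$, feed the preceding proposition into the inductive/definitional structure of $\CI_k$ and $\CH_k$ to get both the commutativity of the square and the fact that the image lands in $\CH_k$, and then deduce the rank statement from the localization map $\Hyp^\ast_{\hT}(F)\to\bigoplus_{x}\Hyp^\ast_{\hT}(i_x^\ast F)$ becoming an isomorphism after inverting all affine roots. The paper states the localization isomorphism without justification; your closing remarks on the $\hS_k$-freeness of stalks (the parity property) and the Euler-class argument are a helpful expansion of exactly what is being swept under the rug there.
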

\begin{proof}
The first part of the theorem follows from the preceding proposition
and the fact that $\Hyp^\ast_{\hT}(F_{e})=B_{e}$ as a $\hCZ_k$-module. 
Consider the localization  map
$
\Hyp^\ast_{\hT}(F)\to\bigoplus_{x\in\hCW}\Hyp^\ast_{\hT}(i^\ast_x F)
$.
Each $\Hyp^\ast_{\hT}(i^\ast_x F)$ is naturally an $\hS_k$-module, and
the right hand side of the map above carries the obvious action of
$\prod_{x\in\hCW}\hS$. Moreover, the map itself is compatible with the
actions of $\hCZ_k$ on the left hand side
and of $\prod_{x\in\hCW}\hS_k$ on the right, i.e.~it is a
$\hCZ_k$-module map. It induces an isomorphism
$$
\Hyp^\ast_{\hT}(F)\otimes_{\hS_k}\hS_k^\emptyset\stackrel{\sim}\to\bigoplus_{x\in\hCW}\Hyp^\ast_{\hT}(i^\ast_x F)\otimes_{\hS_k}\hS_k^\emptyset.
$$
and the second claim follows.
\end{proof}

\subsection{Equivariant intersection cohomology sheaves}

Let 
$\Pi^{-}\subset\SA$ be the set of alcoves that are contained in the strip
$H_{\alpha,0}^-\cap H_{\alpha,-1}^+$ for each simple root $\alpha$ (the set $-\Pi^-$ is also called the ``fundamental box''). Let $\hw_0\in\hCW$ be the element that corresponds to the smallest alcove $A_{\hw_0}$ in $\Pi^{-}$. Denote by $\SA^\circ\subset \SA$ the set of alcoves that correspond to the set $\hCW^{\circ}:=\{w\in\hCW\mid w\le \hw_0\}$.

\begin{definition}
\begin{enumerate}
\item Denote by $\CI_k^\circ\subset\CI_k$ the full
  subcategory that consists of direct sums of direct summands of
  objects of the form $\pi_t^\ast\pi_{t\ast}\circ\cdots\circ\pi_s^\ast\pi_{s\ast}
  F_{e}[n]$ for $s,\cdots, t\in\hCS$ such that $w=s\cdots t$ is a
  reduced expression for $w\in\hCW^\circ$.
\item Analogously, denote by $\CH_k^\circ\subset\CH_k$ the full subcategory that consists of direct sums of direct summands of
  objects of the form $\theta^t\circ\cdots\circ\theta^s B_{e}[n]$ for $s,\cdots, t\in\hCS$ such that $w=s\cdots t$ is a
  reduced expression for $w\in\hCW^\circ$.
\end{enumerate}
\end{definition}
Since by Theorem \ref{speSheH} equivariant hypercohomology commutes with the translation
functors on $\CI_k$ and $\CH_k$ we obtain an induced functor $\Hyp_{\hT}^\ast\colon\CI_k^\circ\to\CH_k^\circ$.

Now suppose that $k$ is a field and denote by $\IC_{\hT,y}\in\dercat_{\hT}(\hFl,k)$ the equivariant intersection cohomology complex on the Schubert variety $\ol{\CO_y}$ with coefficients in $k$. 
Choose $y\in\hCW^{\circ}$ and fix a reduced expression $y=s\cdots t$. If $\ch k=0$, then the decomposition theorem (cf.~\cite{MR751966}) together with some orbit combinatorics shows that 
$$
\pi_t^\ast\pi_{t\ast}\circ\cdots\circ\pi_s^\ast\pi_{s\ast}
  F_{e}\cong\bigoplus_i \IC_{\hT,y_i}[n_i]
 $$
 for some $y_i\in \hCW^\circ$ and $n_i\in\DZ$. Moreover, there is exactly one $i$ with $y_i=y$. By an equivariant version of a theorem of Kazhdan and Lusztig (cf.~\cite{MR573434}) we have
$\rk\,\Hyp^\ast_{\hT}(i^\ast_x\IC_{\hT,y})=h_{x,y}(1)$, again under the assumption $\ch k=0$.

Keep the reduced expression $s\cdots t$ fixed. Since the object
$\pi_t^\ast\pi_{t\ast}\circ\cdots\circ\pi_s^\ast\pi_{s\ast}F_{e}$ as
well as the intersection cohomology sheaves on Schubert varieties can
be defined over $\DZ$, we deduce that the above decomposition, as well as the character formula of Kazhdan and Lusztig, also holds if $\ch k$ is big enough, i.e.~bigger than a certain number $N$ depending on the reduced expression. 

Now in the definition of $\CI_k^\circ$ only finitely many reduced
expressions $s\cdots t$ occur. So we can deduce the following. 
\begin{theorem}\label{ICpos2} Suppose $\ch k$ is either zero or big enough, i.e.~bigger than a certain number $N$ depending on the root system $R$. Then the following holds.
\begin{enumerate}
\item  $\CI_k^\circ$ is the full subcategory of $\dercat_{\hT}(\hFl,k)$ that consists of objects isomorphic to a direct sum of shifted equivariant intersection cohomology sheaves $\IC_{\hT,y}$ with $y\in\hCW^\circ$. 
\item If $y\in\hCW^\circ$ and $y=s\cdots t$ is a reduced expression, then $\IC_{\hT,y}$ occurs, up to a shift, as the unique indecomposable direct summand in $\pi_t^\ast\pi_{t\ast}\cdots\pi_s^\ast\pi_{s\ast} F_{e}$ that is supported on $\ol{\CO_{y}}$. 
\item For $y\in\hCW^\circ$ and $x\in\hCW$ we have $\rk\, \Hyp_{\hT}^\ast(i_x^\ast \IC_{\hT,y})=h_{x,y}(1)$. 
\end{enumerate}
\end{theorem}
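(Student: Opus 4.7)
The plan is to prove the three statements essentially by a reduction-mod-$p$ argument, using the characteristic zero case (where the decomposition theorem and Kazhdan--Lusztig's character formula apply) as a base case, together with the finiteness of the combinatorics involved in defining $\CI_k^\circ$.

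First I would handle the characteristic zero case, which is essentially already sketched in the preceding paragraph. Fix a reduced expression $y=s\cdots t$ for some $y\in\hCW^\circ$. Since $\pi_s$ is an ind-proper map of ind-varieties and since $F_e$ is pure, the sheaf $F_s:=\pi_t^\ast\pi_{t\ast}\cdots\pi_s^\ast\pi_{s\ast}F_{e}$ is semisimple by the decomposition theorem of Beilinson--Bernstein--Deligne, and hence decomposes as a direct sum of shifted equivariant IC sheaves of the form $\IC_{\hT,z}[n]$ with $z\in\hCW$. The orbit combinatorics of the Bott--Samelson type map associated to the sequence $s,\dots,t$ show that only $z\le y$ can appear (since the image of the Bott--Samelson resolution lies in $\ol{\CO_y}$), and that $\IC_{\hT,y}$ itself appears with multiplicity one (since the resolution is birational onto $\ol{\CO_y}$). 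The equivariant version of the classical theorem of Kazhdan and Lusztig (cited as \cite{MR573434}) then gives the character formula $\rk\,\Hyp^\ast_{\hT}(i_x^\ast \IC_{\hT,y})=h_{x,y}(1)$.

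Next I would transfer this to large positive characteristic by an explicit base-change argument. The sheaves $F_e$, the morphisms $\pi_s$, and the functors $\pi_s^\ast$, $\pi_{s\ast}$ are all defined over $\DZ$, so each sheaf $F_s$ appearing in the definition of $\CI_k^\circ$ arises by extension of scalars from an object defined over $\DZ$. Likewise, each IC sheaf $\IC_{\hT,y}$ is defined over $\DZ$, and the decomposition obtained in characteristic zero is induced by a splitting that involves only finitely many idempotents in a finitely generated graded ring. Hence there is an integer $N_{y,s\cdots t}$ such that for $\ch k>N_{y,s\cdots t}$ the same decomposition of $F_s$ into shifted IC sheaves holds verbatim over $k$, and the character formula of Kazhdan--Lusztig continues to hold (since equivariant stalk cohomology also commutes with base change under these hypotheses). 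The key point is that in the definition of $\CI_k^\circ$ only reduced expressions for elements of the finite set $\hCW^\circ$ occur, hence only finitely many sequences $s,\dots, t$ are involved. Setting $N$ to be the maximum of the integers $N_{y,s\cdots t}$ over all these finitely many choices yields a single integer such that for all $\ch k>N$ every generating object of $\CI_k^\circ$ decomposes as predicted.

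Once the generating objects of $\CI_k^\circ$ are known to decompose into shifted $\IC_{\hT,z}$ with $z\in\hCW^\circ$, statement (1) follows by closing under direct sums and direct summands and noting that each $\IC_{\hT,z}$ with $z\in\hCW^\circ$ actually appears (as the summand of some $F_s$ supported on $\ol{\CO_z}$). Statement (2) follows from the support and birationality observations made above. Statement (3) follows from the character formula together with the compatibility of Kazhdan--Lusztig polynomials with the summand in question. The main obstacle is the base-change step: one must verify that the decomposition theorem and the hypercohomology computations behave well under reduction modulo $p$ for $p$ sufficiently large; this is not a purely formal matter, but it is a standard kind of argument for a fixed (finite) collection of morphisms and sheaves defined over $\DZ$, and it is precisely here that the finiteness of the combinatorics of $\hCW^\circ$ enters in an essential way.
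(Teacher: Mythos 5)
Your proposal follows the same route as the paper: establish the decomposition and the Kazhdan--Lusztig character formula in characteristic zero via the decomposition theorem and orbit combinatorics of the Bott--Samelson map, then observe that the relevant objects are defined over $\DZ$ so the decomposition persists for $\ch k$ larger than some $N$ depending on the fixed reduced expression, and finally use that only finitely many reduced expressions occur in the definition of $\CI_k^\circ$ to extract a single bound $N$. You add a bit of useful detail (birationality of the Bott--Samelson resolution for multiplicity one, and phrasing the base-change step in terms of idempotents in a finitely generated graded ring), but the argument is essentially identical to the paper's.
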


\section{Modular representations}\label{sec-mod}

Let $k$ be an algebraically closed field of characteristic $p>h$ and
let $\fg^\vee$ be the simple $k$-Lie algebra with root system
$R^\vee$. Let $\fh^\vee\subset\fb^\vee\subset\fg^\vee$ be a Cartan and
a Borel subalgebra.
In the following we recall the definition and the main properties of a certain category
$\CC_k$ of $X^\vee$-graded restricted representations of $\fg^\vee$ that
 is equivalent to the category of rational representations of the subgroup scheme $G_1^\vee T^\vee\subset G^\vee$ that is generated by the (first)
Frobenius kernel 
$G_1^\vee\subset G^\vee$ and the torus $T^\vee$. Our basic references
for the following are the books \cite{MR2015057} and \cite{MR1272539}.

\subsection{Restricted representations of $\fg^\vee$} 
Denote by $U(\fl)$ the universal enveloping algebra of a Lie algebra $\fl$. 
Recall that if $\fl$ is the Lie algebra of an algebraic group over a
field of characteristic $p>0$, then $\fl$ is a $p$-Lie algebra,
i.e.~it is endowed with a $p$-th power map $D\mapsto D^{[p]}$. For
each such Lie algebra let $U^{res}(\fl)$ be the restricted enveloping
algebra, i.e.~the quotient of
$U(\fl)$ by the two-sided ideal generated by all $D^p-D^{[p]}$ with
$D\in \fl$ (here $D^{p}$ denotes the $p$-th power of $D$ in $U(\fl)$).

Set $U^{res}:=U^{res}(\fg^\vee)$. Then $U^{res}$ is an $X^\vee$-graded Lie algebra, where the grading comes from the adjoint action of the torus $T^\vee$. Let $U^{res}\catmod_{X^\vee}$ be the category of $X^\vee$-graded $U^{res}$-modules $M=\bigoplus_{\nu\in X^\vee}M_\nu$.  For $\lambda\in \Hom(T^\vee,k^\times)$ we denote by $\ol \lambda\in\Hom(\fh^\vee,k)$ its differential. 
As in  \cite{MR1272539,MR1357204} we define the full subcategory $\CC_k$ of
$U^{res}\catmod_{X^\vee}$ by the following condition on its objects:
$$
\CC_k:=\left\{M\in U^{res}\catmod_{X^\vee}\left|
\,
\begin{matrix}
\text{$M$ is finitely generated,}\\
Hm=\ol\nu(H)m \\
\forall H\in\fh^\vee, \nu\in {X^\vee}, m\in M_\nu
\end{matrix}
\right.\right\}.
$$
It is known that this category is equivalent to the category of finite dimensional
rational representations of $G_1^\vee
T^\vee$ (cf.~\cite{MR2015057}). 

For each $\lambda\in {X^\vee}$ there is a standard module 
$$
Z(\lambda):=U^{res}(\fg^\vee)\otimes_{U(\fb^\vee)}k_\lambda
$$
in $\CC_k$, the  {\em baby Verma module} with highest weight
$\lambda$, which has a unique simple quotient $L(\lambda)$. The
$L(\lambda)$'s form a system of representatives for the simple objects
in $\CC_k$, and we denote by $\left[Z(\lambda):L(\mu)\right]$ the
multiplicity of $L(\mu)$ in a Jordan-Hölder series of
$Z(\lambda)$. This is a finite number, and for fixed $\lambda$ it is non-zero only for finitely many $\mu$.

\subsection{Deformed projective objects}\label{epo}

There is a projective cover $P(\mu)$ of each simple object
$L(\mu)$ in $\CC_k$. Each $P(\mu)$ admits a Verma flag, i.e.~a filtration
such that the subquotients are isomorphic to various $Z(\lambda)$. The corresponding multiplicity $(P(\mu): Z(\lambda))$ is independent of the filtration and we have the Brauer-Humphreys reciprocity formula $$
\left(P(\mu):Z(\lambda)\right)=\left[ Z(\lambda):L(\mu)\right].
$$

Recall that we defined the algebra
$S_k=S(X\otimes_\DZ k)$ that we can now identify with
$S(\fh^\vee)$. The  deformed version $\tCC_k$ of $\CC_k$ carries an 
action of the completion ${\tilde S_k}$ of $S_k$ at the maximal ideal
generated by $\fh^\vee\subset S_k$. We denote by $\tau\colon S_k\to\tilde
S_k$ the canonical map.  Following \cite[Section 1.2]{MR1272539}, we let $U$ be the quotient of
$U(\fg^\vee)$ by the twosided ideal generated by all $D^{[p]}-D^p$
with $D\in\fg^\vee_{\alpha^\vee}$ for some $\alpha^\vee\in R^\vee$
(note that $U^{res}$ is a quotient of $U$). We consider  $U\otimes_k
{\tilde S_k}$ as an $X^\vee$-graded ${\tilde S_k}$-algebra and define
$U\otimes_k {\tilde S_k}\catmod_{X^\vee}$ as the category of
$X^\vee$-graded $U\otimes_k {\tilde S_k}$-modules. Then 
$\tCC_k$ is the full subcategory of $U\otimes_k \tilde
S_k\catmod_{X^\vee}$ whose objects satisfy the following condition: 
$$
\tCC_k:=\left\{M\in U\otimes_k {\tilde S_k}\catmod_{X^\vee}\left|
\,
\begin{matrix}
\text{$M$ is finitely generated,}\\
(H\otimes 1)m=(\ol{\nu}(H)\otimes 1+1\otimes H)m \\
\forall H\in\fh^\vee, \nu\in {X^\vee}, m\in M_\nu
\end{matrix}
\right.\right\}.
$$

For $\lambda\in X^\vee$ let $\tilde S_{k,\lambda}$ be the $\tilde
S_k$-module that is free of rank one  and on which
$U(\fb^\vee)$ acts via the map $U(\fb^\vee)\to U(\fh^\vee)=S_k\stackrel{\lambda+\tau}\to\tilde
S_k$. Then we can construct the deformed standard module
$$
\tZ(\lambda):=U\otimes_{U(\fb^\vee)}\tilde S_{k,\lambda}.
$$
It is an object of  $\tCC_k$. For each $\mu\in X^\vee$
there is a deformed indecomposable projective object
$\tP(\mu)$ in $\tCC_k$ that maps surjectively to
$\tZ(\mu)$. Again these projectives admit a finite filtration with
subquotients isomorphic to deformed standard modules, and for the
multiplicities we have 
$$
(\tP(\mu):\tZ(\lambda))=(P(\mu):Z(\lambda)).
$$

\subsection{Translation functors and special representations}

Define the ``$\cdot_p$''-action  of $\hCW$ on $X^\vee$ as follows:  for $w\in\hCW$ and $\lambda\in X^\vee$ set 
$$
w\cdot_p \lambda:=pw(p^{-1}(\lambda+\rho))-\rho,
$$
where $\rho=1/2\cdot\sum_{\alpha\in R^{+,\vee}}\alpha$.

The translation principle implies that in order to know the 
Jordan-H\"older multiplicities of all standard modules it is
sufficient to calculate the Jordan-Hölder multiplicities of the
modules $Z(x\cdot_p 0)$ for $x\in \hCW$.  The linkage principle states
that $L(\mu)$ occurs as a subquotient of $Z(x\cdot_p 0)$ only if
$\mu=y\cdot_p 0$ for some $y\in\hCW$. By the above, the number
$[Z(x\cdot_p 0):L(y\cdot_p0)]$ equals the number  $(\tP(y\cdot_p 0):\tZ(x\cdot_p 0))$  for all
$x,y\in\hCW$. Moreover, an inherent $pX^\vee$-symmetry shows that it
is sufficient to assume that $y\in \hCW^{res,-}\subset\hCW^{\circ}$,
where $\hCW^{res,-}$ is the set of elements in $\hCW$ that correspond
to alcoves in the anti-fundamental box $\Pi^{-}$.

Let us denote by $\tCC_{k,0}^{V}\subset\tCC_k$ the full
subcategory of objects in the principal block that admit a {\em Verma
  flag}, i.e.~a filtration with subquotients isomorphic to standard
modules of the form $\tZ(w\cdot_p 0)$ for various $w\in\hCW$.
For $M\in \tCC_{k,0}^{V}$ we denote by $\supp\, M$ its support,
i.e.~the multiset of $w\in\hCW$ such that $\tZ(w\cdot_p0)$
occurs in a Verma flag of $M$ (so we count the $w$ in $\supp\, M$ with multiplicity).

For each $s\in\hCS$ there is a {\em translation functor} $\theta^s\colon
\tCC_{k,0}^{V}\to\tCC_{k,0}^{V}$. It has the following properties.
\begin{proposition}\label{proptrans}
\begin{enumerate}
\item $\theta^s$ is an exact and self-adjoint functor.
\item $\supp\, \theta^s M=\supp\, M\,\dot\cup\,(\supp\, M) s$.
\item If $w=s\cdots t$ is a reduced expression, then $\tP(w\cdot_p 0)$ occurs as a direct summand in
  $\theta^t\cdots\theta^s \tP(0)$ with multiplicity one.
\end{enumerate}
\end{proposition}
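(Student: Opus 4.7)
The plan is to follow the standard construction and analysis of translation functors in the deformed setting as carried out in \cite[\S 2]{MR1272539}. I would first recall the construction of $\theta^s$: choose weights $\lambda, \mu \in X^\vee$ linked to $0$ under the $\cdot_p$-action, with $\Stab_{\hCW,\cdot_p}(\lambda) = \{e\}$ and $\Stab_{\hCW,\cdot_p}(\mu) = \{e, s\}$ (so $\mu$ lies on the $s$-wall), and let $V$ be a finite-dimensional rational $G^\vee$-module whose extremal weight orbit contains $\mu - \lambda$. Set $T_\lambda^\mu(M) := \pr_\mu(V \otimes_k M)$ and $T_\mu^\lambda(N) := \pr_\lambda(V^\ast \otimes_k N)$, and define $\theta^s := T_\mu^\lambda \circ T_\lambda^\mu$.

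For part (1), I would verify exactness factor by factor: tensoring over $\tilde S_k$ with a finite-dimensional $k$-module carrying a compatible $U$-action is exact (the module is free over $k$, hence flat after base change), and each block projection, being a direct factor of the identity, is exact. For self-adjointness I would use that $V \otimes_k -$ and $V^\ast \otimes_k -$ are biadjoint to each other via the Hom-tensor adjunction together with $V^{\ast\ast} \cong V$, and that block projections are self-adjoint; this makes $T_\lambda^\mu$ biadjoint to $T_\mu^\lambda$, and hence their composite $\theta^s$ is self-adjoint.

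For part (2), exactness reduces the claim to verifying $\supp \theta^s \tZ(w \cdot_p 0) = \{w, ws\}$ as a multiset. I would apply the tensor identity
\[
V \otimes \tZ(w \cdot_p 0) \;\cong\; U \otimes_{U(\fb^\vee)} \bigl(V|_{\fb^\vee} \otimes \tilde S_{k, w\cdot_p 0}\bigr),
\]
filter $V|_{\fb^\vee}$ by $\fb^\vee$-submodules with one-dimensional successive quotients $k_\nu$ indexed by the multiset of $T^\vee$-weights $\nu$ of $V$, and thereby produce a filtration of the left side with subquotients $\tZ(w \cdot_p 0 + \nu)$. Projecting to the block of $\mu$ isolates exactly one surviving subquotient, namely the unique $\nu$ for which $w \cdot_p 0 + \nu$ lies in $\hCW \cdot_p \mu$, giving a single Verma module on the $\mu$-wall. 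Applying $T_\mu^\lambda$ to this via the dual tensor identity then produces a two-step $\tZ$-filtration whose subquotients a direct weight check identifies as $\tZ(w \cdot_p 0)$ and $\tZ(ws \cdot_p 0)$.

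For part (3) I would induct on $l = l(w)$, the base case $w = e$ being trivial. For the induction step, let $w = s_1 \cdots s_l$ be a reduced expression and set $X_l := \theta^{s_l} \cdots \theta^{s_1} \tP(0)$. Because $\theta^s$ is exact and self-adjoint, it preserves projectivity (the functor $\Hom(\theta^s P, -) = \Hom(P, \theta^s -)$ is exact whenever $P$ is projective and $\theta^s$ is exact), so $X_l$ is projective. Iterating (2) shows that $\supp X_l$ equals the multiset of subword products of $(s_1, \ldots, s_l)$; by the strong exchange property of reduced expressions in Coxeter groups, $w$ occurs exactly once (only the full subword evaluates to $w$) and every other element is strictly below $w$ in Bruhat order. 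Hence $(X_l : \tZ(w \cdot_p 0)) = 1$ and $w \cdot_p 0$ is maximal in $\supp X_l$ in the highest-weight order on the principal block of $\tCC_k$, which on the orbit $\hCW \cdot_p 0$ refines Bruhat order. The general fact that if $\lambda$ is maximal in the Verma-flag support of a projective object $P$ in a highest-weight category, then $\tP(\lambda)$ is a direct summand of $P$ with multiplicity $(P : \tZ(\lambda))$, then gives that $\tP(w \cdot_p 0)$ is a direct summand of $X_l$ with multiplicity exactly one. I expect the main subtlety to lie in making precise the compatibility between the highest-weight structure of $\tCC_k$ and the Bruhat order on $\hCW$ via $w \leftrightarrow w \cdot_p 0$, which rests on the strong linkage principle for the restricted category.
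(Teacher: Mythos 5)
The paper does not actually give a proof of Proposition~\ref{proptrans}: it is quoted as a known result from the work of Andersen, Jantzen and Soergel (\cite{MR1272539}), so there is nothing in the text to compare against. Your proofs of parts (1) and (2), via wall-crossing functors $T_\lambda^\mu$, the tensor identity, and weight-filtration/block-projection arguments, are the standard ones and are fine.

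Part (3) has a genuine gap. You assert that iterating (2) shows $\supp X_l$ equals the multiset of subword products of $(s_1,\dots,s_l)$; that would only be true if $\supp \tP(0)=\{e\}$, i.e.~if $\tP(0)=\tZ(0)$. But $\tP(0)$ is the deformed \emph{projective cover}, which strictly contains $\tZ(0)$: by the Brauer--Humphreys reciprocity recalled in the paper, $(\tP(0):\tZ(x\cdot_p 0))=[Z(x\cdot_p 0):L(0)]$, and this is nonzero for many $x\ne e$. Hence $\supp X_l$ is the ``product'' multiset $\{a\cdot(\text{subword product})\,:\,a\in\supp\tP(0)\}$, which contains many elements beyond subword products of $(s_1,\dots,s_l)$, and neither the claimed multiplicity-one nor the maximality of $w\cdot_p 0$ follows from the argument as written. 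A second, related, issue is your claim that the highest-weight order on the block refines the Bruhat order on the orbit $\hCW\cdot_p 0$. The relevant order is the linkage order, which corresponds not to the Bruhat order $\varle$ but to the \emph{generic} Bruhat order $\succeq$ of Section~\ref{Brinf}; these disagree already in rank one (for a finite simple reflection $s$ one has $e\varle s$ but $e\succ s$), so the generic order does not refine Bruhat. The paper's own proof of the closely related Proposition~\ref{PasdS} illustrates the extra work needed: it uses the generic order, the fact that $e$ is minimal in $\supp\tP(0)$ for $\succeq$, and a quotient $\Lambda/\CW$ argument, even in the restricted situation of expressions through $w_0$. You would need a comparable analysis (or a direct adjunction/Hom-counting computation) to close the gap in your part (3).
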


\begin{definition} The category of {\em special deformed modular representations} is the full subcategory $\CR_k$ of $\tCC_{k,0}^{V}$ that consists of all objects that are isomorphic to a direct summand of a direct sum of objects of the form $\theta^s\cdots\theta^t \tZ(0)$ for an arbitrary sequence $s,\dots, t\in\hCS$.
\end{definition}

\subsection{A construction of deformed projectives}
Denote by $w_0\in\CW$ the longest element in the finite
Weyl group. Note that it is the smallest element (in the Bruhat order)
in $\hCW^{res,-}$. Let $y=s_1\cdots s_n$ be a reduced expression for
$y\in\hCW^{res,-}$. We say that this is a reduced expression {\em
  through $w_0$}, if $w_0=s_1\cdots s_{l(w_0)}$. The following proposition shows that all the deformed objects that we are interested in actually appear in $\CR_k$.

\begin{proposition}\label{PasdS} Let $y\in\hCW^{res,-}$ and choose a reduced expression $y=s\cdots t$ through $w_0$. Then $\tP(y\cdot_p 0)$ is the unique indecomposable direct summand in $\theta^t\cdots\theta^s \tZ(0)$ that contains $y$ in its support.
\end{proposition}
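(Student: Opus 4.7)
The plan is to prove the proposition in two parts: first, a support-multiplicity calculation will yield the uniqueness of any indecomposable summand containing $y$; second, the through-$w_0$ hypothesis will be used essentially to exhibit $\tP(y \cdot_p 0)$ as such a summand.

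For the uniqueness step, write $y = s_1 \cdots s_n$ for the given reduced expression (so $s_1 = s$, $s_n = t$, $n = l(y)$) and set $M := \theta^{s_n} \circ \cdots \circ \theta^{s_1} \tZ(0)$. Starting from $\supp \tZ(0) = \{e\}$ and iterating Proposition \ref{proptrans}(2), the multiset $\supp M$ coincides with $\{s_1^{\epsilon_1} s_2^{\epsilon_2} \cdots s_n^{\epsilon_n} : \epsilon_i \in \{0, 1\}\}$. Any such subexpression has length at most $\sum_i \epsilon_i$, so the only way to realize $y$ (of length $n$) is to take all $\epsilon_i = 1$. Hence $y$ appears in $\supp M$ with multiplicity exactly one, and since support is additive across Krull--Schmidt decompositions, $y$ lies in the support of at most one indecomposable summand of $M$.

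For existence, set $k := l(w_0)$, so that by the through-$w_0$ hypothesis $s_1 \cdots s_k$ is itself a reduced expression for $w_0$ and $s_{k+1} \cdots s_n$ is a reduced expression for $w_0^{-1} y$. I first claim that $\tP(w_0 \cdot_p 0)$ is a direct summand of $M_0 := \theta^{s_k} \circ \cdots \circ \theta^{s_1} \tZ(0)$. Indeed, $w_0 \cdot_p 0 = -2\rho$ is the antidominant Steinberg-type weight of the principal block, for which the deformed standard module $\tZ(w_0 \cdot_p 0)$ is simultaneously projective and injective, hence isomorphic to $\tP(w_0 \cdot_p 0)$. The support count applied with $w_0$ in place of $y$ shows that $w_0$ appears in $\supp M_0$ with multiplicity one, so $\tZ(w_0 \cdot_p 0)$ occurs in a Verma flag of $M_0$ exactly once; being the most antidominant weight of the support, it sits at the bottom of the filtration as a submodule, and the injectivity of $\tZ(w_0 \cdot_p 0)$ forces the inclusion to split: $M_0 \cong \tP(w_0 \cdot_p 0) \oplus X$. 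Applying the remaining translations, $N := \theta^{s_n} \circ \cdots \circ \theta^{s_{k+1}} \tP(w_0 \cdot_p 0)$ is a direct summand of $M$; since translation preserves projectivity (by self-adjointness of $\theta^s$), $N$ is projective and decomposes as $N \cong \bigoplus_i \tP(\mu_i)$. A second support count shows $y$ appears in $\supp N$ with multiplicity one, so exactly one index $i_0$ satisfies $y \in \supp \tP(\mu_{i_0})$. By Brauer--Humphreys reciprocity, this forces $y \le \mu_{i_0}$ in the Bruhat order, while $\mu_{i_0} \in \supp N$ forces $l(\mu_{i_0}) \le n = l(y)$; combining gives $\mu_{i_0} = y$, so $\tP(y \cdot_p 0)$ is a direct summand of $N$, hence of $M$. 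By Step 1, it is the unique indecomposable summand of $M$ whose support contains $y$.

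The main obstacle is the splitting of $M_0$: establishing that $\tZ(w_0 \cdot_p 0) \cong \tP(w_0 \cdot_p 0)$ is both projective and injective in $\tCC_{k,0}^V$, and that the resulting bottom Verma inclusion into $M_0$ splits. This rests on the self-duality of the deformed ``Steinberg'' Verma module in the Andersen--Jantzen--Soergel framework — a standard but technically nontrivial input. All remaining steps are support combinatorics on reduced subexpressions together with general Krull--Schmidt and Brauer--Humphreys reasoning.
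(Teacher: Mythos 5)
Your uniqueness argument is correct and self-contained (it reproves a fact the paper obtains by first reducing to $y=w_0$ via Proposition~\ref{proptrans}(3) and then invoking the multiplicity-one statement there), but the existence step has a fatal gap. The claim that $\tZ(w_0\cdot_p 0)$ is both projective and injective and therefore isomorphic to $\tP(w_0\cdot_p 0)$ is false. The paper's own proof ends by observing that \emph{the support of $\tP(w_0\cdot_p 0)$ coincides with $\CW$ (even with multiplicities)}; were $\tP(w_0\cdot_p 0)\cong\tZ(w_0\cdot_p 0)$, its support would be the singleton $\{w_0\}$, so the two modules differ whenever $\CW$ is nontrivial. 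You appear to be conflating $w_0\cdot_p 0=-2\rho$, which lies in the interior of the anti-fundamental box, with the genuine Steinberg weight $(p-1)\rho$; the latter's baby Verma module is simple, projective and injective, but it is not in the principal linkage class, and $-2\rho$ is not a Steinberg weight. Consequently the splitting $M_0\cong\tP(w_0\cdot_p 0)\oplus X$ does not follow, and the entire bootstrap to $N$ collapses.

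The paper's route around this obstacle is genuinely different: it applies the (exact) translation functors to the projective cover map $\tP(0)\to\tZ(0)$, obtaining a surjection $\theta^t\cdots\theta^s\tP(0)\to\theta^t\cdots\theta^s\tZ(0)$ whose source contains $\tP(w_0\cdot_p 0)$ exactly once. It then compares Verma multiplicities on both sides — for $w_0=s\cdots t$ a reduced word in $\CW$, the target has Verma factors $\tZ(w\cdot_p 0)$ only for $w\in\CW$, with the same multiplicities as the source there — and uses Corollary~9.6 of \cite{MR1272539} together with Lemma~\ref{supportorder} to reorder Verma flags so that each indecomposable summand $P$ of the source surjects onto its quotient by the maximal subobject with Verma factors outside $\CW$. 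This shows the Krull--Schmidt decomposition of $\theta^t\cdots\theta^s\tP(0)$ descends to $\theta^t\cdots\theta^s\tZ(0)$; the summand $\tP(w_0\cdot_p 0)$ survives intact precisely because all of its Verma factors lie in $\CW$. If you want to salvage your approach, you would need to replace the false Steinberg identification with this descent-of-decompositions argument or something equivalent.
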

\begin{proof} It is enough to prove the claim  for
  $y=w_0$, since the general case follows from this by Proposition
  \ref{proptrans}, (3). 

  So let $w_0=s\cdots t$ be a reduced expression. Consider a
  surjection $\tP(0)\to \tZ(0)$. Proposition
  \ref{proptrans} shows that we get a surjection
  $\theta^t\cdots\theta^s
  \tP(0)\to\theta^t\cdots\theta^s  \tZ(0)$ and
  that the module on the left hand side
  contains exactly one direct summand
  which is isomorphic to $\tP(w_0\cdot_p 0)$.

  Since $s,\dots, t\in\CW$ we can deduce the following from Proposition \ref{proptrans}:
   $$
 \left(\theta^t\cdots\theta^s  \tZ(0):\tZ(w\cdot_p 0)\right) = 
 \begin{cases} 
\left(\theta^t\cdots\theta^s
  \tP(0):\tZ(w\cdot_p 0)\right), & \text{if $w\in\CW$,} \\
  0, & \text{if $w\not\in \CW$.}
  \end{cases}
  $$
   Each direct summand $P$ of $\theta^t\cdots\theta^s
  \tP(0)$ has a Verma flag. By Corollary 9.6 of
  \cite{MR1272539} we can find a Verma flag  that starts with
  all subquotients $\tZ(w\cdot_p0)$ such that $w\not\preceq
  e$. From the following Lemma \ref{supportorder} we deduce that this Verma flag
  ends with all $\tZ(w\cdot_p 0)$ with $w\in\CW$. From the
  above multiplicity result  we deduce that the
  restriction of the map $\theta^t\cdots\theta^s
  \tP(0)\to \theta^t\cdots\theta^s
  \tZ(0)$ to $P$ is the quotient map modulo the biggest
  submodule admitting a Verma flag with subquotients $\tZ(w\cdot_p 0)$ and $w\not\in \CW$. 

  Hence the direct sum decomposition of $\theta^t\cdots\theta^s
  \tP(0)$ induces a direct sum decomposition in $\theta^t\cdots\theta^s
  \tZ(0)$. Since the support of
  $\tP(w_0\cdot_p 0)$ coincides with $\CW$ (even
  with multiplicities), the proposition is proved. 
\end{proof}

\begin{lemma}\label{supportorder} Choose $s,\dots,t\in\CS$ such that $w_0=s\cdots t$ is a reduced expression. If $w\in \supp\, \theta^t\cdots\theta^s
  \tP(0)$ is such that $w\preceq e$, then $w\in\CW$.
\end{lemma}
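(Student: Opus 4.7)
The plan is to reduce the statement to a constraint on the translation part of $w$ and then combine strong linkage with the geometric description of the generic Bruhat order. First I would iterate the support formula from Proposition \ref{proptrans}(2) along the sequence $s,\dots,t$: at each step the support either stays fixed or is right-translated by a simple reflection, so after all $n$ steps one gets the multiset decomposition
\[
\supp\,\theta^t\cdots\theta^s\tP(0)\;=\;\bigsqcup_{v}(\supp\,\tP(0))\cdot v,
\]
where $v$ ranges over all products obtained by selecting a subsequence of $s,\dots,t$ and multiplying in order. Every letter in this sequence lies in $\CS\subset\CW$, so each such $v$ belongs to the finite Weyl group $\CW$. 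Hence any $w$ in the support factors as $w=xv$ with $x\in\supp\,\tP(0)$ and $v\in\CW$; writing $x=t_\mu u$ with $\mu\in\DZ R^\vee$ and $u\in\CW$, we have $w=t_\mu(uv)$, so the translation part of $w$ is exactly $\mu$, independent of $v$. It thus suffices to show that the hypothesis $w\preceq e$ forces $\mu=0$.

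Next I would establish $\mu\in\DN R^{+,\vee}$ via the strong linkage principle. The Brauer--Humphreys reciprocity identifies $x\in\supp\,\tP(0)$ with $[Z(x\cdot_p 0):L(0)]\ne 0$, which by strong linkage implies $0\uparrow x\cdot_p 0$, in particular $x\cdot_p 0\geq 0$ in the dominance order on $X^\vee$. A direct computation gives $x\cdot_p 0=u(\rho)-\rho+p\mu$, and the standard identity
\[
\rho-u(\rho)\;=\sum_{\alpha^\vee\in R^{+,\vee},\,u^{-1}\alpha^\vee<0}\alpha^\vee
\]
exhibits the right hand side as a non-negative integer combination of positive coroots. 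The inequality $p\mu\geq\rho-u(\rho)$ then forces every simple-coroot coefficient $c_i$ of $\mu$ to be non-negative: if some $c_i<0$ then $pc_i<0$, contradicting that the right hand side is non-negative in that coordinate.

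Finally, the hypothesis $w\preceq e$ in the generic Bruhat order unfolds as $t_\lambda w\leq t_\lambda$ in the ordinary Bruhat order on $\hCW$ for all sufficiently large dominant $\lambda$. I would argue that this forces the translation part $\mu$ of $w$ to be anti-dominant, i.e.\ $-\mu\in\DN R^{+,\vee}$: alcove-geometrically, moving from $A_{t_\lambda}$ in a strictly dominant direction cannot be achieved by going down in the affine Bruhat order. Combining with $\mu\in\DN R^{+,\vee}$ from the previous paragraph gives $\mu=0$, and hence $w=uv\in\CW$. The main obstacle I expect is making this last geometric claim precise: a careful analysis of reduced expressions of $t_\lambda$ (via e.g.\ the Iwahori--Matsumoto length formula) is needed to translate the Bruhat condition into the sharp inequality $-\mu\in\DN R^{+,\vee}$, since a pure length argument gives only the weaker bound $\langle 2\rho,\mu\rangle\leq l(u)\leq|R^+|$.
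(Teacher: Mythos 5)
Your argument is correct in outline but takes a genuinely different route from the paper's. Both proofs begin by observing $\supp\,\theta^t\cdots\theta^s\tP(0)=(\supp\,\tP(0))\cdot\CW$, but then diverge. The paper's proof is purely order-theoretic: since $\CW\subset\hCW$ is parabolic, the generic order descends to a partial order on $\hCW/\CW$ compatible with the quotient map; the assertion that $e$ is the minimum of $\supp\,\tP(0)$ (a representation-theoretic input left implicit there) then makes $\ol e$ the minimum of the image of the support in $\hCW/\CW$, so $w\preceq e$ forces $\ol w=\ol e$, i.e.\ $w\in\CW$. Crucially, the paper never needs to identify what the induced order on $\hCW/\CW$ actually is. Your proof unfolds both of these abstract steps into explicit computations with translation parts: strong linkage (via Brauer--Humphreys reciprocity and the formula $x\cdot_p 0 = u(\rho)-\rho+p\mu$) shows the translation part of every element of $\supp\,\tP(0)$ is dominant, and you then want $w\preceq e$ to force the translation part anti-dominant. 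That second step, which you correctly flag as the obstacle, is where extra work is needed beyond the paper's route: the cleanest way to close it is the same projection to $\hCW/\CW$, followed by the standard identification of the Bruhat order on $\{t_\nu\CW\mid\nu\text{ dominant}\}$ with the dominance order on coweights (a pure length estimate, as you note, does not suffice). In exchange, your version is more self-contained and makes the representation-theoretic input fully explicit, whereas the paper's relies on the unproved minimality of $e$ in $\supp\,\tP(0)$, which is essentially the same strong-linkage fact packaged abstractly.
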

\begin{proof} Let us denote by $\Lambda^\prime\subset\hCW$ 
the support of $\tP(0)$ and by $\Lambda\subset\hCW$ the
support of $\theta^t\cdots\theta^s
  \tP(0)$ , both taken without multiplicities. Then $\Lambda=\Lambda^\prime\cdot \CW$. 

  The set $\Lambda$ is  partially ordered by the
  generic Bruhat order. By definition, there is $\lambda\in\DN
  R^{\vee,+}$ such that the map $ \Lambda\to\hCW$,
  $w\mapsto t_\lambda w$, preserves the order (here $\hCW$ carries the
  ordinary Bruhat order). Moreover, this map is equivariant with
  respect to the multiplication action of $\CW$ on the right.

  Now $\CW\subset\hCW$ is a parabolic subgroup, which implies that
  there is an induced (Bruhat) order on $\hCW/\CW$ such that $\hCW\to\hCW/\CW$
  is a map between partially ordered sets. Together with the results in the
  preceding paragraph we deduce that there is a
  partial order on $\Lambda/\CW$ such that the  map $\Lambda\to\Lambda/\CW$
  respects the orders (explicitely, we identify $\Lambda/\CW$ with its image under
  the composition
  $\Lambda\stackrel{t_\lambda\cdot}\to\hCW\to\hCW/\CW$ and take
the induced order). Let us fix this order on $\Lambda/\CW$.

  The maps $\Lambda^\prime\to\Lambda\to\Lambda/\CW$ respect the
  partial orders and the composition is surjective. Since $e$ is the smallest element in
  $\Lambda^\prime$, its image $\ol e$ in $\Lambda/\CW$ is the smallest
  element in $\Lambda/\CW$. Hence each element in $\Lambda$
  that is smaller than $e$ must be mapped to $\ol e$, hence lies in
  the $\CW$-orbit of $e$.
\end{proof}

\subsection{The quantum group case}
In \cite{Lus90} Lusztig constructs the {\em restricted quantum group}
$\bf u$ associated to the root system $R$, its set of positive roots
$R^+\subset R$, an integer $l$ which is prime to the entries of the
Cartan matrix of $R$, and a primitive $l$-th root of unity
$\zeta$. The quantum group is a finite dimensional $\DZ R$-graded
algebra over the field $k=\DQ(\zeta)$. 

There is a complete analog of the above definitions and structures for
the representation theory of $\bf u$. In particular, one can define
standard modules $Z(\lambda)$, simple modules $L(\lambda)$ and
projective objects $P(\lambda)$ (in a certain category). The standard and
the projective objects
 admit
deformed versions and we can define a category $\CR_{\DQ(\zeta)}$, that describes the representation theory of
$\bf u$. For an overview, see \cite{MR1403973,
  MR1357204}).

\subsection{The Andersen-Jantzen-Soergel equivalence}

Recall the definition of $\CM_k^\prime$ in Section
\ref{sec-AJStrans}. In order to compare the combinatorial category to
the category $\CR_k$ we need to extend scalars. So let
$\tCM_k^\prime$ be the combinatorial category defined in
exactly the same way as $\CM_k^\prime$, but with the algebra $S_k$
replaced by $\tilde S_k$. Then there is an obvious extension of scalars
functor $\cdot\otimes_{S_k} \tilde S_k\colon\CM_k^\prime\to\tCM_k^\prime$ that naturally commutes with the translation functors.

The following is one of the main results in \cite{MR1272539}.

\begin{theorem}\label{AJSequiv}  Suppose that $k=\DQ(\zeta)$ or $\ch k=p>h$. Then there is an equivalence 
$$
\DV\colon\CR_k\stackrel{\sim}\to\tCM_k^\prime
$$
such that  $\DV\circ \theta^s\cong \CT^\prime_s\circ \DV$ for all $s\in\hCS$,
and such that for all $M\in\CR_k$ and $x\in\hCW$  we have
$$
(M:\tZ(x\cdot_p 0))=\rk\, \DV(M)(A_x).
$$
\end{theorem}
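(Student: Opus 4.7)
The theorem is one of the central results of Andersen--Jantzen--Soergel \cite{MR1272539}, and the plan is to sketch their construction of $\DV$. First I would construct $\DV$ on objects in close analogy with the functor $\Psi$ of Section \ref{sec-funcpsi}. For $M \in \CR_k$ and any alcove $A$, set $\DV(M)(A) := M^{\emptyset, w_A}$, the $w_A$-component of the canonical decomposition of $M \otimes_{\tilde S_k} \tilde S_k^\emptyset$; this decomposition exists because the deformed baby Verma modules $\tZ(w \cdot_p 0)$ become pairwise orthogonal free rank one modules after generically inverting all affine roots, so any $M$ with a Verma flag acquires a canonical decomposition after such localization. For each $\beta \in R^+$, define $\DV(M)(A,\beta)$ as the image of the appropriate $\succeq$-filtration piece of the $\beta$-localization $M \otimes_{\tilde S_k} \tilde S_k^\beta$ inside $M^{\emptyset, w_A} \oplus M^{\emptyset, w_{\beta \uparrow A}}$ (or inside $M^{\emptyset, w_A}$ when $\beta \uparrow A = A$), exactly mirroring the construction of $\Psi$.

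The next step is to verify that $\DV$ intertwines translation functors, i.e.\ $\DV \circ \theta^s \cong \CT^{\prime s} \circ \DV$ for every $s \in \hCS$. This is established by a case analysis on alcoves and positive roots parallel to the proof of Theorem \ref{MTheo}, using Proposition \ref{proptrans} to control how $\theta^s$ acts on supports and Verma flags. The constants $a_A^\beta$ appearing in the AJS translation functors precisely compensate for the fact that $\theta^s \tZ(\lambda)$ does not split into $\tZ(\lambda) \oplus \tZ(\lambda s)$ as cleanly as on the combinatorial side. Direct computation then shows $\DV(\tZ(0)) \cong P_0 \otimes_{S_k} \tilde S_k$; since $\CR_k$ is by construction generated from $\tZ(0)$ under direct sums, direct summands and applications of $\theta^s$, while $\tCM_k^\prime$ is generated analogously from $P_0$ under the $\CT^{\prime s}$, this yields essential surjectivity of $\DV$. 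The multiplicity formula $(M : \tZ(x \cdot_p 0)) = \rk\, \DV(M)(A_x)$ is then immediate, since each occurrence of $\tZ(x \cdot_p 0)$ in a Verma flag contributes a free $\tilde S_k^\emptyset$-summand of rank one to $M^{\emptyset, x}$.

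The hard part is full faithfulness. By the generation property and the self-adjointness of the translation functors (cf.\ Proposition \ref{proptrans}) this reduces to matching the Hom-space $\Hom_{\CR_k}(\tZ(0), N)$ with $\Hom_{\tCM_k^\prime}(P_0, \DV(N))$ for every $N \in \CR_k$. The key point, following AJS, is that a homomorphism out of $\tZ(0)$ into a module with a Verma flag is uniquely determined by the image of a highest weight vector inside $N^{\emptyset, e}$ subject to integrality (i.e.\ non-localized) conditions at each affine root hyperplane, and these integrality conditions are precisely the defining axioms of the combinatorial category $\tCM_k^\prime$. Making this identification rigorous, particularly the matching of the $\beta$-integrality conditions with the subspaces $\DV(N)(A, \beta) \subset \DV(N)(A) \oplus \DV(N)(\beta \uparrow A)$, is the main technical obstacle; once it is in place the theorem follows by standard arguments.
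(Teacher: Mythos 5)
The paper does not prove Theorem~\ref{AJSequiv}; it states it as ``one of the main results in \cite{MR1272539}'' and uses it as a black box, much as it does with the decomposition theorem in Section~\ref{sec-She}. So there is no in-paper proof for your sketch to be measured against.

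That said, a few remarks on your sketch as a reconstruction of the AJS argument. Your description of $\DV$ on objects -- take the generic decomposition of $M\otimes\tilde S_k^\emptyset$ coming from the Verma flag, and carve out the integral structure at each $\beta$-wall from $M\otimes\tilde S_k^\beta$ -- is indeed the right shape, and it is instructive that it mirrors the functor $\Psi$ of Section~\ref{sec-funcpsi}. But be careful about three things. First, $\DV$ must be constructed on \emph{all} of $\CR_k$, not merely on objects of the form $\theta^s\cdots\theta^t\tZ(0)$; the morphisms between such objects are where the real content lies, and ``the multiplicity formula is then immediate'' is true for the rank count but not for functoriality. Second, the intertwining $\DV\circ\theta^s\cong\CT^{\prime s}\circ\DV$ is with the \emph{AJS} translation functors $\CT^{\prime s}$, whose definition involves the correction constants $a_A^\beta$; this is precisely why the paper devotes Section~\ref{sec-AJStrans} to proving that $\CT^{\prime s}$ and the paper's $\CT^s$ give equivalent categories of special objects, and your sketch should acknowledge that the verification is against $\CT^{\prime s}$, not the cleaner $\CT^s$. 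Third, your sentence on full faithfulness -- that a map out of $\tZ(0)$ is determined by the image of a highest weight vector in $N^{\emptyset,e}$ subject to integrality conditions -- compresses what is by far the longest and most delicate part of \cite{MR1272539} (the ``combinatorial description of projective objects'' occupying several chapters); as a blind sketch it correctly identifies where the difficulty sits, but it should not be presented as if it reduced to a formal argument.

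In short: the proposal is a reasonable high-level account of the AJS strategy, but the theorem is cited rather than reproved in this paper, and the proposal understates the weight of the full-faithfulness step and the role of the $a_A^\beta$-corrected translation functors.
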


The following partly proves the conjectures of Lusztig.

\begin{theorem}\label{MTheo1} Suppose that $k=\DQ(\zeta)$ or that $\ch
  k=p$  is large enough. For $x\in\hCW$ and
  $y\in\hCW^{res,-}$ we have 
$$
[Z(x\cdot_p 0):L(y\cdot_p 0)]=p_{w_0x,w_0y}(1).
$$
\end{theorem}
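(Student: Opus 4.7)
The plan is to stitch together the functors and character maps constructed in the body of the paper, apply them to intersection cohomology sheaves on the finite subvariety $\bigcup_{y\in\hCW^\circ}\ol{\CO_y}$, and then read off multiplicities. Define
$$
\Phi\colon \CI_k \xrightarrow{\Hyp^\ast_{\hT}} \CH_k \xrightarrow{\Psi} \CM_k \xrightarrow{\gamma} \CM_k^\prime \xrightarrow{\cdot\otimes_{S_k}\tilde S_k} \tCM_k^\prime \xrightarrow{\DV^{-1}} \CR_k.
$$
By Theorem \ref{speSheH}, Theorem \ref{MTheo}(4), Theorem \ref{AJScat}, base change, and Theorem \ref{AJSequiv}, each constituent intertwines the respective translation functors, so $\Phi$ satisfies $\Phi(\pi_s^\ast\pi_{s\ast}F)\cong\theta^s\Phi(F)$ for every $s\in\hCS$ and $F\in\CI_k$. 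A direct computation at the base objects gives $\Phi(F_e)\cong\tZ(0)$.

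Fix $y\in\hCW^{res,-}\subset\hCW^\circ$ and choose a reduced expression $y=s\cdots t$ through $w_0$ (possible since $w_0\varle y$). Applying $\Phi$ to the decomposition provided by Theorem \ref{ICpos2}(2),
$$
\Phi\bigl(\pi_t^\ast\pi_{t\ast}\cdots\pi_s^\ast\pi_{s\ast}F_e\bigr)\cong\theta^t\cdots\theta^s\tZ(0)
$$
is an isomorphism matching direct-sum decompositions. On the sheaf side, $\IC_{\hT,y}$ is the unique indecomposable summand supported on $\ol{\CO_y}$; on the representation side, Proposition \ref{PasdS} says $\tP(y\cdot_p 0)$ is the unique indecomposable summand whose support contains $y$. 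The support/localisation compatibilities built into $\Hyp^\ast_{\hT}$, $\Psi$, $\gamma$ and $\DV$ identify these two summands, so $\Phi(\IC_{\hT,y})\cong \tP(y\cdot_p 0)$.

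Now chain the rank identities: by Theorem \ref{speSheH} the rank of $\Hyp^\ast_{\hT}(i_x^\ast F)$ equals $\rk\Hyp^\ast_{\hT}(F)^{\emptyset,x}$, by Theorem \ref{MTheo}(2) this equals $\rk\Psi(\Hyp^\ast_{\hT}(F))(A_x)$, the auto-equivalence $\gamma$ preserves ranks at alcoves (cf.\ the definition of $\gamma_{\SA}$, which only rescales the subspaces $M(A,\beta)$), base change to $\tilde S_k$ is faithfully flat, and by Theorem \ref{AJSequiv} this rank equals $(\Phi(F):\tZ(x\cdot_p 0))$. Combining with Theorem \ref{ICpos2}(3) we obtain
$$
\bigl(\tP(y\cdot_p 0):\tZ(x\cdot_p 0)\bigr)=\rk\Hyp^\ast_{\hT}(i_x^\ast \IC_{\hT,y})=h_{x,y}(1).
$$
Brauer--Humphreys reciprocity (Section \ref{epo}) converts this into $[Z(x\cdot_p 0):L(y\cdot_p 0)]=h_{x,y}(1)$.

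The last step is the purely combinatorial identity $h_{x,y}(1)=p_{w_0x,w_0y}(1)$ for all $x\in\hCW$ and $y\in\hCW^{res,-}$. This follows from the commutative triangle of character maps in Theorem \ref{charmaps}: specialising $A_e\ast\ul H_y$ at $v=1$ expresses the left-hand coefficients $h_{x,y}(1)$ as periodic Kazhdan--Lusztig coefficients $p_{w_0x,w_0y}(1)$, once one uses the standard fact (due to Lusztig, \cite{MR1445511}) that under $A_e\ast\cdot$ the canonical basis element $\ul H_y$ maps, up to the shift by $w_0$ coming from the anti-fundamental box, to the self-dual periodic element $\ul A_{w_0 y}$. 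The main obstacle in making this last paragraph rigorous is precisely this translation between the two canonical bases; everything before it is a bookkeeping exercise that chains together results already proved in the paper.
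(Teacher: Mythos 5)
Your plan matches the paper's until the key claim ``the support/localisation compatibilities \dots identify these two summands, so $\Phi(\IC_{\hT,y})\cong\tP(y\cdot_p 0)$.'' That isomorphism is \emph{not} available at this point, and the gap is substantive. What the chain of Theorems \ref{speSheH}, \ref{MTheo}, \ref{AJScat}, \ref{AJSequiv} gives is an additive functor $\Phi$ commuting with translation. Applying $\Phi$ to the decomposition of Theorem \ref{ICpos2}(2) and matching with Proposition \ref{PasdS} lets you conclude that $\tP(y\cdot_p 0)$ is a direct summand of $\Phi(\IC_{\hT,y})$; it does \emph{not} let you conclude equality, since $\Phi$ is not known to preserve indecomposability. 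An indecomposable sheaf may well go to a decomposable deformed module, and the extra summands of $\Phi(\IC_{\hT,y})$ — even if they avoid $y$ in their support — would inflate the multiplicities. So all you legitimately extract is the one-sided bound
\[
\bigl(\tP(y\cdot_p 0):\tZ(x\cdot_p 0)\bigr)\;\le\;\rk\,\Hyp^\ast_{\hT}(i_x^\ast \IC_{\hT,y})\;=\;h_{x,y}(1),
\]
and Brauer--Humphreys then gives only $[Z(x\cdot_p0):L(y\cdot_p0)]\le h_{x,y}(1)$.

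The paper closes this inequality by a second, independent argument that your proposal omits entirely: from \cite{MR1272539}, pp.\ 238--239, the decomposition of $\theta^s\tP(y\cdot_p 0)$ is ``at most as large'' as Lusztig's conjecture predicts, which translates into the opposite inequality $\bigl(\tP(ys\cdot_p 0):\tZ(x\cdot_p 0)\bigr)\ge p_{w_0x,w_0ys}(1)$, and one then runs an induction along reduced expressions inside $\hCW^{res,-}$ starting from $w_0$. Without this lower bound and the induction, the argument does not yield equality. Finally, for the identity $h_{x,y}(1)=p_{w_0x,w_0y}(1)$ the paper simply cites Lusztig \cite{MR591724}; your attempt to re-derive it from Theorem \ref{charmaps} is not carried out (you acknowledge this), and Theorem \ref{charmaps} by itself only relates character maps on $\CH_k$ — it does not, without further work identifying the images of the self-dual bases, produce the needed numerical identity between the two families of polynomials.
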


\begin{proof} By the results in Section \ref{epo} the claim is equivalent to 
$(\tP(y\cdot_p0):\tZ(x\cdot_p 0))= p_{w_0x,w_0y}(1)$. 
Consider the composition
$$
\Phi\colon\CI_k\xrightarrow{\Hyp_{\hT}^\ast}\CH_k\xrightarrow{\Psi}\CM_k\cong\CM_k^\prime\xrightarrow{\cdot\otimes_{S_k}\tilde
  S_k}\tCM_k^\prime\xrightarrow{\DV^{-1}}\CR_k.
$$
Using Theorem \ref{speSheH}, Theorem \ref{MTheo} and Theorem \ref{AJSequiv} we deduce that
$\Phi\circ\pi_s^\ast\pi_{s\ast}\cong \theta^s\circ\Phi$
and that
$$
\left(\Phi(F): \tZ(x\cdot_p0)\right) = \rk\, \Hyp_\hT^\ast(i_x^\ast F)
$$
for all $F\in \CI_k$.

Let $y\in\hCW^{res,-}$ and choose a reduced expression $y=s\cdots t$
through $w_0$. By Proposition \ref{PasdS} the object $\tP(y\cdot_p 0)$ occurs as the unique indecomposable direct summand of $\theta^t\circ\cdots\circ\theta^s \tZ(0)$ that contains $y$ in its support. Suppose that $k=\DQ(\zeta)$ or that $\ch k=p$ is large enough such that the assumptions in Theorem
\ref{ICpos2} hold. Then, on the geometric side, the sheaf $\pi_t^\ast\pi_{t\ast}\cdots\pi_s^\ast\pi_{s\ast}(F_{e})$ decomposes into a direct sum of shifted equivariant intersection cohomology complexes on Schubert varieties, and $\IC_{\hT,y}$ occurs with multiplicity one. Moreover, $\rk \, \Hyp_\hT^\ast(i_x^\ast \IC_{\hT,y})=h_{x,y}(1)$.  From the above we deduce that $\tP(y\cdot_p0)$ is a direct summand of $\Phi(\IC_{\hT,y})$, hence
$$
(\tP(y\cdot_p0):\tZ(x\cdot_p 0))\le h_{x,y}(1).
$$ 
By a result of Lusztig in \cite{MR591724} we have
$h_{x,y}(1)=p_{w_0x,w_0y}(1)$.

Now suppose that the theorem is proven for $y\in\hCW^{res,-}$ and choose $s\in\hCS$ with $ys\in \hCW^{res,-}$ and $y<ys$, if possible. 
By \cite{MR1272539}, pp.~ 238/239, the decomposition of $\theta^s
\tP(y\cdot_p 0)$ into indecomposables is {\em at most} as the
Lusztig conjecture predicts, i.e.~we have
$$
(\tP(ys\cdot_p 0):\tZ(x\cdot_p 0))\ge p_{w_0x,w_0ys}(1).
$$
Hence we deduce
$(\tP(ys\cdot_p 0):\tZ(x\cdot_p 0))= p_{w_0x,w_0ys}(1)$,
which is the claim for $ys$.
\end{proof}

\section{Braden--MacPherson sheaves on moment graphs}\label{sec-momgra}

In this final section we give an alternative construction of the
objects in $\CH_k^\circ$ as spaces of global sections of certain sheaves on a moment graph. This
allows us to apply the results in  \cite{fiebig:math0607501} and
\cite{fiebig:math0811.1674} in the representation theoretic context.

\subsection{GKM-pairs}\label{subsec-GKM}
Apart from $\ch\, k\ne 2$ and $\ne 3$ in case $R$ is of type $G_2$, the main restriction $k$ is the  GKM-property.

\begin{definition} Let $\hCG^\prime\subset \hCG$ be a finite full subgraph. We say that $(\hCG^\prime, k)$ is a  {\em GKM-pair} if the labels at two different edges of $\hCG^\prime$ that meet at a common vertex are linearly independent in $\hX\otimes_\DZ k$. 
\end{definition}

For the application to Lusztig's conjecture we recover the usual restriction on $k$. Let $\hCG^\circ$ be the full subgraph of $\hCG$ with vertices in $\hCW^\circ=\{w\in\hCW\mid w\le\hw_0\}$. 

\begin{lemma} Suppose that $\ch k=0$ or $\ch k=p\ge h$. Then $(\hCG^\circ,k)$ is a GKM-pair.
\end{lemma}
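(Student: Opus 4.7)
First I would identify $\hCW^\circ$ explicitly. The antifundamental box $\Pi^-$ is defined by $-1 < \langle \alpha, v \rangle < 0$ for every simple root $\alpha$, which forces $v$ into the antidominant chamber. Using that $w_0$ maps each simple root to the negative of a simple root, one checks that $w_0(A_e) \subset \Pi^-$; moreover it is the unique alcove in the antidominant chamber that is adjacent to the origin, and hence the Bruhat-smallest alcove contained in $\Pi^-$. Thus $\hw_0 = w_0$ and $l(\hw_0) = |R^+|$. Since every reduced expression of $w_0$ uses only simple finite reflections, the subword property of the Bruhat order gives $\hCW^\circ = [e,\,w_0] = \CW$. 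Consequently, any edge of $\hCG^\circ$ at a vertex $w \in \CW$ comes from a reflection $t = (tw)\,w^{-1} \in \CW$, i.e.~a finite reflection $s_\alpha$ for some $\alpha \in R$; its label is the positive root among $\{\alpha,-\alpha\}$, which lies in $X \subset \hX$ and has no $\delta$-component.

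Therefore linear dependence of two such labels in $\hX \otimes_\DZ k$ is equivalent to dependence in $X \otimes_\DZ k$, and the GKM condition reduces to the assertion that any two distinct positive roots $\alpha_1, \alpha_2 \in R^+$ are linearly independent in $X \otimes_\DZ k$. For $\ch k = 0$ this is immediate, since in a reduced root system two distinct positive roots are non-proportional over $\DQ$. For $\ch k = p \geq h$ I would observe that such a dependence is equivalent to $p$ dividing every $2\times 2$ minor of the matrix expressing $\alpha_1,\alpha_2$ in a $\DZ$-basis of $X$. These minors are bounded integers controlled by the entries of the Cartan matrix together with the index $[X:\DZ R]$, and a case-by-case inspection of the irreducible root systems shows that their prime divisors lie among the bad primes for $R$---namely $\{2\}$ for the classical and most exceptional types, and $\{2,3\}$ for $G_2$---all strictly smaller than $h$. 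Since $\ch k \neq 2$ (and $\ne 3$ for $G_2$) is a global hypothesis already in force, the additional bound $p \geq h$ excludes the remaining obstructions and yields the required independence.

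The step requiring most care is the last one: after the structural reductions $\hw_0 = w_0$ and $\hCW^\circ = \CW$, the problem becomes a clean root-theoretic assertion, but a uniform verification of the bound on Cartan-matrix minors needs separate attention to $G_2$ (on account of its triple bond) and to small-rank type $A$ (where $[X:\DZ R]$ can become comparable to $h$ and enters the minor computation nontrivially).
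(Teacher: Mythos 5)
The proposal breaks down at the very first reduction. You identify $\hw_0 = w_0$ on the grounds that $A_{w_0}$ is the Bruhat-\emph{smallest} alcove in the anti-fundamental box $\Pi^-$; but in this paper $\hw_0$ is the Bruhat-\emph{largest} element corresponding to $\Pi^-$ (the Introduction states this explicitly, Section~\ref{sec-mod} uses $\hCW^{res,-}\subset\hCW^\circ$, and the paper's own proof of this lemma refers to ``the largest element $\hw_0\in\hCW^\circ$ (with respect to the Bruhat order)''). Already for $A_2$ the box $\Pi^-$ contains two alcoves; the corresponding elements are $w_0$ (length $3$) and $s_{\gamma,-1}w_0$ (length $4$), so $\hw_0=s_{\gamma,-1}w_0\notin\CW$ and $\hCW^\circ\supsetneq\CW$. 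Consequently your identification $\hCW^\circ=[e,w_0]=\CW$ is false, and the edges of $\hCG^\circ$ are \emph{not} labelled only by finite positive roots $\alpha\in X$: they carry genuinely affine labels $\alpha_n=\alpha-n\delta$ with $n\ne 0$.

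This is not a repairable slip, because the affine part is precisely where the Coxeter number enters. If the labels were all finite roots, linear independence of distinct positive roots in $X\otimes_\DZ k$ would hold under far weaker constraints, and the threshold $p\ge h$ would be unmotivated. The paper's proof handles two distinct affine roots $\alpha-n\delta$ and $\beta-m\delta$: linear dependence of the finite parts forces $\alpha=\beta$ once $p\ge h$, and then dependence forces $p\mid(n-m)$; the real work is the geometric claim that every alcove of $\SA^\circ$ lies in the strip $H_{\gamma,h}^-\cap H_{\gamma,-h}^+$ for each positive root $\gamma$, which together with the reflection constraint $w,s_{\alpha,n}w,s_{\alpha,m}w\in\hCW^\circ$ yields $|n-m|<h\le p$ and hence $n=m$. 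Your proposal never engages with the possible nonzero $\delta$-components of the labels, so it omits the essential content of the lemma. (As a secondary point, your final paragraph appeals to a case-by-case bound on $2\times 2$ minors ``controlled by the Cartan matrix and $[X:\DZ R]$'' with prime divisors among the bad primes; the paper instead invokes the standard fact that $p\ge h$ forces distinct positive roots to stay independent mod $p$, which is cleaner and what is actually needed, but this is moot given the main error.)
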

\begin{proof} In the characteristic $0$ case the pairwise linear independence
  of the positive affine roots implies the statement. So suppose that
  $\ch k$ is positive and that there are $\alpha,\beta\in R^+$,
  $m,n\in\DZ$ such that $\ol{\alpha_n}=\ol{\alpha-n\delta}$ and
  $\ol{\beta_m}=\ol{\beta-m\delta}$ are linearly dependent in
  $\hX\otimes_\DZ k$. This implies that  $\ol{\alpha}=\alpha\otimes 1$
  and $\ol{\beta}=\beta\otimes 1$ are linearly dependent in $X\otimes_\DZ k$. 

Now $\ch k\ge h$ implies that if $\ol{\alpha}$ and $\ol{\beta}$ are linearly dependent, then $\alpha=\beta$. Then $\ol{\alpha-n\delta}$ and $\ol{\alpha-m\delta}$ are linearly dependent if and only if $n-m$ is divisible by $p$.

Recall that we defined $\SA^\circ\subset \SA$ as the set of alcoves that correspond to the set $\hCW^{\circ}$. We claim that for each positive root $\gamma$, the set $\SA^{\circ}$ is contained in $H_{\gamma,h}^-\cap H_{\gamma,-h}^+$. In order to prove this we choose an alcove $A\in\SA^\circ$. Our claim is $\CW$-invariant, hence we can assume that $A$ is contained in the anti-dominant cone, i.e.~$A\subset H_{\nu,0}^-$ for all $\nu \in R^+$. In particular, $A\subset H_{\gamma, h}^-$. 

In order to finish the proof of our claim above it is enough to show that the alcove $A_{\hw_0}$ corresponding to the largest element $\hw_0\in\hCW^{\circ}$ (with respect to the Bruhat order), is contained in $H_{\gamma, -h}^+$, since $A$ is larger (with respect to the generic order) than $A_{\hw_0}$. But this is a direct consequence of  the definition of the Coxeter number.

We finish the proof of the lemma. Suppose that there is $w\in\hCW^\circ$ and
$\alpha,\beta\in R^+$, $n,m\in\DZ$ with $s_{\alpha,n}w$, $s_{\beta,m}w\in \hCW^\circ$, such that $\ol{\alpha_n}$ and $\ol{\beta_m}$ are linearly dependent.  As we have shown above, this already implies that $\alpha=\beta$ and that $p$ divides $n-m$. We have to show that $m=n$ under the assumption $p\ge h$.

Since each alcove in $\SA^\circ$ is contained in $H_{\alpha,-h}^+\cup H_{\alpha,h}^-$ the assumption $w$, $s_{\alpha,n}w$, $s_{\alpha,m}w\in \hCW^\circ$  imply that
$|m-n|< h$, from which we deduce $m=n$. 
\end{proof}

\subsection{Braden--MacPherson sheaves}

Let $x$ be a vertex of $\hCG$ and $\SF$ a sheaf on
$\hCG$. Denote by $\hCG_{>
  x}$ the sub-moment graph with set of vertices $\{y\in\CV\mid y >
x\}$. Restriction of $\SF$ to $\hCG_{> x}$ gives a sheaf $\SF_{>
  x}$ on $\hCG_{> x}$. Define $\CV_{\delta x}\subset\CV$ as the set of vertices $y$ with $y>x$ and that are connected to $x$ by an edge. Accordingly, let $\CE_{\delta x}=\{E\colon x\llinie y\mid y\in\CV_{\delta x}\}\subset\CE$ be the corresponding set of edges. Denote by $\SF^{\delta x}$ the image of the
map
$$
\Gamma(\SF_{> x})\subset \prod_{y> x}\SF^y\stackrel{p}\to
\prod_{y\in\CV_{\delta
    x}}\SF^{y}\stackrel{\rho}\to\prod_{E\in\CE_{\delta x}} \SF^E,
$$
where $p$ is the projection along the decomposition and
$\rho=\prod_{y\in\CV_{\delta x}}\rho_{y,E}$. Define  
$$
\rho_{x,\delta x}:=(\rho_{x,E})_{E\in\CE_{\delta x}}^T\colon \SF^x\to\prod_{E\in\CE_{\delta x}}\SF^E.
$$ 

\begin{theorem}[\cite{MR1871967}, cf.~also \cite{Fie05a}] There is an up to isomorphism unique $k$-sheaf $\SB_w$ on
  $\hCG$ with the following properties:
\begin{enumerate}
\item $\SB_w^w\cong \hS_k$.
\item If $x\stackrel{\alpha}\llinie y$ is an edge and $x<y$, then   the map $\rho_{y,E}\colon \SB_w^y\to\SB_w^E$ is surjective with kernel $\alpha\cdot\SB_w^y$.
\item For any $x\in\CV$, the image of $\rho_{x,\delta x}$ is $\SB_w^{\delta x}$, and  $\rho_{x,\delta x}\colon \SB_w^x\to\SB_w^{\delta x}$ is a projective cover in the category of graded $\hS_k$-modules. 
\end{enumerate}
We call the sheaf $\SB$ the {\em Braden--MacPherson sheaf} on $\hCG$. It is called the {\em canonical sheaf} in \cite{MR1871967}.
\end{theorem}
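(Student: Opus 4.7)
The plan is to construct $\SB_w$ by descending induction on the partial order of vertices of $\hCG$, and to prove uniqueness by an entirely parallel induction. For the base case, I would set $\SB_w^w := \hS_k$ as required by (1). For vertices $x$ that are not $\le w$, I would set all stalks to zero; property (3) is then automatic at such vertices, since $\SB_w^{\delta x} = 0$ and its projective cover is $0$. So the real content is the construction on $\{x \mid x \le w\}$.

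For the inductive step, suppose $\SB_w$ has been constructed on the full subgraph $\hCG_{> x}$. First, for each edge $E\colon x\llinie y$ with $x<y$ I would put $\SB_w^E := \SB_w^y/\alpha(E)\SB_w^y$ and let $\rho_{y,E}$ be the canonical surjection; this enforces property (2). Then $\SB_w^{\delta x}$ is the submodule of $\prod_{E\in\CE_{\delta x}}\SB_w^E$ described immediately before the theorem, using the already constructed data on $\hCG_{>x}$. Since $\SB_w^y$ is a finitely generated graded free $\hS_k$-module at each inductive stage (a fact one carries along in the induction), $\SB_w^{\delta x}$ is a finitely generated graded $\hS_k$-module. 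I would then define $\SB_w^x$ to be a graded projective cover of $\SB_w^{\delta x}$ and $\rho_{x,\delta x}$ the covering map, with the individual $\rho_{x,E}$ read off as the components. Property (3) then holds by construction, while the image condition guarantees that these data patch with the already-constructed data above $x$ to give a bona fide sheaf.

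For uniqueness, given another sheaf $\SB'$ satisfying (1)--(3), I would build an isomorphism $\SB_w\xrightarrow\sim \SB'$ again by descending induction. On $w$ any isomorphism $\hS_k\xrightarrow\sim\hS_k$ will do. Assume the isomorphism is defined on $\hCG_{>x}$. Property (2) forces the induced isomorphism of edge stalks $\SB_w^E\xrightarrow\sim {\SB'}^E$ for edges $E\in\CE_{\delta x}$, hence an isomorphism $\SB_w^{\delta x}\xrightarrow\sim {\SB'}^{\delta x}$. By the universal property of projective covers in the graded category, this lifts to an isomorphism $\SB_w^x\xrightarrow\sim {\SB'}^x$ that is compatible with $\rho_{x,\delta x}$ and its analogue. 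This extends the isomorphism to $\hCG_{\ge x}$ and closes the induction.

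The main obstacle is the existence and uniqueness (up to isomorphism) of graded projective covers for finitely generated graded $\hS_k$-modules. This relies on $\hS_k$ being a graded-local ring with graded maximal ideal $\hS_k^{>0}$, so that the standard Nakayama-type arguments go through in the graded category. Once this is in place, one also verifies by induction that each $\SB_w^x$ produced is graded free of finite rank (graded projectives over $\hS_k$ are free), which is needed in order that $\alpha(E)$ act as a non-zero-divisor on $\SB_w^y$ so that the kernel description in (2) is correct. These points, together with compatibility at each inductive step, are exactly what the construction of Braden--MacPherson in \cite{MR1871967} (and its recasting in \cite{Fie05a}) supplies.
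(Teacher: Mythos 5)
The paper does not prove this theorem; it is stated as a citation to Braden--MacPherson \cite{MR1871967} and to \cite{Fie05a}, so there is no internal proof to compare against. Your reconstruction is correct and is exactly the Braden--MacPherson construction as recalled in those sources: descending induction from $w$, zero stalks off $\{x\mid x\le w\}$, edge stalks $\SB_w^E=\SB_w^y/\alpha(E)\SB_w^y$ dictated by (2), $\SB_w^{\delta x}$ formed from $\Gamma(\SB_{w,>x})$, the vertex stalk $\SB_w^x$ taken to be a graded projective cover of $\SB_w^{\delta x}$, and uniqueness by lifting along projective covers in a parallel downward induction. You have also correctly isolated the two supporting facts one must carry along: that finitely generated graded projectives over the graded-local ring $\hS_k$ admit (and are characterized by) projective covers and are graded free, and that graded freeness of $\SB_w^y$ together with $\alpha(E)\neq 0$ in $\hX\otimes_\DZ k$ (guaranteed by the standing hypothesis $\ch k\neq 2$, and $\neq 3$ in type $G_2$) makes the kernel identification in (2) valid; implicitly one also uses that $\{x\mid x\le w\}$ is finite so that the relevant products are effectively finite.
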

 We set $B_w:=\Gamma(\SB_w)\in\hCZ_k\catmod$. 
 
 \begin{proposition} \label{prop-globsecBM} Suppose that $\ch k=0$ or $\ch k=p\ge h$. Then each indecomposable object in  $\CH_k^\circ$ is, up to a shift in degree,  isomorphic to the $\hCZ_k$-module $B_x$ for some vertex $x$ of $\hCG^\circ$. 
\end{proposition}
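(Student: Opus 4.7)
The plan is to factor the identification through the category of special sheaves on the moment graph $\hCG^\circ$ and then to match indecomposable special sheaves with Braden--MacPherson sheaves. Under the hypothesis $\ch k = 0$ or $\ch k = p \ge h$ the preceding lemma gives that $(\hCG^\circ, k)$ is a GKM-pair, which is precisely the input needed so that the moment graph constructions of Sections 2.8--2.10 work over $k$; in particular the decomposition $\hCZ_k(\Omega) = \hCZ_k(\Omega)^s \oplus c(\alpha_s) \hCZ_k(\Omega)^s$ and the second statement of Lemma \ref{lemma-inttwintrans} both carry over.

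First, I would set up a $k$-linear version $\CA_k^\circ$ of special sheaves restricted to the finite subgraph $\hCG^\circ$, defined by applying the translation functors $\ttheta^s$ only for simple affine reflections that keep us inside $\hCG^\circ$ (and closing under direct sums, direct summands and shifts starting from $\SB_e$). The GKM assumption makes Proposition \ref{prop-inttwintrans} valid in this $k$-linear setting, so taking global sections gives a functor $\Gamma \colon \CA_k^\circ \to \CH_k^\circ$ that sends $\SB_e$ to $B_e$ and intertwines $\ttheta^s$ with $\theta^s$. Since the generators and generating operations on both sides match, every object of $\CH_k^\circ$ is isomorphic to $\Gamma(\SF)$ for some $\SF \in \CA_k^\circ$, and Krull--Schmidt on both sides reduces the proposition to identifying the indecomposable objects of $\CA_k^\circ$ up to shift with the Braden--MacPherson sheaves $\SB_x$ for $x \in \hCW^\circ$. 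This last identification is the content of the main result of \cite{Fie05b}: one checks by induction along reduced expressions that each translated object $\ttheta^t \circ \cdots \circ \ttheta^s \SB_e$ is a direct sum of shifts of Braden--MacPherson sheaves, using the characterization of $\SB_w$ by the three axioms on $\SB_w^w$, $\rho_{y,E}$ and $\rho_{x,\delta x}$, which are stable under taking $\ttheta^s$ precisely because of the GKM condition; conversely each $\SB_x$ with $x \in \hCW^\circ$ arises as the unique indecomposable summand supported on $\{\le x\}$ in the translation along any reduced expression of $x$.

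The main obstacle I expect is verifying that $\Gamma$ faithfully reflects the Krull--Schmidt decomposition, i.e.~that indecomposable sheaves map to indecomposable modules and non-isomorphic sheaves to non-isomorphic modules. The cleanest route is to show that the endomorphism comparison $\End_{\CA_k^\circ}(\SF) \to \End_{\CH_k^\circ}(\Gamma(\SF))$ is an isomorphism of local (respectively matrix) algebras. By Lemma \ref{lemma-specmodglobsec} together with the freeness of the stalks $\SF^x$ guaranteed by the construction of special sheaves and the GKM-property, a sheaf homomorphism $f \colon \SF \to \SG$ in $\CA_k^\circ$ is determined by its effect on global sections: the stalk $\SF^x$ embeds into $\Gamma(\SF)^{\emptyset, x}$ after localization, and the injectivity of $f^x$ can be read off from $\Gamma(f)$ using the generic decomposition of Section \ref{sec-Loc}. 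Surjectivity requires lifting a $\hCZ_k$-module map on global sections to stalks, for which the canonical map $\SF^x \to \SF^{\delta x}$ being a projective cover (a property inherited by direct summands of translates of $\SB_e$) is the key input, combined with the fact that $\SF^{\delta x}$ is determined by $\Gamma(\SF_{>x})$. Once this equivalence on Karoubian closures is established, combining with the identification of indecomposables in $\CA_k^\circ$ yields the proposition.
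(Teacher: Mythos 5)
Your route through the category of special sheaves $\CA_k^\circ$ is genuinely different from the paper's, and it contains a real gap. The paper explicitly notes, at the end of Section \ref{sec-momgrasheaves}, that the special sheaves produced by the simplified translation functors $\ttheta^s$ are \emph{not} in general generated by global sections and hence are \emph{not} Braden--MacPherson sheaves. If every $\ttheta^t\circ\cdots\circ\ttheta^s\SB_e$ decomposed into shifts of Braden--MacPherson sheaves $\SB_x$, it would in particular be generated by global sections, contradicting that caveat. So your central claim that each indecomposable object of $\CA_k^\circ$ is some $\SB_x$ is unjustified and appears to be false as stated; the citation to \cite{Fie05b} does not cover it, because \cite{Fie05b} works with the genuine Braden--MacPherson translation functors, not the simplified edge construction $\SF^E := \SG^{\ol E}$ used in this paper.

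The paper's own proof avoids the sheaf side entirely at this step. It works directly with the $\hCZ_k$-module category: it invokes from \cite{Fie05a} the full subcategory $\CV(\hCG')_k$ of $\hCZ_k$-modules admitting a Verma flag, equipped with its non-standard exact structure, and from \cite{Fie05b} the facts that (i) the indecomposable projective objects of $\CV(\hCG')_k$ are, up to shift, exactly the $B_x$ for $x$ a vertex of $\hCG'$, and (ii) for $w<ws$ the translation functor $\theta^s$ on $\hCZ_k(\le ws)\catmod$ preserves $\CV(\hCG_{\le ws})_k$ and is exact and self-adjoint. Exactness and self-adjointness together imply that $\theta^s$ preserves projectives, so $\theta^s B_w$ is projective in $\CV(\hCG_{\le ws})_k$ and therefore decomposes into shifts of $B_x$'s; an induction along reduced expressions finishes the argument. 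Your proposal could probably be repaired, but not along the lines you sketch: you would either need to show that the global sections functor $\Gamma$ factors through (and induces an equivalence onto) the projective objects of $\CV(\hCG^\circ)_k$ without ever claiming the sheaves themselves are Braden--MacPherson, or you would need to replace $\ttheta^s$ by the actual Braden--MacPherson translation functor of \cite{Fie05b} and then argue that both give the same special $\hCZ_k$-modules after taking global sections. Either way, the hardest work (faithfulness/fullness of $\Gamma$, or the comparison of the two translation functors) is an extra layer of effort that the paper's module-theoretic argument avoids, because it only needs that $\theta^s$ is exact and self-adjoint in the Verma flag exact structure.
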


\begin{proof} We have to show that for each reduced expression $w=s\cdots t$ for $w\in\hCW^\circ$, the object $\theta^t\circ\dots\circ\theta^s B_e$ is isomorphic to a direct sum of shifted copies of various $B_x$ with $x\in\hCW^\circ$. For this it is enough to prove that $\theta^s B_w$ decomposes in such a way for each $w\in\hCW^\circ$, $s\in\hCS$ with $ws\in\hCW^\circ$ and $w<ws$. 

In the following we explain some arguments developed in \cite{Fie05a,Fie05b}. Note that the following heavily depends on the GKM-property. 
In \cite{Fie05a} we associated to each subgraph $\hCG^\prime$  the full
subcategory $\CV(\hCG^\prime)_k$ of $\hCZ_k\catmod$ that consists of
modules that {\em admit a Verma flag} and we defined on
$\CV(\hCG^\prime)_k$ a non-standard exact structure.  We showed in \cite{Fie05b} that the indecomposable projective objects in $\CV(\hCG^\prime)_k$ are up to  shifts represented by the set $\{B_x\}$, where $x$ is a vertex of $\hCG^\prime$. 

We also showed  in \cite{Fie05b} that, if $w<ws$,  the functor $\theta^s\colon \hCZ_k(\le ws)\catmod\to\hCZ_k(\le ws)\catmod$ preserves $\CV(\hCG_{\le ws})_k$ and is exact and self-adjoint, hence it also preserves the subcategory of projective objects. In particular, we deduce that $\theta^s B_w$ is projective in $\CV(\hCG_{\le ws})_k$ and hence decomposes into a direct sum of shifted copies of some $B_x$.
\end{proof}

\subsection{A conjecture}\label{subsec-MomGraconj}

Let $w\in\hCW$ be arbitrary. We can then consider the full sub-moment graph $\hCG_{\le w}$ of $\hCG$ that contains all vertices that are smaller or equal to $w$.

\begin{conjecture}\label{conj-MomGra} Let $w\in\hCW$ and suppose that $(\hCG_{\le w},k)$ is a GKM-pair. Then for all $x\in\hCW$ we have
$$
\rk\, \SB_w^x=h_{x,w}(1).
$$
\end{conjecture}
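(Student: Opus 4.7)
The plan is to reduce the rank equality $\rk\,\SB_w^x = h_{x,w}(1)$ to the Kazhdan--Lusztig formula for equivariant intersection cohomology stalks by running $\SB_w$ through the chain of functors assembled in the paper: the global sections $B_w=\Gamma(\SB_w)$ lie in $\CH_k$, and this combinatorial object should correspond (up to shift) to the equivariant intersection cohomology sheaf $\IC_{\hT,w}\in\CI_k$ under the hypercohomology functor.

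First I would establish the local rank equality $\rk\,\SB_w^x = \rk\, B_w^{\emptyset,x}$. For any BM sheaf, the edge stalks satisfy $\SB_w^E = \SB_w^y/\alpha(E)\SB_w^y$ for any $y$ adjacent to $E$, so after the base change $\otimes_{\hS_k}\hS_k^\emptyset$ (which inverts every edge label, since $(\hCG_{\le w},k)$ is assumed a GKM-pair) these edge stalks vanish and the compatibility relations defining $\Gamma(\SB_w)$ become vacuous. Together with the finiteness of the support of $\SB_w$, this produces an $\hCZ_k^\emptyset$-linear isomorphism $B_w^\emptyset\cong\bigoplus_v\SB_w^v\otimes_{\hS_k}\hS_k^\emptyset$, which via Lemma \ref{lemma-Zloc} matches the generic decomposition of Lemma \ref{lemma-decompreg}; in particular $B_w^{\emptyset,v}\cong\SB_w^v\otimes_{\hS_k}\hS_k^\emptyset$, so the two sides have the same graded rank.

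Next, assuming $w\in\hCW^\circ$ and $\ch k=0$ or $\ch k=p$ large enough for Theorem \ref{ICpos2} to apply, I would identify $B_w\cong\Hyp^\ast_\hT(\IC_{\hT,w})$ as $\hCZ_k$-modules: by Proposition \ref{prop-globsecBM} every indecomposable of $\CH_k^\circ$ is of the form $B_y=\Gamma(\SB_y)$ up to shift, and by Theorem \ref{ICpos2}\,(1)--(2) the analogous statement holds for the $\IC_{\hT,y}$ in $\CI_k^\circ$. Passing an iterated $\pi_s^\ast\pi_{s\ast}$-image through $\Hyp^\ast_\hT$ and tracking the unique indecomposable summand supported on the top Schubert variety $\overline{\CO_w}$ matches the two. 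Theorem \ref{speSheH} then gives $\rk\, B_w^{\emptyset,x}=\rk\,\Hyp^\ast_\hT(i_x^\ast \IC_{\hT,w})$, and Theorem \ref{ICpos2}\,(3) yields $\rk\,\Hyp^\ast_\hT(i_x^\ast\IC_{\hT,w})=h_{x,w}(1)$. Combined with step one, this proves the conjecture for $w\in\hCW^\circ$ under the large-characteristic assumption.

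The hard part will be removing these two restrictions: extending to arbitrary $w\in\hCW$ and to \emph{every} field $k$ for which $(\hCG_{\le w},k)$ is a GKM-pair. The geometric argument rests crucially on the decomposition theorem, which can fail in small positive characteristic; when it fails, the iterated $\pi_s^\ast\pi_{s\ast}$-image of $F_e$ may acquire extra indecomposable summands and the method only yields the one-sided estimate $\rk\,\SB_w^x\geq h_{x,w}(1)$. Closing the gap appears to require intrinsic moment graph techniques: the Lefschetz-type theory developed in \cite{fiebig:math0811.1674} produces the explicit characteristic bound $U(\hw_0)$, and the smooth-locus analysis of \cite{fiebig:math0607501} settles the multiplicity-one case for all relevant primes. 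These give parts (2) and (3) of Theorem \ref{theorem-MainTh}, but a proof of Conjecture \ref{conj-MomGra} in full generality remains open.
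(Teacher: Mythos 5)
This statement is labeled a \emph{conjecture} in the paper, and the paper does not prove it: Section \ref{subsec-MomGraconj} merely asserts that it implies Lusztig's modular and quantum conjectures and records that two special cases are settled in \cite{fiebig:math0607501} and \cite{fiebig:math0811.1674}. So there is no paper proof to compare your argument against, and you correctly recognize this in your last paragraph. Your outline of the known partial results --- reduce to the geometric side via $\Hyp^\ast_\hT$, apply the decomposition theorem to get Theorem \ref{ICpos2}, then use the Kazhdan--Lusztig formula for IC stalks --- matches the spirit of how the paper deduces the large-characteristic instance (cf.\ the proof of Theorem \ref{MTheo1}), and you correctly flag that this only covers $\ch k = 0$ or $\ch k \gg 0$.

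Two places in your sketch are thinner than you acknowledge. First, in step two you want $B_w \cong \Hyp^\ast_\hT(\IC_{\hT,w})$ on the nose, but what the machinery hands you directly is only that $\Hyp^\ast_\hT(\IC_{\hT,w})$ is a direct summand of $\theta^t\cdots\theta^s B_e$ containing $B_w$ as a summand; showing there are no extra summands $B_z\langle m\rangle$ with $z < w$ requires an indecomposability argument (e.g.\ via full faithfulness of $\Hyp^\ast_\hT$ on the special category, as in Soergel's \cite{MR1784005}), which you do not supply. Without it you only get $\rk\,\SB_w^x \le h_{x,w}(1)$, and you would then need the matching lower bound $\rk\,\SB_w^x \ge h_{x,w}(1)$ to conclude --- which, second, you assert but do not justify. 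The inequality $\rk\,\SB_w^x \ge h_{x,w}(1)$ is in fact known, but the reason you give (extra summands when the decomposition theorem fails) does not by itself yield a bound on BM-sheaf stalks; the cleaner route is base change of the Braden--MacPherson sheaf from $\DZ^\prime$ or $\DQ$ to $k$, under which stalk ranks can only increase. Neither gap affects your overall (correct) conclusion that the conjecture remains open, but they would need to be filled in for your large-characteristic case to be a complete argument.
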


By the results in the previous sections, this conjecture implies
Lusztig's modular and quantum conjectures. By \cite{Fie05a}, it also implies the Kazhdan--Lusztig conjecture on the simple highest weight characters of Kac--Moody algebras.

\subsection{The smooth locus of a moment graph}

In  \cite{Fie05a} we determined the ``smooth locus'' of moment graphs for which the intersection sheaf is self-dual. In our situation we obtain a proof of the multiplicity one case of Conjecture \ref{conj-MomGra}:  

\begin{theorem}\label{Mone} Let $w\in\hCW$ and suppose that $(\hCG_{\le w},k)$ is a GKM-pair. Then we have, for all  $x\in\hCW$,
$$
\rk\, \SB_w^x=1\text{ if and only if } h_{x,w}(1)=1.
$$
\end{theorem}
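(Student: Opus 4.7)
The plan is to reduce the theorem to the ``smooth locus'' theorem proved in \cite{fiebig:math0607501} and then match the resulting combinatorial condition against the classical Carrell--Peterson criterion for $h_{x,w}(1)=1$. First I would invoke the main result of \cite{fiebig:math0607501}: under the GKM-assumption on $(\hCG_{\le w},k)$, the stalk $\SB_w^x$ is a free $\hS_k$-module of rank one if and only if $x$ is a \emph{smooth} vertex of $\hCG_{\le w}$, which concretely means that the number of edges of $\hCG_{\le w}$ meeting $x$ equals $l(w)-l(x)$. This characterization is available precisely because the Braden--MacPherson sheaf is self-dual on a GKM moment graph.

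Edges of $\hCG_{\le w}$ incident to $x$ are in bijection with reflections $t\in\hCT$ such that $tx\ne x$ and $tx\le w$, so the ``smooth vertex'' condition rewrites as the Bruhat-combinatorial equality $\#\{t\in\hCT\mid tx\le w,\,tx\ne x\}=l(w)-l(x)$. On the other hand, the Carrell--Peterson criterion (valid for arbitrary Coxeter groups, hence for $\hCW$) asserts that this very cardinality equals $l(w)-l(x)$ if and only if the Bruhat interval $[x,w]$ is rationally smooth at $x$; and rational smoothness at $x$ is in turn equivalent to $h_{x,w}(1)=1$, since $h_{x,w}$ is the Poincar\'e polynomial of the local intersection cohomology of the Schubert variety $\ol{\CO_w}$ at the $\hT$-fixed point indexed by $x$ (a fact that was already used in Theorem \ref{self-dual elts} and Theorem \ref{ICpos2}). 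Concatenating these equivalences yields the theorem.

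The main obstacle is ensuring that the smooth locus theorem of \cite{fiebig:math0607501} is available in the full generality claimed here, namely for arbitrary $w\in\hCW$ and arbitrary fields $k$ such that $(\hCG_{\le w},k)$ is a GKM-pair, and not just in the characteristic zero setting where the geometric interpretation is immediate. A secondary point to verify is that the chain of equivalences $h_{x,w}(1)=1\Leftrightarrow\text{rational smoothness at }x\Leftrightarrow\text{the edge count at }x$ holds unconditionally at the level of the affine Weyl group, without any regularity assumption on the pair $(x,w)$. Once these two ingredients are established, the theorem follows by direct combination, since both sides of the asserted equivalence are translated into the same numerical condition on the edges of $\hCG_{\le w}$ at $x$.
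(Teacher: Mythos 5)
The paper gives no argument for this statement: it is presented as a citation of \cite{fiebig:math0607501} (the reference to \cite{Fie05a} just before the theorem appears to be a slip for the same paper), so there is no internal proof to compare against. Your two-step plan---translate ``$\rk\,\SB_w^x=1$'' into a numerical condition on the moment graph at $x$ and then match it against the Carrell--Peterson description of $h_{x,w}(1)=1$---is the right shape and is presumably how the cited paper proceeds, and you are correct to flag the two delicate points (availability of the smooth-locus theorem over all GKM fields, and the unconditional validity of the pointwise Carrell--Peterson criterion for $\hCW$).

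However, your intermediate numerical condition is wrong as stated, and the chain of equivalences breaks there. You claim $x$ is smooth in $\hCG_{\le w}$ iff ``the number of edges of $\hCG_{\le w}$ meeting $x$ equals $l(w)-l(x)$,'' and you identify that edge set with $\{t\in\hCT\mid tx\le w,\ tx\ne x\}$. This set contains every reflection $t$ with $tx<x$ (the other endpoint then automatically lies in $\hCG_{\le w}$), and there are exactly $l(x)$ such $t$. The Carrell--Peterson/Deodhar count concerns only the \emph{up}-edges: $\#\{t\in\hCT\mid x\lneq tx\le w\}\ge l(w)-l(x)$ with equality iff $h_{x,w}(1)=1$; equivalently, the \emph{total} number of edges at $x$ in $\hCG_{\le w}$ should be $l(w)$, not $l(w)-l(x)$. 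Your version fails already at $x=w$: there the total edge count is $\#\{t\mid tw<w\}=l(w)>0$ while your formula predicts $l(w)-l(w)=0$, even though $h_{w,w}(1)=1$ and $\SB_w^w\cong\hS_k$ has rank one. The fix is to count only up-edges, $\#\{t\in\hCT\mid x\lneq tx\le w\}=l(w)-l(x)$, which is also what the Braden--MacPherson construction (built top-down using only up-edges) naturally sees; with that correction your reduction to Carrell--Peterson is sound.
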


We obtain, as a special case, part (2) of Theorem \ref{theorem-MainTh} from the introduction:

\begin{corollary} Suppose that $p>h$. For $y\in\hCW^{res,-}$ and $x\in \hCW$  we have 
$$
[Z(x\cdot_p 0):L(y\cdot_p 0)]=1 \text{ if and only if } p_{w_0x,w_0y}(1)=1.
$$
\end{corollary}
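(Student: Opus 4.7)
The plan is to combine the multiplicity-one moment-graph result (Theorem \ref{Mone}) with the functor $\Phi\colon\CH_k\to\CR_k$ and the AJS lower bound for Verma multiplicities in order to squeeze $(\tP(y\cdot_p 0):\tZ(x\cdot_p 0))$ against both $\rk\,\SB_y^x$ and $p_{w_0x,w_0y}(1)$. The central inequality to establish is the upper bound
\[
(\tP(y\cdot_p 0):\tZ(x\cdot_p 0))\ \le\ \rk\,\SB_y^x.
\]
Once this is in hand, Theorem \ref{Mone}, the AJS lower bound $(\tP(y\cdot_p 0):\tZ(x\cdot_p 0))\ge p_{w_0x,w_0y}(1)$, Lusztig's identity $h_{x,y}(1)=p_{w_0x,w_0y}(1)$ for $y\in\hCW^{res,-}$ (from \cite{MR591724}), and Humphreys' reciprocity combine into the desired biconditional.

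To establish the upper bound, let $\Phi=\DV^{-1}\circ(\,\cdot\,\otimes_{S_k}\tilde S_k)\circ\Psi\colon\CH_k\to\CR_k$ be the composite of Theorem \ref{MTheo1}. Since $p>h$, the pair $(\hCG^\circ,k)$ is a GKM-pair, so Proposition \ref{prop-globsecBM} identifies $B_y:=\Gamma(\SB_y)$ as an indecomposable object of $\CH_k^\circ$. Inverting all affine roots trivialises the edge-congruences defining $\Gamma(\SB_y)$, so $B_y^{\emptyset,x}=\SB_y^x\otimes_\hS\hS^\emptyset$. Combining Theorem \ref{MTheo}(2) with Theorem \ref{AJSequiv} yields
\[
(\Phi(B_y):\tZ(x\cdot_p 0))\ =\ \rk\,\Psi(B_y)(A_x)\ =\ \rk\,B_y^{\emptyset,x}\ =\ \rk\,\SB_y^x.
\]
I would then show that $\tP(y\cdot_p 0)$ is a direct summand of $\Phi(B_y)$. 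Fix a reduced expression $y=s_1\cdots s_n$ through $w_0$. By Proposition \ref{prop-globsecBM}, $\theta^{s_n}\circ\cdots\circ\theta^{s_1}(B_e)$ decomposes in $\CH_k^\circ$ as a direct sum $\bigoplus_i B_{z_i}\langle n_i\rangle$ of shifted Braden--MacPherson sheaves with $z_i\le y$, and the rank computation above at $x=y$ together with $\SB_z^y=0$ for $z<y$ shows that $B_y$ appears in this decomposition. Applying $\Phi$, which is additive, intertwines the translation functors, and sends $B_e$ to $\tZ(0)$, transports this to a decomposition of $\theta^{s_n}\circ\cdots\circ\theta^{s_1}\tZ(0)$; by the same rank formula, each $\Phi(B_{z_i})$ with $z_i<y$ has $y$ outside its support. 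Proposition \ref{PasdS} singles out $\tP(y\cdot_p 0)$ as the unique indecomposable summand of $\theta^{s_n}\circ\cdots\circ\theta^{s_1}\tZ(0)$ whose support contains $y$. Hence $\tP(y\cdot_p 0)$ is a direct summand of $\Phi(B_y)$, and reading off Verma-multiplicities gives the upper bound.

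With the upper bound in place, the two directions of the corollary run as follows. For ``$\Leftarrow$'', if $p_{w_0x,w_0y}(1)=1$ then $h_{x,y}(1)=1$ by Lusztig's identity, $\rk\,\SB_y^x=1$ by Theorem \ref{Mone}, and the upper bound together with the AJS lower bound forces $(\tP(y\cdot_p 0):\tZ(x\cdot_p 0))=1$. For ``$\Rightarrow$'', Humphreys' reciprocity converts $[Z(x\cdot_p 0):L(y\cdot_p 0)]=1$ into $(\tP(y\cdot_p 0):\tZ(x\cdot_p 0))=1$; the upper bound then gives $\rk\,\SB_y^x\ge 1$, so $\SB_y^x\ne 0$ and hence $x\le y$ (since $\SB_y$ is supported on $\{\le y\}$); by Kazhdan--Lusztig positivity $h_{x,y}(1)\ge 1$, and the AJS lower bound $p_{w_0x,w_0y}(1)\le 1$ combines with Lusztig's identity $p_{w_0x,w_0y}(1)=h_{x,y}(1)\ge 1$ to give $p_{w_0x,w_0y}(1)=1$. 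In the ``$\Leftarrow$'' direction Humphreys' reciprocity converts the Verma multiplicity back into $[Z(x\cdot_p 0):L(y\cdot_p 0)]$.

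The principal technical obstacle is the upper bound, specifically the claim that $\tP(y\cdot_p 0)$ is a direct summand of $\Phi(B_y)$: this requires matching two direct-sum decompositions across $\Phi$ by careful bookkeeping of Verma-supports together with the uniqueness statement of Proposition \ref{PasdS}. Every other step is a routine combination of results already in the paper or of standard Kazhdan--Lusztig theory.
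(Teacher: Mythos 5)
Your proposal is correct and follows essentially the same route as the paper: reduce via Humphreys' reciprocity to the Verma multiplicity, prove the upper bound $(\tP(y\cdot_p 0):\tZ(x\cdot_p 0))\le\rk\,\SB_y^x$ by tracing $B_y$ through $\Phi'$ and invoking Propositions \ref{prop-globsecBM} and \ref{PasdS}, and then combine with Theorem \ref{Mone}, the AJS lower bound $(\tP:\tZ)\ge p_{w_0x,w_0y}(1)$, and Lusztig's identity $h_{x,y}(1)=p_{w_0x,w_0y}(1)$. The paper presents the ``$\Rightarrow$'' direction more tersely (it silently uses that a nonzero Verma multiplicity forces $x\le y$), whereas you route through the nonvanishing of $\SB_y^x$; this is a harmless elaboration, not a different proof.
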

\begin{proof} As in the proof of  Theorem \ref{MTheo1} we see that our claim is equivalent to 
$$
\left(\tP(y\cdot_p 0):\tZ(x\cdot_p 0)\right)=1 \text{ if and only if } p_{w_0x,w_0y}(1)=1.
$$
It is known that $\left(\tP(y\cdot_p 0):\tZ(x\cdot_p 0)\right)\ge
p_{w_0x,w_0y}(1)$ in any case. Since for $x\in\hCW$, $y\in \hCW^{\circ}$ with $x\le y$  one has $p_{w_0x,w_0y}(1)\ge 1$, we deduce that $\left(\tP(y\cdot_p
  0):\tZ(x\cdot_p 0)\right)=1$ implies  $p_{w_0x,w_0y}(1)=1$.

Now assume that $p_{w_0x,w_0y}(1)=h_{x,y}(1)=1$.
We consider the composition 
$$
\Phi^\prime\colon\CH_k\xrightarrow{\Psi}\CM_k\cong\CM_k^\prime\xrightarrow{\cdot\otimes_{S_k}
  \tilde S_k}\tCM_k^\prime\xrightarrow{\DV^{-1}}\CR_k.
$$
In a similar way as in the proof of Theorem \ref{MTheo1} we deduce that $\tP(y\cdot_p0)$ is a direct summand of $\Phi^\prime(B_y)$, and Theorem \ref{Mone} then implies
$$
\left(\tP(y\cdot_p 0):\tZ(x\cdot_p 0)\right)\le 1.
$$
The reverse inequality is stated above, hence $\left(\tP(y\cdot_p
  0):\tZ(x\cdot_p 0)\right)=1$.
\end{proof}

\subsection{An upper bound on the exceptional primes}\label{subsec-upbound}
Let $\bs=(s_1,\dots,s_{l})$ be a sequence in $\hCS$ of length $l$.  Let us
define the corresponding Bott--Samelson element $\ul{H}(\bs)$ in the
affine Hecke algebra by 
$$
\ul{H}({\bs}):=\ul{H}_{s_1}\cdots\ul{H}_{s_l}.
$$
 Let the
polynomials $a_x\in\DZ[v]$ be defined by 
$$
\ul{H}({\bs})=\sum_{x\in \hCW} a_x \tilde T_x.
$$
Set $r_x=a_x(1)$ and set
$$
r=r(\bs):=\max_{x}\{r_x\}.
$$
Let $d_x=\left(\frac{d}{dv}a_x\right)(1)$ be the sum of the exponents
of $a_x$, and set
$$
d=d(\bs):=\max_{x}\{d_x\}.
$$

We denote by $\hR^+_{\bs}\subset\hR^+$ the subset of all positive roots that appear
as a label on the subgraph of $\hCG$ that contains all vertices that are smaller or equal to a subword  of $s_1\cdots
s_l$.  We define the
{\em height} $\height(\alpha)$ of a positive affine root
$\alpha\in \hR^+$ as the number $n$ such that $\alpha$ can be written
as a sum of $n$ elements of $\widehat\Pi$ and we set
$$
N=N(\bs):=\max_{\alpha\in\hR^+_{\bs}}\{\height(\alpha)\}.
$$

Now we associate to $\bs$ the number
$U(\bs):=r!(r!(r-1)!N^{l+2d})^r$ and set, for $w\in\hCW$, 
$$
U(w):=\min_{\bs} U(\bs),
$$
where the minimum is taken over all sequences $\bs$ such that $s_1\cdots s_l$ is a reduced expressions for $w$. Note
that $U(w^\prime)\le U(w)$ for $w^\prime \le w$. The main result in \cite{fiebig:math0811.1674} is the following:

\begin{theorem} Let $w\in\hCW$ and suppose that $\ch k=p>U(w)$. Then Conjecture \ref{conj-MomGra} holds for the moment graph $\hCG_{\le w}$. 
\end{theorem}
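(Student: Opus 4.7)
The plan is to appeal to the Lefschetz theory for moment graph sheaves developed in the companion paper \cite{fiebig:math0811.1674}, whose strategy I now sketch. The idea is to transport the hard Lefschetz theorem from Schubert varieties (where it holds over $\DC$ by classical Hodge theory) to Braden--MacPherson sheaves on moment graphs over fields of positive characteristic, tracking explicitly the primes for which the argument can fail.

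First, I would construct an integral model: over $\DZ^\prime$ the Braden--MacPherson sheaf $\SB_w$ is defined by the same local recursion and yields a graded free $\hS_{\DZ^\prime}$-module on each stalk $\SB_w^x$. The claim reduces to showing that base change to $k$ commutes with the formation of $\SB_w$ whenever $\ch k > U(w)$, because in characteristic zero the rank $\rk\, \SB_w^x$ equals $h_{x,w}(1)$ by identification with the equivariant intersection cohomology sheaf on $\ol{\CO_w}$ together with the theorem of Kazhdan--Lusztig (this is in effect the characteristic zero content of Theorem \ref{ICpos2}). The real question is thus: for which primes $p$ does the reduction of $\SB_w$ modulo $p$ agree with the characteristic zero picture?

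Next, I would equip each global sections module $B_w=\Gamma(\SB_w)$ with additional structure: an invariant, non-degenerate, graded-symmetric $\hS$-bilinear form together with a Lefschetz operator $L$ given by multiplication by a self-dual element of $\hS$ of degree $2$. This data is built inductively along a reduced expression $w=s_1\cdots s_l$, using the fact that $\sout$ is both a left and right adjoint to $\son$ up to a degree shift (a consequence of Lemma \ref{lemma-decZ}, which realises $\hCZ$ as a Frobenius extension of $\hCZ^s$). Hence the form and the operator can be transported through each $\theta^s$, and $B_w$ is cut out as a direct summand of $\theta^{s_1}\cdots\theta^{s_l}(B_e)$ compatibly with these structures. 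With hard Lefschetz in hand on $B_w$, the corresponding primitive decomposition forces the generating function of stalk ranks to satisfy the defining properties of the Kazhdan--Lusztig polynomial $h_{x,w}$ from Theorem \ref{self-dual elts}, yielding $\rk\, \SB_w^x = h_{x,w}(1)$.

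Finally, the bound $U(w)$ emerges by estimating the Gram determinants of these bilinear forms: hard Lefschetz fails over $k$ exactly when such a determinant vanishes in $k$, and each such determinant is an explicit polynomial in the heights of the affine roots labelling edges of $\hCG_{\le w}$, with combinatorial bounds on the number and degree of contributing terms coming from the Bott--Samelson element $\ul{H}(\bs)$ associated to a reduced expression $\bs$ of $w$. Unwinding these estimates yields the explicit formula $U(\bs) = r!(r!(r-1)!N^{l+2d})^r$, and minimising over reduced expressions gives $U(w)$. The main obstacle is the second of these steps: establishing hard Lefschetz for $B_w$ algebraically, rather than importing it from the Hodge theory of the complex Schubert variety $\ol{\CO_w}$. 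In characteristic zero hard Lefschetz comes for free from the decomposition theorem for a suitable Bott--Samelson-type resolution; in positive characteristic one cannot pull this back from geometry and must instead propagate non-degeneracy of the pairing and the Lefschetz isomorphism along the recursion $\theta^s$ by hand, and it is precisely the requirement that the relevant minors remain invertible in $k$ that produces the explicit bound $U(w)$.
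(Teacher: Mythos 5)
The paper does not prove this theorem; it records the definition of $U(w)$ and then cites the statement as ``the main result in [Fie08c],'' whose argument is not reproduced here. Your proposal correctly identifies this and reconstructs a Lefschetz-theoretic strategy that is consistent with the paper's brief description of [Fie08c] (an upper bound obtained ``by developing a Lefschetz theory on a moment graph''). Since there is no proof in the text against which to compare, the accuracy of your finer technical claims---the precise structure of the bilinear pairing (whether it is $\hS$-bilinear on $B_w$ or defined only after a specialization), and exactly how Gram-determinant estimates produce the explicit formula $U(\bs)=r!\,(r!\,(r-1)!\,N^{l+2d})^r$---cannot be confirmed, but nothing in your sketch contradicts what the present paper says about the companion result.

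One caution: the step where you assert that hard Lefschetz and the primitive decomposition ``force the generating function of stalk ranks to satisfy the defining properties'' of Theorem \ref{self-dual elts} is the crux of the entire approach and is underspecified as stated. Hard Lefschetz by itself does not characterize the Kazhdan--Lusztig polynomials; one also needs the support and degree-bound conditions and a self-duality of the Braden--MacPherson sheaf compatible with the recursion, and it is not visible from the present text whether [Fie08c] argues in precisely this way or instead compares the $\DZ'$-form of $\SB_w$ directly against the characteristic-zero picture and bounds the primes dividing a change-of-basis. Your sketch reads as a plausible guess rather than a proof, and a referee would want the companion paper in hand before accepting the intermediate claims.
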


We now obtain part (3) of Theorem \ref{theorem-MainTh} from the introduction:
\begin{corollary} Suppose that $p> U(\hw_0)$. For $x\in\hCW$ and
  $y\in\hCW^{res,-}$ we have 
$$
[Z(x\cdot_p 0):L(y\cdot_p 0)]=p_{w_0x,w_0y}(1).
$$
\end{corollary}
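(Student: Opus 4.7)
The plan is to imitate the proof of Theorem \ref{MTheo1}, replacing the decomposition-theorem input (Theorem \ref{ICpos2}) by the moment-graph input of Section \ref{subsec-upbound}. By Brauer--Humphreys reciprocity and the linkage principle (Section \ref{epo}), the sought identity is equivalent to
$$
(\tP(y\cdot_p 0):\tZ(x\cdot_p 0))=p_{w_0x,w_0y}(1),
$$
and the inequality ``$\ge$'' is the general AJS lower bound already cited in the proof of Theorem \ref{MTheo1}. Only the reverse inequality needs work.

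First I would reintroduce the functor
$$
\Phi^\prime\colon\CH_k\xrightarrow{\Psi}\CM_k\cong\CM_k^\prime\xrightarrow{\cdot\otimes_{S_k}\tilde S_k}\tCM_k^\prime\xrightarrow{\DV^{-1}}\CR_k
$$
from the proof of the preceding corollary. It satisfies $\Phi^\prime(B_e)\cong\tZ(0)$, intertwines $\theta^s$ on both sides, and by Theorem \ref{MTheo}(2) together with Theorem \ref{AJSequiv} it satisfies $(\Phi^\prime(M):\tZ(x\cdot_p 0))=\rk\, M^{\emptyset,x}$ for each $M\in\CH_k$ and $x\in\hCW$. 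The hypothesis $p>U(\hw_0)\ge h$ guarantees that $(\hCG_{\le\hw_0},k)$ is a GKM-pair, so Proposition \ref{prop-globsecBM} identifies the indecomposable objects of $\CH_k^\circ$ with the global section modules $B_w=\Gamma(\SB_w)$ for $w\in\hCW^\circ$. For $y\in\hCW^{res,-}\subset\hCW^\circ$ I fix a reduced expression $y=s\cdots t$ through $w_0$: then $B_y$ occurs with multiplicity one as a summand of $\theta^t\cdots\theta^s B_e$, and every other summand is some $B_{w_i}$ with $w_i<y$. Applying $\Phi^\prime$ and comparing with Proposition \ref{PasdS}, which characterises $\tP(y\cdot_p 0)$ as the unique indecomposable summand of $\theta^t\cdots\theta^s\tZ(0)$ whose Verma-flag support contains $y$, forces $\tP(y\cdot_p 0)$ to be a direct summand of $\Phi^\prime(B_y)$; indeed, for $w_i<y$ the support of $\Phi^\prime(B_{w_i})$ lies in $\{x\le w_i\}$ since $\SB_{w_i}^x=0$ for $x\not\le w_i$, so $y$ cannot appear there.

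With this summand identified, the remainder is a short computation. The freeness of each stalk $\SB_y^x$ over $\hS_k$, together with the generic decomposition of $B_y=\Gamma(\SB_y)$, gives $\rk\, B_y^{\emptyset,x}=\rk\,\SB_y^x$. Since $y\le\hw_0$ we have $U(y)\le U(\hw_0)<p$, so the main theorem of \cite{fiebig:math0811.1674} applied to the moment subgraph $\hCG_{\le y}$ proves Conjecture \ref{conj-MomGra} for it and yields $\rk\,\SB_y^x=h_{x,y}(1)$. Assembling,
$$
(\tP(y\cdot_p 0):\tZ(x\cdot_p 0))\le(\Phi^\prime(B_y):\tZ(x\cdot_p 0))=\rk\,\SB_y^x=h_{x,y}(1),
$$
and Lusztig's identity $h_{x,y}(1)=p_{w_0x,w_0y}(1)$ for $y\in\hCW^{res,-}$ \cite{MR591724} finishes the upper bound.

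The main obstacle is the summand identification $\tP(y\cdot_p 0)\mid\Phi^\prime(B_y)$. In Theorem \ref{MTheo1} the analogous step was supplied by the decomposition theorem for a Bott--Samelson pushforward on $\hFl$, which singled out $\IC_{\hT,y}$ by support; here no geometric crutch is available and the argument must be carried out purely on the combinatorial side, combining the decomposition of $\theta^t\cdots\theta^s B_e$ into moment-graph $B_w$'s (Proposition \ref{prop-globsecBM}), the intertwining property of $\Phi^\prime$, and a support argument on Verma flags in the spirit of Lemma \ref{supportorder}. Once this identification is in place, the remaining steps of the argument are routine.
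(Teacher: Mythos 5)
The paper gives no explicit proof of this corollary: it follows the theorem of Section \ref{subsec-upbound} with the remark that part (3) of Theorem \ref{theorem-MainTh} is now obtained, leaving the argument to the reader as a variant of the multiplicity-one corollary's proof. Your proposal supplies exactly that argument: you reduce via Brauer--Humphreys to a bound on $(\tP(y\cdot_p 0):\tZ(x\cdot_p 0))$, reuse the functor $\Phi'$ and Proposition \ref{PasdS} to realize $\tP(y\cdot_p0)$ as a direct summand of $\Phi'(B_y)$, convert generic ranks of $B_y$ to stalk ranks of $\SB_y$, and then apply the $U(w)$-theorem (together with the monotonicity $U(y)\le U(\hw_0)$) and Lusztig's identity $h_{x,y}(1)=p_{w_0x,w_0y}(1)$. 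This is precisely the intended route, and your identification of the summand step as the one point that must be carried out combinatorially (rather than via the decomposition theorem as in Theorem \ref{MTheo1}) matches the structure of the multiplicity-one corollary's proof, which asserts the same identification ``in a similar way.''
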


\end{document}